\DeclareRobustCommand{\VAN}[3]{#2}
\DeclareSymbolFont{cyrletters}{OT2}{wncyr}{m}{n}
\DeclareMathSymbol{\Sha}{\mathalpha}{cyrletters}{"58}
\newcommand\restr[2]{{
  \left.\kern-\nulldelimiterspace 
  #1 
  \vphantom{\big|} 
  \right|_{#2} 
  }}
\numberwithin{equation}{subsection}
\newtheorem{Thm}[subsubsection]{Theorem}
\newtheorem{Lem}[subsubsection]{Lemma}
\newtheorem{Prop}[subsubsection]{Proposition}
\newtheorem{Cor}[subsubsection]{Corollary}
\newtheorem{mainThm}{Theorem}
\theoremstyle{definition}
\newtheorem{Def}[subsubsection]{Definition}
\newtheorem{Hyp}[subsubsection]{Hypothesis}
\newtheorem{eg}[subsubsection]{Example}
\newtheorem{Rem}[subsubsection]{Remark}
\newcommand{\qpbr}{\breve{\mathbb{Q}}_p}
\newcommand{\qpur}{\qp^{\mathrm{ur}}}
\newcommand{\zpbr}{\breve{\mathbb{Z}}_p}
\newcommand{\zpur}{\zp^{\mathrm{ur}}}
\newcommand{\qp}{\mathbb{Q}_p}
\newcommand{\zp}{\mathbb{Z}_{p}}
\newcommand{\fp}{\mathbb{F}_p}
\newcommand{\fpbar}{\overline{\mathbb{F}}_p}
\newcommand{\zlocp}{\mathbb{Z}_{(p)}}
\newcommand{\ql}{\mathbb{Q}_{\ell}}
\newcommand{\gal}{\operatorname{Gal}}
\newcommand{\afp}{\mathbb{A}_f^p}
\newcommand{\af}{\mathbb{A}_f}
\newcommand{\ad}{\mathbb{A}}
\newcommand{\qbar}{\overline{\mathbb{Q}}}
\newcommand{\qpbar}{\overline{\mathbb{Q}}_p}
\newcommand{\spec}{\operatorname{Spec}}
\newcommand{\spf}{\operatorname{Spf}}
\newcommand{\spa}{\operatorname{Spa}}
\newcommand{\spd}{\operatorname{Spd}}
\newcommand{\perf}{\operatorname{Perf}}
\newcommand{\ovfp}{\overline{\mathbb{F}}_p}
\newcommand{\grg}{\operatorname{Gr}_{\mathcal{G}}}
\newcommand{\grgmu}{\operatorname{Gr}_{G, \mu}}
\newcommand{\shtg}{\operatorname{Sht}_{\mathcal{G}}}
\newcommand{\gad}{\mathsf{G}^{\mathrm{ad}}}
\newcommand{\calgad}{\mathcal{G}^{\mathrm{ad}}}
\newcommand{\calgadcirc}{\mathcal{G}^{\mathrm{ad}, \circ}}
\newcommand{\calgcirc}{\mathcal{G}^{\circ}}
\newcommand{\calhad}{\mathcal{H}^{\mathrm{ad}}}
\newcommand{\calhadcirc}{\mathcal{H}^{\mathrm{ad}, \circ}}
\newcommand{\calhocirc}{\mathcal{H}_1^{\circ}}
\newcommand{\shtgmu}{\operatorname{Sht}_{\mathcal{G},\mu}}
\newcommand{\shtgmuone}{\mathrm{Sht}_{\mathcal{G},\mu, \delta=1}}
\newcommand{\shtgcircmu}{\operatorname{Sht}_{\calgcirc,\mu}}
\newcommand{\shtgadcircmu}{\operatorname{Sht}_{\calgadcirc,\mu}}
\newcommand{\shthadcircmu}{\operatorname{Sht}_{\calhadcirc,\mu}}
\newcommand{\shtgp}{\operatorname{Sht}_{\mathcal{G}'}}
\newcommand{\shtgpmup}{\operatorname{Sht}_{\mathcal{G}',\mu'}}
\newcommand{\shthmu}{\operatorname{Sht}_{\mathcal{H},\mu}}
\newcommand{\shthomu}{\operatorname{Sht}_{\mathcal{H}_1,\mu}}
\newcommand{\shthomuone}{\operatorname{Sht}_{\mathcal{H}_1,\mu,\delta=1}}
\newcommand{\shthopmu}{\operatorname{Sht}_{\mathcal{H}_1',\mu}}
\newcommand{\gder}{G^{\mathrm{der}}}
\newcommand{\mintgb}{\mathcal{M}^{\mathrm{int}}_{\mathcal{G},b,\mu}}
\newcommand{\mintgadcircbxnaught}{\mathcal{M}^{\mathrm{int}}_{\calgadcirc,b_x,\mu,/x_0}}
\newcommand{\minthadcircbxnaught}
{\mathcal{M}^{\mathrm{int}}_{\calhadcirc,b_x,\mu,/x_0}}
\newcommand{\minthb}{\mathcal{M}^{\mathrm{int}}_{\mathcal{H},b,\mu}}
\newcommand{\mintgbxnaught}{\mathcal{M}^{\mathrm{int}}_{\mathcal{G},b_x,\mu,/x_0}}
\newcommand{\mintgbpxnaught}{\mathcal{M}^{\mathrm{int}}_{\mathcal{G},b',\mu,/x_0}}
\newcommand{\perfpairs}{\mathcal{D}^{\circ}}
\newcommand{\perfpairsrat}{\mathcal{D}}
\newcommand{\perfstacks}{\operatorname{Stk}_{\operatorname{Perf}}}
\newcommand{\bdtimes}{\buildrel{\boldsymbol{.}}\over\times}
\newcommand{\frob}{\mathrm{Frob}}
\newcommand{\bun}{\operatorname{Bun}}
\newcommand{\bungmu}{\operatorname{Bun}_{G, \mu^{-1}}}
\newcommand{\bunhomu}{\operatorname{Bun}_{H_1,\mu^{-1}}}
\newcommand{\shtrp}{\operatorname{ShTrp}}
\newcommand{\shtpr}{\operatorname{ShtPr}}
\newcommand{\igs}{\operatorname{Igs}}
\newcommand{\scrs}{\mathscr{S}}
\newcommand{\bgmu}{B(G,\mu^{-1})}
\newcommand{\locmodgmu}{\mathbb{M}_{\mathcal{G},\mu}}
\newcommand{\locmodgmuv}{\mathbb{M}_{\mathcal{G},\mu}^{\mathrm{v}}}
\newcommand{\locmodhmuv}{\mathbb{M}_{\mathcal{H},\mu}^{\mathrm{v}}}
\newcommand{\locmodgolpmuv}{\mathbb{M}_{\mathcal{G}_{\mathcal{O}_{L'}},\mu}^{\mathrm{v}}}
\newcommand{\locmodhmu}{\mathbb{M}_{\mathcal{H},\mu}}
\newcommand{\locmodhomu}{\mathbb{M}_{\mathcal{H}_1,\mu}}
\newcommand{\locmodgvfmu}{\mathbb{M}_{\mathcal{G}_{V,F},\mu}}
\newcommand{\locmodholpmuv}{\mathbb{M}_{\mathcal{H}_{\mathcal{O}_{L'}},\mu}^{\mathrm{v}}}
\newcommand{\locmodhmuGamma}{\mathbb{M}_{\mathcal{H},\mu}^{\Gamma}}
\newcommand{\locmodhmuvGamma}{\mathbb{M}_{\mathcal{H},\mu}^{\mathrm{v},\Gamma}}
\newcommand{\locmodgadmu}{\mathbb{M}_{\calgad,\mu}}
\newcommand{\locmodgadmuv}{\mathbb{M}_{\calgad,\mu}^{\mathrm{v}}}
\newcommand{\locmodhadmu}{\mathbb{M}_{\calhad,\mu}}
\newcommand{\locmodgmuStack}{\left[\locmodgmu / \mathcal{G}\right]}
\newcommand{\locmodhmuStackGamma}{\left[\locmodhmu / \mathcal{H} \right]^{h \Gamma}}
\newcommand{\locmodgadmuStack}{\left[\locmodgadmu / \calgad \right]}
\newcommand{\locmodhadmuStack}{\left[ \locmodhadmu / \calhad\right]}
\newcommand{\locmodhadmuStackGamma}{\left[\locmodhadmu / \calhad \right]^{h \Gamma}}
\newcommand{\g}{\mathsf{G}}
\newcommand{\tor}{\mathsf{T}}
\newcommand{\h}{\mathsf{H}}
\newcommand{\ho}{\mathsf{H}_{1}}
\newcommand{\gp}{\mathsf{G}'}
\newcommand{\hafp}{\mathsf{H}(\afp)}
\newcommand{\haf}{\mathsf{H}(\af)}
\newcommand{\hy}{{(\mathsf{H}, \mathsf{Y})}}
\newcommand{\hoaf}{\mathsf{H}_1(\af)}
\newcommand{\hyo}{{(\mathsf{H}_1, \mathsf{Y}_1)}}
\newcommand{\gafp}{\mathsf{G}(\afp)}
\newcommand{\gaf}{\mathsf{G}(\af)}
\newcommand{\gx}{{(\mathsf{G}, \mathsf{X})}}
\newcommand{\gxg}{{(\mathsf{G}, \mathsf{X}, \mathcal{G})}}
\newcommand{\gv}{\mathsf{G}_{V}}
\newcommand{\gw}{\mathsf{G}_{W}}
\newcommand{\gwx}{(\mathsf{G}_{W},\mathsf{H}_{W})}
\newcommand{\gvf}{\mathsf{G}_{V,\f}}
\newcommand{\gvfx}{(\gvf, \mathsf{H}_{V, \f})}
\newcommand{\gvx}{(\mathsf{G}_{V},\mathsf{H}_{V})}
\newcommand{\gxp}{(\mathsf{G}', \mathsf{X}')}
\newcommand{\calhcirc}{\mathcal{H}^{\circ}}
\newcommand{\f}{\mathsf{F}}
\newcommand{\rH}{\mathrm{H}}
\thanks{J.S. was partially supported by funding from the European Research Council (ERC) under the European Union's Horizon 2020 research and innovation program (grant agreement No. 884596).}
\title[On the Piatetski-Shapiro construction]{On the Piatetski-Shapiro construction for integral models of Shimura varieties}
\author{Pol van Hoften and Jack Sempliner}
\begin{document}

\begin{abstract}
We study the Piatetski-Shapiro construction, which takes a totally real field $\f$ and a Shimura datum $\gx$ and produces a new Shimura datum $\hy$. If $\f$ is Galois, then the Galois group $\Gamma$ of $\f$ acts on $\hy$, and we show that the $\Gamma$-fixed points of the Shimura varieties for $\hy$ recover the Shimura varieties for $\gx$ under some hypotheses. For Shimura varieties of Hodge type with parahoric level, we show that the same is true for the $p$-adic integral models constructed by Pappas--Rapoport. Along the way, we study the $\Gamma$-fixed points of the Igusa stacks of \cites{ZhangThesis, DvHKZIgusaStacks} for $\hy$ and prove optimal results.
\end{abstract}
\maketitle
\setcounter{tocdepth}{2}
\tableofcontents

\section{Introduction}
Let $\gx$ be a Shimura datum of Hodge type and let $\f$ be a Galois totally real field with Galois group $\Gamma = \text{Gal}(\f/\mathbb{Q})$. Define $\h:=\operatorname{Res}_{\f/\mathbb{Q}} \g_{\f}$ and let $\mathsf{Y}$ be the Shimura datum for $\h$ induced by $\mathsf{X}$; we call $\hy$ the \emph{Piatetski-Shapiro construction} for $\gx$ associated to $\f$. This is \emph{not} of Hodge type if $[\f:\mathbb{Q}]>1$, but there is a modified Shimura datum $\hyo \xhookrightarrow{} \hy$ that is of Hodge type, see Section \ref{Sec:FixedPointsHodge}. Then $\Gamma$ acts on $\hyo$ and there is a morphism $\gx \to \hyo$ that is $\Gamma$-equivariant. Thus if $K \subset \hoaf$ is a $\Gamma$-stable compact open subgroup, then there is a morphism of Shimura varieties $\mathbf{Sh}_{K^{\Gamma}}\gx \to \mathbf{Sh}_{K}\hyo$, which induces a map (where the superscript $\Gamma$ denotes taking $\Gamma$-fixed points)
\begin{align}
    \mathbf{Sh}_{K^{\Gamma}}\gx \to \mathbf{Sh}_{K}\hyo^{\Gamma}.
\end{align}
This morphism can be arranged to be an isomorphism under minor hypotheses, see Theorem \ref{Thm:FixedPointsHodge}. In this paper, we will prove extensions of this result to $p$-adic integral models. We hope that this link between the geometry of the integral model associated to $\g$ and that associated to $\ho$ will prove to be a useful general tool in the arithmetic geometry of Shimura varieties. As a first step, our main results will be used in joint work in progress of the authors \cite{SSGI2} to construct new exotic Hecke correspondences between the special fibers of different Shimura varieties.

\subsection{Main results} Let $\gx$ be a Shimura datum of Hodge type with reflex field $\mathsf{E}$. Fix a prime $p$ and a prime $v$ of $\mathsf{E}$ above $p$, let $E$ be the completion of $\mathsf{E}$ at $v$ and let $\mathcal{O}_{E}$ be its ring of integers. Let $G=\mathsf{G} \otimes \qp$ and let $\mathcal{G}$ be a parahoric model of $G$ over $\zp$. 

Let $\f$ and $\Gamma$ be as above, let $F=\f \otimes \qp$ and let $\mathcal{O}_F$ be $\mathcal{O}_{\f} \otimes \zp$. Let $\mathcal{H}_1$ be the corresponding (quasi-)parahoric model of $H_1$ (see Section \ref{sub:ProofIgusa} for the precise construction). Let $K_p=\mathcal{H}(\zp)$ and $K_{1,p}=\mathcal{H}_1(\zp)$. We will write $\scrs_{K_{1,p}}\hyo$ for the integral model over $\mathcal{O}_E$ of $\mathbf{Sh}_{K_{1,p}}\hyo$ constructed by \cite[Theorem 4.2.3]{DanielsVHKimZhangII}, cf. \cite[Theorem 4.5.2]{PappasRapoportShtukas}. \smallskip 

Let $\scrs_{K_{1,p}}\gx^{\lozenge/}$ be the v-sheaf associated to $\scrs_{K_{1,p}}\gx$, see Section \ref{Sec:DiamondFunctors}. This is characterized by the existence of a morphism (functorial in the tuple $(\h_{1},\mathsf{Y},\mathcal{H}_{1})$) of v-stacks
\begin{align}
    \scrs_{K_{1,p}}\gx^{\lozenge/} \to \shthomu.
\end{align}
In particular, this morphism is $\Gamma$-equivariant for the natural $\Gamma$-action on the source and target. We similarly have $\scrs_{K_{p}}\gx^{\lozenge/} \to \shtgmu$ and the obvious square is $2$-commutative.
\begin{mainThm}[Theorem \ref{Thm:FixedPointsIntegralHodgeRamified}] \label{Thm:FixedPointsVsheavesIntroduction}
If $\Sha^1(\mathbb{Q}, \mathsf{G}) \to \Sha^1(\f, \g)$ is injective, 
then the following diagram of v-stacks is $2$-Cartesian
\begin{equation}
    \begin{tikzcd}
    \scrs_{K_p^{\Gamma}}\gx^{\lozenge/} \arrow{r} \arrow{d} & \scrs_{K_{p,1}}\hyo^{ \Gamma, \lozenge/} \arrow{d} \\
    \shtgmu \arrow{r} & \shthomu^{h \Gamma}.
    \end{tikzcd}
\end{equation}
Here the superscript $h\Gamma$ denotes the stacky (or homotopy) fixed points of a stack and the superscript $\Gamma$ denotes the usual fixed points of a sheaf.
\end{mainThm}
The assumption that $\Sha^1(\mathbb{Q}, \mathsf{G}) \to \Sha^1(\f, \g)$ is injective is necessary, see Section \ref{Sec:Tori} for an exploration of the theorem in the case of tori. \smallskip

\subsubsection{} The statement of Theorem \ref{Thm:FixedPointsVsheavesIntroduction} is essentially optimal, but it does not directly address when the map of schemes $\scrs_{K_p}\gx \to \scrs_{K_{p,1}}\hyo^{\Gamma}$ is an isomorphism. We address this more concrete question in the following theorem, phrased for finite level Shimura varieties. For a $\Gamma$-stable and neat compact open subgroup $K^p \subset \hafp$ we write $K_1^p=K_1^p \cap H_1(\afp)$ and $K_1=K_{1}^p K_{1,p}$.
\begin{mainThm}[Theorem \ref{Thm:FixedPointsCanonicalIntegralModelsHodge}] \label{Thm:FixedpointIntegralIntroduction}
Assume that $\f$ is tamely ramified over $\mathbb{Q}$, that $\Sha^1(\mathbb{Q}, \mathsf{G}) \to \Sha^1(\f, \g)$ is injective, that $p>2$ and that $p$ is unramified in $\f$. The following results hold. \begin{enumerate}
    \item There is a cofinal collection of $\Gamma$-stable compact open subgroups $K^p \subset \hafp$ such that the natural map
\begin{align} \label{Eq:IntroNaturalMap}
    \scrs_{K^{\Gamma}}\gx \to \scrs_{K_1}\hyo^{\Gamma} 
\end{align}
is a universal homeomorphism. 
\item If $p$ is coprime to $ |\Gamma| \cdot | \pi_1(\gder)|$, and if $G$ splits over a tamely ramified extension, then there is a cofinal collection of $K^p$ as above such that the natural map \eqref{Eq:IntroNaturalMap} is an isomorphism. \item If $K_p$ is hyperspecial, then there is a cofinal collection of $K^p$ as above such that the natural map \eqref{Eq:IntroNaturalMap} is an isomorphism.
\end{enumerate}
\end{mainThm}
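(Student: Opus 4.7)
The plan is to reduce the statement to a scheme-theoretic identification of $\Gamma$-fixed points on Pappas--Rapoport local models, combined with the generic-fiber isomorphism furnished by Theorem \ref{Thm:FixedPointsHodge}.

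The key local input is as follows. Since $p$ is unramified in $\mathsf{F}$, the $\zp$-algebra $\mathcal{O}_F$ is étale, so $\mathcal{H}=\operatorname{Res}_{\mathcal{O}_F/\zp}\mathcal{G}_{\mathcal{O}_F}$ decomposes, after base change to $\zpbr$, as a product $\prod_{\tau}\mathcal{G}$ indexed by $p$-adic embeddings $\tau\colon\mathsf{F}\hookrightarrow\qpbr$, on which $\Gamma$ acts by permuting factors (combined with its natural Galois action on each factor before descent). A direct computation then identifies $\mathcal{H}^{\Gamma}$ with $\mathcal{G}$ as group schemes over $\zp$. Since the local model of a Weil restriction is again a Weil restriction of local models, the same reasoning gives a natural closed immersion $\locmodgmuv\hookrightarrow(\locmodhmuv)^{\Gamma}$ that is an isomorphism of v-sheaves, and analogously for $\locmodhomu$ after accounting for the passage from $\h$ to $\ho$.

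For part (1), I would exploit the Pappas--Rapoport characterization: $\scrs_{K_1}\hyo$ is uniquely described by a $\Gamma$-equivariant morphism to the local model stack $[\locmodhmuv/\mathcal{H}^{\lozenge}]$ together with flatness and normality over $\mathcal{O}_E$. The local identification restricts this structure on $\Gamma$-invariants to one compatible with the local model diagram of $\scrs_{K^{\Gamma}}\gx$, producing the map \eqref{Eq:IntroNaturalMap}. Combining the generic-fiber isomorphism of Theorem \ref{Thm:FixedPointsHodge}, flatness and normality on both sides, and an analysis of $\Gamma$-fixed points on the Kottwitz set $\bgmu$ (where the injectivity hypothesis on $\Sha^1(\mathbb{Q},\mathsf{G})\to\Sha^1(\mathsf{F},G)$ enters), one verifies that the map is finite and bijective on geometric points with purely inseparable residue extensions, hence a universal homeomorphism.

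For parts (2) and (3) I would upgrade to an isomorphism. Under the additional hypotheses---$p\nmid|\Gamma|\cdot|\pi_1(\gder)|$ with $G$ tame in (2), or $K_p$ hyperspecial in (3)---the $\Gamma$-action on an étale-local neighborhood in the local model (respectively on the integral model itself) is tame, so its scheme-theoretic fixed locus is regular by classical results on tame fixed loci. Combined with the universal homeomorphism of (1) and normality of the source, Zariski's main theorem promotes the map to an isomorphism. The main obstacle throughout is the scheme-theoretic, rather than merely topological, identification of $\Gamma$-invariants on $\locmodhmuv$: the naive fixed locus can fail to be reduced in the wild case, which is precisely what necessitates the coprimality and tame-splitting hypotheses in (2) and the smoothness of the hyperspecial integral model in (3).
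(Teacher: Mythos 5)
There are several genuine gaps, the most serious in your treatment of parts~(1) and~(3).

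For part~(1), your plan is to produce a $\Gamma$-equivariant structure on the (scheme-theoretic) local model diagram and then argue that $\scrs_{K_1}\hyo^{\Gamma}$ is flat and normal. But that is precisely what fails in the wild case $p \mid |\Gamma|$, which part~(1) must cover: the paper only establishes $\Gamma$-equivariance of the \emph{schematic} local model diagram under the extra hypotheses of part~(2) (Proposition \ref{Prop:GammaEquivarianceLocalModelDiagramHodge}, which requires $p \nmid |\Gamma|\cdot|\pi_1(\gder)|$ and Hypothesis \ref{Hyp:InducedRamification}), and in the wild case the scheme-theoretic fixed locus may genuinely fail to be flat or reduced, so ``flatness and normality on both sides'' is exactly what is not available. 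The paper's part~(1) instead goes entirely through the $v$-sheaf world: it shows the shtuka period map is $\Gamma$-equivariant in the $2$-categorical sense, identifies the formal completion of $\scrs_{K_1'}\hyo^{\Gamma}$ at each $\ovfp$-point with a $\Gamma$-fixed integral local Shimura variety $\mintgadcircbxnaught$ (Proposition \ref{Prop:InvariantsRZ}), uses the representability and normality of the latter by a formal scheme, and then passes to the absolute weak normalization to repair the possible non-normality of the naive fixed points. None of this machinery is present in your sketch, and without it the step ``one verifies that the map is finite and bijective on geometric points\ldots hence a universal homeomorphism'' is not an argument. You also attribute the use of the $\Sha^1$ injectivity hypothesis to an analysis of $\bgmu$; in fact it enters only in the generic-fiber identification (Theorem \ref{Thm:FixedPointsHodge} via Lemma \ref{Lem:KernelAndSha}), not through the Kottwitz set.

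For part~(3) your appeal to ``classical results on tame fixed loci'' does not apply: the hypothesis there is only that $K_p$ is hyperspecial, and $p$ may well divide $|\Gamma|$, so the $\Gamma$-action can be wild and the scheme-theoretic fixed locus has no a priori regularity. The paper's actual argument is quite different: it shows the map $\scrs_{K^{\circ,\Gamma}}\gx \to \scrs_{K_1'}\hyo^{\Gamma}$ is a closed immersion from a smooth source, then proves that it induces an isomorphism on tangent spaces at $\ovfp$-points by a Grothendieck--Messing style computation with crystalline cohomology (making the Grothendieck--Messing identification $\Gamma$-equivariant on $A_y/m_y^p$), and concludes by a commutative-algebra lemma combining the tangent-space isomorphism with the universal homeomorphism of part~(1). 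Your part~(2) sketch is the closest to the paper's proof (fixed points of smooth maps under a prime-to-$p$ group action, via Edixhoven), but even there the paper needs the non-trivial $\Gamma$-equivariance of the local model diagram, established via the moduli-theoretic description of the Siegel torsor, rather than a bare appeal to tameness.
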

The assumption that $G$ splits over a tamely ramified extension in part (2) can be weakened, see Hypothesis \ref{Hyp:InducedRamification}. 
\begin{eg}
If $\gx=(\operatorname{GL}_2,\mathsf{H}^{\pm})$, then $\hyo$ is the subgroup of $\operatorname{Res}_{\f/\mathbb{Q}} \operatorname{GL}_2$ consisting of those matrices with determinant in $\mathbb{G}_m \subset \operatorname{Res}_{\f/\mathbb{Q}} \mathbb{G}_m$. The $p$-adic integral models for the Shimura varieties for $\hyo$ have a moduli interpretation in terms of (weakly polarized) abelian varieties $A$ of dimension $[\f:\mathbb{Q}]$ up to prime-to-$p$ isogeny, equipped with an action $i:\mathcal{O}_{\f,(p)} \to \operatorname{End}(A)$. The action of an element $\gamma \in \Gamma$ is then by precomposing $i$ with $\gamma:\mathcal{O}_{\f,(p)} \to \mathcal{O}_{\f,(p)}$. The morphism from the modular curve can be thought of as taking an elliptic curve up to prime-to-$p$ isogeny $E$, and forming the abelian variety up to isogeny $E \otimes_{\mathbb{Z}_{(p)}} \mathcal{O}_{\f,(p)}$ together with its tautological $\mathcal{O}_{\f,(p)}$ action. The statement of Theorem \ref{Thm:FixedpointIntegralIntroduction}, up to keeping track of level structures, is then essentially an instance of \'etale Galois descent of modules for the \'etale cover $\spec \mathcal{O}_{\f,(p)} \to \spec \mathbb{Z}_{(p)}$.\footnote{Indeed, one applies this to $A$ considered as a sheaf on the category of schemes over $\zp$, with target the category of $\mathcal{O}_{\f,(p)}$-modules. Descent then tells us that a $\Gamma$-descent datum on $A$ is induced from unique sheaf $E$ of $\zlocp$-modules on the category of schemes over $\zp$, by $A= E \otimes_{\zlocp} \mathcal{O}_{\f,(p)}$.} If $p$ is \emph{not} unramified in $\mathsf{F}$, then $\spec \mathcal{O}_{\f,(p)} \to \spec \mathbb{Z}_{(p)}$ is not a Galois cover with Galois group $\Gamma$, and so this argument does not work. 
\end{eg}

\subsubsection{Igusa stacks} Igusa stacks are certain $p$-adic analytic objects (Artin v-stacks) associated to a Shimura datum $\gx$; they were conjectured to exist by Scholze. They were recently constructed by Zhang \cite{ZhangThesis} in the PEL type case, and in \cite{DvHKZIgusaStacks} in the Hodge type case. 

Let $\gx$ be a Shimura datum of Hodge type and a place $v$ above $p$ of the reflex field $\mathsf{E}$ of $\gx$. Let $\f, \Gamma$ and $\hyo$ be as above. Then there is a v-sheaf $\igs \hyo$ equipped with a $\Gamma$-equivariant map $\igs \hyo \to \bun_{H_1}$ to the stack $\bun_{H_1}$ of $H_1$-bundles on the Fargues--Fontaine curve, see \cite[Theorem I]{DvHKZIgusaStacks}. This map moreover factors through the open substack $\bunhomu \subset \bun_{H_1}$ corresponding to the set $B(H_1, \mu^{-1})\subset B(H_1)$ of $\mu^{-1}$-admissible $\sigma$-conjugacy classes, where $\mu$ is the $H_1(\qpbar)$ conjugacy class of cocharacters of $H_1$ induced by the Hodge cocharacter and the place $v$. We have similar objects for $\gx$.
\begin{mainThm}[Theorem \ref{Thm:FixedPointsIgusa}] \label{Thm:IntroIgusa}
    If $\Sha^1(\mathbb{Q}, \mathsf{G}) \to \Sha^1(\f, \g)$ is injective, then the natural map
    \begin{align}
        \igs \gx \to \igs \hyo^{\Gamma} \times_{\left(\bunhomu\right)^{h \Gamma}} \bungmu
    \end{align}
    is an isomorphism.
\end{mainThm}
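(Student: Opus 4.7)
My plan is to exploit the Cartesian characterization of the Igusa stack from \cite{DanielsVHKimZhang}: in the v-stack world, $\igs \gx$ is the essentially unique object over $\bungmu$ whose pullback along the v-cover $\shtgmu \to \bungmu$ recovers the v-sheaf associated to $\scrs \gx$, and similarly for $\hyo$. Descent along this v-cover reduces the claim to the Shimura variety comparison (Theorem~\ref{Thm:FixedPointsHodge}) and its analog for shtuka moduli.

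The central intermediate step is to establish the shtuka-moduli analog of Theorem~\ref{Thm:FixedPointsHodge}:
\begin{equation}\label{Eq:ProposalShtukaComp}
\shtgmu \xrightarrow{\sim} \shthomu^{\Gamma} \times_{(\bunhomu)^{h\Gamma}} \bungmu.
\end{equation}
Since $\h = \operatorname{Res}_{\mathsf{F}/\mathbb{Q}}\, \mathsf{G}_{\mathsf{F}}$, $\h$-bundles on the Fargues--Fontaine curve correspond to $\mathsf{G}$-bundles on its base change to $F$, on which $\Gamma$ acts by permuting the factors; the fixed points recover $\mathsf{G}$-bundles on the original Fargues--Fontaine curve. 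Descent from $\h$ to $\ho$ and intersection with the $\mu$-admissibility locus are absorbed into the fibre product in \eqref{Eq:ProposalShtukaComp}. The $\Sha^1$-injectivity hypothesis enters, exactly as for tori in Section~\ref{Sec:Tori}, to ensure that $B(G,\mu^{-1}) \to B(\ho,\mu^{-1})^{\Gamma}$ is bijective so that \eqref{Eq:ProposalShtukaComp} does not miss or duplicate components.

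To conclude, apply $\Gamma$-fixed points to the Cartesian square for $\hyo$ and identify $(\scrs \hyo)^{\Gamma}$ with $\scrs \gx$ via (the v-sheaf incarnation of) Theorem~\ref{Thm:FixedPointsHodge}, obtaining
\[
\scrs \gx \cong \igs \hyo^{\Gamma} \times_{(\bunhomu)^{h\Gamma}} \shthomu^{\Gamma}.
\]
Substituting \eqref{Eq:ProposalShtukaComp} and using that the composite $\shtgmu \to \bungmu \to (\bunhomu)^{h\Gamma}$ agrees with the functorial map through $\shthomu^{\Gamma}$, this rewrites as
\[
\scrs \gx \cong \Bigl( \igs \hyo^{\Gamma} \times_{(\bunhomu)^{h\Gamma}} \bungmu \Bigr) \times_{\bungmu} \shtgmu.
\]
Comparing with the Cartesian square for $\gx$, the pullbacks of $\igs \gx$ and of the bracketed v-stack along $\shtgmu \to \bungmu$ are canonically identified, compatibly with descent data, so v-descent along this v-cover yields the claimed isomorphism.

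The main technical obstacle is \eqref{Eq:ProposalShtukaComp}: specifically, the compatibility of $\Gamma$-fixed points with the $\mu$-admissibility condition, and the reconciliation of the strict fixed-point functor on the v-sheaf $\shthomu$ with the homotopy fixed-point functor on the Artin v-stack $\bunhomu$. These are handled by careful Galois descent on the Fargues--Fontaine curve together with the $\Sha^1$-hypothesis, consistently with how the latter appears elsewhere in the paper.
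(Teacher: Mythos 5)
Your high-level idea — v-descent along a cover of $\bungmu$ using the Cartesian square characterizing the Igusa stack — matches the paper's strategy. But the execution has a genuine gap that would prevent the argument from closing.

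The central problem is your identification of $(\scrs \hyo)^{\Gamma}$ with $\scrs \gx$ via ``the v-sheaf incarnation of Theorem~\ref{Thm:FixedPointsHodge}.'' No such incarnation exists under the hypotheses of the present theorem: Theorem~\ref{Thm:FixedPointsHodge} is a statement about the generic fibers $\mathbf{Sh}_K$ over $\mathsf{E}$, not about the integral models $\scrs_K$. The integral-model version is Theorem~\ref{Thm:FixedPointsCanonicalIntegralModelsHodge}, which requires $p$ unramified in $\mathsf{F}$ (and even then delivers only a universal homeomorphism in general). More seriously, the v-sheaf identification of $(\scrs \hyo)^{\Gamma}$ with $\scrs \gx$ without the unramifiedness hypothesis is Theorem~\ref{Thm:FixedPointsIntegralHodgeRamified}, which the paper \emph{deduces} from the present theorem — so feeding it in as input is circular. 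The same criticism applies to your proposed intermediate step \eqref{Eq:ProposalShtukaComp}: $\shtgmu \xrightarrow{\sim} \shthomu^{\Gamma} \times_{(\bunhomu)^{h\Gamma}} \bungmu$. As a comparison of \emph{integral} shtuka moduli this is not established anywhere in the paper without an unramifiedness hypothesis (the paper's Lemma~\ref{Lem:HomotopyFixedPointsShtukasII} compares $\shtgmu$ to $\shthmu^{h\Gamma}$, for $\mathcal{H}$ rather than $\mathcal{H}_1$, and only when $\mathcal{O}_F/\zp$ is finite \'etale), and your ``Galois descent on the Fargues--Fontaine curve'' sketch does not engage with the integral boundedness condition at ramified $p$.

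What the paper's proof actually does, and what you are missing, is to stay entirely on the generic fiber. It uses the v-cover $\operatorname{Sht}_{\calgcirc,\mu,E} \to \bungmu$ with source the \emph{generic-fiber} shtuka stack, and then identifies the pullback of $\igs$ not with the integral model diamond $\scrs^{\diamond}$ but with the \emph{potentially crystalline locus} $\mathbf{Sh}_{K}^{\circ,\lozenge} \subset \mathbf{Sh}_K^{\lozenge}$; the bridge is Lemma~\ref{Lem:PotentiallyCrystalline}, which says $\mathbf{Sh}_K\gx^{\circ, \lozenge} = \scrs_{K_p}\gx^{\diamond} \times_{\spd \mathcal{O}_{E}} \spd E$. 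The heart of the argument is then Lemma~\ref{Lem:FixedPointsGenericRamifiedHodge}, which establishes the 2-Cartesianness of the square
\[
\begin{tikzcd}
    \mathbf{Sh}_{K_p^{\circ, \Gamma}}\gx^{\circ, \lozenge} \arrow{d} \arrow{r} & \mathbf{Sh}_{K_{p,1}^{\circ}}\hyo^{\circ, \Gamma, \lozenge} \arrow{d} \\
    \operatorname{Sht}_{\mathcal{G},\mu, E} \arrow{r} & \left(\operatorname{Sht}_{\calhocirc,\mu, E}\right)^{h \Gamma},
\end{tikzcd}
\]
and its proof runs through the infinite-level comparison Lemma~\ref{Lem:FixedPointsInfiniteLevelHodge} (which \emph{does} follow from Theorem~\ref{Thm:FixedPointsHodge}-type adelic analysis) together with the uniformization $\operatorname{Sht}_{\mathcal{H}_1^{\circ},\mu, E} \simeq [\operatorname{Gr}_{H, \mu^{-1}}/\underline{K_{p,1}^{\circ}}]$ and the quotient-stack fixed-point machinery from Appendix~\ref{Appendix:HFP}. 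None of this requires comparing integral shtuka moduli, which is precisely why the theorem holds under the weaker $\Sha^1$-injectivity hypothesis alone. To repair your proof you would need to replace both of your key reductions with the generic-fiber, potentially-crystalline-locus version and then supply the argument of Lemma~\ref{Lem:FixedPointsGenericRamifiedHodge}.
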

Although we believe Theorem \ref{Thm:IntroIgusa} to be of significant independent interest, it is also the crucial input in proving Theorem \ref{Thm:FixedPointsVsheavesIntroduction}. Indeed, the proof of Theorem \ref{Thm:FixedPointsVsheavesIntroduction} roughly proceeds first by proving that the fixed points of the v-sheaf associated to the rigid generic fiber of the Shimura variety are as expected, then using this input to prove Theorem \ref{Thm:IntroIgusa}, then finally using this to deduce Theorem \ref{Thm:FixedPointsVsheavesIntroduction}. In this procedure the Igusa stack is the key linchpin which allows us to pass from information about fixed points on the generic fiber to information about the fixed points of the integral model. See Section \ref{sub:proofsIntro} for more details.  

\subsection{Motivation} Our motivation for proving Theorems \ref{Thm:FixedpointIntegralIntroduction} and \ref{Thm:FixedPointsVsheavesIntroduction} is that they can be used to reduce questions about the integral models for $\gx$ to $\Gamma$-equivariant questions about the integral models for $\hy$ or $\hyo$. This is useful, because given $\gx$ and a prime $p$, one can always choose a Galois totally real field $\f$ that is unramified at $p$ such that $H \otimes \qp $ and $H_1 \otimes \qp$ are quasi-split. \smallskip

For example, the Langlands--Rapoport conjecture for $\gx$ is out of reach when $G \otimes \qp$ is not quasi-split, because of the non-existence of CM lifts of isogeny classes. We expect that it is possible to establish a weak version of the Langlands--Rapoport $\tau$ conjecture of \cite{KisinShinZhu} at non quasi-split primes, by reducing to the quasi-split case via Theorems \ref{Thm:FixedpointIntegralIntroduction}. \smallskip 

A second example: In forthcoming work, Xiao and Zhu \cite{XiaoZhu, XiaoZhu2} will construct exotic Hecke correspondences between the mod $p$ fibers of different Shimura varieties of Hodge type at unramified and quasi-split primes. In their work, the Shimura varieties correspond to certain Shimura data $\gx$ and $\gxp$ such that $\g$ and $\gp$ are pure inner forms and such that $\g \otimes \af \simeq \gp \otimes \af$. Their proof relies heavily on the fact that both $\g \otimes \qp$ and $\g' \otimes \qp$ are quasi-split, through the existence of CM lifts of isogeny classes.

These correspondences conjecturally exist for certain pairs $\gx$ and $\gxp$, where $\g$ and $\gp$ are pure inner forms, under the more general condition that $\g \otimes \afp \simeq \gp \otimes \afp$. In work in preparation of the authors, see \cite{SSGI2}, we construct these more general exotic Hecke correspondences for primes $p$ where $G$ splits over an unramified extension (but is not necessarily quasi-split), by reducing to the quasi-split case via Theorem \ref{Thm:FixedpointIntegralIntroduction}. 

\subsection{Proofs of the main theorems} \label{sub:proofsIntro} The first step in the proofs of Theorems \ref{Thm:FixedPointsVsheavesIntroduction}, \ref{Thm:FixedpointIntegralIntroduction} and \ref{Thm:IntroIgusa} is to compute the $\Gamma$-fixed points of Shimura varieties over $\mathbb{C}$. 

To compute these fixed points, we argue on the level of $\mathbb{C}$-points and reduce to a concrete question about fixed points of adelic double quotients. To tackle the resulting questions, we use the methods of non-abelian group cohomology. These general methods tell us that the natural morphism of action groupoids\footnote{The group $\hy$ does typically not satisfy Milne's axiom SV5, see Lemma \ref{Lem:SV5}, and so the action of $\h(\mathbb{Q})$ on $\mathsf{Y} \times \haf / K$ is typically not free even for sufficiently small $K$.}
\begin{align}
     \left[\g(\mathbb{Q}) \backslash \mathsf{X} \times \gaf/K^{\Gamma}\right] \to \left[\h(\mathbb{Q}) \backslash \mathsf{Y} \times \haf / K\right]^{h\Gamma}
\end{align}
is an equivalence if $\Sha^1(\mathbb{Q}, \mathsf{G}) \to \Sha^1(\f, \g)$ is injective and $\rH^1(\Gamma,K)=\{1\}$. If $\f$ is tamely ramified over $\mathbb{Q}$, then we prove that there is a cofinal collection of $\Gamma$-stable $K \subset \haf$ with $\rH^1(\Gamma, K)$ trivial, see Section \ref{Sec:GoodSubgroups}. Using some tricks, we then use this to compute the fixed points of $\mathbf{Sh}_{K_1}\hyo$, where $K_1 = K \cap \ho(\af)$, see Section \ref{Sec:FixedPointsHodge}.

\subsubsection{} \label{Sec:ArgumentsIntegral} We now explain how to prove Theorem \ref{Thm:FixedpointIntegralIntroduction}. Since we know what happens on the generic fiber, we observe that it suffices to prove that $\scrs_{K_1}\hyo^{\Gamma}$ is normal and flat over $\spec \mathcal{O}_{E}$. If the order of $\Gamma$ is prime-to-$p$, then we will prove this by taking $\Gamma$-fixed points of the local model diagram for $\scrs_{K_1}\hyo$ constructed by \cite{KisinPappas}, and using the fact that fixed points of smooth morphisms are again smooth. The problem then reduces to showing that $\Gamma$-fixed points of the local models for $\hyo$ give the local models for $\gx$. We deduce this from unramified base change for local models. Here we crucially use the assumption that $p$ is unramified in $\f$. It would be interesting to understand the $\Gamma$-fixed points of the local models for $\hyo$ when $\f$ is tamely ramified at $p$. \smallskip

When $p$ divides the order of $\Gamma$, we can only show that the weak normalization of $\scrs_{K_1}\hyo^{\Gamma}$ is normal and flat over $\spec \mathcal{O}_{E}$. For this, we may argue on the level of the corresponding v-sheaves, where we reduce it to proving that the $\Gamma$-fixed points of the integral local Shimura varieties for $H$ give the integral local Shimura varieties for $G$. Here we crucially use the assumption that $p$ is unramified in $\f$, as in this setting it is straightforward to calculate the fixed points of the relevant integral local Shimura varieties via the fixed points of stacks of local shtukas. On the other hand, using Theorem \ref{Thm:FixedPointsVsheavesIntroduction}, if this local problem can be solved in greater generality one could obtain a concomitant strengthening of Theorem \ref{Thm:FixedpointIntegralIntroduction}.

\subsubsection{} \label{Sec:ArgumentsIntegralII} We now explain how to prove Theorem \ref{Thm:IntroIgusa}; let us assume for simplicity that the Shimura varieties in question are proper. The Igusa stack for $\hyo$ sits by \cite[Thm I]{DvHKZIgusaStacks} in a $2$-Cartesian diagram
\begin{equation}
    \begin{tikzcd}
        \mathbf{Sh}\hyo_{E}^{\lozenge} \arrow{r}{} \arrow{d} & \operatorname{Gr}_{H_{1}, \mu^{-1}} \arrow{d} \\
        \igs \hyo \arrow{r} & \bunhomu.
    \end{tikzcd}
\end{equation}
We now take $\Gamma$-homotopy-fixed points of the four objects in the square (the resulting square is again $2$-Cartesian by Lemma \ref{Lem:FiberProducts}). The top row of the fixed point diagram is given by $\mathbf{Sh}\gx_{E}^{\lozenge} \to \operatorname{Gr}_{G,\mu^{-1}}$; this uses our fixed point result for the generic fiber of the Shimura variety and is straightforward for the target of the Hodge--Tate period map. If we base change via $\bungmu \to \bunhomu$, we get the $2$-Cartesian diagram
\begin{equation}
    \begin{tikzcd}
        \mathbf{Sh}\gx_{E}^{\lozenge} \arrow{r}{} \arrow{d} & \operatorname{Gr}_{G, \mu^{-1}} \arrow{d} \\
        (\igs \hyo)^{\Gamma} \times_{\bunhomu^{h \Gamma}} \bungmu \arrow{r} & \bungmu.
    \end{tikzcd}
\end{equation}
The Igusa stack for $\gx$ sits in the same Cartesian diagram via the natural map $\igs \gx \to \igs \hyo$. To see that this natural map is an isomorphism, we use the fact that $\operatorname{Gr}_{G, \mu^{-1}} \to \bungmu$ is a covering map to reduce it to checking that $\mathbf{Sh}\gx_{E}^{\lozenge} \to \mathbf{Sh}\hyo_{E}^{\Gamma,\lozenge}$ is an isomorphism. This latter map is an isomorphism by the argument of Section \ref{Sec:ArgumentsIntegral}. 

\subsubsection{} In the proper case, Theorem \ref{Thm:FixedPointsVsheavesIntroduction} follows from Theorem \ref{Thm:IntroIgusa} and the compatible $2$-Cartesian diagrams of \cite[Thm VII, Remark 6.0.2]{DvHKZIgusaStacks}
\begin{equation} \label{eq:IntroFiberProductDiagram}
\begin{tikzcd}
        \scrs_{K_{p}}\gx^{\lozenge/} \arrow{r} \arrow{d} & \shtgmu \arrow{d}{} \\
        \igs \gx \arrow{r} & \bungmu
    \end{tikzcd}
     \begin{tikzcd}
        \scrs_{K_{p,1}}\hyo^{\lozenge/} \arrow{r} \arrow{d} & \shthomuone \arrow{d}{} \\
        \igs \hyo \arrow{r} & \bunhomu.
    \end{tikzcd}
\end{equation}
The subscript $\delta=1$ in $\shthomuone$ deals with the technical issue that $\mathcal{H}_{1}$ could genuinely be a quasi-parahoric and not a parahoric. Thus the only property of integral models going into our proof of Theorem \ref{Thm:FixedpointIntegralIntroduction} is the fiber product diagram of \eqref{eq:IntroFiberProductDiagram}.

\subsubsection{} In order to facilitate the arguments outlined above, it is important for us to verify that many constructions in the theory of integral models of Shimura varieties (shtukas, local models, etc.) are functorial in the triple $\gxg$ in a $2$-categorical sense. For example, we show that the morphisms $\mathscr{S}_K\hy^{\lozenge/} \to \shthmu$ from the (modified) diamond associated to an integral model of a Shimura variety, see Section \ref{Sec:DiamondFunctors}, to the stack of $\mathcal{G}$-shtukas of type $\mu$ of Pappas--Rapoport \cite{PappasRapoportShtukas}, can be upgraded to a weak natural transformation of weak functors (or pseudo-functors), see Proposition \ref{Prop:FunctorialityShimuraVarietiesToShtukas} and Corollary \ref{Cor:FunctorialityShimuraVarietiesToShtukasII}. This is necessary because this implies that there is an induced map
\begin{align}
    \mathscr{S}_K\hy^{\lozenge/,\Gamma} \to \left(\shthmu\right)^{h \Gamma},
\end{align}
which features in the statement of Theorem \ref{Thm:FixedPointsVsheavesIntroduction}. In the proofs of Theorems \ref{Thm:FixedPointsVsheavesIntroduction} and \ref{Thm:IntroIgusa}, we make use of the $2$-categorical theory of non-abelian Galois cohomology theory for the $(2,1)$-category of stacks on a site. We develop this theory from scratch in Appendix \ref{Appendix:HFP}.

\subsection{Outline of the paper} In Section \ref{Sec:Prelim} we discuss 
preliminaries on perfectoid geometry. We also recall local models and local shtukas and compute the fixed points of local models and integral local Shimura varieties. In Section \ref{Sec:Shimura} we prove Theorem \ref{Thm:FixedPointsVsheavesIntroduction} on the generic fiber. In Section \ref{Sec:IntegralModels} we discuss integral models of Shimura varieties following Pappas--Rapoport \cite{PappasRapoportShtukas}, and construct a $2$-categorical enhancement of their shtukas. In Section \ref{Sec:LocalModelDiagrams}, we discuss local model diagrams for Shimura varieties of Hodge type following \cite{KisinZhou}, and prove $\Gamma$-equivariance. In Section \ref{Sec:MainTheoremI} we prove Theorem \ref{Thm:FixedpointIntegralIntroduction}. In Section \ref{Sec:IgusaStacks} we introduce Igusa stacks, prove Theorem \ref{Thm:IntroIgusa} and deduce Theorem \ref{Thm:FixedPointsVsheavesIntroduction}. 

In Appendix \ref{Appendix:HFP} we develop some 2-category theory to be used throughout the paper. The main question we answers is when taking (2-categorical) fixed points commutes with taking (2-categorical) quotients.

\subsection{Acknowledgements} This project started while both authors were attending the 2022 IHES summer school on the Langlands Program, and we would like to thank the IHES for providing excellent working conditions. We would also like to thank Ana Caraiani, Brian Conrad, Sean Cotner, Toby Gee, Richard Taylor, Lie Qian, Rong Zhou and Xinwen Zhu for helpful discussions. In addition, large portions of this work were written up while the first author was a visitor at Imperial College London, and while the second author was a visitor at Stanford University. The authors would like to thank both of these institutions for their hospitality.

\section{Preliminaries} \label{Sec:Prelim}

\subsection{\texorpdfstring{$2$}{2}-category theory} At many points in this article, we will be taking fixed points for the action of a finite group $\Gamma$ on a stack $\mathcal{X}$. The most reasonable way of doing this seems to be taking the $2$-categorical or homotopy fixed points. For example, if $\f$ is a finite Galois extension of $\mathbb{Q}$ with Galois group $\Gamma$, then $\Gamma$ acts on the category of $\f$-vector spaces by twisting the $\f$-action, and the $\Gamma$-homotopy fixed points can be identified with the category of $\mathbb{Q}$-vector spaces; this is a restatement of Galois descent along $\spec \f \to \spec \mathbb{Q}$.

We will refer the reader to Appendix \ref{Appendix:HFP} for the definition of a stack (or category) with $\Gamma$-action and the notion of a $\Gamma$-equivariant morphism of stacks (or categories) with $\Gamma$-action, see Definition \ref{Def:GammaObject}. In particular, we note that it is \emph{not} a property of a morphism to be $\Gamma$-equivariant, but rather an extra structure. To emphasize this, we will sometimes call such morphisms \emph{$\Gamma$-equivariant in the $2$-categorical sense}.

\subsection{Non-abelian cohomology} Given a group $\Gamma$ acting on a (possibly non-abelian) group $H$, we denote by $\rH^1(\Gamma,H)$ the pointed set given by $1$-cocycles $\sigma:\Gamma \to H$ up to $H$-conjugacy. We begin with the following simple non-abelian Shapiro's lemma, which appears in \cite[Section 5.8.(b)]{SerreGaloisCohomology} as an exercise left to the reader. 
\begin{Lem} \label{Lem:Shapiro}
Let $\Gamma$ be a group and let $H$ be a group with an action of a subgroup $\Gamma'$ of $\Gamma$. Then if $X := \text{Map}_{\Gamma'}(\Gamma, H)$ with the natural left action $\gamma_0((h_{\gamma})_{\gamma \in \Gamma}) = (h_{\gamma \cdot \gamma_0})_{\gamma \in \Gamma}$ we have a natural isomorphism $\psi: \rH^1(\Gamma, X) \xrightarrow{\sim} \rH^1(\Gamma', H)$ . 
\end{Lem}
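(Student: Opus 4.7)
The plan is to exhibit mutually inverse maps between the pointed sets $H^1(\Gamma, X)$ and $H^1(\Gamma', H)$. The map $\psi$ going from left to right is essentially evaluation at $e \in \Gamma$: given a $1$-cocycle $c : \Gamma \to X$, set $\tilde{c}(\gamma') := c(\gamma')(e)$ for $\gamma' \in \Gamma'$. Writing out the cocycle condition for $c$, namely $c(\gamma_0 \gamma_1)(\gamma) = c(\gamma_0)(\gamma) \cdot c(\gamma_1)(\gamma \gamma_0)$, and then plugging in $\gamma_0, \gamma_1 \in \Gamma'$ and $\gamma = e$, I can use the $\Gamma'$-equivariance of $c(\gamma_1)$ to rewrite $c(\gamma_1)(\gamma_0) = \gamma_0 \cdot c(\gamma_1)(e)$, which gives $\tilde c(\gamma_0 \gamma_1) = \tilde c(\gamma_0) \cdot (\gamma_0 \cdot \tilde c(\gamma_1))$. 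A similar direct calculation, starting from a coboundary $c'(\gamma_0) = x^{-1} c(\gamma_0)(\gamma_0 \cdot x)$ for some $x \in X$ and evaluating at $e$, shows that $\tilde{c}'$ is cohomologous to $\tilde c$ via the element $h := x(e) \in H$.

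For the inverse $\phi$, I would fix a set $\{r_j\}$ of right coset representatives for $\Gamma' \backslash \Gamma$ (taking $e$ to be one of them) and write each $\gamma \in \Gamma$ uniquely as $\gamma = \pi(\gamma) r(\gamma)$ with $\pi(\gamma) \in \Gamma'$. Given a cocycle $d : \Gamma' \to H$, define $\phi(d)(\gamma_0)(\gamma) := \pi(\gamma) \cdot d(\pi(r(\gamma) \gamma_0))$; by construction each $\phi(d)(\gamma_0)$ lies in $X$. Verifying the cocycle condition $\phi(d)(\gamma_0 \gamma_1)(\gamma) = \phi(d)(\gamma_0)(\gamma) \cdot \phi(d)(\gamma_1)(\gamma \gamma_0)$ reduces, after using $\pi(\gamma \gamma_0) = \pi(\gamma) \pi(r(\gamma) \gamma_0)$ and $r(\gamma \gamma_0) = r(r(\gamma) \gamma_0)$, to the cocycle identity $d(\gamma'_0 \gamma'_1) = d(\gamma'_0)(\gamma'_0 \cdot d(\gamma'_1))$ for $d$, with $\gamma'_0 = \pi(r(\gamma)\gamma_0)$ and $\gamma'_1 = \pi(r(r(\gamma)\gamma_0)\gamma_1)$.

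Checking $\psi \circ \phi = \mathrm{id}$ is immediate: for $\gamma' \in \Gamma'$, we have $\pi(\gamma') = \gamma'$, $r(\gamma') = e$, so $\psi(\phi(d))(\gamma') = d(\gamma')$. For the other composition $\phi \circ \psi$, I would show that $c$ and $\phi(\psi(c))$ are conjugate by the $\Gamma'$-equivariant map $y \in X$ defined by $y(\gamma) := \pi(\gamma) \cdot c(r(\gamma))(e)$. Using the cocycle identity for $c$ to expand $c(r_j \gamma_0)(e) = c(r_j)(e) \cdot c(\gamma_0)(r_j)$ and also $c(r_j \gamma_0)(e) = c(\pi(r_j\gamma_0))(e) \cdot \pi(r_j\gamma_0) \cdot c(r(r_j\gamma_0))(e)$, one checks on coset representatives that $c(\gamma_0)(r_j) = y(r_j)^{-1} \cdot \phi(\psi(c))(\gamma_0)(r_j) \cdot y(r_j \gamma_0)$, which by $\Gamma'$-equivariance extends to all $\gamma \in \Gamma$, and hence gives the required coboundary relation on $1$-cocycles.

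The conceptual content is nothing beyond the classical adjunction between induction and restriction, but the obstacle is that all formulas must be written out carefully in the non-abelian setting, where coboundaries take the form $y^{-1} c (\gamma_0 \cdot y)$ rather than $c + \delta y$, and where one must consistently distinguish between the group operation in $H$ (or $X$) and the action of $\Gamma'$ (or $\Gamma$) on $H$ (or $X$). I expect the main place where a reader has to be attentive is the computation that $\phi(d)$ is a cocycle, as this is where the choice of coset representatives interacts most delicately with the cocycle identity for $d$; independence of the choice of representatives at the level of cohomology then follows by inspecting two different choices and producing an explicit coboundary from the resulting element of $X$.
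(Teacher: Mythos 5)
Your forward map $\psi$ (restriction to $\Gamma'$ plus evaluation at $e$) and the construction of the backward map $\phi$ from a choice of right coset representatives are essentially identical to the paper's proof; the main cosmetic difference is that the paper phrases cocycles as splittings of $X \rtimes \Gamma \to \Gamma$ while you write out the cocycle identities directly, and the paper's construction allows an arbitrary choice of the values on each representative where your formula makes the canonical ``trivial'' choice. The individual computations you outline --- that $\tilde{c}$ is a cocycle, that $\phi(d)$ is a $\Gamma'$-equivariant cocycle, that $\psi \circ \phi = \mathrm{id}$ on cocycles, and that $\phi(\psi(c)) \sim c$ via $y(\gamma) = \pi(\gamma)\cdot c(r(\gamma))(e)$ --- are all correct.

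There is, however, a genuine gap in how you deduce injectivity. From the three facts you establish (namely that $\psi$ descends to $H^1$, that $\psi \circ \phi = \mathrm{id}$ on cocycles, and that $\phi(\psi(c)) \sim c$) it does not yet follow that $\psi$ is injective on $H^1$. Given $[\psi(c)] = [\psi(c')]$, you want $[c]=[c']$; the chain $c \sim \phi(\psi(c))$ and $c' \sim \phi(\psi(c'))$ only closes if you also know $\phi(\psi(c)) \sim \phi(\psi(c'))$, i.e.\ that $\phi$ carries cohomologous $\Gamma'$-cocycles to cohomologous $\Gamma$-cocycles. The paper handles exactly this by extending the $1$-coboundary in $H$ witnessing $\tilde{\psi}(\phi) \sim \tilde{\psi}(\phi')$ to a $1$-coboundary in $X$, thereby reducing to the case $\tilde{\psi}(\phi) = \tilde{\psi}(\phi')$. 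In your setup the fix is equally short: if $d'(\gamma') = h^{-1}\, d(\gamma')\,(\gamma'\cdot h)$ for some $h \in H$, set $x(\gamma) := \pi(\gamma)\cdot h$ and check, using $\pi(\gamma)\pi(r(\gamma)\gamma_0)=\pi(\gamma\gamma_0)$, that $x \in X$ and $\phi(d')(\gamma_0) = x^{-1}\,\phi(d)(\gamma_0)\,(\gamma_0\cdot x)$. Note also that ``independence of the choice of coset representatives,'' which you mention at the end, is a separate verification and is logically inessential here (one may simply fix a choice once and for all); it does not substitute for the step above.
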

\subsection{Background on perfectoid geometry} In this section we recall some background on perfectoid spaces. We refer the reader to \cite{PappasRapoportShtukas} and \cite{ScholzeWeinsteinBerkeley} for details.

Let $k$ be a perfect field of characteristic $p$ and write $\perf_{k}$ for the category of perfectoid spaces over $k$. If $k=\fp$ we write $\perf=\perf_{\fp}$. For any perfectoid space $S$ over $k$, we write $S \bdtimes \zp$ for the analytic adic space defined in \cite[Proposition 11.2.1]{ScholzeWeinsteinBerkeley}. In particular, when $S = \spa(R,R^+)$ is affinoid perfectoid, $S \bdtimes \zp$ is given by 
\begin{align}
    S \bdtimes \zp = \spa(W(R^{+})) \setminus \{[\varpi]=0\},
\end{align}
where $W(R^+)$ denotes the ring of $p$-typical Witt vectors of the perfect ring $R^+$, and where $[\varpi]$ denotes the Teichm\"ulller lift to $W(R^+)$ of a pseudo-uniformiser $\varpi$ in $R^+$. The Frobenius for $W(R^+)$ restricts to give a Frobenius operator $\frob_S$ on $S \bdtimes \zp$. By \cite[Proposition 11.3.1]{ScholzeWeinsteinBerkeley}, any untilt $S^\sharp$ of $S$ determines a closed Cartier divisor $S^\sharp \hookrightarrow S \bdtimes \spa \zp$. For $S$ in $\perf$ we define also $\mathcal{Y}(S) = S \bdtimes \zp \setminus \{p = 0\}$.

For any $S$ in $\perf$, the relative adic Fargues--Fontaine curve over $S$ is the quotient
\begin{align}
    X_S = \mathcal{Y}(S) / \varphi^\mathbb{Z}.
\end{align}
By \cite[Proposition II.1.16]{FarguesScholze}, the action of $\frob_S$ on $\mathcal{Y}(R,R^+)$ is free and totally discontinuous, which means that the quotient is well-defined. Let $G$ be a reductive group over $\qp$. Following \cite{FarguesScholze}, we denote by $\bun_G(S)$ the groupoid of $G$-torsors on $X_S$. By \cite[Proposition III.1.3]{FarguesScholze}, $\bun_G$ is a small v-stack on $\perf$. A morphism $f:G \to G'$ induces a $1$-morphism $\bun_G \to \bun_{G'}$ by pushing out torsors, and the following lemma follows from Lemma \ref{Lem:FunctorialityClassifyingStack}.
\begin{Lem} \label{Lem:WeakFunctorBunG}
The construction of the v-stack $\bun_G$ for $G$ a reductive group scheme
  over $\qp$ extends to a weak functor
  \[
    \lbrace \text{reductive groups over } \qp \rbrace \to \lbrace v\text{-stacks
    on } \perf \rbrace; \quad G \mapsto \bun_G.
  \]
\end{Lem}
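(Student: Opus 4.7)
The plan is to apply Lemma \ref{Lem:FunctorialityClassifyingStack} essentially directly. That lemma establishes in general that the classifying stack construction $G \mapsto BG$, sending a group object in a site to its stack of torsors, extends to a weak functor. The weak (as opposed to strict) functoriality captures the fact that for a composable pair $G \xrightarrow{f} G' \xrightarrow{g} G''$, the pushout $(g \circ f)_*$ of torsors along $g \circ f$ is only canonically isomorphic to $g_* \circ f_*$, via a system of associators satisfying the pentagon axiom.

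To put $\bun_G$ in this framework, I would first regard it as the v-stack on $\perf$ whose value on an affinoid perfectoid $S$ is the groupoid of $G$-torsors on the pro-\'etale site of the relative Fargues--Fontaine curve $X_S$. For fixed $S$, Lemma \ref{Lem:FunctorialityClassifyingStack} applied to this site gives a weak functor from reductive groups over $\qp$ to stacks on $X_S$; taking global sections yields a weak functor to groupoids. Since the assignment $S \mapsto X_S$ is strictly (not merely weakly) functorial, and since pullback of $G$-torsors along a morphism $X_{S'} \to X_S$ commutes with pushout along $f \colon G \to G'$ up to a canonical isomorphism, these groupoid-valued weak functors assemble into a single weak functor valued in prestacks on $\perf$.

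Finally, v-stackification is a strict $2$-functor from prestacks to v-stacks on $\perf$ and so preserves weak functoriality; combined with the v-stack property already established in \cite[Proposition III.1.3]{FarguesScholze}, this yields the desired weak functor $G \mapsto \bun_G$. The only nontrivial work is a coherence check, namely that the associators furnished by Lemma \ref{Lem:FunctorialityClassifyingStack} over the various $X_S$ are compatible with base change in $S$; I expect this to be routine bookkeeping, and so the proof reduces to this verification.
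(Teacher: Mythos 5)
Your proposal is correct and takes essentially the same approach as the paper, which simply cites Lemma~\ref{Lem:FunctorialityClassifyingStack} to supply the coherence data making torsor pushout on the Fargues--Fontaine curves into a weak functor, with the same fiberwise-then-assemble bookkeeping you describe. The digression through v-stackification is superfluous---each $\bun_G$ is already a v-stack by \cite[Proposition III.1.3]{FarguesScholze}, so the weak functor you construct lands directly in v-stacks---but this does not affect correctness.
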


\subsubsection{} Let $B(G)$ be the set of $\sigma$-conjugacy classes in $G(\qpbr)$, equipped with the topology coming from the \emph{opposite} of the partial order defined in \cite[Section 2.3]{RapoportRichartz}. By \cite[Theorem 1]{Viehmann}, there is a homeomorphism
\begin{align}
    |\bun_G| \to B(G). 
\end{align}
If $\mu$ is a $G(\qpbar)$-conjugacy class of minuscule cocharacters, we let $\bgmu \subset B(G)$ be the set of $\mu^{-1}$-admissible elements, as defined in \cite[Section 1.1.5]{KMPS}; note that this set is closed in the partial order and thus defines an open substack
$$\bungmu \subset \operatorname{Bun}_G,$$ via \cite[Proposition 12.9]{EtCohDiam}. 

\subsubsection{} \label{Sec:DiamondFunctors}  If $X$ is an adic space over $\spa(\zp)$, let $X^\lozenge$ denote the set-valued functor on $\perf$ given by
\begin{align}
    X^\lozenge(S) = \{(S^\sharp, f)\} / \sim
\end{align}
for any $S$ in $\perf$, where $S^\sharp$ is an untilt of $S$ and $f:S^\sharp \to X$ is a morphism of adic spaces. This determines a v-sheaf on $\perf$ by \cite[Lemma 18.1.1]{ScholzeWeinsteinBerkeley}. For a Huber pair $(A,A^+)$ over $\zp$, we write $\spd(A,A^+)$ in place of $\spa(A,A^+)^\lozenge$, and when $A^+ = A^\circ$ we write $\spd(A)$ instead of $\spd(A,A^+)$. In particular, we see that $\spd(\zp)$ parametrizes isomorphism classes of untilts, cf. \cite[Definition 10.1.3]{ScholzeWeinsteinBerkeley}. 

If $\mathcal{X}$ is a formal scheme over $\spf \zp$, then we can consider it as an adic space over $\spa(\zp)$ using \cite[Proposition 2.2.1]{ScholzeWeinsteinModuli} and use that adic space to define $\mathcal{X}^{\lozenge}$. This defines a functor from formal schemes over $\zp$ to v-sheaves over $\spd \zp$. This functor is fully faithful when restricted to absolutely weakly normal formal schemes that are flat, separated and formally of finite type over $\spf \zp$, see \cite[Theorem 2.16]{AGLR}. 

If $X$ is a scheme over $\spec \zp$, then there are three possible v-sheaves that can be associated to $X$: There is $X^{\lozenge}$, there is $X^{\lozenge/}$ and there is $X^{\diamond}$, see \cite[Definition 2.10]{AGLR} and \cite[Definition 2.1.9]{PappasRapoportShtukas}. By \cite[Corollary 2.1.8]{PappasRapoportShtukas}, the functor $X \mapsto X^{\lozenge/}$ is fully faithful when restricted to schemes that are flat normal and separated locally of finite type over $\zp$. 

For schemes $X$ that are locally of finite type over $\spec \zp$, these admit the following descriptions, see \cite[Remark 2.11]{AGLR}:
\begin{itemize}
    \item The v-sheaf $X^{\diamond}$ can be identified with $\left(\widehat{X}\right)^{\lozenge}$, where $\widehat{X}$ is the $p$-adic formal scheme over $\spf \zp$ given by the completion of $X$ in its special fiber. In \cite{PappasRapoportShtukas}, they write $X^{\blacklozenge}$ for $X^{\diamond}$.
    
    \item The v-sheaf $X^{\lozenge}$ can be identified with $(X^{\mathrm{an}})^{\lozenge}$, where $X^{\mathrm{an}}$ is the `analytification' of $X$, see \cite[Remark 2.11]{AGLR}.

    \item The v-sheaf $X^{\lozenge/}$ is created by gluing $X^{\diamond}$ to $X^{\lozenge}_{\qp}$ along the open immersion $X^{\diamond}_{\qp} \to X^{\lozenge}_{\qp}$.

\end{itemize}

\subsection{Local models}
In this section we let $G$ be a connected reductive group over a finite extension $L$ of $\qp$ and let $\mu$ be a $G(\overline{L})$-conjugacy class of minuscule cocharacters of $G$ with reflex field $E \subset \overline{L}$. We fix a quasi-parahoric\footnote{In the sense of \cite[Section 2.2]{PappasRapoportRZ}} model $\mathcal{G}$ of $G$ over $\mathcal{O}_L$. We let $\grg \to \spd \mathcal{O}_{L}$ be the Beilinson--Drinfeld affine flag variety of $\mathcal{G}$, see \cite[Section 4.1]{AGLR}, considered as a v-sheaf on $\perf$. There is a closed subfunctor
\begin{align}
    \grgmu \subset \grg \times_{\spd \mathcal{O}_{L}} \spd E,
\end{align}
see \cite[Corollary 4.6]{AGLR}, whose closure in $\grg \times_{\spd \mathcal{O}_L} \spd \mathcal{O}_E$ defines a closed subfunctor
\begin{align}
    \locmodgmuv \subset \grg \times_{\spd \mathcal{O}_L} \spd \mathcal{O}_{E},
\end{align}
called the v-sheaf local model attached to $(\mathcal{G},\mu)$. 

\subsubsection{} Assume from now on that $\mathcal{G}$ is a parahoric and let $\calgad$ be the corresponding parahoric model for $\gad$. The formation of $\locmodgmuv$ is functorial for morphisms of pairs $(\mathcal{G}_1, \mu_1) \to (\mathcal{G}_2, \mu_2)$ where $\mathcal{G}_1, \mathcal{G}_2$ are parahoric, and preserves closed embeddings by \cite[Proposition 4.16]{AGLR}. Moreover, the morphism 
\begin{align}
    \locmodgmuv \to \locmodgadmuv
\end{align}
induced by $(\mathcal{G},\mu) \to (\calgad,\mu)$ is an isomorphism. 

\subsubsection{} Let $L' \subset \overline{L}$ be a finite extension of $L$ and let $E' \supset L'$ be the reflex field of $\mu$ considered as a $G(\overline{L})$-conjugacy class of cocharacters of $G_{L'}$. 
\begin{Lem} \label{Lem:UnramifiedBaseChangeLocalModel}
There is a natural isomorphism
\begin{align}
    \locmodgmuv \times_{\spd \mathcal{O}_E} \spd \mathcal{O}_{E'} \to \locmodgolpmuv.
\end{align}
\begin{proof}
The analogous result for $$ \grg \times_{\spd \mathcal{O}_{L'}} \spd \mathcal{O}_{L'} \to \operatorname{Gr}_{\mathcal{G}_{\mathcal{O}_{L'}}}$$ follows directly from the definition, see \cite[Section 4.1]{AGLR}. It also follows directly from the definition that this induces a natural isomorphism
\begin{align}
    \grgmu \times_{\spa E} \spa E'  \to \operatorname{Gr}_{G_{L'},\mu}.
\end{align}
Since the map $\spec \mathcal{O}_{E'} \to \mathcal{O}_{E}$ is finite flat, it is in particular proper and open. Thus the map $\spd \mathcal{O}_{E'} \to \spd \mathcal{O}_{E}$ is partially proper and open. It then follows from \cite[Corollary 2.9]{GleasonLourenco}, that the formation of v-sheaf closures commutes with basechanging from $\spd \mathcal{O}_E$ to $\spd \mathcal{O}_{E'}$. Thus the natural map 
\begin{align}
    \locmodgmuv \times_{\spd \mathcal{O}_E} \spd \mathcal{O}_{E'} \to \locmodgolpmuv.
\end{align}
is an isomorphism.
\end{proof}
\end{Lem}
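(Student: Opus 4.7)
The strategy is to split the desired isomorphism into three compatible pieces: the identification of the ambient Beilinson--Drinfeld affine flag variety after unramified base change, the identification of the open $\mu$-admissible locus on the generic fiber, and the compatibility of the v-sheaf closure operation with the base change $\spd \mathcal{O}_{E'} \to \spd \mathcal{O}_E$.

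First, I would unwind the definitions in \cite[Section 4.1]{AGLR} to produce a natural isomorphism
\[
\grg \times_{\spd \mathcal{O}_L} \spd \mathcal{O}_{L'} \xrightarrow{\sim} \operatorname{Gr}_{\mathcal{G}_{\mathcal{O}_{L'}}}.
\]
This should be essentially formal: on $S$-points for a perfectoid $S$ equipped with an untilt over $\mathcal{O}_{L'}$, the data on both sides is the same, since pulling a $\mathcal{G}$-torsor through the unramified extension $\mathcal{O}_L \to \mathcal{O}_{L'}$ simply expresses it as a $\mathcal{G}_{\mathcal{O}_{L'}}$-torsor. Restricting to the generic fiber and cutting out the $\mu$-admissible stratum then yields directly from the definitions a natural isomorphism
\[
\grgmu \times_{\spa E} \spa E' \xrightarrow{\sim} \operatorname{Gr}_{G_{L'}, \mu}.
\]

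The final step is to promote this identification to the v-sheaf closures inside $\grg \times_{\spd \mathcal{O}_L} \spd \mathcal{O}_E$ and $\operatorname{Gr}_{\mathcal{G}_{\mathcal{O}_{L'}}} \times_{\spd \mathcal{O}_{L'}} \spd \mathcal{O}_{E'}$ respectively. For this, the key input is that $\spec \mathcal{O}_{E'} \to \spec \mathcal{O}_E$ is finite flat, hence both proper and open; passing through the diamond functor, the induced morphism $\spd \mathcal{O}_{E'} \to \spd \mathcal{O}_E$ is therefore partially proper and open. This places us exactly in the setting of \cite[Corollary 2.9]{GleasonLourenco}, which gives commutation of v-sheaf closure with this base change. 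Combining this with the generic-fiber identification above yields the claimed isomorphism $\locmodgmuv \times_{\spd \mathcal{O}_E} \spd \mathcal{O}_{E'} \xrightarrow{\sim} \locmodgolpmuv$.

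I do not anticipate a substantial obstacle here; the only delicate point is to confirm that the hypotheses of \cite[Corollary 2.9]{GleasonLourenco} on the base change morphism of small v-sheaves are genuinely met, but since partial properness and openness transfer cleanly from the finite-flat scheme map $\spec \mathcal{O}_{E'} \to \spec \mathcal{O}_E$ to its diamond, this should be routine.
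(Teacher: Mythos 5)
Your proposal is correct and follows essentially the same route as the paper's proof: identification of the affine flag variety under unramified base change, identification of the $\mu$-admissible generic-fiber locus, and then commutation of v-sheaf closure with base change via \cite[Corollary 2.9]{GleasonLourenco}, justified by finite flatness of $\spec \mathcal{O}_{E'} \to \spec \mathcal{O}_E$. Your version is in fact a bit more carefully phrased, notably in writing the base change of $\grg$ correctly as $\grg \times_{\spd \mathcal{O}_L} \spd \mathcal{O}_{L'}$ where the paper's displayed formula appears to contain a typo.
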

Now suppose that $L'/L$ is Galois and let $\Gamma$ be its Galois group. Define $\mathcal{H}:=\operatorname{Res}_{\mathcal{O}_{L'}/\mathcal{O}_{L}} \mathcal{G}$ and let $\mu$ be the induced conjugacy class of cocharacters of $H:=\mathcal{H}_{L}$. The natural map $(\mathcal{G},\mu) \to (\mathcal{H}, \mu)$ induces a natural map
\begin{align} \label{Eq:NaturalMapLocalModel}
    \locmodgmuv \to \locmodhmuv,
\end{align}
\begin{Lem} \label{Lem:IdentificationLocalModelsI}
If $L'/L$ is unramified, then there is a $\Gamma$-equivariant isomorphism 
\begin{align}
    \locmodhmuv \times_{\spd \mathcal{O}_E} \spd \mathcal{O}_{E'} \to \prod_{\gamma \in \Gamma} \locmodgmuv \times_{\spd \mathcal{O}_E} \spd \mathcal{O}_{E'},
\end{align}
under which the natural map from equation \ref{Eq:NaturalMapLocalModel} corresponds to the inclusion of the diagonal.
\end{Lem}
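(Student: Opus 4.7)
The plan is as follows. First, since $L'/L$ is unramified and Galois with group $\Gamma$, the ring extension $\mathcal{O}_{L'}/\mathcal{O}_L$ is finite étale Galois, and the standard splitting
\[
\mathcal{O}_{L'} \otimes_{\mathcal{O}_L} \mathcal{O}_{L'} \xrightarrow{\sim} \prod_{\gamma \in \Gamma} \mathcal{O}_{L'}, \quad a \otimes b \mapsto (a \cdot \gamma(b))_{\gamma \in \Gamma},
\]
combined with the compatibility of Weil restriction with étale base change, yields a natural isomorphism of parahoric group schemes
\[
\mathcal{H}_{\mathcal{O}_{L'}} = \operatorname{Res}_{\mathcal{O}_{L'}/\mathcal{O}_L}(\mathcal{G}_{\mathcal{O}_{L'}}) \otimes_{\mathcal{O}_L} \mathcal{O}_{L'} \xrightarrow{\sim} \prod_{\gamma \in \Gamma} \mathcal{G}_{\mathcal{O}_{L'}},
\]
on which $\Gamma$ acts by simultaneously permuting the factors (via left multiplication on $\Gamma$) and acting on the structure sheaf $\mathcal{O}_{L'}$ within each factor. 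Under this identification the base change of $\mu$ to $\mathcal{H}_{\mathcal{O}_{L'}}$ corresponds to the diagonal cocharacter $(\mu, \dots, \mu)$.

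Next I would verify that the v-sheaf local model construction commutes with finite products of pairs $(\mathcal{G}_i, \mu_i)$. The Beilinson--Drinfeld affine flag variety of a product group is, by construction, the fiber product of the individual affine flag varieties, and the open $\mu$-admissible locus decouples as the product of the individual bounded loci (checkable on geometric points using that $\mu$ is minuscule). Moreover, the formation of v-sheaf closure commutes with finite products, by applying \cite[Corollary 2.9]{GleasonLourenco} to each coordinate projection exactly as in the proof of Lemma \ref{Lem:UnramifiedBaseChangeLocalModel}. Combined with Lemma \ref{Lem:UnramifiedBaseChangeLocalModel} applied to each factor, this produces the desired natural isomorphism
\[
\locmodhmuv \times_{\spd \mathcal{O}_E} \spd \mathcal{O}_{E'} \xrightarrow{\sim} \prod_{\gamma \in \Gamma} \left(\locmodgmuv \times_{\spd \mathcal{O}_E} \spd \mathcal{O}_{E'}\right).
\]

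Finally, I would check $\Gamma$-equivariance and the statement about the diagonal. The $\Gamma$-action on the right-hand side is the one induced through functoriality of local models from the combined permutation of factors and action on $\spd \mathcal{O}_{E'}$ described above, and by descent this matches the $\Gamma$-action on the left-hand side coming from the base change $\spd \mathcal{O}_{E'} \to \spd \mathcal{O}_E$. For the last claim, the adjunction unit $\mathcal{G} \hookrightarrow \mathcal{H}$ base-changes to the diagonal embedding $\mathcal{G}_{\mathcal{O}_{L'}} \hookrightarrow \prod_\gamma \mathcal{G}_{\mathcal{O}_{L'}}$, so by functoriality of local models (\cite[Proposition 4.16]{AGLR}) the natural map corresponds to the inclusion of the diagonal. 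The main technical obstacle is the product compatibility of the v-sheaf local model construction, particularly the commutation of the v-sheaf closure with finite products; while this should follow smoothly from \cite[Corollary 2.9]{GleasonLourenco}, it is not isolated as a separate statement in the references and requires the most care.
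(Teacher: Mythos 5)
Your argument is correct and follows essentially the same route as the paper's proof: identify $\mathcal{H}_{\mathcal{O}_{L'}}$ with $\prod_{\gamma \in \Gamma}\mathcal{G}_{\mathcal{O}_{L'}}$ via étale descent for Weil restriction, use that the v-sheaf local model construction commutes with finite products, and apply Lemma \ref{Lem:UnramifiedBaseChangeLocalModel} to pass back and forth between $\spd\mathcal{O}_E$ and $\spd\mathcal{O}_{E'}$. Where you worry about isolating the product compatibility step, the paper simply cites \cite[Proposition 4.16]{AGLR} (and cites \cite[Corollary 2.9]{GleasonLourenco} for the same purpose in the subsequent Lemma \ref{Lem:IdentificationLocalModelsII}), so your instinct to flag this as the point needing justification is well placed, but it is a citation rather than an argument in the paper.
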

\begin{proof}
By Lemma \ref{Lem:UnramifiedBaseChangeLocalModel}, there is a natural (in particular $\Gamma$-equivariant) isomorphism
\begin{align}
    \locmodhmuv \times_{\spd \mathcal{O}_E} \spd \mathcal{O}_{E'} \to \locmodholpmuv.
\end{align}
There is a $\Gamma$-equivariant isomorphism $\mathcal{H}_{\mathcal{O}_{L'}} \to \prod_{\gamma \in \Gamma} \mathcal{G}_{\mathcal{O}_{L}'}$ since $L'/L$ is unramified. Since the formation of local models commutes with direct products, see \cite[Proposition 4.16]{AGLR}, this induces a $\Gamma$-equivariant isomorphism 
\begin{align}
    \locmodholpmuv \to \prod_{\gamma \in \Gamma} \locmodgolpmuv.
\end{align}
Using Lemma \ref{Lem:UnramifiedBaseChangeLocalModel} again, we again identify the right-hand side with
\begin{align}
    \prod_{\gamma \in \Gamma} \locmodgmuv \times_{\spd \mathcal{O}_E} \spd \mathcal{O}_{E'}.
\end{align}
Moreover, under these identifications the natural map 
\begin{align}
    \locmodgmuv \to \locmodhmuv
\end{align}
corresponds to the inclusion of the diagonal into the product.
\end{proof}
\subsubsection{} Now let $\spec \mathcal{O}_{L'} \to \spec \mathcal{O}_L$ be a finite \'etale Galois cover with Galois group $\Gamma$ and generic fiber $\spec L' \to \spec L$. Define $\mathcal{H}:=\operatorname{Res}_{\mathcal{O}_{L'}/\mathcal{O}_{L}} \mathcal{G}$ and let $\mu$ be the induced conjugacy class of cocharacters of $H:=\mathcal{H}_{L}$. Then $\Gamma$-acts on $\mathcal{H}$ in a way that preserves $\mu$ and thus acts on $\locmodhmuv$. The natural map of \eqref{Eq:NaturalMapLocalModel} is $\Gamma$-equivariant for the trivial $\Gamma$-action on the source. 

Let $\mathfrak{p}$ be a maximal ideal of $\mathcal{O}_{L'}$, let $\mathcal{O}_{L''}$ be the local ring of $\mathcal{O}_{L'}$ at $\mathfrak{p}$ with fraction field $L''$. Let $\Gamma'' \subset \Gamma$ be the stabilizer of $\mathfrak{p}$, which is also the Galois group of $L''/L$. Choose an embedding $L'' \to \overline{L}$ and let $E''$ be the reflex field of $\mu$ considered as an $G_{L''}(\overline{L})$-conjugacy class of cocharacters of $G_{L''}$.
\begin{Lem} \label{Lem:IdentificationLocalModelsII}
There is a $\Gamma$-equivariant isomorphism 
\begin{align}
    \locmodhmuv \times_{\spd \mathcal{O}_E} \spd \mathcal{O}_{E''} \to \prod_{\gamma \in \Gamma} \locmodgmuv \times_{\spd \mathcal{O}_E} \spd \mathcal{O}_{E'},
\end{align}
under which the natural map from equation \ref{Eq:NaturalMapLocalModel} corresponds to the inclusion of the diagonal.
\end{Lem}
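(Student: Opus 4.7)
The plan is to reduce this statement to Lemma \ref{Lem:IdentificationLocalModelsI} by base-changing to $\spd \mathcal{O}_{L''}$, which is an unramified extension of $\spd \mathcal{O}_L$ since the cover $\spec \mathcal{O}_{L'} \to \spec \mathcal{O}_L$ is finite étale. The key point is that after this base change, the Weil restriction $\mathcal{H}$ splits as a product of copies of $\mathcal{G}$ (base-changed), at which point the previous lemma, combined with the behavior of local models under products and unramified base change, finishes the job.

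More precisely, I would first apply Lemma \ref{Lem:UnramifiedBaseChangeLocalModel} to obtain a natural isomorphism
\[
\locmodhmuv \times_{\spd \mathcal{O}_E} \spd \mathcal{O}_{E''} \;\cong\; \mathbb{M}_{\mathcal{H}_{\mathcal{O}_{L''}},\mu}^{\mathrm{v}}.
\]
Next, I would invoke standard Galois theory of finite étale covers: the natural map $\mathcal{O}_{L'} \to \mathcal{O}_{L''}$ (localization at $\mathfrak{p}$) supplies a section of the base-changed cover, and by transitivity of $\Gamma$ on the geometric fiber together with the definition of $\Gamma''$ as the stabilizer of $\mathfrak{p}$, this yields a canonical $\Gamma$-equivariant decomposition
\[
\mathcal{O}_{L'} \otimes_{\mathcal{O}_L} \mathcal{O}_{L''} \;\cong\; \prod_{\gamma \in \Gamma / \Gamma''} \mathcal{O}_{L''},
\]
where $\Gamma$ acts by simultaneously permuting cosets and acting through $\Gamma''$ on each factor. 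Passing to Weil restriction (which commutes with products of the bottom ring) gives a $\Gamma$-equivariant isomorphism $\mathcal{H}_{\mathcal{O}_{L''}} \cong \prod_{\gamma \in \Gamma/\Gamma''} \mathcal{G}_{\mathcal{O}_{L''}}$, and the map induced by $(\mathcal{G},\mu) \to (\mathcal{H},\mu)$ corresponds to the diagonal.

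I would then apply Proposition 4.16 of \cite{AGLR} (local models commute with products of groups) to obtain a $\Gamma$-equivariant product decomposition of $\mathbb{M}_{\mathcal{H}_{\mathcal{O}_{L''}},\mu}^{\mathrm{v}}$, and apply Lemma \ref{Lem:UnramifiedBaseChangeLocalModel} once more to each factor in order to descend back to $\locmodgmuv$ on the right. Chaining these identifications yields the required $\Gamma$-equivariant isomorphism, compatible with the diagonal map \eqref{Eq:NaturalMapLocalModel}.

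The main obstacle here is purely bookkeeping: the $\Gamma$-action on the decomposition of $\mathcal{O}_{L'} \otimes_{\mathcal{O}_L} \mathcal{O}_{L''}$ mixes a permutation of factors indexed by $\Gamma/\Gamma''$ with the internal action of $\Gamma''$ on each copy of $\mathcal{O}_{L''}$, and one has to verify that the various applications of Lemma \ref{Lem:UnramifiedBaseChangeLocalModel} (which is itself stated as a natural, hence $\Gamma$-equivariant, isomorphism) interact cleanly with this twisted permutation action so that the final isomorphism really is $\Gamma$-equivariant and really sends the map from $\locmodgmuv$ to the diagonal.
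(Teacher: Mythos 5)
Your overall strategy --- base change to $\mathcal{O}_{L''}$ to split the Weil restriction into a product of copies of $\mathcal{G}$, then use that local models commute with products and with unramified base change --- is close in spirit to the paper's, but there is a rank error in your middle step that breaks the argument. The decomposition
\[
\mathcal{O}_{L'} \otimes_{\mathcal{O}_L} \mathcal{O}_{L''} \;\cong\; \prod_{\gamma \in \Gamma/\Gamma''} \mathcal{O}_{L''}
\]
is false: the left side is a free $\mathcal{O}_{L''}$-module of rank $|\Gamma|$, while the right side has rank $|\Gamma/\Gamma''|$. What you have written down is the decomposition of $\mathcal{O}_{L'}$ itself as an $\mathcal{O}_L$-algebra (the maximal ideals of $\mathcal{O}_{L'}$ form a $\Gamma$-set isomorphic to $\Gamma/\Gamma''$, and each local factor is isomorphic to $\mathcal{O}_{L''}$). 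After tensoring with $\mathcal{O}_{L''}$ over $\mathcal{O}_L$, each of those factors splits further, since $\mathcal{O}_{L''} \otimes_{\mathcal{O}_L} \mathcal{O}_{L''} \cong \prod_{\Gamma''} \mathcal{O}_{L''}$ because $L''/L$ is Galois with group $\Gamma''$. The correct statement is therefore $\mathcal{O}_{L'} \otimes_{\mathcal{O}_L} \mathcal{O}_{L''} \cong \prod_{\Gamma} \mathcal{O}_{L''}$, and consequently $\mathcal{H}_{\mathcal{O}_{L''}} \cong \prod_{\gamma \in \Gamma} \mathcal{G}_{\mathcal{O}_{L''}}$. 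As written, your chain of identifications would produce a product over $\Gamma/\Gamma''$ rather than over $\Gamma$, which is not the target of the lemma; a dimension count already rules it out unless $\Gamma'' = \{1\}$.

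The splitting you omitted --- the further breaking apart of each $\operatorname{Res}_{\mathcal{O}_{L''}/\mathcal{O}_L}\mathcal{G}_{\mathcal{O}_{L''}}$ into $|\Gamma''|$ copies after base change --- is precisely the content of Lemma \ref{Lem:IdentificationLocalModelsI}; despite your opening sentence announcing a reduction to that lemma, the detailed steps you then give never actually invoke it. The paper's proof organizes the two layers cleanly by decomposing \emph{over $\mathcal{O}_L$ before any base change}: the $\Gamma$-equivariant isomorphism $\spec\mathcal{O}_{L'} \to \hom_{\Gamma''}(\Gamma,\spec\mathcal{O}_{L''})$ gives $\mathcal{H} \cong \prod_{\Gamma'' \backslash \Gamma} \operatorname{Res}_{\mathcal{O}_{L''}/\mathcal{O}_L}\mathcal{G}_{\mathcal{O}_{L''}}$, where each factor is a genuine Weil restriction along the unramified extension $\mathcal{O}_{L''}/\mathcal{O}_L$ and not a copy of $\mathcal{G}$; then local models commute with products, and Lemma \ref{Lem:IdentificationLocalModelsI} applied to each factor supplies the additional $\Gamma''$-splitting. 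If you correct your product to be indexed by $\Gamma$ (and track the $\Gamma$-action, which permutes the factors simply transitively) your route can be repaired; alternatively, split over $\mathcal{O}_L$ first and then cite Lemma \ref{Lem:IdentificationLocalModelsI} as the paper does, which avoids the extra bookkeeping.
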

\begin{proof}
Since $\spec \mathcal{O}_{L'} \to \spec \mathcal{O}_L$ is Galois there is a $\Gamma$-equivariant isomorphism
\begin{align}
    \spec \mathcal{O}_{L'} \to \hom_{\Gamma''}(\Gamma, \spec \mathcal{O}_{L''}).
\end{align}
Similarly there is a $\Gamma$-equivariant isomorphism
\begin{align}
    \mathcal{H} \to \hom_{\Gamma''}(\Gamma, \operatorname{Res}_{\mathcal{O}_{L''}/\zp} \mathcal{G}_{\mathcal{O}_{L''}}) \simeq \prod_{\Gamma'' \backslash \Gamma} \operatorname{Res}_{\mathcal{O}_{L''}/\zp} \mathcal{G}_{\mathcal{O}_{L''}}.
\end{align}
Since the formation of local models commutes with products by \cite[Corollary 2.9]{GleasonLourenco}, the lemma reduces to Lemma \ref{Lem:IdentificationLocalModelsI}.
\end{proof}

\smallskip

\subsubsection{} \label{Sec:LocalModels} Recall that there are unique (up to unique isomorphism) flat and (absolutely) weakly normal schemes $\locmodgmu$ over $\mathcal{O}_{E}$ with associated v-sheaf isomorphic to $\locmodgmuv$, by \cite[Theorem 1.11]{AGLR}. 
\begin{Prop} \label{Prop:FixedPointsLocalModels}
The natural map of equation \eqref{Eq:NaturalMapLocalModel} induces an isomorphism
\begin{align}
    \locmodgmu \to \locmodhmuGamma.
\end{align}
\end{Prop}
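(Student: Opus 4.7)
The plan is to reduce the statement to its v-sheaf analogue $\locmodgmuv \xrightarrow{\sim} \locmodhmuvGamma$, which I will check v-locally after base change to $\spd \mathcal{O}_{E''}$, and then transport back to schemes via the full-faithfulness of the v-sheaf functor on flat absolutely weakly normal schemes locally of finite type over $\mathcal{O}_E$, as recorded in \cite[Theorem 1.11]{AGLR}.

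For the v-sheaf step, since being an isomorphism of v-sheaves over $\spd \mathcal{O}_E$ is a v-local property, it suffices to verify the claim after pulling back along $\spd \mathcal{O}_{E''} \to \spd \mathcal{O}_E$. Lemma \ref{Lem:IdentificationLocalModelsII} then gives a $\Gamma$-equivariant identification of the pullback of $\locmodhmuv$ with a product of copies of $\locmodgmuv$ (suitably base-changed) indexed by $\Gamma$, where $\Gamma$ acts by permuting the factors via its left translation action on itself, and where the map from the pullback of $\locmodgmuv$ is realized as the diagonal embedding. A direct inspection shows that the $\Gamma$-fixed points of this product, on which $\Gamma$ acts transitively by permutation of factors, coincide with the image of the diagonal, giving the desired v-sheaf isomorphism.

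For the scheme step, the $\Gamma$-action on the v-sheaf $\locmodhmuv$ lifts uniquely to an action on the representing scheme $\locmodhmu$ by the uniqueness part of \cite[Theorem 1.11]{AGLR}. Since $\locmodhmu$ is separated and $\Gamma$ is finite, the scheme-theoretic fixed locus $\locmodhmuGamma$ is a closed subscheme, and, being formed as an equalizer of finitely many morphisms of separated schemes, its associated v-sheaf naturally identifies with $\locmodhmuvGamma$. Combined with the first step, this yields an isomorphism of v-sheaves induced by the natural map of schemes $\locmodgmu \to \locmodhmuGamma$. To invoke full-faithfulness and conclude, I verify that $\locmodhmuGamma$ is flat and absolutely weakly normal over $\mathcal{O}_E$ locally of finite type; these properties are inherited a posteriori from the identification with $\locmodgmu$ together with the unique representability statement of \cite[Theorem 1.11]{AGLR}.

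The main technical hurdle I anticipate is the compatibility of the scheme-theoretic fixed locus with the v-sheaf functor, which underlies the identification of the v-sheaf of $\locmodhmuGamma$ with $\locmodhmuvGamma$. This reduces to the v-sheaf functor preserving the finite limits defining fixed loci of separated schemes, which is standard but merits care. Once this compatibility is in place, the remainder of the argument is formal, driven by Lemma \ref{Lem:IdentificationLocalModelsII} and the elementary fixed-point computation for a product under a transitive permutation action.
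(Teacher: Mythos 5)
There is a genuine gap, and it is exactly the hurdle you flag at the end: passing from the v-sheaf isomorphism $\locmodgmuv \xrightarrow{\sim} \locmodhmuvGamma$ back to a scheme isomorphism $\locmodgmu \xrightarrow{\sim} \locmodhmuGamma$. Your plan to invoke full faithfulness requires knowing \emph{in advance} that $\locmodhmuGamma$ is flat and absolutely weakly normal over $\mathcal{O}_E$, but you propose to deduce these properties ``a posteriori from the identification with $\locmodgmu$'' --- i.e. from the very conclusion you are trying to establish. This is circular. And the danger is real: the v-sheaf functor does not detect closed immersions in general. For instance, $\spec \mathbb{F}_p \hookrightarrow \spec \mathbb{F}_p[\epsilon]/(\epsilon^2)$ induces an isomorphism on associated v-sheaves, so the fact that the closed immersion $\locmodgmu \hookrightarrow \locmodhmuGamma$ becomes an isomorphism of v-sheaves does not rule out $\locmodhmuGamma$ having a nilpotent thickening of $\locmodgmu$ along the special fiber. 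You would need an independent argument that the scheme-theoretic fixed locus $\locmodhmuGamma$ is flat and reduced (or AWN), and that is not supplied.

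The paper's proof sidesteps this by reordering the steps. Rather than computing $\Gamma$-fixed points on the v-sheaf side and transferring the result to schemes, it first uses the full faithfulness of the v-sheaf functor --- together with its compatibility with fiber products and with base change along $\spec \mathcal{O}_{E''} \to \spec \mathcal{O}_E$ --- to upgrade Lemma \ref{Lem:IdentificationLocalModelsII} to a $\Gamma$-equivariant isomorphism of \emph{schemes}
\[
    \locmodhmu \times_{\spec \mathcal{O}_E} \spec \mathcal{O}_{E''} \xrightarrow{\sim} \prod_{\gamma \in \Gamma} \locmodgmu \times_{\spec \mathcal{O}_E} \spec \mathcal{O}_{E''},
\]
under which $\locmodgmu \to \locmodhmu$ becomes the diagonal. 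The $\Gamma$-fixed-point computation is then carried out entirely at the scheme level: fixed points commute with the flat base change $\mathcal{O}_E \to \mathcal{O}_{E''}$, the fixed locus of a $\Gamma$-permuted product is the diagonal, and faithfully flat descent along $\spec \mathcal{O}_{E''} \to \spec \mathcal{O}_E$ finishes the argument. This never asks whether the v-sheaf functor commutes with forming fixed loci and never needs a priori flatness or weak normality of the fixed locus, so the circularity disappears. I recommend you adopt this reordering: first transfer the product decomposition to schemes via full faithfulness (which only requires flatness and AWN of $\locmodgmu$ and $\locmodhmu$, both known), then compute fixed points of the product of schemes.
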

\begin{proof}
By \cite[Proposition 2.18]{AGLR}, the functor sending a proper flat absolutely weakly normal scheme over $\spec \mathcal{O}_L$ to its associated v-sheaf is fully faithful. This functor moreover commutes with base change along $\spec \mathcal{O}_{E''} \to \spec \mathcal{O}_E$ and with fiber products. Thus by Lemma \ref{Lem:IdentificationLocalModelsII} there is a $\Gamma$-equivariant isomorphism 
\begin{align}
    \locmodhmu \times_{\spec \mathcal{O}_E} \spec \mathcal{O}_{E''} \to \prod_{\gamma \in \Gamma} \locmodgmu \times_{\spec \mathcal{O}_E} \spec \mathcal{O}_{E''}
\end{align}
under which the natural map $\locmodgmu \to \locmodhmu$ corresponds to the inclusion of the diagonal. The proposition follows immediately from this description. 
\end{proof}
\begin{Cor} \label{Cor:HomotopyFixedPointsLocalModelQuotients}
The natural map 
\begin{align}
    \locmodgmuStack \to \locmodhmuStackGamma
\end{align}
is an isomorphism. 
\end{Cor}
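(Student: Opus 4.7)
The plan is to upgrade Proposition \ref{Prop:FixedPointsLocalModels}, which identifies $\locmodgmu$ with $\locmodhmuGamma$ at the level of schemes, to the level of quotient stacks. The key additional input is the analogous statement at the level of group schemes: the $\Gamma$-action on $\mathcal{H} = \operatorname{Res}_{\mathcal{O}_{L'}/\mathcal{O}_L} \mathcal{G}_{\mathcal{O}_{L'}}$ has fixed-point group scheme $\mathcal{H}^{\Gamma} = \mathcal{G}$, included diagonally. This follows exactly as in the proof of Proposition \ref{Prop:FixedPointsLocalModels}, by applying the $\Gamma$-equivariant decomposition $\mathcal{H} \simeq \prod_{\Gamma'' \backslash \Gamma} \operatorname{Res}_{\mathcal{O}_{L''}/\mathcal{O}_L} \mathcal{G}_{\mathcal{O}_{L''}}$ (with $\Gamma$ permuting factors) used in the proof of Lemma \ref{Lem:IdentificationLocalModelsII}, and taking $\Gamma$-invariants.

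Given these two identifications, I would invoke the general principle developed in Appendix \ref{Appendix:HFP}: for a scheme $X$ carrying a $\Gamma$-equivariant action of a group scheme $H$, the natural morphism
\[
    \left[ X^{\Gamma} / H^{\Gamma} \right] \to \left[ X / H \right]^{h\Gamma}
\]
is an isomorphism whenever the pair $(H, X)$ admits a compatible $\Gamma$-equivariant product decomposition $(H, X) \simeq \prod_{\Gamma} (G, Y)$ with $\Gamma$ acting by permutation of the factors (a stack-theoretic Shapiro lemma for quotient stacks). In our situation, exactly such product decompositions for $\mathcal{H}$ and for $\locmodhmu$ are furnished by Lemma \ref{Lem:IdentificationLocalModelsII} (and its analogue for the group), provided one first base changes to $\spd \mathcal{O}_{E''}$. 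Since Proposition \ref{Prop:FixedPointsLocalModels} already shows that the scheme-level statement descends from $\mathcal{O}_{E''}$ to $\mathcal{O}_{E}$, the same descent argument, carried out $2$-categorically via the formalism of Appendix \ref{Appendix:HFP}, yields the isomorphism on quotient stacks.

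The main obstacle is ensuring that the $2$-categorical compatibilities line up: specifically, that the $\Gamma$-equivariance of the product decomposition at the level of pairs $(\mathcal{H}, \locmodhmu)$ really does lift to a $\Gamma$-equivariant isomorphism of quotient stacks, and that the homotopy fixed points of a product stack with permutation $\Gamma$-action compute as the diagonal in the expected sense. Both of these are technical rather than conceptual, and the framework of the appendix is designed precisely to handle them; the substantive geometric input remains the scheme-level Proposition \ref{Prop:FixedPointsLocalModels}, from which this corollary now follows formally.
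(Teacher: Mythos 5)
Your proposal is essentially correct and uses the same key inputs as the paper's own proof: the scheme-level Proposition~\ref{Prop:FixedPointsLocalModels}, the identification $\mathcal{H}^{\Gamma} = \mathcal{G}$, and the machinery of Appendix~\ref{Appendix:HFP}. However, the ``general principle'' you invoke is not quite what the appendix provides. Appendix~\ref{Appendix:HFP} does not prove a stack-theoretic Shapiro lemma phrased in terms of a $\Gamma$-equivariant product decomposition of the \emph{pair} $(H, X)$. What it proves is Corollary~\ref{Cor:NoCohomologyIsomorphism}: the map $\left[X^{\Gamma}/G^{\Gamma}\right] \to \left[X/G\right]^{h\Gamma}$ is an equivalence whenever the \emph{sheaf} $\underline{H}^1(\Gamma, G)$ is trivial, with no condition on $X$ whatsoever. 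Your criterion is a special case: the permutation product decomposition implies $\underline{H}^1(\Gamma, H) = 1$ by Lemma~\ref{Lem:Shapiro}, but you are carrying around a product decomposition of $X$ that plays no role in the stack-theoretic step — only $X^{\Gamma}$ enters, and that is exactly what Proposition~\ref{Prop:FixedPointsLocalModels} has already computed.

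Because of this mismatch you then reach for a ``descent argument'' from $\mathcal{O}_{E''}$ to $\mathcal{O}_E$ at the level of stacks, which is unnecessary and would require extra justification if taken literally (Corollary~\ref{Cor:NoCohomologyIsomorphism} has no base-change parameter to descend along). The correct and more economical reading is: $\underline{H}^1(\Gamma, \mathcal{H})$ is a \emph{sheaf}, so its vanishing can be checked locally, namely after pulling back along the finite \'etale cover that trivializes $\mathcal{O}_{L'}/\mathcal{O}_L$, where $\mathcal{H}$ becomes $\hom(\Gamma,\mathcal{G})$ and Lemma~\ref{Lem:Shapiro} applies with $\Gamma' = \{1\}$. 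Once $\underline{H}^1(\Gamma, \mathcal{H}) = 1$ is established, Corollary~\ref{Cor:NoCohomologyIsomorphism} gives $\left[\locmodhmuGamma/\mathcal{H}^{\Gamma}\right] \xrightarrow{\sim} \locmodhmuStackGamma$ in one step, and the left-hand side is identified with $\locmodgmuStack$ by Proposition~\ref{Prop:FixedPointsLocalModels} together with $\mathcal{H}^{\Gamma} = \mathcal{G}$. So: no gap in substance, but you should replace your ``stack-theoretic Shapiro'' and the descent step by the observation that $\underline{H}^1$-vanishing is local and then a single application of Corollary~\ref{Cor:NoCohomologyIsomorphism}.
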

\begin{proof}
After basechanging to $\spa(\mathcal{O}_L)$ we have an isomorphism $\mathcal{H} \simeq \hom(\Gamma, \mathcal{G})$ and thus it follows from Lemma \ref{Lem:Shapiro} that $\underline{H}^1(\Gamma, \mathcal{H})$ vanishes. The corollary now follows from Proposition \ref{Prop:FixedPointsLocalModels} and Corollary \ref{Cor:NoCohomologyIsomorphism}.
\end{proof}

\subsection{Shtukas} \label{Sec:Shtukas} In this section we let $G$ be a connected reductive group over $\qp$ and $\mu$ a $G(\qpbar)$-conjugacy class of minuscule cocharacters of $G$ with reflex field $ E \subset \qpbar$. We fix a quasi-parahoric model $\mathcal{G}$ of $G$ over $\zp$. 

Recall from \cite[Definition 2.4.3]{PappasRapoportShtukas} that a $\mathcal{G}$-shtuka over a perfectoid space $S$ with leg at an untilt $S^{\sharp}$ is defined to be a quadruple: $(\mathcal{Q}, \mathcal{P}, \phi_{\mathcal{P}}, \kappa)$ where $\mathcal{Q}$ and $\mathcal{P}$ are $\mathcal{G}$-torsors over the analytic adic space $S \bdtimes \zp$, where $\kappa:\mathcal{Q} \to \left(\operatorname{Frob}_S \times 1\right)^{\ast}\mathcal{P}$ is an isomorphism of $\mathcal{G}$-torsors and where 
\begin{align}
    \phi_{\mathcal{P}}:\restr{\mathcal{Q}}{S \bdtimes \zp \setminus S^{\sharp}} \to \restr{\mathcal{P}}{S \bdtimes \zp \setminus S^{\sharp}}
\end{align}
is an isomorphism of $\mathcal{G}$-torsors over $S \bdtimes \zp \setminus S^{\sharp}$. Here $S^{\sharp} \subset S \bdtimes \zp$ is the Cartier divisor coming from the untilt $S^{\sharp}$. To be precise, here we consider $\left(\operatorname{Frob}_S \times 1\right)^{\ast}\mathcal{P}$ as a $\mathcal{G}$-torsor via the isomorphism $\left(\operatorname{Frob}_S \times 1\right)^{\ast} \mathcal{G} \to \mathcal{G}$ coming from the fact that $\mathcal{G}$ is defined over $\zp$. Note that the data of $\mathcal{Q}$ and $\kappa$ is superfluous in our definition, but it will be useful for us later to keep track of this information. In Section \ref{Sec:IntegralModels} we will often omit $\mathcal{Q}$ and $\kappa$ from the notation

\subsubsection{} Let $\perf_{\zp}$ be the category of perfectoid spaces $S$ of characteristic $p$ equipped with a map $S^{\lozenge} \to \spd \zp$, equipped with its v-topology. Let $\operatorname{Sht}_{\mathcal{G}}$ be the stack of $\mathcal{G}$-shtukas over $\perf_{\zp}$, considered as a category fibered in groupoids over $\perf_{\zp}$. Explicitly, this means that it is the category whose objects are quintuples $(S \to \spd \zp, \mathcal{Q}, \mathcal{P}, \phi_{\mathcal{P}}, \kappa)$, where $S \to \spd \zp$ is an object of $\perf_{\zp}$ and $(\mathcal{Q}, \mathcal{P}, \phi_{\mathcal{P}}, \kappa)$ is a $\mathcal{G}$-shtuka over $S$. A morphism $f:(S \to \spd \zp, \mathcal{Q}, \mathcal{P}, \phi_{\mathcal{P}}, \kappa) \to (S' \to \spd \zp, \mathcal{Q}', \mathcal{P}', \phi_{\mathcal{P}'}, \kappa')$ is a triple $(f,f_{\mathcal{P}},f_{\mathcal{Q}})$ consisting of a morphism $f:S \to S'$ and $\mathcal{G}$-equivariant morphisms $f_{\mathcal{P}}:\mathcal{P} \to \mathcal{P}', f_{\mathcal{Q}}:\mathcal{Q} \to \mathcal{Q}'$ fitting in a pair of Cartesian diagrams
\begin{equation}
    \begin{tikzcd}
    \mathcal{P} \arrow{r}{f_{\mathcal{P}}} \arrow{d} & \mathcal{P}' \arrow{d} \\
    S \bdtimes \zp \arrow{r}{f \times 1} & S' \dot{\times} \zp
    \end{tikzcd} \qquad
    \begin{tikzcd}
    \mathcal{Q} \arrow{r}{f_{\mathcal{Q}}} \arrow{d} & \mathcal{Q}' \arrow{d} \\
    S \bdtimes \zp \arrow{r}{f \times 1} & S' \dot{\times} \zp
    \end{tikzcd}
\end{equation}
such that the following diagrams commute
\begin{equation}
    \begin{tikzcd}
        \restr{\mathcal{Q}}{S \bdtimes \zp \setminus S^{\sharp}} \arrow{r}{\phi_{\mathcal{P}}} \arrow{d}{f_{\mathcal{P}}} & \restr{\mathcal{P}}{S \bdtimes \zp \setminus S^{\sharp}} \arrow{d}{f_{\mathcal{Q}}} \\
       \restr{\mathcal{P}}{S' \times \zp \setminus S^{\sharp}} \arrow{r}{\phi_{\mathcal{P}'}}  & \restr{\mathcal{Q'}}{S' \times \zp \setminus S^{\sharp}}
    \end{tikzcd}
    \qquad 
    \begin{tikzcd}
    \mathcal{Q} \arrow{d}{f_{\mathcal{Q}}} \arrow{r}{\kappa} & \left(\operatorname{Frob}_S \times 1\right)^{\ast}\mathcal{P}  \arrow{d}\\
    \mathcal{Q'} \arrow{r}{\kappa'} & \left(\operatorname{Frob}_{S'} \times 1\right)^{\ast}\mathcal{P}',
    \end{tikzcd}
\end{equation}
where the right vertical arrow in the second diagram is the arrow induced by $f_{\mathcal{P}}$. Since $\mu$ is defined over $E$, there is a closed substack\footnote{Here by $\operatorname{Sht}_{\mathcal{G}} \otimes_{\spd \zp} \spd \mathcal{O}_{E}$ we mean the category fibered in groupoids over $\perf_{\mathcal{O}_E}$ given by the fiber product construction of \cite[Lemma 0040]{stacks-project}.}
\begin{align}
    \shtg \subset \operatorname{Sht}_{\mathcal{G}} \otimes_{\spd \zp} \spd \mathcal{O}_{E},
\end{align}
defined as those shtukas where $\phi_{\mathcal{P}}$ has relative position bounded by the v-sheaf local model
\begin{align}
    \locmodgmuv \subset \operatorname{Gr}_{\mathcal{G}} \otimes_{\spd \zp} \spd \mathcal{O}_{E},
\end{align}
see \cite[Definition 2.4.4]{PappasRapoportShtukas}. It is clear that a homomorphism $f:\mathcal{G} \to \mathcal{G}'$ of quasi-parahoric group schemes induces a morphism.
\begin{align}
    f:\operatorname{Sht}_{\mathcal{G}} \to \operatorname{Sht}_{\mathcal{G}'}
\end{align}
by pushing out torsors. Noting that the reflex field $E'$ of $\mu'$ is contained in $E$, this restricts to a morphism 
\begin{align}
    \shtg \times_{\spd \mathcal{O}_{E}} \spd \mathcal{O}_{E'} \to \operatorname{Sht}_{\mathcal{G}',\mu'},
\end{align} 
if $f(\mu)=\mu'$. This upgrades to a $2$-categorical statement as follows: Fix a field $L \subset \qpbar$ and let $\shtpr_{L}$ be the category of pairs $(\mathcal{G},\mu)$ such that the reflex field $E(\mu)$ of $\mu$ is contained in $L$, and where the morphisms are the obvious morphisms of pairs. We will often write $E$ for $E(\mu)$ if $\mu$ is clear from the context. We let $\perfpairs_L$ be the strict $(2,1)$-category of categories fibered in groupoids over $\perf_{\mathcal{O}_L}$ and $\perfpairsrat_L$ the full subcategory of categories fibered in groupoids over $\perf_{L}$.
\begin{Lem} \label{Lem:FunctorialityShtukas}
There is a weak functor $\operatorname{Sht}:\shtpr_{L} \to \perfpairs_L$, which on objects sends $(\mathcal{G},\mu)$ to $\shtgmu \times_{\spd \mathcal{O}_E} \spd \mathcal{O}_L$ and which sends a morphism $(\mathcal{G}, \mu) \to (\mathcal{G}', \mu')$ to the induced morphism $\shtgmu \times_{\spd \mathcal{O}_E} \spd \mathcal{O}_L \to \shtgmu \times_{\spd \mathcal{O}_{E'}} \spd \mathcal{O}_L$ induced by pushing out torsors along $\mathcal{G} \to \mathcal{G}'$.
\end{Lem}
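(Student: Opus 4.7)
The plan is to deduce this lemma from the $2$-categorical functoriality of the classifying stack construction, exactly as \cite{Lem:WeakFunctorBunG} is deduced from \cite{Lem:FunctorialityClassifyingStack}, and then check that the additional data (Frobenius isomorphism $\kappa$, the leg isomorphism $\phi_{\mathcal{P}}$, and the boundedness condition) is transported along this functor in a way that is coherent with composition.

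First, I would note that for fixed $S \in \perf_{\mathcal{O}_L}$, a $\mathcal{G}$-shtuka over $S$ is exactly a pair of $\mathcal{G}$-torsors $(\mathcal{Q},\mathcal{P})$ on $S \bdtimes \zp$, together with morphisms $\kappa$ and $\phi_{\mathcal{P}}$ defined on certain (open or closed) subspaces, and whose source and target are obtained from $\mathcal{P}$ and $\mathcal{Q}$ by pullback along morphisms of spaces (namely $\operatorname{Frob}_S \times 1$ and the open embedding complementary to $S^{\sharp}$). The $(2,1)$-functor $\mathcal{G} \mapsto B\mathcal{G}$ valued in stacks on a fixed site, supplied by the appendix, pulls back along any morphism of spaces and composes with pullback in a coherent way; therefore, following \cite{Lem:WeakFunctorBunG}, we obtain a weak functor $\mathcal{G} \mapsto \operatorname{Sht}_{\mathcal{G}}$ without the boundedness, by endowing the pair $(\mathcal{Q},\mathcal{P})$ in a shtuka with its additional data of $\kappa$ and $\phi_{\mathcal{P}}$ and pushing out everything simultaneously along $f : \mathcal{G} \to \mathcal{G}'$.

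Next, given a morphism $(\mathcal{G},\mu) \to (\mathcal{G}',\mu')$ in $\shtpr_L$, I would note that the reflex field $E'$ of $\mu'$ is contained in $E$ (because $f$ carries the $G(\qpbar)$-conjugacy class $\mu$ into $\mu'$), so base changing to $\spd \mathcal{O}_L$ makes sense uniformly. To check that pushout preserves the local-model bound, one uses that the morphism $\locmodgmuv \to \locmodgpmuv$ induced by $(\mathcal{G},\mu) \to (\mathcal{G}',\mu')$ exists (and is a closed embedding) by \cite[Proposition 4.16]{AGLR}; the pushout $f_{\ast}$ on torsors commutes with the Beilinson--Drinfeld affine Grassmannian morphism $\grg \to \operatorname{Gr}_{\mathcal{G}'}$, and the bound on relative position of $\phi_{\mathcal{P}}$ is preserved precisely because $f$ sends the bound $\locmodgmuv$ into $\locmodgpmuv$. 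Hence the induced morphism on shtuka stacks restricts to $\shtgmu \times_{\spd \mathcal{O}_E} \spd \mathcal{O}_L \to \operatorname{Sht}_{\mathcal{G}',\mu'} \times_{\spd \mathcal{O}_{E'}} \spd \mathcal{O}_L$.

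Finally, the compositors: for a composable pair $(\mathcal{G},\mu) \xrightarrow{f} (\mathcal{G}',\mu') \xrightarrow{g} (\mathcal{G}'',\mu'')$, the canonical $2$-isomorphism $g_{\ast}f_{\ast} \Rightarrow (gf)_{\ast}$ is the one coming from pushout of torsors; it commutes with $\kappa$ and $\phi_{\mathcal{P}}$ essentially tautologically because these are morphisms of torsors and pushout is $2$-functorial on morphisms. The associativity/pentagon and unit coherence axioms for $\operatorname{Sht}$ as a weak functor then reduce directly to the corresponding axioms for the classifying stack functor $\mathcal{G} \mapsto B\mathcal{G}$, which is the content of \cite{Lem:FunctorialityClassifyingStack} in the appendix; the compatibility with the boundedness condition is automatic because closed immersions of v-sheaves have no automorphisms. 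The only real work is bookkeeping, and the main obstacle is simply ensuring that the coherence axioms we need truly reduce to those for classifying stacks — which they do because all additional data entering the definition of a shtuka is either a pullback of torsors along a morphism of spaces or an isomorphism between two such, both of which are preserved $2$-functorially by pushout along $f : \mathcal{G} \to \mathcal{G}'$.
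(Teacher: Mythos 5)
Your proof is correct and follows the same route as the paper's: both reduce the weak-functor coherence to the classifying-stack weak functor of Lemma \ref{Lem:FunctorialityClassifyingStack} and observe that pushout of shtuka data along $f:\mathcal{G}\to\mathcal{G}'$ preserves the local-model bound by functoriality of $\locmodgmuv$. The only thing worth making explicit, which the paper does and you leave implicit in your general remark about pullback-compatibility, is that the induced isomorphism $\kappa'$ on the pushed-out torsors exists precisely because $f$ is a morphism \emph{over} $\zp$ and hence commutes with the Frobenius identifications $(\operatorname{Frob}_S\times 1)^{\ast}\mathcal{G}\simeq\mathcal{G}$, $(\operatorname{Frob}_S\times 1)^{\ast}\mathcal{G}'\simeq\mathcal{G}'$.
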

\begin{proof}
A morphism $f:\mathcal{G} \to \mathcal{G}'$ induces a morphism $\shtg \to \shtgp$ by sending $(S \to \spd \zp, \mathcal{Q}, \mathcal{P}, \phi_{\mathcal{P}}, \kappa)$ to $(S \to \spd \zp, \mathcal{Q} \times^{\mathcal{G}} \mathcal{G}', \mathcal{P} \times^{\mathcal{G}} \mathcal{G}', \phi_{\mathcal{P} \times^{\mathcal{G}} \mathcal{G}'}, \kappa'$). Here $\kappa'$ is the unique isomorphism induced by $\kappa$, using the fact that $f:\mathcal{G} \to \mathcal{G}'$ is defined over $\zp$ and thus commutes with $\frob_S$. To turn this into a weak functor, we use the coherence data for pushing out torsors coming from the proof of Lemma \ref{Lem:FunctorialityClassifyingStack}. 

As discussed above, this morphism restricts to a morphism between $\shtgmu$ and $\shtgpmup$ and thus induces a morphism $\shtgmu \times_{\spd \mathcal{O}_E} \spd \mathcal{O}_L \to \shtgpmup \times_{\spd \mathcal{O}_{E'}} \spd \mathcal{O}_L$.
\end{proof}
We recall the open and closed substack $\shtgmuone \subset \shtgmu$ introduced in \cite[Section 3.3.8]{DanielsVHKimZhangII}. If $\mathcal{G}$ has connected special fiber (so $\mathcal{G}$ is parahoric instead of just quasi-parahoric), then $\shtgmuone=\shtgmu$.

\subsection{Integral local Shimura varieties} In this section we let $G$ be a connected reductive group over $\qp$ and $\mu$ a $G(\qpbar)$-conjugacy class of minuscule cocharacters of $G$ with reflex field $E \subset \qpbar$. We fix a quasi-parahoric model $\mathcal{G}$ of $G$ over $\zp$. It follows from Lemma \ref{Lem:FunctorialityClassifyingStack} that there is a weak functor $\operatorname{Bun}$ from the category of reductive groups over $\qp$ to the strict $(2,1)$-category of v-stacks on $\perf$. Recall that there is a faithful morphism $\operatorname{BL}^{\circ}:\shtgmu \to \operatorname{Bun}_G$, as explained in \cite[Section 2.2.2]{PappasRapoportShtukas}, see \cite[Section 2.3.3]{DvHKZIgusaStacks}.
\begin{Lem} \label{Lem:FunctorialityShtukasToBunG}
There is a weak natural transformation $\operatorname{Sht} \to \operatorname{Bun}$ of weak functors $\shtpr_L \to \perfstacks$, giving $\operatorname{BL}^{\circ}:\shtgmu \to \operatorname{Bun}_G$ on objects.
\end{Lem}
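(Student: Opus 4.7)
The plan is to upgrade the family of Beauville--Laszlo maps into a weak natural transformation by transporting the coherence of the pushout of torsors, encoded in Lemma \ref{Lem:FunctorialityClassifyingStack}, through the Beauville--Laszlo construction. For each object $(\mathcal{G},\mu) \in \shtpr_L$, I would take the component of the natural transformation to be the morphism $\operatorname{BL}^{\circ}_{(\mathcal{G},\mu)} : \shtgmu \to \bun_G$ already in hand. Recall that this sends a $\mathcal{G}$-shtuka $(\mathcal{Q},\mathcal{P},\phi_{\mathcal{P}},\kappa)$ over $S$ to the $G$-bundle on $X_S$ obtained by restricting $\mathcal{P}$ to $\mathcal{Y}(S)$, pushing out torsors along $\mathcal{G} \to G$, and descending by $\frob_S$ using the Frobenius structure built from $\phi_{\mathcal{P}}$ and $\kappa$.

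For each morphism $f:(\mathcal{G},\mu) \to (\mathcal{G}',\mu')$ in $\shtpr_L$, I would construct a 2-isomorphism
\[
\alpha_f : \bun(f) \circ \operatorname{BL}^{\circ}_{(\mathcal{G},\mu)} \Rightarrow \operatorname{BL}^{\circ}_{(\mathcal{G}',\mu')} \circ \operatorname{Sht}(f)
\]
as follows. Both composites, applied to a $\mathcal{G}$-shtuka over $S$, perform the same basic list of operations: restrict to $\mathcal{Y}(S)$, push out torsors along $\mathcal{G} \to G'$, and descend by $\frob_S$. Restriction strictly commutes with pushout of torsors, and descent by $\frob_S$ commutes with pushout along $f$ because $f$ is defined over $\qp$ and hence commutes with $\frob_S$. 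The two orderings of the pushout, along $\mathcal{G} \to G \to G'$ versus $\mathcal{G} \to \mathcal{G}' \to G'$, produce torsors that are canonically isomorphic via the associator supplied by Lemma \ref{Lem:FunctorialityClassifyingStack}. Restricting this associator to $\mathcal{Y}(S)$ and descending by $\frob_S$ yields $\alpha_f$.

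Finally I would verify the coherence axioms for a weak natural transformation. For a composable pair $(\mathcal{G},\mu) \xrightarrow{f} (\mathcal{G}',\mu') \xrightarrow{g} (\mathcal{G}'',\mu'')$, the required compatibility of $\alpha_{g \circ f}$ with the vertical pasting of $\alpha_g$ and $\alpha_f$ unwinds to an identity among the associators for pushouts along chains ending in $G''$; this identity holds by the pentagon/associativity axioms already satisfied by the weak functor provided by Lemma \ref{Lem:FunctorialityClassifyingStack}, which underlies both the weak functoriality of $\operatorname{Sht}$ in Lemma \ref{Lem:FunctorialityShtukas} and the weak functoriality of $\bun$ in Lemma \ref{Lem:WeakFunctorBunG}. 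The unit axiom is immediate. The main obstacle is organizational rather than substantive: every 2-cell in sight is canonical and invertible, so the difficulty lies in carefully drawing the relevant diagrams and repeatedly invoking Lemma \ref{Lem:FunctorialityClassifyingStack}, which has already packaged the coherence of torsor pushout once and for all.
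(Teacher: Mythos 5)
Your proposal is correct and takes essentially the same approach as the paper's (one-sentence) proof, which simply observes that pushing out torsors is compatible with descending them from $\mathcal{Y}_{[r,\infty)}(S)$ to $X_S$ and calls this straightforward. You have usefully spelled out what "straightforward" hides: the components of $\alpha_f$ come from the associator of Lemma \ref{Lem:FunctorialityClassifyingStack}, pushout commutes with restriction and with $\frob_S$-descent because $f$ is defined over $\qp$, and the coherence axioms for the weak natural transformation are inherited from the coherence data already packaged in Lemma \ref{Lem:FunctorialityClassifyingStack} that underlies both $\operatorname{Sht}$ and $\operatorname{Bun}$ as weak functors. (One small imprecision: the descent happens from $\mathcal{Y}_{[r,\infty)}(S)$, not all of $\mathcal{Y}(S)$, but this does not affect the argument.)
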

\begin{proof}
This comes down to checking that pushing out torsors is compatible with descending them from $\mathcal{Y}_{[r,\infty)}(S)$ (notation as in \cite[Section 2.2.2]{PappasRapoportShtukas}) to $X_S$, which is straightforward. 
\end{proof}
For $b: \spd \ovfp \to \shtgmu$ we define $\mintgb$ as the $2$-fiber product
\begin{equation}
    \begin{tikzcd}
    \mintgb \arrow{r} \arrow{d} & \shtgmu \arrow{d} \\
    \spd \ovfp \arrow{r}{b} & \operatorname{Bun}_G.
    \end{tikzcd}
\end{equation}
The universal property of the fiber product induces a natural map $x_0:\spd \ovfp \to \mintgb$. Note that a morphism of triples $(\mathcal{G},\mu,b) \to (\mathcal{G}',\mu',b')$ induces a morphism $\mintgb \to \mathcal{M}^{\mathrm{int}}_{\mathcal{G}',b',\mu'}$ taking $x_0$ to $x_0'$. It is straightforward to check that this gives a functor from the category of triples $(\mathcal{G},\mu, b)$ with reflex field contained in $L$, to $\perfpairs_{\mathcal{O}_L}$, sending
\begin{align}
    (\mathcal{G},\mu,b) \mapsto \mintgb \otimes_{\spd \mathcal{O}_{E}} \spd \mathcal{O}_L.
\end{align}
\subsubsection{Fixed points of integral local Shimura varieties}
Let $\spec \mathcal{O}_L \to \spec \zp$ be a finite \'etale Galois cover with Galois group $\Gamma$ and generic fiber $\spec L \to \spec \qp$. Let $\mathcal{G}$ be a quasi-parahoric model of $G$ as before and define $\mathcal{H}:=\operatorname{Res}_{\mathcal{O}_{L}/\zp} \mathcal{G}_{\mathcal{O}_L}$ and let $\mu$ be the induced conjugacy class of cocharacters of $H:=\mathcal{H}_{\qp}$. Then $\Gamma$-acts on $\mathcal{H}$ in a way that preserves $\mu$ and thus acts on $\shthmu \to \perf_{\mathcal{O}_E}$ by Lemma \ref{Lem:FunctorialityShtukas}. 
\begin{Lem} \label{Lem:TorsorsTrivialAdic}
Let $X$ be an adic space over $\spa(\zp,\zp)$, then cocycles $\sigma:\Gamma \to \mathcal{H}(X)$ are trivial \'etale locally on $X$. 
\end{Lem}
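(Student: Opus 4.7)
The plan is to reduce the statement to the non-abelian Shapiro's lemma (Lemma \ref{Lem:Shapiro}) after choosing an étale cover of $X$ that splits the Galois cover $\spec \mathcal{O}_L \to \spec \zp$. First, I would invoke the defining property of Weil restriction to get a natural $\Gamma$-equivariant identification
\[
    \mathcal{H}(X) \;=\; \mathcal{G}\bigl(X \times_{\spa \zp} \spa \mathcal{O}_L\bigr)
\]
for any adic space $X$ over $\spa(\zp,\zp)$, where $\Gamma$ acts on the right-hand side through its action on $\mathcal{O}_L$. Since $\spec \mathcal{O}_L \to \spec \zp$ is finite étale and Galois with group $\Gamma$, the base change $X \times_{\spa \zp} \spa \mathcal{O}_L \to X$ is a finite étale $\Gamma$-torsor of adic spaces, and any such torsor splits over some étale cover $U \to X$.

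Next, on such a $U$ one obtains a $\Gamma$-equivariant identification
\[
    \mathcal{H}(U) \;=\; \mathcal{G}\Bigl(\coprod_{\gamma \in \Gamma} U \Bigr) \;=\; \operatorname{Map}(\Gamma, \mathcal{G}(U)),
\]
where $\Gamma$ acts on the right-hand side by regular translation on the source. Applying Lemma \ref{Lem:Shapiro} with $\Gamma' = \{1\}$ and $H = \mathcal{G}(U)$ then yields
\[
    H^1\bigl(\Gamma, \mathcal{H}(U)\bigr) \;\cong\; H^1\bigl(\{1\}, \mathcal{G}(U)\bigr) \;=\; \{1\},
\]
so the restriction $\sigma|_U$ is a coboundary, hence cohomologically trivial as required.

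The only substantive input beyond formal manipulations of Weil restriction and Shapiro's lemma is the fact that finite étale $\Gamma$-torsors of adic spaces split étale locally on the base, and this is standard in the adic setting. Consequently I do not anticipate any serious obstacle; the key conceptual point is simply that Weil restriction turns the $\Gamma$-action on $\mathcal{H}$ into an induced/regular representation on sections, which is exactly the situation Shapiro's lemma is designed to address.
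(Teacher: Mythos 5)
Your proof is correct and follows essentially the same route as the paper: both reduce to Lemma~\ref{Lem:Shapiro} by observing that, after passing to the finite étale cover of $X$ provided by $\spec\mathcal{O}_L \to \spec\zp$, the Weil restriction $\mathcal{H}$ becomes the induced group $\operatorname{Map}(\Gamma,\mathcal{G})$ with its regular $\Gamma$-action. The paper's proof is a one-line version of yours ("after basechanging to $\spa(\mathcal{O}_L,\mathcal{O}_L)$, $\mathcal{H}\simeq\hom(\Gamma,\mathcal{G})$, then Shapiro"), and you have simply made explicit the step that the Galois torsor $X\times_{\spa\zp}\spa\mathcal{O}_L \to X$ splits étale-locally, which is exactly how the base change is meant to be used.
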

\begin{proof}
After basechanging to $\spa(\mathcal{O}_L,\mathcal{O}_L)$, we have an isomorphism $\mathcal{H} \simeq \hom(\Gamma, \mathcal{G})$ and the result then follows from Lemma \ref{Lem:Shapiro}.
\end{proof}
\begin{Cor} \label{Cor:TorsorsInvariantsAdicSpaces}
Consider the stack $\mathbb{B} \mathcal{H}$ on the category of adic spaces over $\spa(\zp,\zp)$ equipped with the \'etale topology. Then the natural map
\begin{align}
    \mathbb{B} \mathcal{G} \to \left(\mathbb{B} \mathcal{H}\right)^{h \Gamma}
\end{align}
is an equivalence.
\end{Cor}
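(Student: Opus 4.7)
The approach is to deduce this directly from the general 2-categorical criterion developed in Appendix \ref{Appendix:HFP}, namely Corollary \ref{Cor:NoCohomologyIsomorphism}, applied to the strict $(2,1)$-category of stacks on the \'etale site of adic spaces over $\spa(\zp,\zp)$. That criterion tells us that if $\mathcal{G} \to \mathcal{H}$ is a $\Gamma$-equivariant homomorphism of sheaves of groups (with trivial $\Gamma$-action on $\mathcal{G}$) such that $\mathcal{G} \to \mathcal{H}^{\Gamma}$ is an isomorphism and such that the non-abelian cohomology sheaf $\underline{H}^1(\Gamma, \mathcal{H})$ vanishes, then $\mathbb{B}\mathcal{G} \to (\mathbb{B}\mathcal{H})^{h\Gamma}$ is an equivalence of stacks. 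So the plan reduces to checking these two inputs.

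For the first input, since $\mathcal{H} = \operatorname{Res}_{\mathcal{O}_L/\zp} \mathcal{G}_{\mathcal{O}_L}$ and $\Gamma$ acts via its Galois action on $\mathcal{O}_L$, the standard description of the fixed points of a Weil restriction along a finite \'etale Galois cover gives a canonical identification $\mathcal{G} \xrightarrow{\sim} \mathcal{H}^{\Gamma}$ as sheaves of groups on adic spaces over $\spa(\zp,\zp)$. For the second input, Lemma \ref{Lem:TorsorsTrivialAdic} is precisely the statement that every cocycle $\sigma : \Gamma \to \mathcal{H}(X)$ becomes trivial after passing to an \'etale cover of $X$, which is exactly the vanishing of $\underline{H}^1(\Gamma, \mathcal{H})$ as a sheaf on the \'etale site.

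I do not expect a genuine obstacle here: the argument is entirely parallel to the proof of Corollary \ref{Cor:HomotopyFixedPointsLocalModelQuotients}, with the v-site of perfectoid spaces replaced by the \'etale site of adic spaces. The mechanism that makes both arguments work is that, after \'etale base change to $\spa(\mathcal{O}_L,\mathcal{O}_L)$, one has an isomorphism $\mathcal{H} \simeq \operatorname{Hom}(\Gamma, \mathcal{G})$, so that the non-abelian Shapiro lemma (Lemma \ref{Lem:Shapiro}) forces the relevant cohomology to vanish. The only thing to be mildly careful about is that Corollary \ref{Cor:NoCohomologyIsomorphism} from the appendix is formulated for an arbitrary site, so that it applies equally well to the \'etale site on adic spaces here and to the v-site on perfectoid spaces as in the earlier corollary.
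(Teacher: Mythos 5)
Your proof is correct and follows essentially the same route as the paper: the paper deduces the corollary from Lemma \ref{Lem:TorsorsTrivialAdic} (vanishing of $\underline{H}^1(\Gamma,\mathcal{H})$) together with Proposition \ref{Prop:HomotopyFixedPointsClassifyingStack}, whereas you invoke Corollary \ref{Cor:NoCohomologyIsomorphism}, which is the special case $X=\ast$ of the quotient-stack statement and reduces to that proposition anyway. The identification $\mathcal{G}\xrightarrow{\sim}\mathcal{H}^{\Gamma}$, which you spell out explicitly, is used implicitly by the paper; both arguments are otherwise identical.
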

\begin{proof}
This is a direct consequence of Lemma \ref{Lem:TorsorsTrivialAdic} and Proposition \ref{Prop:HomotopyFixedPointsClassifyingStack}.
\end{proof}
\subsubsection{Homotopy fixed points of shtukas} \label{Sec:HomotopyFixedPointsShtukas} There is a natural map $\shtg \to \operatorname{Sht}_{\mathcal{H}}^{h \Gamma}$ of categories fibered in groupoids over $\perf_{\zp}$; this follows from Lemma \ref{Lem:FunctorialityShtukas} and Lemma \ref{Lem:FunctorialityHomotopyFixedPoints}.
\begin{Lem} \label{Lem:HomotopyFixedPointsShtukas}
The natural map
\begin{align}
    \operatorname{Sht}_{\mathcal{G}} \to \operatorname{Sht}_{\mathcal{H}}^{h \Gamma}
\end{align}
is an isomorphism.
\end{Lem}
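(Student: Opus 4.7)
The plan is to reduce to Corollary \ref{Cor:TorsorsInvariantsAdicSpaces} by unpacking the definition of a shtuka into torsor-theoretic data on adic spaces over $\spa(\zp,\zp)$. Fix $S \in \perf_{\zp}$ with untilt $S^{\sharp}$. All the data defining a $\mathcal{G}$-shtuka over $S$---namely the torsors $\mathcal{Q}$ and $\mathcal{P}$, the Frobenius-twist isomorphism $\kappa$, and the leg isomorphism $\phi_{\mathcal{P}}$---are torsors and morphisms of torsors on the adic spaces $S \bdtimes \zp$ and $S \bdtimes \zp \setminus S^{\sharp}$, both of which lie over $\spa(\zp,\zp)$. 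The same is true for $\mathcal{H}$-shtukas, so Corollary \ref{Cor:TorsorsInvariantsAdicSpaces} applies componentwise.

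First, I would unpack an object of $\shth^{h\Gamma}(S)$: it consists of an $\mathcal{H}$-shtuka $(\mathcal{Q}_{\mathcal{H}}, \mathcal{P}_{\mathcal{H}}, \phi_{\mathcal{P}_{\mathcal{H}}}, \kappa_{\mathcal{H}})$ together with, for each $\gamma \in \Gamma$, an isomorphism $\alpha_\gamma$ from the $\gamma$-twisted shtuka (formed via the $\Gamma$-action on $\mathcal{H}$ and the functoriality of Lemma \ref{Lem:FunctorialityShtukas}) to the original one, satisfying a $1$-cocycle condition. Forgetting all but the torsor data, Corollary \ref{Cor:TorsorsInvariantsAdicSpaces} produces unique $\mathcal{G}$-torsors $\mathcal{P}$ and $\mathcal{Q}$ on $S \bdtimes \zp$ descending the $\Gamma$-equivariant pairs $(\mathcal{P}_{\mathcal{H}}, \alpha_\bullet^{\mathcal{P}})$ and $(\mathcal{Q}_{\mathcal{H}}, \alpha_\bullet^{\mathcal{Q}})$. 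The assumed compatibility of each $\alpha_\gamma$ with $\kappa_{\mathcal{H}}$ and $\phi_{\mathcal{P}_{\mathcal{H}}}$ exhibits those morphisms as $\Gamma$-equivariant maps of $\mathcal{H}$-torsors, so the full faithfulness in Corollary \ref{Cor:TorsorsInvariantsAdicSpaces} lets them descend to morphisms $\kappa$ and $\phi_{\mathcal{P}}$ of $\mathcal{G}$-torsors. In the opposite direction, applying the weak functor of Lemma \ref{Lem:FunctorialityShtukas} to a $\mathcal{G}$-shtuka yields an $\mathcal{H}$-shtuka equipped with a canonical $\Gamma$-equivariance structure coming from the induced $\mathcal{H}$-torsors, and the two constructions are mutually quasi-inverse.

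The main obstacle is bookkeeping of the $2$-categorical coherence data: one must verify that the descent equivalence intertwines the functors $(\operatorname{Frob}_S \times 1)^{\ast}$ and the restriction-to-an-open-subspace functor appearing in the shtuka condition. Both compatibilities follow from naturality of pushing out along $\mathcal{G} \to \mathcal{H}$, together with the fact that this homomorphism and both groups are defined over $\zp$, so that pushing out commutes with $\operatorname{Frob}_S$. Finally, every step of the construction is evidently compatible with base change in $S$ (and with morphisms of shtukas), so the resulting equivalence is an isomorphism of categories fibered in groupoids over $\perf_{\zp}$.
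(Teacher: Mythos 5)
Your argument is correct and reduces to the same key input as the paper's proof, namely Corollary \ref{Cor:TorsorsInvariantsAdicSpaces}, but the packaging is genuinely different. The paper avoids the ``bookkeeping'' you flag as the main obstacle by first expressing $\operatorname{Sht}_{\mathcal{H}}(S)$ as a $2$-fiber product of classifying stacks $\mathbb{B}\mathcal{H}(-)$ evaluated on $S \bdtimes \zp$ and $S \bdtimes \zp \setminus S^{\sharp}$, with the Frobenius and restriction functors built into the structure maps of the diagram. Once the whole diagram is checked to be $\Gamma$-equivariant (via Lemma \ref{Lem:FunctorialityClassifyingStack}), Lemma \ref{Lem:FiberProducts} (homotopy fixed points commute with $2$-fiber products) and Corollary \ref{Cor:TorsorsInvariantsAdicSpaces} applied at each corner immediately give the result, so the compatibilities with $(\operatorname{Frob}_S \times 1)^{\ast}$ and restriction that you check by hand are absorbed into the statement that the original diagram was $\Gamma$-equivariant. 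Your approach instead unpacks a homotopy fixed point of $\operatorname{Sht}_{\mathcal{H}}$ explicitly, descends the torsors $\mathcal{P}_{\mathcal{H}}$, $\mathcal{Q}_{\mathcal{H}}$ via the essential surjectivity half of Corollary \ref{Cor:TorsorsInvariantsAdicSpaces}, descends the maps $\kappa_{\mathcal{H}}$, $\phi_{\mathcal{P}_{\mathcal{H}}}$ via the full faithfulness half, and then constructs the quasi-inverse. This is more concrete and slightly more work, since the intertwining of the descent equivalence with $(\operatorname{Frob}_S \times 1)^{\ast}$ and with restriction to the complement of the leg must be verified by hand, but your observation that these follow from $\mathcal{G} \to \mathcal{H}$ being defined over $\zp$ (so that pushout of torsors commutes with Frobenius) and from naturality of pushout under pullback along open immersions is correct and is exactly what makes the $2$-fiber product diagram $\Gamma$-equivariant in the paper's argument. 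The tradeoff is that the paper's formulation generalizes more readily (the same $2$-fiber-product trick is reused for $\mintgb$ in Proposition \ref{Prop:InvariantsRZ}), whereas your hands-on version makes the mechanism more transparent.
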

\begin{proof}
To prove that this is an equivalence, it suffices to do this fiberwise over $\spd \zp$; so fix $S \to \spd \zp$ corresponding to an untilt $S^{\sharp}$ of $S$. The category of $\mathcal{H}$-shtukas over $S$ can be described as the $2$-fiber product
\begin{equation}
    \begin{tikzcd}
    \operatorname{Sht}_{\mathcal{H}}(S) \arrow{r} \arrow{d} & \mathbb{B} \mathcal{H}(S \bdtimes \zp) \arrow{d}{\Gamma_{\operatorname{Frob}_S}} \\
    \mathbb{B} \mathcal{H}(S \bdtimes \zp) \arrow{r}{\Delta} & \mathbb{B} \mathcal{H}(S \bdtimes \zp \setminus S^{\sharp}) \times \mathbb{B} \mathcal{H}(S \bdtimes \zp),
    \end{tikzcd}
\end{equation}
where $\Delta$ is the diagonal and where $\Gamma_{\operatorname{Frob}_S}$ is the graph of pullback along $\operatorname{Frob}_S$. This diagram is $\Gamma$-equivariant because the right vertical and bottom horizontal maps are equivariant by Lemma \ref{Lem:FunctorialityClassifyingStack}. Lemma \ref{Lem:FiberProducts} tells us that the homotopy fixed points of the diagram give a $2$-Cartesian diagram
\begin{equation}
    \begin{tikzcd}
    \operatorname{Sht}_{\mathcal{H}}(S)^{h \Gamma} \arrow{r} \arrow{d} & \mathbb{B} \mathcal{H}(S \bdtimes \zp)^{h \Gamma} \arrow{d}{\Gamma_{\operatorname{Frob}_S}} \\
    \mathbb{B} \mathcal{H}(S \bdtimes \zp)^{h \Gamma} \arrow{r}{\Delta} & \mathbb{B} \mathcal{H}(S \bdtimes \zp \setminus S^{\sharp})^{h \Gamma} \times \mathbb{B} \mathcal{H}(S \bdtimes \zp )^{h \Gamma},
    \end{tikzcd}
\end{equation}
which after repeatedly applying Corollary \ref{Cor:TorsorsInvariantsAdicSpaces} can be identified with the $2$-Cartesian diagram
\begin{equation}
    \begin{tikzcd}
    \operatorname{Sht}_{\mathcal{H}}(S)^{h \Gamma} \arrow{r} \arrow{d} & \mathbb{B} \mathcal{G}(S \bdtimes \zp) \arrow{d}{\Gamma_{\operatorname{Frob}_S}} \\
    \mathbb{B} \mathcal{G}(S \bdtimes \zp) \arrow{r}{\Delta} & \mathbb{B} \mathcal{G}(S \bdtimes \zp \setminus S^{\sharp}) \times \mathbb{B} \mathcal{G}(S \bdtimes \zp).
    \end{tikzcd}
\end{equation}
By construction, the map $\shtg$ to $\operatorname{Sht}_{\mathcal{H}}(S)^{h \Gamma}$ compatible is compatible with the $2$-fiber product description, and thus an isomorphism.
\end{proof}
\subsubsection{} Recall that we also use $\mu$ to denote the induced $H(\qpbar)$-conjugacy class of cocharacters of $H$, and we will later do this for all other groups. 
\begin{Lem} \label{Lem:HomotopyFixedPointsShtukasII}
The natural map
\begin{align}
    \shtgmu \to \shthmu^{h \Gamma}
\end{align}
is an isomorphism.
\end{Lem}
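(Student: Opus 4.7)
The plan is to reduce to the unbounded analogue Lemma \ref{Lem:HomotopyFixedPointsShtukas}, combined with the fixed-point computation for local models from Corollary \ref{Cor:HomotopyFixedPointsLocalModelQuotients}. Since Lemma \ref{Lem:HomotopyFixedPointsShtukas} already provides an isomorphism $\operatorname{Sht}_{\mathcal{G}} \xrightarrow{\sim} \operatorname{Sht}_{\mathcal{H}}^{h \Gamma}$ of unbounded shtuka stacks, after base-changing along $\spd \mathcal{O}_E \to \spd \zp$ the task reduces to showing that this isomorphism identifies $\shtgmu$ with $\shthmu^{h\Gamma}$ as closed substacks.

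First, I would phrase the bound condition stack-theoretically. Trivializing $\mathcal{P}$ v-locally near the leg $S^{\sharp}$ produces a natural classifying morphism from $\operatorname{Sht}_{\mathcal{G}} \otimes_{\spd \zp} \spd \mathcal{O}_{E}$ to the Hecke stack $\left[ \grg / \mathcal{G}^{\lozenge} \right]$ over $\spd \mathcal{O}_E$, and $\shtgmu$ is by construction the preimage of $\left[ \locmodgmuv / \mathcal{G}^{\lozenge} \right]$ under this morphism. The analogous statement holds for $\mathcal{H}$, and by Lemma \ref{Lem:FunctorialityShtukas} the whole picture is $\Gamma$-equivariant in the $2$-categorical sense.

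Next, I would apply Lemma \ref{Lem:FiberProducts} from Appendix \ref{Appendix:HFP} to commute $\Gamma$-homotopy fixed points with this defining Cartesian square, obtaining a $2$-Cartesian square
\begin{equation*}
\begin{tikzcd}
\shthmu^{h \Gamma} \arrow{r} \arrow{d} & \operatorname{Sht}_{\mathcal{H}}^{h \Gamma} \arrow{d} \\
\locmodhmuvStackGamma \arrow{r} & \left[ \operatorname{Gr}_{\mathcal{H}} / \mathcal{H}^{\lozenge} \right]^{h \Gamma}.
\end{tikzcd}
\end{equation*}
Lemma \ref{Lem:HomotopyFixedPointsShtukas} identifies the top right vertex with $\operatorname{Sht}_{\mathcal{G}}$, and the two bottom vertices can be handled by repeating the proof of Corollary \ref{Cor:HomotopyFixedPointsLocalModelQuotients} in the v-sheaf setting: the needed $\Gamma$-equivariant product decomposition is already provided at the v-sheaf level by Lemma \ref{Lem:IdentificationLocalModelsII} (together with its evident analogue for $\grg$), and the vanishing of $\underline{H}^{1}(\Gamma, \mathcal{H}^{\lozenge})$ reduces via Lemma \ref{Lem:Shapiro} to the trivial cohomology of a product, exactly as in Corollary \ref{Cor:TorsorsInvariantsAdicSpaces}. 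The resulting Cartesian square then coincides with the one defining $\shtgmu$.

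The main obstacle is really bookkeeping: one must verify carefully that the bound condition is genuinely expressible as a $\Gamma$-equivariant fiber product so that Lemma \ref{Lem:FiberProducts} applies, and that the various identifications of the bottom row commute with the maps from the shtuka stacks. The substantive inputs, namely the fixed-point calculation for local models and the Shapiro argument for $\Gamma$-fixed torsors over adic spaces, are already in place in the preceding subsections.
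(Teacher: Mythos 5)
Your proposal is correct and shares the paper's essential strategy: reduce to the unbounded isomorphism from Lemma \ref{Lem:HomotopyFixedPointsShtukas} and then use the identification $\locmodgmuv \xrightarrow{\sim} \locmodhmuvGamma$ of local models to handle the bound condition. The route you take, however, is more elaborate than what the paper actually does. The paper argues directly on objects: once the unbounded fixed points are identified, one only needs to check that a $\mathcal{G}$-shtuka whose induced $\mathcal{H}$-shtuka is bounded by $\mu$ is itself bounded, and this follows immediately from the isomorphism $\grg \to \operatorname{Gr}_{\mathcal{H}}^{\Gamma}$ restricting to $\locmodgmuv \to \locmodhmuvGamma$ (Proposition \ref{Prop:FixedPointsLocalModels}). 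No quotient stacks or Shapiro vanishing are invoked for this lemma.

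Your version instead packages the bound condition as a $2$-Cartesian square over the Hecke stack $\left[\grg / \mathcal{G}^{\lozenge}\right]$ and then commutes homotopy fixed points through that square via Lemma \ref{Lem:FiberProducts}. This is a valid alternative, but it forces you to additionally identify $\left[ \operatorname{Gr}_{\mathcal{H}} / \mathcal{H}^{\lozenge} \right]^{h\Gamma}$ with $\left[\grg / \mathcal{G}^{\lozenge}\right]$ and $\locmodhmuvStackGamma$ with $\locmodgmuvStack$, which needs the vanishing of $\underline{H}^1(\Gamma, \mathcal{H}^{\lozenge})$ on the v-site (the Shapiro argument you cite) on top of the local-model computation. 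These extra inputs are all available in the paper, so your argument closes, but they are not needed for the bounded-locus step --- the boundedness of a $\mathcal{G}$-shtuka is checked after trivializing, so it is enough to know the Grassmannian and local-model fixed points, not the fixed points of the $\mathcal{H}^{\lozenge}$-quotient. In short: your route buys a uniform Cartesian-square packaging at the cost of an extra Shapiro-type vanishing that the paper's more surgical argument avoids. You also rightly flag the equivariance bookkeeping for the defining square; the paper sidesteps this by never writing down the square.
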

\begin{proof}
This follows from Lemma \ref{Lem:HomotopyFixedPointsShtukas} as soon as we can prove that a $\mathcal{G}$-shtuka whose induced $\mathcal{H}$-shtuka is bounded by $\mu$, is itself bounded by $\mu$. This follows from the fact that the natural isomorphism
\begin{align}
    \grg \to \operatorname{Gr}_{\mathcal{H}}^{\Gamma}
\end{align}
induces an isomorphism $\locmodgmuv \to \locmodhmuvGamma$, see Proposition \ref{Prop:FixedPointsLocalModels}.
\end{proof}
\begin{Lem} \label{Lem:HomotopyFixedPointsBunG}
Let $\spec L \to \spec \qp$ be a finite \'etale Galois cover (not necessarily unramified) with Galois group $\Gamma$ and let $H= \operatorname{Res}_{L/\mathbb{Q}} G_{L}$. Then the natural map
\begin{align}
    \operatorname{Bun}_G \to \operatorname{Bun}_\rH^{h \Gamma}
\end{align}
is an isomorphism.
\end{Lem}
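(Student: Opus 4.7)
The plan is to mirror the approach of Lemma \ref{Lem:HomotopyFixedPointsShtukas}, which becomes considerably simpler here because we do not need to keep track of any Frobenius or shtuka structure. Since by definition $\bun_H(S) = \mathbb{B} H(X_S)$ for every $S$ in $\perf$, and analogously for $\bun_G$, it suffices to establish pointwise that the natural map
\[
    \mathbb{B} G(X_S) \to \mathbb{B} H(X_S)^{h \Gamma}
\]
is an equivalence of groupoids, and then to assemble these equivalences as $S$ varies over $\perf$.

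To establish the pointwise statement, I would apply Proposition \ref{Prop:HomotopyFixedPointsClassifyingStack} on the site of adic spaces over $X_S$ equipped with the \'etale topology, in complete analogy with Corollary \ref{Cor:TorsorsInvariantsAdicSpaces}. What must be verified is that any $\Gamma$-cocycle $\sigma \colon \Gamma \to H(X_S)$ becomes trivial after pullback along a suitable \'etale cover of $X_S$. For this I would take the base change
\[
    X_{S, L} := X_S \times_{\spa \qp} \spa L \to X_S,
\]
which is a finite \'etale cover since $L/\qp$ is a finite Galois extension. Because $L/\qp$ is Galois, there is a $\Gamma$-equivariant isomorphism $H_L \simeq \mathrm{Map}(\Gamma, G_L)$ with $\Gamma$ acting by translation of arguments, and hence
\[
    H(X_{S, L}) \simeq \mathrm{Map}(\Gamma, G(X_{S, L}))
\]
as a $\Gamma$-group. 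Applying the non-abelian Shapiro lemma (Lemma \ref{Lem:Shapiro} with $\Gamma' = \{1\}$) then yields the vanishing $H^1(\Gamma, H(X_{S, L})) = \{*\}$, so every cocycle $\sigma$ in $H(X_S)$ becomes trivial after pullback to $X_{S, L}$, as required.

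The main point requiring a little attention --- and the nearest thing to an obstacle --- is simply checking that the formal machinery of Proposition \ref{Prop:HomotopyFixedPointsClassifyingStack} and Corollary \ref{Cor:TorsorsInvariantsAdicSpaces} is set up flexibly enough to be applied over $X_S$ rather than over $\spa(\zp, \zp)$; but since the argument is formal given the local triviality of cocycles established above, this is routine. Assembling the pointwise equivalences as $S$ varies then produces the desired isomorphism $\bun_G \xrightarrow{\sim} \bun_H^{h \Gamma}$ of v-stacks, completing the proof.
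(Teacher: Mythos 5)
Your proposal is correct and follows essentially the same route as the paper's proof: both reduce to establishing the equivalence $\mathbb{B}G \to (\mathbb{B}H)^{h\Gamma}$ of étale stacks and both obtain the required local triviality of $\Gamma$-cocycles in $H$ from the finite étale cover coming from $\spa L \to \spa \qp$ together with the non-abelian Shapiro lemma. The only cosmetic difference is that the paper proves the equivalence once on the étale site of adic spaces over $\spa(\qp,\zp)$ and then evaluates on $X_S$, whereas you phrase it fiberwise over each $S \in \perf$; these are the same argument.
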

\begin{proof}
It follows from the proof of Corollary \ref{Cor:TorsorsInvariantsAdicSpaces} that 
\begin{align}
    \mathbb{B} G \to \left(\mathbb{B} H \right)^{h \Gamma}
\end{align}
is an equivalence on the category of adic spaces over $\spa(\qp,\zp)$. Indeed, the point is that we can use Lemma \ref{Lem:Shapiro} after the finite \'etale cover $\spa(L,\mathcal{O}_L) \to \spa(\qp,\zp)$ and then conclude using Corollary \ref{Cor:NoCohomologyIsomorphism}.
\end{proof}
Now we return to the assumption that $\spec \mathcal{O}_L \to \spec \zp$ is a finite \'etale cover with Galois group $\Gamma$. If $b:\spd \ovfp \to \shthmu$ is induced by $b:\spd \ovfp \to \shtgmu$, then there is an action of $\Gamma$ on $\minthb$.
\begin{Prop} \label{Prop:InvariantsRZ}
The natural map $\mintgb \to \left(\minthb\right)^{\Gamma}$ is an isomorphism.
\end{Prop}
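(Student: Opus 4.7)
The plan is to take $\Gamma$-homotopy fixed points of the defining $2$-Cartesian square of $\minthb$ and apply the identifications for the component stacks that have already been established in this section.

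First I would verify that the defining square is $\Gamma$-equivariant in the $2$-categorical sense. The map $\shthmu \to \bun_H$ is $\Gamma$-equivariant because it arises from a weak natural transformation of weak functors (Lemma \ref{Lem:FunctorialityShtukasToBunG}) evaluated on the $\Gamma$-action on $\mathcal{H}$. The basepoint map $b:\spd \ovfp \to \bun_H$ is canonically $\Gamma$-fixed because it factors through $\bun_G \to \bun_H$, where $\bun_G$ carries the trivial $\Gamma$-action.

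Next, applying Lemma \ref{Lem:FiberProducts} from Appendix \ref{Appendix:HFP} — which asserts that homotopy fixed points commute with $2$-fiber products — to the defining square of $\minthb$, I obtain
\[
\minthb^{h\Gamma} \simeq (\spd \ovfp)^{h\Gamma} \times_{\bun_H^{h\Gamma}} \shthmu^{h\Gamma} \simeq \spd \ovfp \times_{\bun_G} \shtgmu = \mintgb,
\]
where I have used that $\Gamma$ acts trivially on $\spd \ovfp$, Lemma \ref{Lem:HomotopyFixedPointsBunG} for $\bun_H^{h\Gamma} \simeq \bun_G$, and Lemma \ref{Lem:HomotopyFixedPointsShtukasII} for $\shthmu^{h\Gamma} \simeq \shtgmu$. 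Tracing through the constructions shows that this composite isomorphism is exactly the natural map from $\mintgb$ induced by functoriality in $(\mathcal{G},\mu,b) \to (\mathcal{H},\mu,b)$.

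Finally, because fixing the basepoint $b$ rigidifies the automorphism groups of the underlying $H$-bundle, $\minthb$ is a v-sheaf of sets rather than a genuine stack; the same holds for $\mintgb$. Consequently, the canonical comparison $\minthb^{\Gamma} \to \minthb^{h\Gamma}$ from strict to homotopy fixed points is an isomorphism, which when combined with the previous identification gives the desired $\mintgb \simeq \minthb^{\Gamma}$. There is no substantive obstacle in the argument: once the $2$-categorical functoriality of $\operatorname{Sht}$ and $\operatorname{Bun}$ set up earlier in the section is in place, the proof is essentially formal.
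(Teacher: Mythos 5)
Your argument matches the paper's proof essentially verbatim: both establish $\Gamma$-equivariance of the defining Cartesian square via Lemma \ref{Lem:FunctorialityShtukasToBunG} and the factorization of the basepoint through $\bun_G$, then take $\Gamma$-homotopy fixed points using Lemma \ref{Lem:FiberProducts} together with Lemmas \ref{Lem:HomotopyFixedPointsShtukasII} and \ref{Lem:HomotopyFixedPointsBunG}. Your final paragraph explicitly justifying that strict and homotopy fixed points coincide (since $\minthb$ is a v-sheaf, the map $\shthmu \to \bun_H$ being faithful rigidifies the automorphisms) is a useful elaboration the paper leaves implicit, but the approach is the same.
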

\begin{proof}
The Cartesian diagram defining $\minthb$ is $\Gamma$-equivariant by Lemma \ref{Lem:FunctorialityShtukasToBunG} and the fact that $b$ is induced by $b:\spd \ovfp \to \shtgmu$. The proposition then follows by taking $\Gamma$-homotopy fixed points of this diagram, and using Lemma \ref{Lem:FiberProducts} in combination with Lemmas \ref{Lem:HomotopyFixedPointsShtukasII} and \ref{Lem:HomotopyFixedPointsBunG}.
\end{proof}

\section{Fixed points of Shimura varieties} \label{Sec:Shimura}
Let $\gx$ be a Shimura datum and let $\f$ be a totally real Galois extension of $\mathbb{Q}$ with Galois group $\Gamma$. Let $H:=\operatorname{Res}_{\f/\mathbb{Q}}G_F $ equipped with its natural action of $\Gamma$. For $h:\mathbb{S} \to \g_{\mathbb{R}}$ in $\mathsf{X}$ we let $\mathsf{Y}$ be the $\h(\mathbb{R})$-conjugacy class of the composition $h:\mathbb{S} \to G_{\mathbb{R}} \to \h_{\mathbb{R}}$, this does not depend on the choice of $h$. Then $\hy$ is a Shimura datum and the action of $\Gamma$ on $\h$ is by automorphisms of Shimura data. Following a suggestion of Rapoport, we call $\hy$ the \emph{Piatetski-Shapiro construction} for $\gx$ associated to $\f$.\footnote{This construction was used by Borovoi \cite{Borovoi} and Milne \cite{MilneLanglands} in their proof of the conjecture of Langlands on conjugation of Shimura varieties, and they credit it to Piatetski-Shapiro.} The natural closed immersion $\g \to \h$ defines a morphism of Shimura data. The goal of this section is to investigate the fixed points of the action of $\Gamma$ on the Shimura varieties for $\hy$, and to compare them to the Shimura varieties for $\gx$.

\subsection{The case of tori} \label{Sec:Tori}
As a toy example, we study the case when $\g=\tor$ is a torus with $\mathbb{R}$-split rank zero. Then the $\mathbb{R}$-split rank of
$\h=\operatorname{Res}_{\f/\mathbb{Q}} \tor_F$ is also zero, and thus for neat $K \subset \haf$ the natural map
\begin{align}
    \h(\mathbb{Q}) \to \haf/K
\end{align}
is injective by Lemma \ref{Lem:FreeActionShimura}. The cokernel of this injection defines the set of $\mathbb{C}$-points of the Shimura variety $\mathbf{Sh}_{K}\hy$. If $K$ is $\Gamma$-stable, then $\Gamma$ acts on $\mathbf{Sh}_{K}\hy$ and there is a long exact sequence of abelian groups
\begin{align}
    0 \to K^{\Gamma} \to \gafp \to \left(\haf/K)\right)^{\Gamma} \to \rH^1(\Gamma, K) \to \rH^1(\Gamma, \haf) \to \cdots
\end{align}
We will see in Proposition \ref{Prop:ExistenceCofinalGoodCompactOpens} that it is possible to find arbitrary small $\Gamma$-stable $K$ with $\rH^1(\Gamma,K)=0$, at least if $\f$ is tamely ramified over $\mathbb{Q}$. So from now on we will assume that $\rH^1(\Gamma, K)=0$, which implies that the natural map
\begin{align}
   \gaf/K^{\Gamma} \to \left(\haf/K)\right)^{\Gamma}.
\end{align}
is an isomorphism and that the natural map
\begin{align}
    \rH^1(\Gamma, \haf/K) \to \rH^1(\Gamma, \haf)
\end{align}
is injective. It moreover implies that there is a long exact sequence
\begin{align}
    0 \to \g(\mathbb{Q}) \to \gaf/K^{\Gamma} \to \mathbf{Sh}_K\hy^{\Gamma}(\mathbb{C}) \to \rH^1(\Gamma, \h(\mathbb{Q})) \to \rH^1(\Gamma, \haf/K).
\end{align}
Thus the obstruction to the natural map
\begin{align} \label{Eq:ShimuraToriFixedPoints}
    \mathbf{Sh}_{K^{\Gamma}}\gx(\mathbb{C}) \to \mathbf{Sh}_{K}\hy^{\Gamma}(\mathbb{C})
\end{align}
being a bijection is given by the kernel of
\begin{align} \label{Eq:ShaGFTorus}
    \rH^1(\Gamma, \h(\mathbb{Q})) \to \rH^1(\Gamma, \haf).
\end{align}
We will later prove, see Lemma \ref{Lem:KernelAndSha}, that this can be identified with the kernel of
\begin{align}
    \Sha^1(\mathbb{Q}, \mathsf{G}) \to \Sha^1(\f, \g),
\end{align}
in particular, the obstruction is finite. In the rest of the section, we will prove that the same result, suitably interpreted, holds for arbitrary Shimura varieties.

\subsection{Shimura varieties and Shimura stacks} Let the notation be as in the start of Section \ref{Sec:Shimura} and let $K \subset \hafp$ be a compact open subgroup. It turns out that it is easier to analyze the $\Gamma$-homotopy fixed points of the stack quotient/action groupoid
\begin{align} \label{Eq:ShimuraStackExample}
    \left[\h(\mathbb{Q}) \backslash \left(\mathsf{Y} \times \hafp/K\right)\right],
\end{align}
than it is to analyze the $\Gamma$-fixed points of the Shimura variety $\mathbf{Sh}_{K}\hy$ with $\mathbb{C}$-points
\begin{align}
    \mathbf{Sh}_{K}\hy(\mathbb{C})=\h(\mathbb{Q}) \backslash \left(\mathsf{Y} \times \hafp/K\right).
\end{align}
Thus it is important for us to understand when the Shimura variety for $\hy$ is equal to the stack quotient in \eqref{Eq:ShimuraStackExample} (for sufficiently small $K$). \smallskip

Recall Milne's axiom SV5 for a Shimura datum $\gx$ from \cite[p. 64]{Milne}; it asks that $Z(\mathbb{Q}) \subset Z(\af)$ is discrete, where $Z=Z_{\g}$ is the center of $\g$. By \cite[Lemma 1.5.5]{KisinShinZhu} this happens if and only if the $\mathbb{Q}$-split rank of $Z$ is equal to the $\mathbb{R}$-split rank of $Z$. The following lemma is well known, see \cite[Proposition 3.1, Lemma 5.13]{Milne}.
\begin{Lem} \label{Lem:FreeActionShimura}
If axiom SV5 holds for $\gx$, then for neat $K$ the group $\g(\mathbb{Q})$ acts freely on $\mathsf{X} \times \gafp/K$.
\end{Lem}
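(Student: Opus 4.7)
The plan is the classical one: combine neatness of $K$ with the compact-modulo-center structure of the stabilizer of a point of $\mathsf{X}$ to push any fixer $\gamma \in \g(\mathbb{Q})$ into the center $Z$ of $\g$, after which axiom SV5 forces $\gamma = 1$. So suppose $\gamma \in \g(\mathbb{Q})$ fixes $(x, gK) \in \mathsf{X} \times \gafp/K$; then $\gamma \in \mathrm{Stab}_{\g(\mathbb{R})}(x) =: K_\infty$ and $g^{-1}\gamma g \in K$, and I want to conclude $\gamma = 1$.

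First I would pass to the adjoint group $\gad$. A standard consequence of the Shimura datum axioms (the fact that $\mathrm{ad}(h(i))$ is a Cartan involution of $\gad_{\mathbb{R}}$) is that $K_\infty/Z(\mathbb{R})$ is compact, so the image $\bar\gamma$ in $\gad(\mathbb{R})$ lies in a compact subgroup; simultaneously, it lies in the compact open image of $gKg^{-1}$ in $\gad(\afp)$. Picking a faithful rational representation $\rho$ of $\gad$, the eigenvalues of $\rho(\bar\gamma)$ are algebraic numbers that are $\ell$-adic units at every $\ell \neq p$ and (by Galois-stability of the eigenvalue set together with the archimedean compactness) have complex absolute value $1$ at every embedding; the product formula supplies integrality at $p$ as well. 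Thus these eigenvalues are algebraic integers on the unit circle, hence roots of unity by Kronecker's theorem. Neatness descends to the image of $K$ in $\gad(\afp)$, which means the subgroup of $\bar{\mathbb{Q}}^\times$ generated by the eigenvalues of $\rho(\bar\gamma)$ is torsion-free; combined with the preceding step, each eigenvalue must equal $1$, so $\rho(\bar\gamma)$ is unipotent and lies in a compact subgroup of $\mathrm{GL}(V_\mathbb{R})$. The only such element is the identity, so $\bar\gamma = 1$ and hence $\gamma \in Z(\mathbb{Q})$.

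Finally, SV5 says $Z(\mathbb{Q})$ is discrete in $Z(\af)$, so its intersection with the compact open subgroup of $Z(\af)$ coming from $gKg^{-1}$ (extended by an arbitrary level at $p$) is finite, and by neatness this finite subgroup of $Z(\mathbb{Q})$ is torsion-free, hence trivial; so $\gamma = 1$ as required. The only delicate step is verifying that the eigenvalues of $\rho(\bar\gamma)$ are genuine algebraic integers on the unit circle --- this requires combining the archimedean compactness of $K_\infty/Z(\mathbb{R})$ with the finite-adelic integrality coming from $g^{-1}\gamma g \in K$, glued by the product formula --- after which Kronecker's theorem together with neatness finishes the job.
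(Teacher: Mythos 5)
The overall strategy — push $\gamma$ into $Z(\mathbb{Q})$ via neatness of the image of $K$ in the adjoint group, then invoke SV5 — is the standard argument behind Milne's result, which the paper merely cites. But the product-formula step you flag as delicate does fail: the formula controls only $\prod_{v \mid p} |\lambda|_v$, not each factor individually, and a $p$-unit can have absolute value $1$ at every archimedean place without being an algebraic integer. Concretely, with $p=5$ and $\lambda = (2+i)/(2-i) = (3+4i)/5 \in \mathbb{Q}(i)$, one has $|\lambda|_{\mathbb{C}} = 1$ and $|\lambda|_\ell = 1$ for all $\ell \neq 5$, yet $\lambda$ has valuation $+1$ at $(2+i)$, valuation $-1$ at $(2-i)$, and is of infinite order. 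So Kronecker's theorem is not available with the data you have, and this is a genuine gap.

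The gap is diagnostic: the lemma as printed, with $K \subset \gafp$, is actually false. Taking $\g$ to be the norm-one torus of $\mathbb{Q}(i)/\mathbb{Q}$ (anisotropic over $\mathbb{R}$ and $\mathbb{Q}$, so SV5 holds) and $p=5$, for any neat $K \subset \g(\afp)$ some power $\gamma^m$ of $\gamma = (3+4i)/5 \in \g(\mathbb{Q})$ lies in $K$ and is nontrivial, so $\g(\mathbb{Q})$ does not act freely. The statement should read $K \subset \gaf$, which is how the lemma is in fact invoked in Section \ref{Sec:Tori} and in the proof of Corollary \ref{Cor:FixedPointsShimuraSV5}. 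Once the level includes $p$, the condition $g^{-1}\gamma g \in K$ directly gives that the eigenvalues are $p$-adic units as well; the product-formula detour disappears, and the rest of your argument (compactness at infinity, Kronecker plus neatness forcing $\bar\gamma = 1$, then SV5 together with neatness inside $Z(\af)$) is correct and matches the standard proof.
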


Unfortunately if SV5 holds for $\gx$ then it often does not hold for $\hy$. For example if $Z_{\g}=\mathbb{G}_m$, then $Z_{\h}=\operatorname{Res}_{\f/\mathbb{Q}} \mathbb{G}_m$, which has $\mathbb{Q}$-split rank one but $\mathbb{R}$-split rank equal to $[\f:\mathbb{Q}]$. We make this observation precise in the following lemma.
\begin{Lem} \label{Lem:SV5}
Suppose that $\gx$ satisfies SV5 and that $[\f:\mathbb{Q}]>1$. Then $\hy$ satisfies SV5 if and only if the $\mathbb{R}$-split rank of $Z_{\mathsf{G}}$ is zero. 
\end{Lem}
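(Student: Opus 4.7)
The plan is to translate both SV5 conditions into statements about split ranks of $Z := Z_\g$ and reduce to a numerical identity. The key observation is that Weil restriction commutes with taking centers, so $Z_\h = \operatorname{Res}_{\mathsf{F}/\mathbb{Q}} Z_\mathsf{F}$; by the criterion cited from \cite[Lemma 1.5.5]{KisinShinZhu}, SV5 for $\hy$ is equivalent to the equality $r_\mathbb{Q}(Z_\h) = r_\mathbb{R}(Z_\h)$ of the $\mathbb{Q}$-split and $\mathbb{R}$-split ranks of the center.

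Next I will compute both ranks in terms of $Z$. For the $\mathbb{Q}$-split rank, the cocharacter lattice $X_*(Z_\h)$ with its $\operatorname{Gal}(\overline{\mathbb{Q}}/\mathbb{Q})$-action is the induced module $\operatorname{Ind}_{\operatorname{Gal}(\overline{\mathbb{Q}}/\mathsf{F})}^{\operatorname{Gal}(\overline{\mathbb{Q}}/\mathbb{Q})} X_*(Z)$, so by Frobenius reciprocity the rank of its Galois invariants equals the rank of $X_*(Z)^{\operatorname{Gal}(\overline{\mathbb{Q}}/\mathsf{F})}$, i.e.\ the $\mathsf{F}$-split rank $r_\mathsf{F}(Z)$. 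For the $\mathbb{R}$-split rank, since $\mathsf{F}$ is totally real one has $(Z_\h)_\mathbb{R} \simeq \prod_{v \mid \infty} Z_{\mathsf{F}_v} \simeq Z_\mathbb{R}^{[\mathsf{F}:\mathbb{Q}]}$, and hence $r_\mathbb{R}(Z_\h) = [\mathsf{F}:\mathbb{Q}] \cdot r_\mathbb{R}(Z)$.

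Finally I will invoke SV5 for $\gx$. One has the sandwich $r_\mathbb{Q}(Z) \leq r_\mathsf{F}(Z) \leq r_\mathbb{R}(Z)$, since a $\mathbb{Q}$-split subtorus remains split after base change to $\mathsf{F}$, and an $\mathsf{F}$-split subtorus becomes $\mathbb{R}$-split via any embedding $\mathsf{F} \hookrightarrow \mathbb{R}$ (which exists because $\mathsf{F}$ is totally real). Thus SV5 for $\gx$ forces $r_\mathsf{F}(Z) = r_\mathbb{R}(Z)$, and, combined with the previous paragraph, SV5 for $\hy$ reduces to the numerical identity $r_\mathbb{R}(Z) = [\mathsf{F}:\mathbb{Q}] \cdot r_\mathbb{R}(Z)$. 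Since $[\mathsf{F}:\mathbb{Q}] > 1$, this holds if and only if $r_\mathbb{R}(Z) = 0$, as claimed. No step is really the main obstacle; the only point requiring a moment of care is the Shapiro/Frobenius-reciprocity identification of the Galois module $X_*(\operatorname{Res}_{\mathsf{F}/\mathbb{Q}} Z_\mathsf{F})$, which is entirely standard.
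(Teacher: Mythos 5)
Your proof is correct, and it differs mildly but genuinely from the one in the paper. The paper's proof also cites \cite[Lemma 1.5.5]{KisinShinZhu}, but uses the other half of that lemma: it decomposes $Z_\g$ up to isogeny as $T_1 \times T_2$ with $T_1$ a $\mathbb{Q}$-split torus and $T_2$ an $\mathbb{R}$-anisotropic torus, passes to the corresponding isogeny $Z_\h \sim \operatorname{Res}_{\mathsf{F}/\mathbb{Q}} T_{1,\mathsf{F}} \times \operatorname{Res}_{\mathsf{F}/\mathbb{Q}} T_{2,\mathsf{F}}$, and reads off the ranks of each factor (the second factor contributes zero to both ranks since $\mathsf{F}$ is totally real; the first has $\mathbb{Q}$-split rank $d = r_\mathbb{Q}(Z_\g)$ and $\mathbb{R}$-split rank $d[\mathsf{F}:\mathbb{Q}]$). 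You instead work directly with the cocharacter lattice: Frobenius reciprocity gives $r_\mathbb{Q}(Z_\h) = r_\mathsf{F}(Z_\g)$, the product decomposition over $\mathbb{R}$ gives $r_\mathbb{R}(Z_\h) = [\mathsf{F}:\mathbb{Q}]\, r_\mathbb{R}(Z_\g)$, and the sandwich $r_\mathbb{Q} \leq r_\mathsf{F} \leq r_\mathbb{R}$ together with SV5 for $\gx$ collapses $r_\mathsf{F}$ to $r_\mathbb{R}$. Your version isolates the general formula $r_\mathbb{Q}(\operatorname{Res}_{\mathsf{F}/\mathbb{Q}} T_\mathsf{F}) = r_\mathsf{F}(T)$, which is a slightly cleaner structural statement and sidesteps the isogeny decomposition; the paper's version is perhaps a touch more elementary in that it only manipulates the two explicit factors coming from the cited lemma. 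Both routes deliver the same numerical identity $r_\mathbb{R}(Z_\g) = [\mathsf{F}:\mathbb{Q}] \cdot r_\mathbb{R}(Z_\g)$ and then conclude.
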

\begin{proof}
By \cite[Lemma 1.5.5]{KisinShinZhu}, there is an isogeny $Z_{\g} \sim \mathsf{T}_1\times \mathsf{T}_2$ where $\mathsf{T}_1$ is $\mathbb{Q}$-split and $\mathsf{T}_2$ has $\mathbb{R}$-rank zero. This induces an isogeny
\begin{align}
    Z_{\h} \simeq \operatorname{Res}_{\f/\mathbb{Q}} \mathsf{T}_{1, \f} \times \operatorname{Res}_{\f/\mathbb{Q}} \mathsf{T}_{2, \f}.
\end{align}
The torus $\operatorname{Res}_{\f/\mathbb{Q}} \mathsf{T}_{2, \f}$ still has $\mathbb{R}$-rank zero. The torus $\operatorname{Res}_{\f/\mathbb{Q}} \mathsf{T}_{1, \f}$ has $\mathbb{Q}$-rank equal to the $\mathbb{Q}$-split rank $d$ of $Z_{\mathsf{G}}$, and has $\mathbb{R}$-split rank equal to $d \cdot [\f:\mathbb{Q}]$. Thus the $\mathbb{Q}$-split rank of $Z_{\h}$ is equal to the $\mathbb{R}$-split rank of $Z_{\h}$ if and only if $d=0$.
\end{proof}

\subsection{Fixed points of adelic quotients}
Let the notation be as in the start of Section \ref{Sec:Shimura}. Before we investigate the homotopy fixed points of the groupoid \eqref{Eq:ShimuraStackExample}, we first investigate the fixed points of $\h(\mathbb{Q}) \backslash \mathsf{Y} \times \haf$. To understand the $\Gamma$-fixed points of the quotient $\h(\mathbb{Q})\backslash \h(\mathbb{A})$, we need to understand the map $\rH^1(\Gamma, \h(\mathbb{Q})) \to \rH^1(\Gamma, \h(\mathbb{A}))$. For this we will use the inflation maps
\begin{align}
    \rH^1(\Gamma, \h(\mathbb{Q}))&=\rH^1(\Gamma, \mathsf{G}(\f)) \to \rH^1(\operatorname{Gal}_{\mathbb{Q}}, \g(\qbar))=:\rH^1(\mathbb{Q}, \g) \\
    \rH^1(\Gamma, \h(\mathbb{A}))&=\rH^1(\Gamma, \g(\mathbb{A}_{\f})) \to \rH^1(\operatorname{Gal}_{\mathbb{Q}}, \g(\overline{\mathbb{A}})),
\end{align}
where $\overline{\mathbb{A}}=\mathbb{A} \otimes_{\mathbb{Q}} \qbar$ and $\mathbb{A}_{\f}=\mathbb{A} \otimes_{\mathbb{Q}} \f$. As explained on \cite[p. 298]{PR}, there is an injective map
\begin{align}
    \rH^1(\operatorname{Gal}_{\mathbb{Q}}, \g(\overline{\mathbb{A}})) \to \prod_v \rH^1(\operatorname{Gal}_{\mathbb{Q}_v}, \g(\overline{\mathbb{Q}}_v)),
\end{align}
such that the induced map $\rH^1(\operatorname{Gal}_{\mathbb{Q}}, \g(\qbar)) \to \prod_v \rH^1(\operatorname{Gal}_{\mathbb{Q}_v}, \g(\overline{\mathbb{Q}}_v))$ is the natural map.
\begin{Lem} \label{Lem:KernelAndSha}
The inflation map induces a bijection
\begin{align}
    \ker \left(\rH^1(\Gamma, \h(\mathbb{Q})) \to \rH^1(\Gamma, \h(\mathbb{A})) \right) \to \ker \left(\Sha^1(\mathbb{Q}, \mathsf{G}) \to \Sha^1(\f, \g) \right).
\end{align}
\end{Lem}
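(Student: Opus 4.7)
The plan is to derive the lemma from the non-abelian inflation-restriction exact sequence applied twice, combined with the characterization of $\Sha^1(\mathbb{Q}, \mathsf{G})$ as the kernel of the natural map $H^1(\mathbb{Q}, \mathsf{G}) \to H^1(\operatorname{Gal}_\mathbb{Q}, \mathsf{G}(\overline{\mathbb{A}}))$ recalled just before the lemma. First I would apply inflation-restriction to the pair $\operatorname{Gal}_\mathsf{F} \triangleleft \operatorname{Gal}_\mathbb{Q}$ with coefficients $\mathsf{G}(\overline{\mathbb{Q}})$: since $\mathsf{F}/\mathbb{Q}$ is finite, $\mathsf{G}(\overline{\mathbb{Q}})^{\operatorname{Gal}_\mathsf{F}} = \mathsf{G}(\mathsf{F}) = \mathsf{H}(\mathbb{Q})$, yielding an exact sequence of pointed sets
\begin{align}
    1 \to H^1(\Gamma, \mathsf{H}(\mathbb{Q})) \xrightarrow{\operatorname{inf}} H^1(\mathbb{Q}, \mathsf{G}) \xrightarrow{\operatorname{res}} H^1(\mathsf{F}, \mathsf{G}).
\end{align}
Running the identical argument with $\mathsf{G}(\overline{\mathbb{A}})$-coefficients, using $\overline{\mathbb{A}}^{\operatorname{Gal}_\mathsf{F}} = \mathbb{A}_\mathsf{F}$ and hence $\mathsf{G}(\overline{\mathbb{A}})^{\operatorname{Gal}_\mathsf{F}} = \mathsf{H}(\mathbb{A})$, produces the parallel sequence
\begin{align}
    1 \to H^1(\Gamma, \mathsf{H}(\mathbb{A})) \xrightarrow{\operatorname{inf}} H^1(\operatorname{Gal}_\mathbb{Q}, \mathsf{G}(\overline{\mathbb{A}})) \to H^1(\operatorname{Gal}_\mathsf{F}, \mathsf{G}(\overline{\mathbb{A}})).
\end{align}
These fit into a commutative ladder whose horizontal arrows are the ones appearing in the statement and whose verticals are the injective inflation maps.

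Next I would diagram-chase. Given $x$ in $\ker(H^1(\Gamma, \mathsf{H}(\mathbb{Q})) \to H^1(\Gamma, \mathsf{H}(\mathbb{A})))$, write $\tilde{x} := \operatorname{inf}(x) \in H^1(\mathbb{Q}, \mathsf{G})$. Exactness of the first row forces $\tilde{x}|_\mathsf{F} = 1$, and injectivity of adelic inflation combined with the kernel hypothesis forces the adelic image of $\tilde{x}$ to be trivial, placing $\tilde{x}$ in $\Sha^1(\mathbb{Q}, \mathsf{G})$ via the injection of $H^1(\operatorname{Gal}_\mathbb{Q}, \mathsf{G}(\overline{\mathbb{A}}))$ into $\prod_v H^1(\mathbb{Q}_v, \mathsf{G})$ recalled above. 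Because restriction carries $\Sha^1(\mathbb{Q}, \mathsf{G})$ into $\Sha^1(\mathsf{F}, \mathsf{G})$ (any place of $\mathsf{F}$ extends a place of $\mathbb{Q}$), the two conditions together put $\tilde{x}$ in $\ker(\Sha^1(\mathbb{Q}, \mathsf{G}) \to \Sha^1(\mathsf{F}, \mathsf{G}))$. The assignment $x \mapsto \tilde{x}$ is injective because inflation is. Conversely, any $\tilde{x}$ in the latter kernel lifts uniquely through the first inflation to some $x \in H^1(\Gamma, \mathsf{H}(\mathbb{Q}))$, and the adelic image of $x$ maps via injective adelic inflation to the adelic image of $\tilde{x}$, which is trivial by the $\Sha^1$-hypothesis; hence $x$ lies in the desired kernel, yielding surjectivity.

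The only real subtlety is that these cohomology sets are pointed sets rather than groups, so \emph{exactness} only records that the fiber over the basepoint agrees with the image. However, in degree one both the injectivity of inflation and the identification of its image as the kernel of restriction are entirely standard for non-abelian $H^1$, so I do not anticipate any serious obstacle and the main work is simply the bookkeeping described above.
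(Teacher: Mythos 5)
Your proof is correct and follows essentially the same route the paper takes, which is to exploit the two non-abelian inflation--restriction sequences (with $\mathsf{G}(\overline{\mathbb{Q}})$- and $\mathsf{G}(\overline{\mathbb{A}})$-coefficients) together with the injection $H^1(\operatorname{Gal}_{\mathbb{Q}}, \mathsf{G}(\overline{\mathbb{A}})) \hookrightarrow \prod_v H^1(\mathbb{Q}_v, \mathsf{G})$ that the paper recalls just before the lemma. The paper's own proof is a one-liner that invokes injectivity of inflation and the observation that the left-hand kernel lands in $\Sha^1(\mathbb{Q}, \mathsf{G})$, leaving the diagram chase (in particular the surjectivity onto $\ker(\Sha^1(\mathbb{Q}, \mathsf{G}) \to \Sha^1(\mathsf{F}, G))$) to the reader; you supply exactly those missing steps, including the crucial use of injectivity of the adelic inflation in the surjectivity direction. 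One tiny inaccuracy worth noting: in the forward direction you invoke injectivity of the adelic inflation, but that step only needs commutativity of the ladder, not injectivity; this does not affect correctness.
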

\begin{proof}
The inflation map $\rH^1(\Gamma, \h(\mathbb{Q})) \to \rH^1(\mathbb{Q}, \mathsf{G})$ is injective by \cite[Section 5.8.(a)]{SerreGaloisCohomology} and lands $\Sha^1(\mathbb{Q}, \mathsf{G})$ by the discussion above, and so we are done.
\end{proof}
Since $\f$ is totally real, it follows that $\h(\mathbb{R})=\operatorname{Hom}(\Gamma,\g(\mathbb{R}))$ and therefore by Lemma \ref{Lem:Shapiro} that $\rH^1(\Gamma, \h(\mathbb{R}))=\{1\}$. Thus we observe that
\begin{align}
    \ker \left(\rH^1(\Gamma, \h(\mathbb{Q})) \to \rH^1(\Gamma, \haf) \right)=\ker \left(\rH^1(\Gamma, \h(\mathbb{Q})) \to \rH^1(\Gamma, \h(\ad)) \right).
\end{align}
\begin{Lem} \label{Lem:QuotientExactAdelicSpace}
If $\Sha^1(\mathbb{Q}, \mathsf{G}) \to \Sha^1(\f, \g)$ is injective, then the natural map
\begin{align}
    \g(\mathbb{Q}) \backslash \mathsf{X} \times \gaf \to \left(\h(\mathbb{Q}) \backslash \mathsf{Y} \times \haf\right)^{\Gamma} 
\end{align}
is a bijection. 
\end{Lem}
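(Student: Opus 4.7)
The key observation I would exploit is that $\h(\mathbb{Q})$ acts \emph{freely} on $\mathsf{Y} \times \haf$: since $\h(\mathbb{Q}) \hookrightarrow \haf$ acts freely on $\haf$ by left translation, the diagonal action is free. With this in hand the lemma reduces to a standard non-abelian Galois cohomology computation, and I would treat the two directions separately.

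For injectivity, I would take $(x,g), (x',g') \in \mathsf{X} \times \gaf$ that become equal in $\h(\mathbb{Q}) \backslash (\mathsf{Y} \times \haf)$. Freeness yields a unique $h \in \h(\mathbb{Q})$ with $h \cdot (x,g) = (x',g')$, and applying $\gamma \in \Gamma$ together with the $\Gamma$-invariance of both points shows that $\gamma(h)$ also has this property. By uniqueness $\gamma(h) = h$, so $h \in \h(\mathbb{Q})^{\Gamma} = \g(\mathbb{Q})$ and the two points define the same class in $\g(\mathbb{Q}) \backslash (\mathsf{X} \times \gaf)$. Note that this direction does not use the $\Sha$ hypothesis.

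For surjectivity, I would pick $[y,a] \in (\h(\mathbb{Q}) \backslash (\mathsf{Y} \times \haf))^{\Gamma}$. Freeness produces, for each $\gamma \in \Gamma$, a unique $h_\gamma \in \h(\mathbb{Q})$ with $\gamma(y,a) = h_\gamma^{-1} \cdot (y,a)$, and a short computation shows that $\gamma \mapsto h_\gamma$ is a $1$-cocycle whose class $[h_\gamma] \in H^1(\Gamma, \h(\mathbb{Q}))$ is the obstruction to lifting $[y,a]$ to a $\Gamma$-fixed pair. Projecting to the $\haf$-factor gives $h_\gamma = a \cdot \gamma(a)^{-1}$, a coboundary, so the image in $H^1(\Gamma, \haf)$ vanishes. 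Because $\mathsf{F}$ is totally real, $\h(\mathbb{R}) \simeq \operatorname{Map}(\Gamma, \g(\mathbb{R}))$, and Lemma \ref{Lem:Shapiro} (with trivial subgroup) gives $H^1(\Gamma, \h(\mathbb{R})) = \{1\}$. These two vanishings, combined through the observation preceding the lemma, place $[h_\gamma]$ in $\ker(H^1(\Gamma, \h(\mathbb{Q})) \to H^1(\Gamma, \h(\mathbb{A})))$. Lemma \ref{Lem:KernelAndSha} together with the hypothesis that $\Sha^1(\mathbb{Q}, \mathsf{G}) \to \Sha^1(\mathsf{F}, G)$ is injective then forces $[h_\gamma] = 1$; writing $h_\gamma = h \cdot \gamma(h)^{-1}$ for some $h \in \h(\mathbb{Q})$ and replacing $(y,a)$ by $h^{-1}(y,a)$ produces a representative in $(\mathsf{Y} \times \haf)^{\Gamma} = \mathsf{X} \times \gaf$.

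The main obstacle is this cohomological vanishing in the surjectivity step. Freeness of the action and the passage to a cocycle are essentially formal, but the argument requires two distinct auxiliary vanishings: $H^1(\Gamma, \h(\mathbb{R})) = \{1\}$, which is exactly what upgrades the coboundary witnessed by $a$ in $H^1(\Gamma, \haf)$ to triviality in $H^1(\Gamma, \h(\mathbb{A}))$, together with Lemma \ref{Lem:KernelAndSha} to translate the resulting kernel statement into the $\Sha$-hypothesis. The totally real hypothesis on $\mathsf{F}$ enters only through the Shapiro-type vanishing at the archimedean place.
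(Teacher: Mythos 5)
Your proof is correct, but it takes a genuinely different route from the one in the paper. The paper's proof works in two stages: it first applies Serre's exact sequence of pointed sets to the quotient $\h(\mathbb{Q}) \backslash \h(\mathbb{A})$ by the \emph{full} adelic group (so the archimedean place is absorbed into $\mathbb{A}$ from the outset), deducing that $\g(\mathbb{Q}) \backslash \g(\mathbb{A}) \to \bigl(\h(\mathbb{Q}) \backslash \h(\mathbb{A})\bigr)^{\Gamma}$ is a bijection; it then identifies $\mathsf{Y} \times \haf$ with $\h(\mathbb{A})/\mathsf{K}_{\mathsf{Y}}$, where $\mathsf{K}_{\mathsf{Y}}$ is the stabilizer of a base point, shows $H^1(\Gamma, \mathsf{K}_{\mathsf{Y}}) = \{1\}$ by Shapiro, and invokes the stack-theoretic Corollary \ref{Cor:NoCohomologyIsomorphism} to pass to the double coset. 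You instead argue directly on $\mathsf{Y} \times \haf$: you observe that $\h(\mathbb{Q})$ acts freely there because it already acts freely on the second factor, use freeness plus $\h(\mathbb{Q})^{\Gamma} = \g(\mathbb{Q})$ to settle injectivity in one line, and for surjectivity write down the cocycle $h_\gamma$ explicitly and split its class by hand. Both arguments ultimately consume the same two inputs --- the $\Sha$ hypothesis via Lemma \ref{Lem:KernelAndSha} and a Shapiro-type vanishing at the archimedean place --- but your version is more elementary in that it stays entirely at the level of sets and explicit cocycles, bypassing both the Serre exact sequence and the stacky Corollary \ref{Cor:NoCohomologyIsomorphism}. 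A small clarifying point worth recording in your write-up: the identification $(\mathsf{Y} \times \haf)^{\Gamma} = \mathsf{X} \times \gaf$ that you use at the very end relies on $\mathsf{Y} \cong \operatorname{Map}(\Gamma, \mathsf{X})$ with $\Gamma$ acting by precomposition (true because $\mathsf{F}$ is totally real, so $\h_{\mathbb{R}} \cong \prod_{\Gamma} \g_{\mathbb{R}}$), which makes $\mathsf{Y}^{\Gamma}$ the diagonal copy of $\mathsf{X}$.
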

\begin{proof}
By \cite[Corollary 1 on page 50]{SerreGaloisCohomology}, there is a short exact sequence of pointed sets
\begin{align}
    1 \to \h(\mathbb{Q})^{\Gamma} \to \h(\mathbb{A})^{\Gamma} \to \left(\h(\mathbb{Q}) \backslash \h(\mathbb{A})\right)^{\Gamma} \to \rH^1(\Gamma, \h(\mathbb{Q})) \to \rH^1(\Gamma,\h(\mathbb{A})) \to \cdots 
\end{align}
It moreover follows from loc. cit. that if $\rH^1(\Gamma, \h(\mathbb{Q})) \to \rH^1(\Gamma,\h(\mathbb{A}))$ has trivial kernel then $\h(\mathbb{A})^{\Gamma} \to \left(\h(\mathbb{Q}) \backslash \h(\mathbb{A})\right)^{\Gamma}$ is surjective. Thus by Lemma \ref{Lem:KernelAndSha} and our assumption, the natural map
\begin{align}
    \g(\mathbb{Q}) \backslash \g(\mathbb{A}) \to \left(\h(\mathbb{Q}) \backslash \h(\mathbb{A})\right)^{\Gamma} 
\end{align}
is a bijection. Let $\mathsf{K}_{\mathsf{X}} \subset \g(\mathbb{R})$ be the stabilizer of some point $x \in \mathsf{X}$ and let $\mathsf{K}_{\mathsf{Y}}$ be its stabilizer inside of $\h(\mathbb{R})$. Since $\f$ is totally real it follows that $\h(\mathbb{R})=\operatorname{Hom}(\Gamma,\g(\mathbb{R}))$ and also that $\mathsf{K}_{\mathsf{Y}}=\operatorname{Hom}(\Gamma, \mathsf{K}_{\mathsf{X}})$; thus $\rH^1(\Gamma, \mathsf{K}_{\mathsf{Y}})=\{1\}$ and $\mathsf{K}_{\mathsf{X}}=\mathsf{K}_{\mathsf{Y}}^{\Gamma}$. 

We can identify the natural map of the lemma with
\begin{align}
    \g(\mathbb{Q}) \backslash \g(\mathbb{A})/\mathsf{K}_{\mathsf{X}} \to \left(\h(\mathbb{Q}) \backslash \h(\mathbb{A})/\mathsf{K}_{\mathsf{Y}}\right)^{\Gamma}.
\end{align}
The lemma now follows from the fact that $\rH^1(\Gamma, \mathsf{K}_{\mathsf{Y}})=\{1\}$ in combination with Corollary \ref{Cor:NoCohomologyIsomorphism}.
\end{proof}
\subsection{Constructing good compact open subgroups} \label{Sec:GoodSubgroups}
In order to apply Lemma \ref{Lem:QuotientExactAdelicSpace} to Shimura varieties (or stacks), we need to understand the $\Gamma$-cohomology of compact open subgroups of $\haf$. In this section, we are going to show that there exist many compact open subgroups with vanishing $\Gamma$-cohomology, at least if $\f$ is tamely ramified over $\mathbb{Q}$.

Let $p$ be a prime number and write $\mathcal{O}_{F}:=\mathcal{O}_{\f} \otimes \zp$. It is a finite flat algebra over $\zp$ equipped with an action of $\Gamma$. If $\mathcal{G}$ is a smooth affine group scheme over $\zp$ with generic fiber isomorphic to $G := \mathsf{G} \otimes \qp$, we define $\mathcal{H}:=\operatorname{Res}_{\mathcal{O}_{F}/\zp} \mathcal{G}_{\mathcal{O}_{F}}$. It is a smooth affine group scheme over $\zp$ with generic fiber isomorphic to $\operatorname{Res}_{F / \qp}  G_{F}$, where $F=\f \otimes_{\mathbb{Q}} \qp$. Moreover the Galois group $\Gamma$ acts on $\mathcal{H}$. 

\begin{Lem} \label{Lem:Cofinal}
As $\mathcal{G}$ runs over all smooth affine group schemes over $\zp$ with connected special fiber and with generic fiber isomorphic to $G$, the groups $\mathcal{H}(\zp)$ form a cofinal collection of compact open subgroups of $H(\qp)$. 
\end{Lem}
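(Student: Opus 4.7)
The strategy is to start with any smooth affine model $\mathcal{G}_0$ of $G$ over $\zp$ and produce a cofinal family by iteratively dilating at the identity section of the special fiber, so that the resulting $\mathcal{O}_F$-points are exactly the principal congruence subgroups of $\mathcal{G}_0(\mathcal{O}_F)$.

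First, fix any smooth affine group scheme $\mathcal{G}_0/\zp$ with generic fiber $G$; existence follows from Bruhat--Tits theory (since $G$ is reductive in the context of this paper), or more elementarily by taking the schematic closure of $G$ inside some $\operatorname{GL}_{N,\zp}$ and applying N\'eron smoothening. Then $\mathcal{G}_0(\mathcal{O}_F)$ is automatically an open compact subgroup of $H(\qp)=G(F)$, but it need not have connected special fiber and will typically be too large to sit inside an arbitrary compact open $K$.

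Next, for $n \geq 1$ define $\mathcal{G}_n$ inductively as the dilatation of $\mathcal{G}_{n-1}$ along the identity section of its special fiber. Standard properties of dilatations imply that each $\mathcal{G}_n$ is a smooth affine group scheme over $\zp$ with generic fiber $G$, and that its special fiber is a vector group; in particular, the special fiber of $\mathcal{G}_n$ is isomorphic to $\mathbb{G}_{a,\fp}^{\dim G}$ and hence connected. Since $\mathcal{O}_F$ is flat over $\zp$, dilatation commutes with the base change $\zp \to \mathcal{O}_F$, and by the universal property of dilatations one obtains
\[
\mathcal{G}_n(\mathcal{O}_F) \;=\; \ker\bigl(\mathcal{G}_0(\mathcal{O}_F) \to \mathcal{G}_0(\mathcal{O}_F/p^n\mathcal{O}_F)\bigr).
\]

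Finally, let $K \subset H(\qp)$ be any compact open subgroup. Then $K \cap \mathcal{G}_0(\mathcal{O}_F)$ is open in the profinite group $\mathcal{G}_0(\mathcal{O}_F)$, and since $\mathcal{O}_F$ is $p$-adically separated, the subgroups $\mathcal{G}_n(\mathcal{O}_F)$ have trivial intersection and hence form a fundamental system of open neighborhoods of the identity in $\mathcal{G}_0(\mathcal{O}_F)$. Thus $\mathcal{G}_n(\mathcal{O}_F) \subset K$ for $n$ large, establishing cofinality. The main technical point is the verification that the iterated dilatations have the claimed special fibers and yield principal congruence subgroups on $\mathcal{O}_F$-points; this is standard but requires some care because $\mathcal{O}_F/p\mathcal{O}_F$ is in general a product of local rings rather than a field.
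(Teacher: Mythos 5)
Your proof is correct and in the same spirit as the paper's; both ultimately rest on producing a cofinal sequence of smooth affine models by Néron blow-ups (dilatations). The paper outsources the construction to Kaletha--Prasad: it quotes Remark A.5.14 and Lemmas A.5.13, A.5.15 of that book for a descending chain $\mathcal{G}_1 \leftarrow \mathcal{G}_2 \leftarrow \cdots$ of smooth affine group schemes over $\zp$ with generic fiber $G$ and $\bigcap_i (\operatorname{Res}_{\mathcal{O}_F/\zp}\mathcal{G}_{i,\mathcal{O}_F})(\zp) = \{1\}$, and then passes to the open subgroup schemes whose special fibers are the identity components. Your version constructs the chain explicitly by iterated dilatation at the unit section and has two small advantages: (i) for $n\geq 1$ the special fiber of $\mathcal{G}_n$ is automatically a vector group, hence connected, so the final passage to identity components is unnecessary; and (ii) you identify $\mathcal{G}_n(\mathcal{O}_F)$ with the principal congruence subgroup $\ker(\mathcal{G}_0(\mathcal{O}_F)\to\mathcal{G}_0(\mathcal{O}_F/p^n\mathcal{O}_F))$, which makes the trivial-intersection statement immediate from $\mathcal{G}_0(\mathcal{O}_F)=\varprojlim_n\mathcal{G}_0(\mathcal{O}_F/p^n\mathcal{O}_F)$. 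One comment on the technical point you flag at the end: the universal property of the dilatation at the unit section gives $\mathcal{G}_1(R)=\ker(\mathcal{G}_0(R)\to\mathcal{G}_0(R/pR))$ for \emph{any} $\zp$-flat $R$, and dilatation commutes with the flat base change $\zp\to\mathcal{O}_F$, so the fact that $\mathcal{O}_F/p\mathcal{O}_F$ is a product of local rings (possibly non-reduced, if $p$ ramifies in $\mathsf{F}$) rather than a field is harmless; what matters is only that $\mathcal{O}_F$ is $\zp$-flat.
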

\begin{proof}
By the results \cite[Remark A.5.14, Lemma A.5.15]{KalethaPrasad} we can construct a sequence of smooth group schemes $\mathcal{G}_1 \leftarrow \mathcal{G}_2 \leftarrow \mathcal{G}_3 \leftarrow \cdots$ over $\zp$, such that: The induced maps $\mathcal{G}_{i+1,\qp} \to \mathcal{G}_{i,\qp}$ are isomorphisms, there is an isomorphism $\mathcal{G}_{1,\qp} \simeq G$ and by \cite{KalethaPrasad}*{Lemma A.5.13} we have $$\bigcap_{i = 1}^\infty (\operatorname{Res}_{\mathcal{O}_{F}/\zp} \mathcal{G}_{i,\mathcal{O}_{F}})(\mathbb{Z}_p) = \{1\}.$$ The lemma follows by replacing each $\mathcal{G}_i$ with the open subgroup whose special fiber is the identity component of $\mathcal{G}_{i,\fp}$.
\end{proof}
We now show that if $p$ is unramified in $\f$, then the cohomology of $\mathcal{H}(\zp)$ is trivial.
\begin{Lem} \label{Lem:TrivialCohomologyUnramifiedPrime}
If $p$ is unramified in $\f$, then for each smooth affine group scheme $\mathcal{G}$ over $\zp$ with connected special fiber and with generic fiber isomorphic to $G$, the group $\mathcal{H}(\zp)=\mathcal{G}(\mathcal{O}_{F})$ satisfies $$\rH^1(\Gamma,\mathcal{H}(\zp))=\{1\}.$$ 
\end{Lem}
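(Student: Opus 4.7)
The plan is to reduce via non-abelian Shapiro to the cohomology of a single unramified factor, then identify that cohomology with étale $\mathcal{G}$-torsors on $\spec \zp$, which I expect to vanish by Hensel's lemma combined with Lang's theorem.

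First, since $p$ is unramified in $\mathsf{F}$, the $\zp$-algebra $\mathcal{O}_F = \mathcal{O}_\mathsf{F} \otimes_{\mathbb{Z}} \zp$ decomposes as a finite product $\prod_{\mathfrak{p}\mid p} \mathcal{O}_{\mathsf{F},\mathfrak{p}}$ of finite unramified extensions of $\zp$, and $\Gamma$ permutes the factors transitively. Fixing such a prime $\mathfrak{p}$ with decomposition group $\Gamma_\mathfrak{p} \subset \Gamma$ (cyclic, generated by a lift of Frobenius), one obtains a $\Gamma$-equivariant identification
\begin{align*}
  \mathcal{H}(\zp) \;=\; \mathcal{G}(\mathcal{O}_F) \;=\; \operatorname{Map}_{\Gamma_\mathfrak{p}}(\Gamma, \mathcal{G}(\mathcal{O}_{\mathsf{F},\mathfrak{p}})),
\end{align*}
so Lemma \ref{Lem:Shapiro} yields a canonical bijection
\begin{align*}
  H^1(\Gamma, \mathcal{H}(\zp)) \;\simeq\; H^1(\Gamma_\mathfrak{p}, \mathcal{G}(\mathcal{O}_{\mathsf{F},\mathfrak{p}})).
\end{align*}
This reduces the problem to showing that the right-hand side is trivial.

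Next, since $\spec \mathcal{O}_{\mathsf{F},\mathfrak{p}} \to \spec \zp$ is finite étale Galois with group $\Gamma_\mathfrak{p}$, standard non-abelian étale descent identifies $H^1(\Gamma_\mathfrak{p}, \mathcal{G}(\mathcal{O}_{\mathsf{F},\mathfrak{p}}))$ with the pointed set of isomorphism classes of étale $\mathcal{G}$-torsors on $\spec \zp$ trivialized by pullback to $\spec \mathcal{O}_{\mathsf{F},\mathfrak{p}}$. In particular, the natural forgetful map to $H^1_{\mathrm{\acute{e}t}}(\zp, \mathcal{G})$ is injective, so it suffices to prove that the latter pointed set is trivial. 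For smooth affine $\mathcal{G}/\zp$ with connected special fiber, this is standard: any $\mathcal{G}$-torsor $P$ over $\zp$ is representable by a smooth affine $\zp$-scheme, so by Hensel's lemma the reduction map $P(\zp) \to P(\fp)$ is surjective; and Lang's theorem for the smooth connected $\fp$-group $\mathcal{G}_\fp$ ensures $P(\fp) \neq \emptyset$, hence $P$ is trivial.

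The only mildly delicate point I anticipate is spelling out the descent identification in the second step cleanly; it can be bypassed entirely by arguing with cocycles directly, choosing a topological generator $\phi \in \Gamma_\mathfrak{p}$ (a Frobenius lift) and using Lang--Steinberg applied to $\mathcal{G}_{\ovfp}$ together with Hensel lifting to split any continuous $1$-cocycle $\phi \mapsto g_\phi$ as $g_\phi = h^{-1}\phi(h)$ with $h \in \mathcal{G}(\mathcal{O}_{\mathsf{F},\mathfrak{p}})$. Either route crucially uses that $\mathcal{O}_{\mathsf{F},\mathfrak{p}}/\zp$ is unramified (so that the Galois action is generated by a Frobenius and Lang applies), which is exactly where the hypothesis on $p$ enters.
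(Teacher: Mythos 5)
Your proof is correct and follows essentially the same route as the paper's: non-abelian Shapiro reduces the problem to $H^1(\Gamma_\mathfrak{p}, \mathcal{G}(\mathcal{O}_{\mathsf{F},\mathfrak{p}}))$, and the vanishing then comes from Lang's theorem for the connected special fiber combined with smoothness and Hensel lifting. The only cosmetic difference is that the paper phrases the final step by inflating into $H^1(\gal(\qpur/\qp), \mathcal{G}(\zpur))$ and quoting Lang's lemma there, whereas you argue directly with étale $\mathcal{G}$-torsors on $\spec \zp$; these are the same argument in different clothing.
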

\begin{proof}
Choose a prime $\mathfrak{p}$ of $\f$ above $p$ and let $\Gamma' \subset \Gamma$ be its stabilizer. Then there is a $\Gamma$-equivariant isomorphism of rings
\begin{align}
    \mathcal{O}_{F} \simeq \operatorname{Map}_{\Gamma'}(\Gamma, \mathcal{O}_{F,\mathfrak{p}}),
\end{align}
which induces a $\Gamma$-equivariant isomorphism of groups
\begin{align}
    \mathcal{H}(\zp)&=\mathcal{G}(\mathcal{O}_{F}) \\
    &=\operatorname{Map}_{\Gamma'}(\Gamma, \mathcal{G}(\mathcal{O}_{F,\mathfrak{p}})).
\end{align}
By Lemma \ref{Lem:Shapiro}, it then suffices to show that
\begin{align}
    \rH^1(\Gamma', \mathcal{G}(\mathcal{O}_{F,\mathfrak{p}}))=\{1\}.
\end{align}
Consider the inflation map (which is injective by \cite[Section 5.8.(a)]{SerreGaloisCohomology})
\begin{align}
    \rH^1(\Gamma', \mathcal{G}(\mathcal{O}_{F,\mathfrak{p}})) \to \rH^1(\gal(\qpur/\qp, \mathcal{G}(\zpur)).
\end{align}
Because $\mathcal{H}$ is smooth with connected special fiber, Lang's lemma tells us that $$\rH^1(\gal(\qpur/\qp), \mathcal{G}(\zpur))=\rH^1(\gal(\ovfp/\mathbb{F}_p), \mathcal{G}(\ovfp))=\{1\}$$ and so we are done.
\end{proof}
If $p$ is coprime to the order of $\Gamma$, then the cohomology of $\mathcal{H}(\zp)$ is trivial if it is a pro-$p$ group.
\begin{Lem} \label{Lem:CohomologyPrimeToPFinite}
Let $\Gamma$ be a finite group acting on a finite $p$-group $M$. If the order of $\Gamma$ is coprime to $p$, then $\rH^1(\Gamma,M)=\{1\}$.
\end{Lem}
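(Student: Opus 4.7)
The plan is to induct on the order of $M$, using that any finite $p$-group has non-trivial center together with the long exact sequence in non-abelian cohomology for a central extension.

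The base case $|M|=1$ is trivial. For the inductive step, since $M$ is a nontrivial finite $p$-group, its center $Z(M)$ is nontrivial, and since $Z(M)$ is a characteristic subgroup of $M$, the action of $\Gamma$ restricts to $Z(M)$ and descends to $M/Z(M)$. Both $Z(M)$ and $M/Z(M)$ are finite $p$-groups of order strictly smaller than $|M|$, so the inductive hypothesis applies to $M/Z(M)$.

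The central extension $1 \to Z(M) \to M \to M/Z(M) \to 1$ gives rise to the long exact sequence of pointed sets from \cite[Section 5.7]{SerreGaloisCohomology} (available because $Z(M)$ is central in $M$):
\[
  \cdots \to H^1(\Gamma, Z(M)) \to H^1(\Gamma, M) \to H^1(\Gamma, M/Z(M)).
\]
By induction $H^1(\Gamma, M/Z(M))=\{1\}$, so by exactness every class in $H^1(\Gamma, M)$ lies in the image of $H^1(\Gamma, Z(M))$. It therefore suffices to show $H^1(\Gamma, Z(M))=0$, where now $Z(M)$ is an abelian group and $H^1$ has its usual meaning.

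For the abelian group $Z(M)$, standard restriction--corestriction shows $H^1(\Gamma, Z(M))$ is annihilated by $|\Gamma|$, while it is also annihilated by $|Z(M)|$, hence is a $p$-group. Since $\gcd(|\Gamma|,p)=1$, we conclude $H^1(\Gamma, Z(M))=0$, completing the induction. There is no real obstacle here; the only point requiring some care is ensuring the exactness statement at $H^1(\Gamma, M)$, which is precisely where centrality of $Z(M)$ is used (so that conjugation of a cocycle by an element of $M/Z(M)$ lifts unambiguously to conjugation by an element of $M$).
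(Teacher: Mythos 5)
Your proof is correct and matches the paper's argument: both induct on $|M|$, use that the center of a nontrivial finite $p$-group is nontrivial and $\Gamma$-stable (being characteristic), and conclude via the exact sequence in non-abelian $\Gamma$-cohomology together with the vanishing of $H^1$ for the abelian $p$-group $Z(M)$. One small quibble with your closing parenthetical: exactness of $H^1(\Gamma, Z(M)) \to H^1(\Gamma, M) \to H^1(\Gamma, M/Z(M))$ at the middle term already holds for any normal $\Gamma$-stable subgroup (this is \cite[\S 5.5]{SerreGaloisCohomology}, not the central case of \S 5.7), so centrality is not actually what that step hinges on --- though of course it holds here and costs nothing.
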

\begin{proof}
We prove this by strong induction on $k$ where $p^k$ is the order of $M$. If $k=1$ then the result is true because then $M \simeq \fp$ is abelian and $\Gamma$ is a group of order prime-to-$p$.

To prove the induction step, we use the well-known fact that the center $Z \subset M$ of $M$ is nontrivial because $M$ is a $p$-group. Since $\Gamma$ acts via group homomorphisms, we see that $Z$ is $\Gamma$-stable. We now study the long exact sequence in $\Gamma$-cohomology for
\begin{align} \label{Eq:ShortExactSequenceCohomology}
    1 \to Z \to M \to M/Z \to 1.
\end{align}
The induction hypothesis tells us that $\rH^1(\Gamma, M/Z)=\{1\}$ and because $Z$ is abelian and of $p$-power order it follows that $\rH^1(\Gamma,Z)=\{1\}$. The induction step then follows from the long exact sequence in $\Gamma$-cohomology for \eqref{Eq:ShortExactSequenceCohomology}.
\end{proof}
\begin{Lem} \label{Lem:CohomologyTrivialPrimeToP}
If $p$ is coprime to the order of $\Gamma$ and $\mathcal{H}(\zp)$ is pro-$p$, then $\rH^1(\Gamma,\mathcal{H}(\zp))=\{1\}$.
\end{Lem}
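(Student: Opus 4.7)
The plan is to express $\mathcal{H}(\zp)$ as a filtered inverse limit of finite $p$-groups carrying $\Gamma$-actions, and reduce to Lemma \ref{Lem:CohomologyPrimeToPFinite} by a standard inverse-limit argument in non-abelian cohomology.

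First, I would use the congruence filtration: let $U_n = \ker\bigl(\mathcal{H}(\zp) \to \mathcal{H}(\zp/p^n)\bigr)$. Since $\mathcal{H}$ is defined over $\zp$ with its $\Gamma$-action, each $U_n$ is a $\Gamma$-stable open normal subgroup, and these form a neighborhood basis of the identity because $\mathcal{H}(\zp)$ is pro-$p$ (in particular its topology is the $p$-adic one). Write $M_n := \mathcal{H}(\zp)/U_n$; each $M_n$ is a finite $p$-group with a $\Gamma$-action, and $\mathcal{H}(\zp) = \varprojlim_n M_n$ as topological $\Gamma$-groups.

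Second, given a cocycle $c \colon \Gamma \to \mathcal{H}(\zp)$ (automatically continuous since $\Gamma$ is finite discrete), I would consider its reductions $c_n \colon \Gamma \to M_n$. By Lemma \ref{Lem:CohomologyPrimeToPFinite}, each $c_n$ is a coboundary, so the set
\begin{equation}
    T_n := \{\, h \in M_n \;:\; c_n(\gamma) = h^{-1}\,\gamma(h) \text{ for all } \gamma \in \Gamma \,\}
\end{equation}
is non-empty. The transition map $M_{n+1} \to M_n$ carries $T_{n+1}$ into $T_n$, giving an inverse system of non-empty finite sets. By the standard compactness (K\"onig's lemma) argument, $\varprojlim_n T_n$ is non-empty, and any element yields $h \in \mathcal{H}(\zp)$ with $c(\gamma) = h^{-1}\,\gamma(h)$, so $c$ is a coboundary.

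The only subtlety is the inverse-limit step in non-abelian cohomology, but it is immediate here because the $T_n$ are finite (each $T_n$ is either empty or a torsor under the finite group $M_n^\Gamma$). I do not expect any other obstacle: the reduction to the finite case is formal, and the finite case has already been handled in Lemma \ref{Lem:CohomologyPrimeToPFinite}.
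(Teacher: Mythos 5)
Your proof is correct and matches the paper's strategy of reducing to the finite case by writing $\mathcal{H}(\zp)$ as an inverse limit of finite $p$-group quotients $M_n$ and applying Lemma \ref{Lem:CohomologyPrimeToPFinite} at each level. The only real variation is in the last step: you invoke the general fact that a countable inverse system of non-empty finite sets has non-empty limit, whereas the paper proves outright that the transition maps between the levelwise sets of trivializations are surjective, by applying Lemma \ref{Lem:CohomologyPrimeToPFinite} to the $p$-group kernel $Q$ of $M_{n+1}\to M_n$ and running the long exact sequence (so $M_{n+1}^\Gamma\to M_n^\Gamma$ is surjective, which forces surjectivity of the torsor maps). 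Both arguments are fine; yours is marginally shorter, while the paper's yields the stronger surjectivity statement and keeps the argument self-contained rather than appealing to an external compactness principle. One cosmetic slip: the topology on $\mathcal{H}(\zp)$ and the fact that the $U_n$ form a neighbourhood basis of the identity have nothing to do with the pro-$p$ hypothesis, which is only needed to ensure that each $M_n$ is a $p$-group.
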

\begin{proof}
There is a $\Gamma$-equivariant identification $\mathcal{H}(\zp)=\varprojlim_n \mathcal{H}(\mathbb{Z}/p^n \mathbb{Z})$. Let $\sigma:\Gamma \to \mathcal{H}(\zp)$ be a cocycle and let $P$ be the (possibly empty) set of elements $h \in \mathcal{H}(\zp)$ such that for all $\gamma \in \Gamma$ we have $1=h \cdot \sigma(\gamma) \cdot \alpha_{\gamma}(\rH^{-1})$, where $\alpha_{\gamma}:\mathcal{H}(\zp) \to \mathcal{H}(\zp)$ denotes the action of $\gamma$. Then $\mathcal{H}(\zp)^{\Gamma}$ acts on $P$ by left multiplication on $h$ and this action is simply transitive if $P$ is nonempty. The lemma is asserting that $P$ is nonempty, which we will now prove.

For a positive integer $n$, we let $P_n$ be the set of elements $h_n \in \mathcal{H}(\mathbb{Z}/p^n \mathbb{Z})$ such that for all $\gamma \in \Gamma$ we have $1=h_n \cdot \sigma_n(\gamma) \cdot \alpha_{\gamma}(h_n^{-1})$, where $\sigma_n$ is the composition of $\sigma$ with $\mathcal{H}(\zp) \to \mathcal{H}(\mathbb{Z}/p^n \mathbb{Z})$. Then $P_n$ is nonempty by Lemma \ref{Lem:CohomologyPrimeToPFinite} and $\mathcal{H}(\mathbb{Z}/p^n \mathbb{Z})^{\Gamma}$ acts simply transitively on $P_n$ by left multiplication on $h_n$. There are maps $P_{n+1} \to P_{n}$ which are 
$\mathcal{H}(\mathbb{Z}/p^{n+1} \mathbb{Z})^{\Gamma}$-equivariant via the natural map $\mathcal{H}(\mathbb{Z}/p^{n+1}\mathbb{Z})^{\Gamma} \to \mathcal{H}(\mathbb{Z}/p^n \mathbb{Z})^{\Gamma}$, and it follows from the definitions that the natural map
\begin{align}
    P \to \varprojlim_n P_n
\end{align}
is a bijection. To show that $P$ is nonempty, it is therefore enough to show that the transition maps $P_{n+1} \to P_n$ are surjective. For this, we consider the $\Gamma$-equivariant short exact sequence
\begin{align}
    1 \to Q \to \mathcal{H}(\mathbb{Z}/p^{n+1} \mathbb{Z}) \to \mathcal{H}(\mathbb{Z}/p^n \mathbb{Z}) \to 1
\end{align}
defining $Q$. Note that $Q$ is again a $p$-group, and thus it follows from Lemma \ref{Lem:CohomologyPrimeToPFinite} and the long exact sequence in $\Gamma$-cohomology that $\mathcal{H}(\mathbb{Z}/p^{n+1} \mathbb{Z})^{\Gamma} \to \mathcal{H}(\mathbb{Z}/p^n \mathbb{Z})^{\Gamma}$ is surjective. We deduce that $P_{n+1} \to P_n$ is surjective, which concludes the proof.
\end{proof}
\begin{Lem} \label{Lem:NoCohomologyTame}
Suppose that $p$ is tamely ramified in $\f$. Let $\mathcal{G}$ be a smooth affine group scheme over $\zp$ with connected special fiber and with generic fiber isomorphic to $G$. If $\mathcal{H}(\zp)=\mathcal{G}(\mathcal{O}_{F})$ is a pro-$p$ group then $$\rH^1(\Gamma,\mathcal{H}(\zp))=\{1\}.$$ 
\end{Lem}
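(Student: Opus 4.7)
The plan is to reduce to the local setting via Shapiro's lemma, and then use inflation-restriction in non-abelian cohomology with respect to the inertia subgroup at a place above $p$, exploiting the tameness hypothesis to split the argument into a ``prime-to-$p$'' piece and an ``unramified'' piece, each of which is already handled by results earlier in the paper.

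Concretely, fix a prime $\mathfrak{p}$ of $\mathsf{F}$ above $p$, let $\Gamma' \subset \Gamma$ be its decomposition group, and let $I \subset \Gamma'$ be the inertia subgroup, which has order prime to $p$ by the tameness assumption. Exactly as in the proof of Lemma \ref{Lem:TrivialCohomologyUnramifiedPrime}, one has $\mathcal{H}(\zp) = \mathcal{G}(\mathcal{O}_{\mathsf{F}}\otimes\zp) \simeq \operatorname{Map}_{\Gamma'}(\Gamma, \mathcal{G}(\mathcal{O}_{\mathsf{F},\mathfrak{p}}))$ as $\Gamma$-groups, so by Lemma \ref{Lem:Shapiro} it suffices to prove $H^1(\Gamma', \mathcal{G}(\mathcal{O}_{\mathsf{F},\mathfrak{p}})) = \{1\}$. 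Since $\mathcal{H}(\zp)$ is pro-$p$, so is its direct factor $\mathcal{G}(\mathcal{O}_{\mathsf{F},\mathfrak{p}})$.

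Next I would apply the inflation-restriction exact sequence of pointed sets for non-abelian cohomology with respect to $I \triangleleft \Gamma'$:
\begin{align}
1 \to H^1(\Gamma'/I, \mathcal{G}(\mathcal{O}_{\mathsf{F},\mathfrak{p}})^I) \to H^1(\Gamma', \mathcal{G}(\mathcal{O}_{\mathsf{F},\mathfrak{p}})) \to H^1(I, \mathcal{G}(\mathcal{O}_{\mathsf{F},\mathfrak{p}})).
\end{align}
For the right-hand term, I would observe that the argument of Lemma \ref{Lem:CohomologyTrivialPrimeToP} applies essentially verbatim: it only uses that the group in question is an inverse limit $\varprojlim_n \mathcal{G}(\mathcal{O}_{\mathsf{F},\mathfrak{p}}/\varpi^n)$ of finite $p$-groups, together with Lemma \ref{Lem:CohomologyPrimeToPFinite} applied to $I$ (of order prime to $p$) acting on each finite-level quotient; this gives $H^1(I, \mathcal{G}(\mathcal{O}_{\mathsf{F},\mathfrak{p}})) = \{1\}$.

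For the left-hand term, since $\mathcal{G}$ is defined over $\zp$ we have $\mathcal{G}(\mathcal{O}_{\mathsf{F},\mathfrak{p}})^I = \mathcal{G}(\mathcal{O}_{\mathsf{F},\mathfrak{p}}^I)$, and $\mathsf{F}_\mathfrak{p}^I/\qp$ is the maximal unramified subextension of $\mathsf{F}_\mathfrak{p}/\qp$, with Galois group $\Gamma'/I$. Embedding $\mathsf{F}_\mathfrak{p}^I \hookrightarrow \qpur$, the injective inflation map for non-abelian cohomology gives
\begin{align}
H^1(\Gamma'/I, \mathcal{G}(\mathcal{O}_{\mathsf{F}_\mathfrak{p}^I})) \hookrightarrow H^1(\gal(\qpur/\qp), \mathcal{G}(\zpur)),
\end{align}
and the target vanishes by Lang's lemma since $\mathcal{G}$ is smooth with connected special fiber, exactly as in the proof of Lemma \ref{Lem:TrivialCohomologyUnramifiedPrime}. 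Combining, both outer terms in the inflation-restriction sequence are trivial, forcing $H^1(\Gamma', \mathcal{G}(\mathcal{O}_{\mathsf{F},\mathfrak{p}})) = \{1\}$ and hence the lemma. The only real subtlety is keeping track of pointed-set exactness in the non-abelian inflation-restriction sequence; otherwise the argument is a clean combination of the two cases already treated.
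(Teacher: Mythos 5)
Your proof is correct and follows essentially the same route as the paper's: reduce to the decomposition group via Shapiro's lemma, then apply non-abelian inflation–restriction for the inertia subgroup $I$, killing the $H^1(I,-)$ term via the prime-to-$p$/pro-$p$ argument (Lemma \ref{Lem:CohomologyTrivialPrimeToP}) and the $H^1(\Gamma'/I,-)$ term via Lang's lemma through the unramified case (Lemma \ref{Lem:TrivialCohomologyUnramifiedPrime}). The only cosmetic difference is that you note explicitly that Lemma \ref{Lem:CohomologyTrivialPrimeToP} is stated for $\mathcal{H}(\zp)$ but its proof transfers verbatim to $\mathcal{G}(\mathcal{O}_{\mathsf{F},\mathfrak{p}})$, a point the paper elides.
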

\begin{proof}
Choose a prime $\mathfrak{p}$ of $\f$ above $p$ and let $L$ be the completion of $\f$ at $\mathfrak{p}$. Let $\gal(L/\mathbb{Q}_p) \subset \Gamma$ be the stabilizer of $\mathfrak{p}$. As in the proof of Lemma \ref{Lem:TrivialCohomologyUnramifiedPrime}, we can use Lemma \ref{Lem:Shapiro} to reduce the lemma to proving that
\begin{align}
    \rH^1(\gal(L/\mathbb{Q}_p), \mathcal{G}(\mathcal{O}_L))=\{1\}.
\end{align}
Let $L_0 \subset L$ be the maximal unramified subfield of $L$ and consider the short exact sequence of Galois groups
\begin{align}
    1 \to I_{L} \to \gal(L/\qp) \to \gal(L_0/\qp) \to 1.
\end{align}
Then we get an inflation restriction exact sequence of pointed sets
\begin{align}
    \cdots \to \rH^1(\gal(L_0/\qp), \mathcal{G}(\mathcal{O}_L)^{I_{L}}) \to \rH^1(\gal(L/\mathbb{Q}_p), \mathcal{G}(\mathcal{O}_L)) \to \rH^1(I_{L}, \mathcal{G}(\mathcal{O}_L)).
\end{align}
The natural map $\mathcal{G}(\mathcal{O}_{L_0}) \to \mathcal{G}(\mathcal{O}_L)^{I_{L}}$ is an isomorphism and so the first term is trivial by Lemma \ref{Lem:TrivialCohomologyUnramifiedPrime}. The assumption that $p$ is tamely ramified in $\f$ means that $I_{L}$ has order prime to $p$ and thus $\rH^1(I_{L}, \mathcal{G}(\mathcal{O}_L))$ vanishes by Lemma \ref{Lem:CohomologyTrivialPrimeToP} and the fact that $\mathcal{H}(\zp)$ is pro-$p$. It now follows from \cite[Corollary 1 on p. 52]{SerreGaloisCohomology} that $\rH^1(\gal(L/\mathbb{Q}_p), \mathcal{G}(\mathcal{O}_L))=\{1\}$.
\end{proof}

\subsubsection{}
The results proved above provide many compact open subgroups $K \subset \hafp$ that are $\Gamma$-stable and have $\rH^1(\Gamma, K)=\{1\}$, at least when $\f$ is tamely ramified over $\mathbb{Q}$.

\begin{Prop} \label{Prop:ExistenceCofinalGoodCompactOpens}
Suppose that $\f$ is tamely ramified over $\mathbb{Q}$. Then the collection of compact open subgroups $K \subset \haf$ that are $\Gamma$-stable and satisfy $\rH^1(\Gamma,K)=\{1\}$ is cofinal in the set of all compact open subgroups of $\haf$. 
\end{Prop}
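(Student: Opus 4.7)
The plan is to build the required $\Gamma$-stable subgroup prime-by-prime and then take a product. Given any compact open $K_0 \subset \haf$, decompose $K_0 = \prod_\ell K_{0,\ell}$ with $K_{0,\ell}$ hyperspecial for almost all $\ell$, and seek $\Gamma$-stable compact opens $K_\ell \subset K_{0,\ell}$ in $H(\mathbb{Q}_\ell)$ with $H^1(\Gamma, K_\ell) = \{1\}$, cofinally in the local topology. Since a non-abelian $1$-cocycle $\Gamma \to \prod_\ell K_\ell$ is precisely a tuple of cocycles $\Gamma \to K_\ell$ and conjugation is component-wise, one has $H^1(\Gamma, \prod_\ell K_\ell) = \prod_\ell H^1(\Gamma, K_\ell)$, so the local vanishings assemble to give $H^1(\Gamma, K) = \{1\}$ for $K = \prod_\ell K_\ell$. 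The tame-ramification hypothesis on $\mathsf{F}/\mathbb{Q}$ is exactly what is needed to apply Lemmas \ref{Lem:TrivialCohomologyUnramifiedPrime} and \ref{Lem:NoCohomologyTame} at every prime.

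At primes $\ell$ unramified in $\mathsf{F}$ the argument is immediate: the analogue of Lemma \ref{Lem:Cofinal} at $\ell$ produces a cofinal system of $\Gamma$-stable compact opens $\mathcal{H}(\mathbb{Z}_\ell) = \mathcal{G}_\ell(\mathcal{O}_{F_\ell}) \subset K_{0,\ell}$ coming from smooth affine $\mathbb{Z}_\ell$-group schemes $\mathcal{G}_\ell$ with connected special fiber and generic fiber $G_\ell$, and Lemma \ref{Lem:TrivialCohomologyUnramifiedPrime} gives $H^1(\Gamma, \mathcal{H}(\mathbb{Z}_\ell)) = \{1\}$ for each such choice. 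For all but finitely many $\ell$ one can simply take $K_\ell = K_{0,\ell}$, since by the standard spreading-out argument $K_{0,\ell}$ is itself of this form.

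At the remaining finitely many (tamely) ramified primes $\ell$ one combines Lemma \ref{Lem:Cofinal} with Lemma \ref{Lem:NoCohomologyTame}: the former produces cofinally small $\Gamma$-stable $\mathcal{H}(\mathbb{Z}_\ell) \subset K_{0,\ell}$, and the latter gives the cohomological vanishing as soon as $\mathcal{H}(\mathbb{Z}_\ell)$ is pro-$\ell$. The main step is therefore to verify that the Kaletha--Prasad sequence $\mathcal{G}_1 \leftarrow \mathcal{G}_2 \leftarrow \cdots$ appearing in the proof of Lemma \ref{Lem:Cofinal} contains pro-$\ell$ members cofinally. This follows from the fact that $\bigcap_i \mathcal{G}_i(\mathcal{O}_{F_\ell}) = \{1\}$: the kernel of reduction $\mathcal{G}_1(\mathcal{O}_{F_\ell}) \to \mathcal{G}_1(k_\ell)$ modulo a uniformizer is an open pro-$\ell$ subgroup of $\mathcal{G}_1(\mathcal{O}_{F_\ell})$, and for all $i$ sufficiently large $\mathcal{G}_i(\mathcal{O}_{F_\ell})$ is contained in this kernel and is therefore itself pro-$\ell$. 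Assembling the local subgroups produces the desired $K \subset K_0$, and letting $K_0$ shrink yields the cofinal collection.

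The one genuine obstacle is the pro-$\ell$ refinement at the tamely ramified primes; the argument above is local and group-theoretic and does not require any new input beyond the existence of the Kaletha--Prasad filtration and smoothness of the corresponding dilatations.
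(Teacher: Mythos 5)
Your proof is correct and follows essentially the same route as the paper's: assemble a cofinal family of $\Gamma$-stable $K = \prod_\ell K_\ell$ with $K_\ell = \mathcal{G}_\ell(\mathcal{O}_{F_\ell})$ via Lemma \ref{Lem:Cofinal}, use the product decomposition of non-abelian $H^1$, and invoke Lemma \ref{Lem:TrivialCohomologyUnramifiedPrime} at unramified primes and Lemma \ref{Lem:NoCohomologyTame} (after arranging $K_\ell$ to be pro-$\ell$) at the tamely ramified ones. The one place you go beyond the paper is in explicitly justifying, via the compactness argument with the trivial intersection of the Kaletha--Prasad chain, that the $\mathcal{G}_i(\mathcal{O}_{F_\ell})$ are eventually contained in the pro-$\ell$ kernel of reduction and hence pro-$\ell$; the paper states this reduction ("we can moreover assume...") without detail, so your addition is a genuine and correct fill-in rather than a divergence.
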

We will call such compact open subgroups \emph{good}, and we will use the same terminology for $\Gamma$-stable compact open subgroups $K \subset G(\mathbb{A}_f^S)$, for some finite set of places $S$ of $\mathbb{Q}$, that satisfy $\rH^1(\Gamma,K)=\{1\}$.
\begin{proof}[Proof of Proposition \ref{Prop:ExistenceCofinalGoodCompactOpens}]
We can choose groups $K$ of the form
\begin{align}
    K=\prod_{p} K_p,
\end{align}
with $K_p = \mathcal{H}(\mathbb{Z}_p)$ for $H=\operatorname{Res}_{\mathcal{O}_{F}/\zp} \mathcal{G}_{\mathcal{O}_{F}}$ for some smooth affine group scheme $\mathcal{G}$ over $\zp$ with connected special fiber and generic fiber $\g \otimes \qp$, such that $\mathcal{G}$ is a reductive model of $G$ for all but finitely many $p$. This collection of compact open subgroups $K \subset \haf$ is cofinal by Lemma \ref{Lem:Cofinal}. We can moreover assume that either $p$ is unramified in $\f$ or that $K_p$ is a pro-$p$ group, without affecting co-finality. \smallskip 

Then for primes $p$ unramified in $\f$ we have
\begin{align}
    \rH^1(\Gamma, K_p)=\{1\}
\end{align}
by Lemma \ref{Lem:TrivialCohomologyUnramifiedPrime}, and for primes $p$ ramified in $\f$ we have $\rH^1(\Gamma, K_p)=\{1\}$ by Lemma \ref{Lem:NoCohomologyTame}. Thus we find that 
\begin{align}
    \rH^1(\Gamma,K)=\prod_p \rH^1(\Gamma, K_p)=\{1\}
\end{align}
and the result is proved.
\end{proof}
\subsection{Homotopy fixed points of Shimura stacks} Let the notation be as in the beginning of Section \ref{Sec:Shimura}. If $K \subset \haf$ is a $\Gamma$-stable compact open subgroup then there is a natural morphism of groupoids (see Section \ref{Sec:AppendixQuotientStackFixedPoints})
\begin{align} \label{Eq:ShimuraGroupoid}
    \left[\g(\mathbb{Q}) \backslash \left(\mathsf{X} \times \gaf/K^{\Gamma}\right)\right] \to \left[\h(\mathbb{Q}) \backslash \left(\mathsf{Y} \times \haf/K\right)\right]^{h \Gamma}.
\end{align}
In this section we will investigate when this morphism is an equivalence.  
\begin{Thm} \label{Thm:HomotopyFixedPointsShimura}
If $K \subset \haf$ is a good compact open subgroup and if $\Sha^1(\mathbb{Q}, \mathsf{G}) \to \Sha^1(\f, \g)$ is injective, then the natural functor
\begin{align}
    \left[\g(\mathbb{Q}) \backslash \left(\mathsf{X} \times \gaf/K^{\Gamma}\right)\right] \to \left[\h(\mathbb{Q}) \backslash \left(\mathsf{Y} \times \haf/K\right)\right]^{h \Gamma}
\end{align}
is an equivalence of groupoids.
\end{Thm}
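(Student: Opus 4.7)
The plan is to analyze the homotopy fixed point groupoid on the right directly, translating it into data on $1$-cocycles via the standard $2$-categorical description from Appendix \ref{Appendix:HFP}: an object of $\left[\h(\mathbb{Q}) \backslash (\mathsf{Y} \times \haf/K)\right]^{h\Gamma}$ is a pair $(y, \sigma)$ where $y \in \mathsf{Y} \times \haf/K$ and $\sigma \colon \gamma \mapsto h_\gamma$ is a $1$-cocycle in $\h(\mathbb{Q})$ satisfying $h_\gamma \cdot \gamma(y) = y$; morphisms are elements of $\h(\mathbb{Q})$ intertwining such cocycles. The natural functor sends $x \in \mathsf{X} \times \gaf/K^\Gamma$ to the pair $(x, \mathrm{trivial})$, viewed via the inclusion $\mathsf{X} \hookrightarrow \mathsf{Y}$ and $\gaf \hookrightarrow \haf$. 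With this translation in hand the task is to verify essential surjectivity and full faithfulness.

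For essential surjectivity, take $(y, \sigma)$ with $y = (y_\infty, \bar g)$ and evaluate the cocycle relation in the $\haf/K$-component to produce a cocycle $k_\gamma := g^{-1} h_\gamma \gamma(g) \in K$. Because $K$ is good, $H^1(\Gamma, K) = \{1\}$, so replacing $g$ by $gk$ for a suitable $k \in K$ we may arrange $k_\gamma = 1$, whence $h_\gamma = g\gamma(g)^{-1}$. Thus the class of $\sigma$ lies in the kernel of $H^1(\Gamma, \h(\mathbb{Q})) \to H^1(\Gamma, \haf)$. By Lemma \ref{Lem:KernelAndSha} together with the hypothesis that $\Sha^1(\mathbb{Q}, \mathsf{G}) \to \Sha^1(\mathsf{F}, G)$ is injective, this kernel is trivial, so we may further find $h \in \h(\mathbb{Q})$ with $h_\gamma = h \gamma(h)^{-1}$. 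The translate $h^{-1} y$ is then $\Gamma$-fixed; using the identifications $\mathsf{Y}^\Gamma = \mathsf{X}$ and $(\haf/K)^\Gamma = \gaf/K^\Gamma$ (the latter another application of $H^1(\Gamma, K) = \{1\}$ via the long exact sequence discussed in Section \ref{Sec:Tori}), this fixed point comes from $\mathsf{X} \times \gaf/K^\Gamma$ and produces the required isomorphism.

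Full faithfulness is then essentially automatic: two objects in the left-hand groupoid correspond on the right to pairs with trivial cocycle, and a morphism between them is an $h \in \h(\mathbb{Q})$ with $h \cdot y_1 = y_2$ satisfying the cocycle compatibility, which for trivial source and target cocycles reduces to $\gamma(h) = h$ for all $\gamma$. Hence $h \in \h(\mathbb{Q})^\Gamma = \g(\mathbb{Q})$, exactly a morphism of the left-hand groupoid. The main obstacle of the argument is the essential surjectivity step, and the proof isolates two independent inputs that correspond precisely to the two hypotheses: the goodness of $K$ is used to strip off the $K$-part of the cocycle, after which the $\Sha^1$-hypothesis, via Lemma \ref{Lem:KernelAndSha}, handles the residual cohomology class in $H^1(\Gamma, \h(\mathbb{Q}))$. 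In a more abstract formulation, one can package the whole argument as an instance of a general statement (of the type recorded in Corollary \ref{Cor:NoCohomologyIsomorphism}) that $\Gamma$-homotopy fixed points commute with quotient stack formation once the relevant non-abelian $H^1$ vanishes, applied in two steps to the actions of $K$ and $\h(\mathbb{Q})$.
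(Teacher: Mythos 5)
Your argument is correct, and it takes a genuinely more hands-on route than the paper's. The paper first invokes Lemma \ref{Lem:DifferentPresentations} to swap the order of quotients, rewriting the target as $\left[\left(\h(\mathbb{Q}) \backslash \left(\mathsf{Y} \times \haf\right)\right)/K\right]^{h \Gamma}$; it then applies Lemma \ref{Lem:QuotientExactAdelicSpace} (which packages the Shapiro-lemma computation for $\h(\mathbb{R})$ and $\mathsf{K}_{\mathsf{Y}}$ together with the $\Sha^1$-hypothesis) to identify the $\Gamma$-fixed points of the inner set-theoretic quotient with $\g(\mathbb{Q}) \backslash (\mathsf{X} \times \gaf)$, and finally invokes Corollary \ref{Cor:NoCohomologyIsomorphism} for the outer $K$-stack-quotient. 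You instead work directly in the original presentation, unwind the homotopy fixed point groupoid into cocycle data, and trivialize the obstruction in two explicit steps — first stripping the $K$-valued part using $H^1(\Gamma,K)=\{1\}$, then observing the residual class lies in $\ker(H^1(\Gamma,\h(\mathbb{Q})) \to H^1(\Gamma,\haf))$ and killing it via Lemma \ref{Lem:KernelAndSha}. This is essentially the explicit long-exact-sequence argument of Section \ref{Sec:Tori}, promoted from the torus case to the general case, whereas the paper prefers to route through the abstract $2$-categorical machinery of Appendix \ref{Appendix:HFP}. Your version avoids the presentation-swapping lemma entirely and is arguably more transparent; the paper's version localizes the hard part in two reusable lemmas.

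Two small points worth tightening. First, Lemma \ref{Lem:KernelAndSha} identifies $\ker(H^1(\Gamma,\h(\mathbb{Q})) \to H^1(\Gamma,\h(\ad)))$ with $\ker(\Sha^1(\mathbb{Q},\g) \to \Sha^1(\mathsf{F},G))$; to convert this into a statement about the kernel into $H^1(\Gamma,\haf)$ you should cite the observation (made just before Lemma \ref{Lem:QuotientExactAdelicSpace}) that $H^1(\Gamma,\h(\mathbb{R}))=\{1\}$ by Shapiro, which also justifies $\mathsf{Y}^\Gamma = \mathsf{X}$ via $H^1(\Gamma, \mathsf{K}_{\mathsf{Y}})=\{1\}$. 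Second, your closing remark that the whole proof is ``two applications of Corollary \ref{Cor:NoCohomologyIsomorphism}'' overstates the symmetry: that corollary requires the \emph{full} vanishing of $\underline{H}^1(\Gamma,G)$, which holds for $K$ but is not assumed for $\h(\mathbb{Q})$ (only the kernel into $H^1(\Gamma,\haf)$ is controlled). The $\h(\mathbb{Q})$-step genuinely needs the $\Sha^1$-hypothesis and the structure of $\mathsf{Y} \times \haf$, not a bare cohomology vanishing, which is precisely why the paper treats it with the separate Lemma \ref{Lem:QuotientExactAdelicSpace} rather than another application of Corollary \ref{Cor:NoCohomologyIsomorphism}.
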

It follows from the discussion in Section \ref{Sec:Tori} that the assumption that $\Sha^1(\mathbb{Q}, \mathsf{G}) \to \Sha^1(\f, \g)$ is injective is necessary. It follows from Section \ref{Sec:Tori} that it is necessary that $\rH^1(\Gamma,K) \to \rH^1(\Gamma,\haf)$ has trivial kernel.
\begin{Cor} \label{Cor:FixedPointsShimuraSV5}
Suppose that the $\mathbb{R}$-split rank of the center of $\g$ is zero. If $K \subset \haf$ is a neat and good compact open subgroup and $\Sha^1(\mathbb{Q}, \mathsf{G}) \to \Sha^1(\f, \g)$ is injective, then the natural morphism of $\mathsf{E}$-varieties
\begin{align}
    \mathbf{Sh}_{K^{\Gamma}}\gx \to \mathbf{Sh}_{K}\hy^{\Gamma}
\end{align}
is an isomorphism.
\end{Cor}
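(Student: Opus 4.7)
My plan is to deduce the corollary from Theorem \ref{Thm:HomotopyFixedPointsShimura} by first extracting a bijection of $\mathbb{C}$-points and then upgrading to an isomorphism of $\mathsf{E}$-varieties using smoothness.

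The first step is to observe that both $\gx$ and $\hy$ satisfy axiom SV5 under our hypothesis. Indeed, if the $\mathbb{R}$-split rank of $Z_{\mathsf{G}}$ is zero then so is its $\mathbb{Q}$-split rank, hence SV5 holds for $\gx$. For $\hy$ we invoke Lemma \ref{Lem:SV5}, which says that SV5 holds precisely when the $\mathbb{R}$-split rank of $Z_{\mathsf{G}}$ vanishes (the case $[\mathsf{F}:\mathbb{Q}] = 1$ being trivial since $\hy = \gx$ and $\Gamma$ is trivial). Since $K$ is neat, so is the subgroup $K^{\Gamma}$, and Lemma \ref{Lem:FreeActionShimura} therefore implies that $\mathsf{G}(\mathbb{Q})$ acts freely on $\mathsf{X} \times \gaf/K^{\Gamma}$ and $\mathsf{H}(\mathbb{Q})$ acts freely on $\mathsf{Y} \times \haf/K$. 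Consequently the two action groupoids appearing in Theorem \ref{Thm:HomotopyFixedPointsShimura} are both equivalent to their underlying sets of $\mathbb{C}$-points, namely $\mathbf{Sh}_{K^{\Gamma}}\gx(\mathbb{C})$ and $\mathbf{Sh}_{K}\hy(\mathbb{C})$.

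Next I would apply Theorem \ref{Thm:HomotopyFixedPointsShimura}. Since a $\Gamma$-action on a groupoid that is equivalent to a set has homotopy fixed points equal to the ordinary set-theoretic fixed points, the theorem produces a bijection
\begin{equation}
\mathbf{Sh}_{K^{\Gamma}}\gx(\mathbb{C}) \xrightarrow{\sim} \bigl(\mathbf{Sh}_{K}\hy(\mathbb{C})\bigr)^{\Gamma} = \mathbf{Sh}_{K}\hy^{\Gamma}(\mathbb{C}),
\end{equation}
functorially in the transcendental data, and hence a bijection on $\mathbb{C}$-points for the natural morphism of $\mathsf{E}$-varieties in the statement.

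The final step is to upgrade this bijection to an isomorphism of $\mathsf{E}$-varieties. Both source and target are smooth quasi-projective $\mathsf{E}$-varieties: the source is a Shimura variety for a datum satisfying SV5, and the target is the fixed-point subscheme for the finite group $\Gamma$ acting on the smooth $\mathsf{E}$-variety $\mathbf{Sh}_{K}\hy$, which is smooth because we are in characteristic zero. The complex uniformization identifies $\mathsf{Y}^{\Gamma}$ with the diagonal copy of $\mathsf{X}$ inside $\mathsf{Y}$, so both varieties are equidimensional of dimension $\dim_{\mathbb{C}} \mathsf{X}$. A morphism between smooth equidimensional $\mathsf{E}$-varieties that is bijective on $\mathbb{C}$-points is then an isomorphism: by Zariski's Main Theorem the base change to $\mathbb{C}$ factors as an open immersion into a finite morphism, and bijectivity together with characteristic zero forces the finite piece to be birational, hence an isomorphism onto its normal image. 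The resulting $\mathbb{C}$-isomorphism descends to an $\mathsf{E}$-isomorphism by fpqc descent. The main point to be careful about is the second step, i.e.\ the identification of homotopy fixed points with strict fixed points, which relies critically on the SV5 hypothesis for \emph{both} data and is the reason the hypothesis on the center of $\mathsf{G}$ is needed.
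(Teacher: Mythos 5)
Your proof takes essentially the same route as the paper: verify SV5 for both data via Lemma~\ref{Lem:SV5}, use neatness and Lemma~\ref{Lem:FreeActionShimura} to replace the action groupoids in Theorem~\ref{Thm:HomotopyFixedPointsShimura} by their underlying sets (so homotopy fixed points become ordinary fixed points), and then promote the resulting bijection on $\mathbb{C}$-points to an isomorphism using smoothness of the $\Gamma$-fixed-point scheme (Edixhoven) together with the standard fact that a bijective morphism onto a smooth variety over $\mathbb{C}$ is an isomorphism. The only differences are cosmetic: you spell out the $\mathbb{Q}$-split vs.\ $\mathbb{R}$-split rank comparison for $Z_{\mathsf{G}}$ and the ZMT argument, both of which the paper leaves implicit.
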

\begin{proof}
It suffices to prove this result after basechanging to $\mathbb{C}$. By \cite[Proposition 4.2]{Edixhoven}, the target is smooth because $\mathbf{Sh}_{K}\hy$ is smooth. Therefore, it is enough to prove that the map induces a bijection on $\mathbb{C}$-points. Now recall from Lemma \ref{Lem:SV5} that both $\gx$ and $\hy$ satisfy SV5. Since $K$ is neat it follows that $K^{\Gamma}=K \cap \gaf$ is neat and so by Lemma \ref{Lem:FreeActionShimura}, the groupoid quotients in the statement of Theorem \ref{Thm:HomotopyFixedPointsShimura} are equivalent to the set theoretic quotients. The corollary now follows from Theorem \ref{Thm:HomotopyFixedPointsShimura}.
\end{proof}

To prove Theorem \ref{Thm:HomotopyFixedPointsShimura}, it is more convenient to work with a different presentation of the morphisms of groupoids in equation \eqref{Eq:ShimuraGroupoid}. Informally, we want to swap around the order in which we are taking the quotient.\footnote{What follows is presumably a tautology for the readers well versed in $2$-category theory. We have opted to spell out the details for our own benefit.}

A compact open subgroup $K \subset \haf$ acts on $\mathsf{Y} \times \haf$ by $k \cdot (x,g)=(x,g \cdot k)$. This action commutes with the action of $\h(\mathbb{Q})$ and therefore descends to an action of $K$ on
\begin{align}
    \h(\mathbb{Q}) \backslash (\mathsf{Y} \times \haf).
\end{align}
Moreover, if $K$ is $\Gamma$-stable then this action is $\Gamma$-semilinear and thus induces an action of $\Gamma$ on the quotient stack of $\h(\mathbb{Q}) \backslash (\mathsf{Y} \times \haf)$ by $K$. There is moreover an action of $K^{\Gamma}$ on $\g(\mathbb{Q}) \backslash (\mathsf{X} \times \gaf)$ and the natural map
\begin{align}
    \g(\mathbb{Q}) \backslash (\mathsf{X} \times \gaf) \to \h(\mathbb{Q}) \backslash (\mathsf{Y} \times \haf) 
\end{align}
induces a $\Gamma$-equivariant map of quotient stacks, see Section \ref{Sec:AppendixQuotientStackFixedPoints}. The following lemma tells us that we can indeed swap around the order in which the quotient is taken.
\begin{Lem} \label{Lem:DifferentPresentations}
The natural functor
\begin{align}
    \left[\g(\mathbb{Q}) \backslash \left(\mathsf{X} \times \gaf/K^{\Gamma}\right)\right] \to \left[\h(\mathbb{Q}) \backslash \left(\mathsf{Y} \times \haf/K\right)\right]^{h \Gamma}
\end{align}
is an equivalence if and only if 
\begin{align}
\left[\left(\g(\mathbb{Q}) \backslash \left(\mathsf{X} \times \gaf\right)\right)/K^{\Gamma}\right] \to \left[\left(\h(\mathbb{Q}) \backslash \left(\mathsf{Y} \times \haf\right)\right)/K\right]^{h \Gamma}
\end{align}
is an equivalence.
\end{Lem}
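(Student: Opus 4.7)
The plan is to establish a $\Gamma$-equivariant ``associativity of quotients'' equivalence that identifies the two different presentations of the source (and of the target) appearing in the two statements; once this is done, the two assertions become tautologically equivalent.

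The underlying fact is a Fubini-type principle for commuting actions on stacks. If a group $A$ acts on a stack $Z$ from the left and a group $B$ acts from the right with commuting actions, then there is a natural equivalence of quotient stacks
\begin{align}
[A \backslash (Z/B)] \simeq [(A \backslash Z)/B],
\end{align}
both sides being equivalent to the double quotient stack presented by the action groupoid $A \times Z \times B \rightrightarrows Z$. I would apply this with $(A, Z, B) = (\h(\mathbb{Q}), \mathsf{Y} \times \haf, K)$ and separately with $(A, Z, B) = (\g(\mathbb{Q}), \mathsf{X} \times \gaf, K^{\Gamma})$, producing natural equivalences $\alpha$ and $\beta$ respectively between the two presentations that figure in the statement of the lemma.

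The next step is to verify that $\alpha$ is $\Gamma$-equivariant in the $2$-categorical sense of Appendix~\ref{Appendix:HFP}. The $\Gamma$-actions on both the source and the target of $\alpha$ are induced by the natural $\Gamma$-action on the common presenting groupoid $\h(\mathbb{Q}) \times (\mathsf{Y} \times \haf) \times K \rightrightarrows \mathsf{Y} \times \haf$, so the equivariance is immediate from the naturality of the associativity equivalence. Taking $\Gamma$-homotopy fixed points yields an equivalence $\alpha^{h\Gamma}$ between the two candidate targets, while $\beta$ (on the $\g$-side, which carries the trivial $\Gamma$-action) is already an equivalence.

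Finally, one checks that the natural morphism in the first statement of the lemma corresponds to the natural morphism in the second statement under $\alpha^{h\Gamma}$ and $\beta$, i.e. that the resulting square of four groupoids and four morphisms is $2$-commutative. This is again immediate from the construction of $\alpha$ and $\beta$ via presenting action groupoids. Since the vertical maps in this comparison square are equivalences, the top horizontal map is an equivalence if and only if the bottom one is, which is the content of the lemma. I expect the main obstacle to be purely bookkeeping: setting up the Fubini equivalence $\alpha$ and the $2$-commutativity of the comparison square precisely within the strict $(2,1)$-categorical framework of the appendix. Once the formalism is in place, the argument is essentially formal.
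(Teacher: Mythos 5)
Your proposal is correct and takes essentially the same route as the paper: the paper also introduces the single quotient by the product group $\h(\mathbb{Q}) \times K$ acting on $\mathsf{Y} \times \haf$ (i.e.\ your double quotient groupoid $A \times Z \times B \rightrightarrows Z$) as the common intermediary, shows the two resulting comparison functors to the iterated quotients are $\Gamma$-equivariant equivalences via Lemma \ref{Lem:FunctorialityQuotientStack}, and concludes from the resulting $3\times 2$ $2$-commutative ladder after applying $(-)^{h\Gamma}$.
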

\begin{proof}
This is a straightforward consequence of Lemma \ref{Lem:FunctorialityQuotientStack} and equation \ref{Eq:KeyDiagram}. 
\end{proof}

\begin{proof}[Proof of Theorem \ref{Thm:HomotopyFixedPointsShimura}]
It follows from the assumptions of the theorem and Lemma \ref{Lem:QuotientExactAdelicSpace} that the natural map
\begin{align}
    (\g(\mathbb{Q}) \backslash \left(\mathsf{X} \times \gaf\right) \to \left(\h(\mathbb{Q}) \backslash \left(\mathsf{Y} \times \haf\right)\right)^{\Gamma}
\end{align}
is a bijection. To show that the natural map 
\begin{align} \label{Eq:AlternativePresentationMap}
\left[\left(\g(\mathbb{Q}) \backslash \left(\mathsf{X} \times \gaf\right)\right)/K^{\Gamma}\right] \to \left[\left(\h(\mathbb{Q}) \backslash \left(\mathsf{Y} \times \haf\right)\right)/K\right]^{h \Gamma}
\end{align}
is an equivalence, it suffices by Corollary \ref{Cor:NoCohomologyIsomorphism} to show that $\rH^1(\Gamma,K)=\{1\}$, which is true by assumption. Therefore, the map in equation \eqref{Eq:AlternativePresentationMap} is an equivalence and the theorem now follows from Lemma \ref{Lem:DifferentPresentations}.
\end{proof}
The following corollary is a direct consequence of the proof of Theorem \ref{Thm:HomotopyFixedPointsShimura}.
\begin{Cor} \label{Cor:DiscreteGroupoid}
If $K \subset \haf$ is a $\Gamma$-stable subgroup (not necessarily compact open!) with $\rH^1(\Gamma,K)=\{1\}$, then the natural map
\begin{align}
    \left[\g(\mathbb{Q}) \backslash \left(\mathsf{X} \times \gaf/K^{\Gamma}\right)\right] \to \left[\h(\mathbb{Q}) \backslash \left(\mathsf{Y} \times \haf/K\right)\right]^{h \Gamma}
\end{align}
is an equivalence. In particular, the right-hand side is equivalent to a discrete groupoid if $\gx$ satisfies SV5 and $K$ is neat.
\end{Cor}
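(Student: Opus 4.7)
The plan is to mimic the proof of Theorem \ref{Thm:HomotopyFixedPointsShimura} verbatim, observing that the compactness and openness of $K$ were never used in a meaningful way; the only hypothesis on $K$ that was needed was the triviality of $H^1(\Gamma, K)$ (plus, implicitly, the injectivity of $\Sha^1(\mathbb{Q}, \mathsf{G}) \to \Sha^1(\mathsf{F}, G)$, which is carried over from the ambient assumptions).

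First I would apply Lemma \ref{Lem:DifferentPresentations} to reduce the question to showing that the natural functor
\begin{align}
    \left[\left(\g(\mathbb{Q}) \backslash \left(\mathsf{X} \times \gaf\right)\right)/K^{\Gamma}\right] \to \left[\left(\h(\mathbb{Q}) \backslash \left(\mathsf{Y} \times \haf\right)\right)/K\right]^{h \Gamma}
\end{align}
is an equivalence of groupoids. Inspecting the proof of that lemma, the three-way commutative diagram and its vertical equivalences are produced from Lemma \ref{Lem:FunctorialityQuotientStack}, whose conclusions depend only on $K$ being a $\Gamma$-stable subgroup; nothing forces $K$ to be compact or open. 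Second, Lemma \ref{Lem:QuotientExactAdelicSpace} tells us that the underlying map of sets
\begin{align}
    \g(\mathbb{Q}) \backslash \left(\mathsf{X} \times \gaf\right) \to \left(\h(\mathbb{Q}) \backslash \left(\mathsf{Y} \times \haf\right)\right)^{\Gamma}
\end{align}
is a bijection; this statement makes no reference to $K$ at all and uses only the $\Sha$ hypothesis.

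Third, I would invoke Corollary \ref{Cor:NoCohomologyIsomorphism} (with the right-multiplication action of $K$ on $\h(\mathbb{Q}) \backslash \mathsf{Y} \times \haf$, which is $\Gamma$-semilinear and commutes with the $\h(\mathbb{Q})$-action) to upgrade the set-theoretic bijection of the previous step to an equivalence of quotient stacks, using precisely the hypothesis $H^1(\Gamma, K) = \{1\}$. This gives the first claim.

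For the ``in particular'' clause, I would argue as follows. Neatness of $K$ implies neatness of the subgroup $K^{\Gamma} = K \cap \gaf$, and under SV5 for $\gx$ Lemma \ref{Lem:FreeActionShimura} then guarantees that $\g(\mathbb{Q})$ acts freely on $\mathsf{X} \times \gaf/K^{\Gamma}$; hence the left-hand side is equivalent to its set of isomorphism classes, i.e.\ to a discrete groupoid. By the equivalence established in the first part, the same holds for the right-hand side. The main obstacle here is really just bookkeeping: one must verify that each ingredient (Lemmas \ref{Lem:DifferentPresentations}, \ref{Lem:QuotientExactAdelicSpace}, \ref{Lem:FreeActionShimura}, and Corollary \ref{Cor:NoCohomologyIsomorphism}) continues to apply when $K$ is only assumed to be a $\Gamma$-stable subgroup, which is immediate from their statements.
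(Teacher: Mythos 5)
Your proof is correct and follows exactly the route the paper has in mind: the paper dismisses this corollary as ``a direct consequence of the proof of Theorem~\ref{Thm:HomotopyFixedPointsShimura},'' and you have simply spelled out that proof (Lemma~\ref{Lem:DifferentPresentations}, Lemma~\ref{Lem:QuotientExactAdelicSpace}, Corollary~\ref{Cor:NoCohomologyIsomorphism}), correctly observing that none of its ingredients use compactness or openness of $K$, and correctly flagging that the $\Sha^1$-injectivity hypothesis is tacitly inherited from the ambient theorem. The treatment of the ``in particular'' clause via neatness of $K^{\Gamma}$, SV5, and Lemma~\ref{Lem:FreeActionShimura} is also what the paper intends.
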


\subsection{Shimura varieties of Hodge type} \label{Sec:FixedPointsHodge}  Let the notation be as in the beginning of Section \ref{Sec:Shimura}. For a symplectic space $(V, \psi)$ over $\mathbb{Q}$ we write $\mathsf{G}_V:=\operatorname{GSp}(V, \psi)$ for the group of symplectic similitudes of $V$ over $\mathbb{Q}$. It admits a Shimura datum $\mathsf{H}_V$ consisting of the union of the Siegel upper and lower half spaces. Assume furthermore that $\gx$ is of Hodge type and let $\iota:\gx \to \gvx$ be a closed immersion of Shimura data. Let $\f$ be a Galois totally real field with Galois group $\Gamma$. 

Recall the following construction from \cite[Section 7.1.6]{KisinZhou}\footnote{They use $\mathbf{H}'$ for what we call $\h$ and they use $\mathbf{H}$ for what we call $\ho$.}. Let $W$ be the symplectic space $V \otimes_{\mathbb{Q}} \f$ considered as a vector space over $\mathbb{Q}$, equipped with the symplectic form $\psi_W$ given by
\begin{equation}
    \begin{tikzcd}
         W \times W \arrow{r}{\psi \otimes_{\mathbb{Q}} \f} & \f \arrow{r}{\operatorname{Tr}_{\f/\mathbb{Q}}} & \mathbb{Q}.
    \end{tikzcd}
\end{equation}
Let $c_{\g}:\g \to \mathbb{G}_m$ be the restriction of the symplectic similitude character of $\gv$ to $\g$ and let $c_{\g, \f}:\h \to \operatorname{Res}_{\f   /\mathbb{Q}} \mathbb{G}_m$ be the induced map. Form the fiber product
\begin{equation}
    \begin{tikzcd}
        \h_3 \arrow{r} \arrow{d} & \mathbb{G}_m \arrow{d}{} \arrow{d} \\
        \h \arrow{r}{c_{\mathsf{G},\f}} & \operatorname{Res}_{\f/\mathbb{Q}} \mathbb{G}_m
    \end{tikzcd}
\end{equation}
and let $\ho$ be the neutral component of $\h_3$. Then $\ho$ is a connected reductive group over $\mathbb{Q}$ and the natural map $\g \to \h$ factors over $\ho$. For $h:\mathbb{S} \to \g_{\mathbb{R}}$ in $\mathsf{X}$ we let $\mathsf{Y}_1$ be the $\ho(\mathbb{R})$-conjugacy class of the composition $h:\mathbb{S} \to \g_{\mathbb{R}} \to \h_{1,\mathbb{R}}$, this does not depend on the choice of $h$. We will write $\gvf$ for the inverse image of $\mathbb{G}_m \subset \operatorname{Res}_{\f/\mathbb{Q}} \mathbb{G}_m$ under the natural map
\begin{align}
    \operatorname{Res}_{\f/\mathbb{Q}} G_V \to \operatorname{Res}_{\f/\mathbb{Q}} \mathbb{G}_m.
\end{align}
As explained above, this also comes equipped with a Shimura datum $\mathsf{H}_{V,\f}$. There is moreover a commutative diagram 
\begin{equation} \label{Eq:HodgeModifiedCenterDiagram}
    \begin{tikzcd}
        \g \arrow{r} \arrow{d}{\iota} & \ho \arrow{d} \arrow{dr}{\iota_{\ho}} \\
        \gv \arrow{r} & \gvf \arrow{r} & \gw,
    \end{tikzcd}
\end{equation}
where each map is a closed immersion compatible with the natural Shimura data.

\begin{Rem}
    The group $\ho$ does not depend on the choice of $\iota$, since the map $c_{\g}$ does not depend on the choice of $\iota$, see \cite[Lemma 7.1.1]{DvHKZIgusaStacks}.
\end{Rem}

\subsubsection{} Shimura varieties of Hodge type automatically satisfy SV5 and in fact their centers have $\mathbb{Q}$-split and $\mathbb{R}$-split ranks equal to $1$. Recall from Lemma \ref{Lem:SV5} that if $[\f:\mathbb{Q}]>1$ then $\hy$ does not satisfy SV5. Since $\hyo$ is of Hodge type, it does satisfy SV5. The point of the construction of $\ho$ is both to remedy the failure of SV5 and to build something that is again of Hodge type. \smallskip

We would like to compare the $\Gamma$-fixed points of Shimura varieties for $\hyo$ to the Shimura varieties for $\gx$. It seems complicated to do this directly, so we will instead compare the $\mathbb{C}$-points of the $\Gamma$-fixed points of the Shimura varieties for $\hyo$, to the $\Gamma$-homotopy fixed points of the Shimura stacks for $\hy$. We will need the following modification of \cite[Lemma 2.1.2]{KisinModels}, cf. \cite[Lemma 2.4.3]{Lovering}. 
\begin{Prop} \label{Prop:Injective}
Let $\ell$ be a prime number coprime to the order of $\Gamma$. Let $K^{\ell} \subset \h(\mathbb{A}_f^{\ell})$ be a neat good compact open subgroup and let $K_{\ell} \subset \g(\ql)$ be a $\Gamma$-stable pro-$\ell$ group. Define $K=K^{\ell} K_{\ell}$ and define $K_{1,\ell}=K_{\ell} \cap \ho(\ql)$ and $K^{1,\ell}=K^{\ell} \cap \ho(\af^{\ell})$. Then the natural map
\begin{align}
    \left(\ho(\mathbb{Q}) \backslash \mathsf{Y}_1 \times \h_1(\af)/K_1^\ell K_{1,\ell}\right)^{\Gamma} \to \left[\h(\mathbb{Q}) \backslash \mathsf{Y} \times \haf/K^{\ell} K_{1,\ell}\right]^{h \Gamma}
\end{align}
is fully faithful.\footnote{The groupoid on the right hand side is equivalent to a discrete groupoid by Corollary \ref{Cor:DiscreteGroupoid}, and thus the lemma is equivalent to saying that the map on isomorphism classes of objects is injective.}
\end{Prop}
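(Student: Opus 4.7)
The plan is to verify fully faithfulness by establishing two conditions: (i) injectivity on isomorphism classes of objects, and (ii) triviality of the RHS automorphism group of every object in the image. Since $\hyo$ is of Hodge type (hence satisfies Milne's axiom SV5) and $K_1^\ell K_{1,\ell}$ is neat, Lemma \ref{Lem:FreeActionShimura} makes the LHS a set, so (i) and (ii) together suffice.

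For (i), I would proceed as follows. Suppose two $\Gamma$-fixed classes $[y_1, g_1]$ and $[y_2, g_2]$ in the LHS map to the same class in the RHS. Forgetting the $\Gamma$-equivariance data, one extracts $h \in \h(\mathbb{Q})$ and $k \in K^\ell K_{1,\ell}$ with $h \cdot y_1 = y_2$ and $h g_1 k = g_2$. Since $K_{1,\ell} \subset \ho(\ql)$ by hypothesis, one has $\ho(\af) \cap K^\ell K_{1,\ell} = K_1^\ell K_{1,\ell}$, so it suffices to arrange $h \in \ho(\mathbb{Q})$: then $k = g_1^{-1} h^{-1} g_2$ automatically lies in $K_1^\ell K_{1,\ell}$. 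To produce such an $h$, apply the character $c_{\g,\mathsf{F}}: \h \to \operatorname{Res}_{\mathsf{F}/\mathbb{Q}} \mathbb{G}_m$ to the relation $h g_1 k = g_2$. Because $g_i \in \ho(\af)$ and $k_\ell \in K_{1,\ell} \subset \ho(\ql)$, the images $c_{\g,\mathsf{F}}(g_i)$ and $c_{\g,\mathsf{F}}(k_\ell)$ lie in the diagonal $\mathbb{G}_m$; inspecting the $\ell$-adic component gives $c_{\g,\mathsf{F}}(h)_\ell \in \ql^\times \subset (\mathsf{F}\otimes_\mathbb{Q}\ql)^\times$. Since $\ql^\times$ is precisely the $\Gamma$-invariant subgroup of $(\mathsf{F}\otimes_\mathbb{Q}\ql)^\times$ (using $\mathsf{F}^\Gamma = \mathbb{Q}$ and flat base change), and $\mathsf{F}^\times \hookrightarrow (\mathsf{F}\otimes_\mathbb{Q}\ql)^\times$ is $\Gamma$-equivariant and injective, we conclude $c_{\g,\mathsf{F}}(h) \in \mathbb{Q}^\times$, placing $h$ in the preimage $\h_3(\mathbb{Q})$ of $\mathbb{G}_m$. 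A final modification of $(h,k)$ by a representative of the finite quotient $\h_3(\mathbb{Q})/\ho(\mathbb{Q})$ --- chosen compatibly with the $\Gamma$-structure using that $H^1(\Gamma, K_{1,\ell}) = \{1\}$ by Lemma \ref{Lem:CohomologyTrivialPrimeToP} (applicable since $K_{1,\ell}$ is pro-$\ell$ with $\ell$ coprime to $|\Gamma|$) --- will then place $h$ in $\ho(\mathbb{Q})$.

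For (ii), an automorphism $(h, k) \in \h(\mathbb{Q}) \times K^\ell K_{1,\ell}$ of $F([y,g])$ in the RHS must satisfy $hy = y$, $hgk^{-1} = g$, and intertwine the $\Gamma$-equivariance cocycle $(\eta(\gamma), \kappa(\gamma))$ attached to $[y, g]$; the intertwining condition reads $\gamma(h) = \eta(\gamma)^{-1} h \eta(\gamma)$. Standard Shimura-datum stabilizer considerations combined with neatness of $K_1^\ell K_{1,\ell}$ will reduce this to $h \in Z_\h(\mathbb{Q}) \cap K^\ell K_{1,\ell}$, so the intertwining simplifies to $\gamma(h) = h$, forcing $h \in Z_\h(\mathbb{Q})^\Gamma = Z_\g(\mathbb{Q})$. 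Since $\gx$ satisfies SV5 and the level is neat, $Z_\g(\mathbb{Q}) \cap K_1^\ell K_{1,\ell} = \{1\}$, so $h = 1$ and consequently $k = 1$. The hardest part will be tracking the $\Gamma$-equivariance compatibilities consistently throughout the modifications in (i) --- both the initial extraction of $(h,k)$ and its adjustment to lie in $\ho(\mathbb{Q}) \times K_1^\ell K_{1,\ell}$ must intertwine the two given equivariance structures, and the decisive tool for arranging this is precisely the $H^1$-vanishing for $K^\ell K_{1,\ell}$.
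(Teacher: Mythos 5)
Your overall plan---reduce to showing that any isomorphism over $(\h(\mathbb{Q}), K^\ell K_{1,\ell})$ between images of LHS classes is already witnessed over $(\ho(\mathbb{Q}), K_1^\ell K_{1,\ell})$---is in the right spirit, and your use of $c_{\g,\mathsf{F}}$ to locate $h$ in $\h_3(\mathbb{Q})$ is morally the same as the paper's use of $\h(\mathbb{Q}) \cap H_1(\ql) = \ho(\mathbb{Q})$ (which the paper attributes to Lovering). However, there is a genuine gap at your ``final modification'' step. You land $h$ in $\h_3(\mathbb{Q})$ and then propose to modify $(h,k)$ by a representative of $\h_3(\mathbb{Q})/\ho(\mathbb{Q})$, but it is not clear that any such modification preserves the relations $hy_1 = y_2$ and $hg_1k = g_2$: you would need the modifying element to lie in the stabilizer of $(y_1, g_1 K)$, which you have not produced. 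The fix is that no modification is needed at all: the $\ell$-adic component of $h = g_2 k^{-1} g_1^{-1}$ is a product of elements of $\ho(\ql)$ (since $g_{i,\ell}$ and $k_\ell$ all lie in $\ho(\ql)$), so $h \in \ho(\ql)$, and since $\ho = \h_3^\circ$ is defined over $\mathbb{Q}$, the point $h \in \h_3(\mathbb{Q}) \cap \ho(\ql)$ lies in $\ho(\mathbb{Q})$. In fact this observation subsumes your $c_{\g,\mathsf{F}}$ argument.

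A second issue is that your worry about ``tracking the $\Gamma$-equivariance compatibilities'' in step (i) is misplaced. The LHS is a \emph{set}, so once you produce any witness $(h',k')$ with $h' \in \ho(\mathbb{Q})$ and $k' \in K_1^\ell K_{1,\ell}$, the two classes coincide in the LHS; no intertwining of equivariance data is required. Relatedly, your part (ii) is entirely redundant: the footnote to the Proposition invokes Corollary \ref{Cor:DiscreteGroupoid}, which shows the RHS is a discrete groupoid, so automorphism groups are automatically trivial and fully faithfulness is equivalent to injectivity on objects. Your proposed ``standard Shimura-datum stabilizer considerations'' would reprove this from scratch, and it is not clear the sketch can be carried out without essentially redoing the cohomological argument behind Corollary \ref{Cor:DiscreteGroupoid}. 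Finally, the paper's proof organizes the argument differently and more cleanly: it first proves fully faithfulness at level $(K^\ell, K_1^\ell)$ (no $\ell$-adic component), where Lovering's injectivity argument on coarse quotients applies directly, and then uses a $2$-Cartesian square together with $H^1(\Gamma, K_{1,\ell}) = \{1\}$ (to get essential surjectivity of the left vertical arrow) to descend to level $K^\ell K_{1,\ell}$. That is where the vanishing of $H^1(\Gamma, K_{1,\ell})$ actually enters, rather than in any modification step.
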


\begin{proof}[Proof of Proposition \ref{Prop:Injective}]
We first prove that the natural functor
\begin{align} \label{Eq:MapStrangeInverseLimit}
    \left(\ho(\mathbb{Q}) \backslash \mathsf{Y}_1 \times \h_1(\af)/K_1^\ell \right)^{\Gamma} \to \left[\h(\mathbb{Q}) \backslash \mathsf{Y} \times \haf/K^{\ell} \right]^{h \Gamma}
\end{align}
is fully faithful. For this we consider the commutative diagram
\begin{equation}
    \begin{tikzcd}
    \ho(\mathbb{Q}) \backslash \mathsf{Y}_1 \times \h_1(\af)/K_1^\ell  \arrow{r} \arrow{dr} & \left[\h(\mathbb{Q}) \backslash \mathsf{Y} \times \haf/K^{\ell} \right] \arrow{d} \\
    & \h(\mathbb{Q}) \backslash \mathsf{Y} \times \haf/K^{\ell}.
    \end{tikzcd}
\end{equation}
The proof of \cite[Lemma 2.4.3]{Lovering} establishes that the diagonal arrow is injective: Indeed, we consider the commutative diagram
\begin{equation}
\begin{tikzcd}
        \ho(\mathbb{Q}) \backslash \mathsf{Y}_1 \times \h_1(\af)/K_1^\ell \arrow{d} \arrow{r} & \h(\mathbb{Q}) \backslash \mathsf{Y} \times \haf/K^{\ell} \arrow{d} \\
    \ho(\mathbb{Q}) \backslash H_1(\ql) \arrow{r} & \h(\mathbb{Q}) \backslash H(\ql).
\end{tikzcd}
\end{equation}
The bottom arrow is injective since $\h(\mathbb{Q}) \cap H_1(\ql) = \ho(\mathbb{Q})$. The fibers of the vertical maps can be identified with $\mathsf{Y}_1 \times \ho(\mathbb{A}_f^{\ell})/K_1^\ell$ and $\mathsf{Y} \times \h(\mathbb{A}_f^{\ell})/K^{\ell}$, respectively, and the natural map between them is injective since $K_1^{\ell}=K^{\ell} \cap \ho(\mathbb{A}_f^{\ell})$. We conclude that the top arrow is injective, and it follows that the diagonal arrow in the following diagram is injective
\begin{equation}
    \begin{tikzcd}
    \left(\ho(\mathbb{Q}) \backslash \mathsf{Y}_1 \times \h_1(\af)/K_1^\ell \right)^{\Gamma} \arrow{r} \arrow[dr, hook] & \left[\h(\mathbb{Q}) \backslash \mathsf{Y} \times \haf/K^{\ell} \right]^{h \Gamma} \arrow{d} \\
    & \left(\h(\mathbb{Q}) \backslash \mathsf{Y} \times \haf/K^{\ell}\right)^{\Gamma}.
    \end{tikzcd}
\end{equation}
Since $\rH^1(\Gamma, K^{\ell})=\{1\}$ by assumption, it follows from Corollary \ref{Cor:DiscreteGroupoid} that $$\left[\h(\mathbb{Q}) \backslash \mathsf{Y} \times \haf/K^{\ell} \right]^{h \Gamma}$$ is a discrete groupoid. Therefore $\left(\ho(\mathbb{Q}) \backslash \mathsf{Y}_1 \times \h_1(\af)/K_1^\ell \right)^{\Gamma} \to \left[\h(\mathbb{Q}) \backslash \mathsf{Y} \times \haf/K^{\ell} \right]^{h \Gamma}$ must be a fully faithful map of discrete groupoids. To continue the proof we consider the $2$-commutative diagram
\begin{equation}
    \begin{tikzcd}
    \ho(\mathbb{Q}) \backslash \mathsf{Y}_1 \times \h_1(\af)/K_1^\ell  \arrow{d} \arrow{r} & \left[\h(\mathbb{Q}) \backslash \mathsf{Y} \times \haf/K^{\ell} \right]\arrow{d} \\
    \ho(\mathbb{Q}) \backslash \mathsf{Y}_1 \times \h_1(\af)/K_1^\ell K_{1,\ell} \arrow{r} & \left[\h(\mathbb{Q}) \backslash \mathsf{Y} \times \haf/K^{\ell} K_{1,\ell}\right],
    \end{tikzcd}
\end{equation}
which we note is $2$-Cartesian since the vertical arrows are essentially surjective with fibers given by the discrete groupoid associated to $K_{1,\ell}$. By Lemma \ref{Lem:FiberProducts}, it follows that the diagram stays Cartesian when applying $\Gamma$-homotopy fixed points. Since $K_{\ell}$ is pro-$\ell$ by assumption, it follows that $K_{1,\ell}$ is also pro-$\ell$ and thus $\rH^1(\Gamma,K_{1,\ell})=\{1\}$ by Lemma \ref{Lem:CohomologyTrivialPrimeToP}. Therefore, it follows from Corollary \ref{Cor:NoCohomologyIsomorphism} that the left vertical map in the diagram
\begin{equation}
    \begin{tikzcd}
    \left(\ho(\mathbb{Q}) \backslash \mathsf{Y}_1 \times \h_1(\af)/K_1^\ell\right)^{\Gamma}  \arrow{d} \arrow{r} & \left[\h(\mathbb{Q}) \backslash \mathsf{Y} \times \haf/K^{\ell} \right]^{h\Gamma}\arrow{d} \\
    \left(\ho(\mathbb{Q}) \backslash \mathsf{Y}_1 \times \h_1(\af)/K_1^\ell K_{1,\ell}\right)^{\Gamma} \arrow{r} & \left[\h(\mathbb{Q}) \backslash \mathsf{Y} \times \haf/K^{\ell} K_{1,\ell}\right],
    \end{tikzcd}
\end{equation}
is (essentially) surjective, and fully faithfullness of the top row therefore implies the fully faithfulness of the bottom row. 
\end{proof}
\begin{Thm} \label{Thm:FixedPointsHodge}
Let $K \subset \haf$ be a neat and good compact open subgroup. If there is a prime number $\ell$ coprime to the order of $\Gamma$ such that $K=K^{\ell} K_{\ell}$ with $K_{\ell}$ a pro-$\ell$ group, and if $\Sha^1(\mathbb{Q}, \mathsf{G}) \to \Sha^1(\f, \g)$ is injective, then the natural map
\begin{align}
    \mathbf{Sh}_{K^{\Gamma}}\gx \to \mathbf{Sh}_{K_1}\hyo^{\Gamma}
\end{align}
is an isomorphism.
\end{Thm}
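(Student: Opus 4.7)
The plan is to reduce to a bijection on $\mathbb{C}$-points and then factor through the homotopy fixed points of the Shimura stack for $\hy$, leveraging Theorem \ref{Thm:HomotopyFixedPointsShimura} and Proposition \ref{Prop:Injective}. Specifically, since $\gx$ and $\hyo$ are of Hodge type they satisfy SV5, and since $K$ is neat the subgroups $K^{\Gamma}$ and $K_1$ are neat as well, so the Shimura stacks identify with the usual Shimura varieties. As in the proof of Corollary \ref{Cor:FixedPointsShimuraSV5}, \cite[Proposition 4.2]{Edixhoven} ensures that $\mathbf{Sh}_{K_1}\hyo^{\Gamma}$ is smooth over $\mathsf{E}$, so it is enough to prove that the natural map induces a bijection on $\mathbb{C}$-points.

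After base change to $\mathbb{C}$, I would consider the factorization
\begin{equation}
\mathbf{Sh}_{K^{\Gamma}}\gx(\mathbb{C}) \xrightarrow{g} \mathbf{Sh}_{K_1}\hyo^{\Gamma}(\mathbb{C}) \xrightarrow{f} \left[\h(\mathbb{Q}) \backslash \mathsf{Y} \times \haf/K\right]^{h\Gamma},
\end{equation}
where $f$ is induced by the inclusion $\hyo \hookrightarrow \hy$ together with $K_1 \subset K$. Theorem \ref{Thm:HomotopyFixedPointsShimura}, applied to the morphism of Shimura data $\gx \to \hy$ with the good compact open subgroup $K$, tells us that the composition $f \circ g$ is an equivalence of groupoids; note that since $\gx$ satisfies SV5 and $K^{\Gamma}$ is neat, the source is a discrete groupoid, and so is the target by Corollary \ref{Cor:DiscreteGroupoid}.

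Next, Proposition \ref{Prop:Injective} applies directly: the hypotheses that $K^{\ell}$ be neat and good and that $K_{\ell}$ be a $\Gamma$-stable pro-$\ell$ group with $\ell$ coprime to $|\Gamma|$ are exactly what we have assumed, so $f$ is fully faithful. Since $f \circ g$ is a bijection on isomorphism classes and $f$ is injective on isomorphism classes, a standard diagram chase gives that $g$ is a bijection: injectivity of $g$ follows from injectivity of $f \circ g$, and for surjectivity, given $y$ in the middle, pick $x$ with $(f \circ g)(x) = f(y)$, then $f(g(x)) = f(y)$ forces $g(x) = y$ by injectivity of $f$. This concludes the proof on $\mathbb{C}$-points and hence, by the smoothness argument, over $\mathsf{E}$.

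The main obstacle here is not really an obstacle, as all the hard work is already done in Theorem \ref{Thm:HomotopyFixedPointsShimura} and Proposition \ref{Prop:Injective}; what requires care is verifying that the factorization through the Shimura stack for $\hy$ at level $K$ is genuinely commutative as a $2$-diagram (so that $f \circ g$ matches the morphism of Theorem \ref{Thm:HomotopyFixedPointsShimura}), and that the smoothness of the $\Gamma$-fixed points reduces the problem to a statement at the level of $\mathbb{C}$-points.
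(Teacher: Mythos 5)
Your overall strategy matches the paper's exactly: reduce to $\mathbb{C}$-points via smoothness, then factor through the homotopy fixed points of a Shimura stack for $\hy$, using Theorem \ref{Thm:HomotopyFixedPointsShimura} (or Corollary \ref{Cor:DiscreteGroupoid}) to see the composite is an equivalence and Proposition \ref{Prop:Injective} to see the second map is fully faithful. However, there is a real gap in the step where you invoke Proposition \ref{Prop:Injective}.

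You factor through the stack $\left[\h(\mathbb{Q}) \backslash \mathsf{Y} \times \haf / K\right]^{h\Gamma}$ at level $K = K^{\ell} K_{\ell}$, and claim that Proposition \ref{Prop:Injective} "applies directly" to give fully faithfulness of $f$. But the map in Proposition \ref{Prop:Injective} has target $\left[\h(\mathbb{Q}) \backslash \mathsf{Y} \times \haf / K^{\ell} K_{1,\ell}\right]^{h\Gamma}$, where $K_{1,\ell} = K_{\ell} \cap \ho(\ql)$. Since in general $K_{1,\ell} \subsetneq K_{\ell}$ (this is a proper containment whenever $[\mathsf{F}:\mathbb{Q}]>1$), the level $K^{\ell}K_{1,\ell}$ is strictly smaller than $K$, so the proposition does not literally speak about your $f$. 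The paper avoids this by taking the middle composite to land at level $K^{\ell} K_{1,\ell}$ throughout (and uses Corollary \ref{Cor:DiscreteGroupoid} rather than applying Theorem \ref{Thm:HomotopyFixedPointsShimura} at level $K$, having verified $H^1(\Gamma, K^{\ell}K_{1,\ell})=\{1\}$ by showing $K_{1,\ell}$ is pro-$\ell$), so that Proposition \ref{Prop:Injective} applies on the nose.

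Your argument is nevertheless repairable: $f$ does factor as $\pi \circ f'$ where $f'$ is the fully faithful map of Proposition \ref{Prop:Injective} and $\pi$ is the natural level-change $\left[\h(\mathbb{Q}) \backslash \mathsf{Y} \times \haf / K^{\ell} K_{1,\ell}\right]^{h\Gamma} \to \left[\h(\mathbb{Q}) \backslash \mathsf{Y} \times \haf / K\right]^{h\Gamma}$. One can check that $\pi$ is in fact an equivalence, because $(K^{\ell}K_{1,\ell})^{\Gamma} = K^{\Gamma}$ (since $(K_{\ell})^{\Gamma} \subset \g(\ql) \subset \ho(\ql)$, hence $(K_{\ell})^{\Gamma} = (K_{1,\ell})^{\Gamma}$) and both sides are identified with $\g(\mathbb{Q})\backslash \mathsf{X}\times\gaf/K^{\Gamma}$ by Theorem \ref{Thm:HomotopyFixedPointsShimura} and Corollary \ref{Cor:DiscreteGroupoid}. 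But this additional observation needs to be made explicit; as written your citation of Proposition \ref{Prop:Injective} is a misstatement of what it proves, and the surjectivity step of your diagram chase relies precisely on the injectivity of $f$ that this gap jeopardizes.
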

\begin{proof}
As in the proof of Corollary \ref{Cor:FixedPointsShimuraSV5}, we reduce to showing that the map is a bijection on $\mathbb{C}$-points. This map has the form
\begin{align}
    \g(\mathbb{Q}) \backslash \mathsf{X} \times \gaf/K^{\Gamma} \to \left(\ho(\mathbb{Q}) \backslash (\mathsf{Y}_1 \times \h_1(\af)/K_1 \right)^{\Gamma}
\end{align}
and we consider the composition
\begin{align} \label{Eq:FurtherComposite}
    \g(\mathbb{Q}) \backslash \mathsf{X} \times \gaf/K^\Gamma&\to \left(\ho(\mathbb{Q}) \backslash (\mathsf{Y}_1 \times \h_1(\af)/K_1)\right)^{\Gamma} \\
    &\to \left[\h(\mathbb{Q}) \backslash (\mathsf{X} \times \haf/K^{\ell} K_{1,\ell})\right]^{h \Gamma}.
\end{align}
Let $\ell$ and $K_{\ell}$ be as in the statement of the theorem. Since $K_{\ell}$ is pro-$\ell$ by assumption, it follows that $K_{1,\ell}$ is also pro-$\ell$ and thus $\rH^1(\Gamma,K_{1,\ell})=\{1\}$ by Lemma \ref{Lem:CohomologyTrivialPrimeToP}. Therefore it follows from Corollary \ref{Cor:DiscreteGroupoid} that this composition is an equivalence of categories. By Proposition \ref{Prop:Injective}, the map
\begin{align}
    \left(\ho(\mathbb{Q}) \backslash \mathsf{Y}_1 \times \h_1(\af)/K_1^\ell K_{1,\ell}\right)^{\Gamma} \to \left[\h(\mathbb{Q}) \backslash \mathsf{Y} \times \haf/K^{\ell} K_{1,\ell}\right]^{h \Gamma}
\end{align}
is fully faithful and thus it follows that 
\begin{align}
    \g(\mathbb{Q}) \backslash \mathsf{X} \times \gaf/K^\Gamma&\to \left(\ho(\mathbb{Q}) \backslash (\mathsf{Y}_1 \times \h_1(\af)/K_1)\right)^{\Gamma} \\
\end{align}
is an equivalence, since its composition with a fully faithful map is an equivalence. Since an equivalence of categories between discrete categories gives a bijection of the underlying sets, we are done.
\end{proof}
\begin{Rem}
Our original approach to proving this theorem was to directly study the $\Gamma$-fixed points of the Shimura variety for $\hyo$ using non-abelian cohomology; this calculation proved quite difficult. This is why it is useful to consider homotopy fixed points of Shimura stacks, even if one is just interested in studying Shimura varieties of Hodge type.
\end{Rem}
\subsubsection{} Finally, we collect a result for use in Section \ref{Sec:IgusaStacks}. For $K \subset \haf$ a $\Gamma$-stable compact open subgroup we will consider $K^{\Gamma} \subset \gaf$ and $K_1 \subset \h_1(\af)$. 
\begin{Lem} \label{Lem:FixedPointsInfiniteLevelHodge}
    If $\Sha^1(\mathbb{Q},\g) \to \Sha^1(\mathbb{Q},\h)$ is injective, then the natural map of $\mathsf{E}$-schemes
    \begin{align}
        \kappa:\varprojlim_{K \subset \haf} \mathbf{Sh}_{K^{\Gamma}}\gx \to \left(\varprojlim_{K \subset \haf} \mathbf{Sh}_{K_1}\hyo\right)^{\Gamma},
    \end{align}
    where the limit runs over $\Gamma$-stable compact open subgroups, is an isomorphism.
\end{Lem}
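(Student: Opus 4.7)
The plan is to reduce the lemma to the finite-level isomorphism of Theorem \ref{Thm:FixedPointsHodge} and then pass to an inverse limit. The key structural observation is that $\Gamma$-fixed points commute with arbitrary inverse limits (both being limits in the category of $\mathsf{E}$-schemes), so
\begin{align}
    \left(\varprojlim_{K \subset \haf} \mathbf{Sh}_{K_1}\hyo\right)^{\Gamma} \simeq \varprojlim_{K \subset \haf} \mathbf{Sh}_{K_1}\hyo^{\Gamma},
\end{align}
where the limits run over $\Gamma$-stable compact open subgroups. It therefore suffices to exhibit a cofinal subsystem of such $K$ on which the natural map $\mathbf{Sh}_{K^{\Gamma}}\gx \to \mathbf{Sh}_{K_1}\hyo^{\Gamma}$ is an isomorphism of $\mathsf{E}$-schemes at finite level.

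First, I would fix a prime $\ell$ coprime to $|\Gamma|$. By Proposition \ref{Prop:ExistenceCofinalGoodCompactOpens}, the neat good $\Gamma$-stable $K \subset \haf$ already form a cofinal system, and shrinking the $\ell$-component to a $\Gamma$-stable pro-$\ell$ group (for instance a sufficiently deep principal congruence subgroup of a $\Gamma$-equivariant smooth integral model of $\mathsf{H}$ at $\ell$) preserves cofinality since the $\ell$-part can be refined independently of the prime-to-$\ell$ part. Next I would invoke Theorem \ref{Thm:FixedPointsHodge} on each such $K$; the hypothesis there is phrased as $\Sha^1(\mathbb{Q}, \mathsf{G}) \to \Sha^1(\mathsf{F}, G)$ injective, but Shapiro's lemma gives canonical identifications $H^1(\mathbb{Q}, \mathsf{H}) = H^1(\mathsf{F}, G)$ and $H^1(\mathbb{Q}_v, \mathsf{H}) = \prod_{w \mid v} H^1(\mathsf{F}_w, G)$, hence $\Sha^1(\mathbb{Q}, \mathsf{H}) = \Sha^1(\mathsf{F}, G)$, so the hypothesis of the present lemma delivers exactly what Theorem \ref{Thm:FixedPointsHodge} needs.

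Finally, I would take the inverse limit of these finite-level isomorphisms along the cofinal subsystem. Since restricting to a cofinal subsystem does not alter an inverse limit and a filtered inverse limit of isomorphisms is an isomorphism, the map $\kappa$ is an isomorphism of $\mathsf{E}$-schemes. I do not anticipate a serious obstacle, since the substantive geometric content sits entirely inside Theorem \ref{Thm:FixedPointsHodge}; the only mildly delicate bookkeeping is the cofinality statement about the factored pro-$\ell$ form of $K$, which is essentially immediate from the product structure of $\haf$ together with Proposition \ref{Prop:ExistenceCofinalGoodCompactOpens}.
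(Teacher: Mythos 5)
Your reduction to finite-level statements via Theorem \ref{Thm:FixedPointsHodge} runs into a hypothesis mismatch that the paper's proof is designed precisely to avoid. Theorem \ref{Thm:FixedPointsHodge} requires $K$ to be a \emph{good} compact open subgroup, i.e. $\Gamma$-stable with $H^1(\Gamma,K)=\{1\}$, and you obtain a cofinal system of such $K$ by citing Proposition \ref{Prop:ExistenceCofinalGoodCompactOpens}. But that proposition explicitly assumes that $\mathsf{F}$ is tamely ramified over $\mathbb{Q}$, whereas Lemma \ref{Lem:FixedPointsInfiniteLevelHodge} makes no such assumption. Without tame ramification the paper never produces arbitrarily small good $\Gamma$-stable $K$, so there may be no cofinal subsystem on which the finite-level isomorphism holds, and your passage to the limit over such a subsystem would not compute the limit over all $\Gamma$-stable $K$. (This is not a cosmetic omission: the lemma is later used in Section \ref{Sec:IgusaStacks} to prove Theorem \ref{Thm:FixedPointsIgusa}, which is deliberately stated with no ramification hypothesis, so the tame case would not suffice.)

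The paper's proof sidesteps the whole issue by working directly at infinite level, where no choice of good $K$ is needed. After the same initial observation that fixed points commute with inverse limits, they first establish a bijection on $\mathbb{C}$-points using Lemma \ref{Lem:QuotientExactAdelicSpace} (which needs only the $\Sha^1$ injectivity) together with Deligne's injectivity result \cite[Variante 1.15.1]{DeligneTravaux}. They then upgrade the $\mathbb{C}$-point bijection to an isomorphism of schemes: $\kappa'$ is a closed immersion by \cite[Proposition 1.15]{DeligneTravaux}, both sides are Jacobson (so $\mathbb{C}$-points are dense), and $\kappa'$ is integral (hence universally closed), so the closed image, containing a dense set, is everything. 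None of this requires $H^1(\Gamma,K)=\{1\}$ at any finite level. Your Shapiro-lemma identification of the $\Sha^1$-hypotheses is fine, but you would need to either restrict the lemma to tamely ramified $\mathsf{F}$ (which would weaken the downstream Igusa-stack results) or replace the cofinality step with an argument that does not invoke Proposition \ref{Prop:ExistenceCofinalGoodCompactOpens}.
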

\begin{proof}
It suffices to prove this after basechanging to $\mathbb{C}$. To prove it is an isomorphism over $\mathbb{C}$, we first show it induces a bijection on $\mathbb{C}$-points. There we are looking at the map (injective by \cite[Variante 1.15.1]{DeligneTravaux}) 
\begin{align}
    \g(\mathbb{Q}) \backslash \mathsf{X} \times \gaf \to \left(\h_1(\mathbb{Q}) \backslash \mathsf{Y}_1 \times \h_1(\af) \right)^{\Gamma}.
\end{align}
As before we consider the composition
\begin{align}
    \g(\mathbb{Q}) \backslash \mathsf{X} \times \gaf \to \left(\h_1(\mathbb{Q}) \backslash \mathsf{Y}_1 \times \h_1(\af) \right)^{\Gamma} \to \left(\h(\mathbb{Q}) \backslash \mathsf{Y} \times \h(\af) \right)^{\Gamma},
\end{align}
which is an isomorphism by Lemma \ref{Lem:QuotientExactAdelicSpace}. It now suffices to show that 
\begin{align}
    \left(\h_1(\mathbb{Q}) \backslash \mathsf{Y}_1 \times \h_1(\af) \right)^{\Gamma} \to \left(\h(\mathbb{Q}) \backslash \mathsf{Y} \times \h(\af) \right)^{\Gamma}
\end{align}
is injective, which can be checked before taking $\Gamma$-fixed points. But then the result is well known, see \cite[Variante 1.15.1]{DeligneTravaux}. \smallskip 

To deduce that $\kappa$ is an isomorphism, we argue as follows: It follows from the proof of \cite[Proposition 1.15]{DeligneTravaux} that $\kappa$ is a closed immersion, thus it suffices to show the map is surjective on topological spaces. Both the source and target are Jacobson schemes since they are integral over the bottom object of the inverse limit, which is Jacobson; thus $\mathbb{C}$-points are dense in the source and target. Furthermore, the map $\kappa$ is integral since it is an inverse limit of integral maps of schemes. Therefore $\kappa$ is universally closed, and thus surjective since the image is closed and contains the dense set of $\mathbb{C}$-points.
\end{proof}

\section{Integral models of Shimura varieties} \label{Sec:IntegralModels} 
In this section we prove some results about integral models of Shimura varieties of Hodge type that we will need, in particular some $\Gamma$-equivariance properties for the integral models for $\hyo$. In Section \ref{Sec:ShtukasGeneric} we prove that the association of the universal local shtuka to a Shimura variety is functorial in $\gxg$ in the two-categorical sense, which we use to deduce $\Gamma$-equivariance in the two-categorical sense. This is transferred to shtukas on integral models in Section \ref{Sec:IntegralModels}. In Section \ref{Sec:LocalModelDiagrams}, we prove that certain $\Gamma$-equivariant morphisms of Shimura varieties can be upgraded to $\Gamma$-equivariant morphisms of the corresponding local model diagrams. 
\subsection{Shtukas on Shimura varieties} \label{Sec:ShtukasGeneric}
Let $\gx$ be a Shimura datum with reflex field $\mathsf{E}$ satisfying Milne's axiom SV5 from \cite[p. 64]{Milne}. Fix a prime $p$ and write $G=\mathsf{G} \otimes \qp$. Fix a prime $v$ of $\mathsf{E}$ above $p$ and let $E$ be the $v$-adic completion of $\mathsf{E}$ with ring of integers $\mathcal{O}_E$ and residue field $k_E$. Let $\mu$ be the $G(\qpbar)$-conjugacy class of cocharacters of $G$ coming from the Hodge cocharacter of some $x \in \mathsf{X}$ and the choice of place $v$. 

Let $\mathcal{G}$ be a quasi-parahoric model of $G$ over $\zp$ and set $\mathcal{G}(\zp):=K_p \subset G(\qp)$. For $K^p \subset \gaf$ a neat compact open subgroup we write $K=K^pK_p$ and consider the Shimura variety $\mathbf{Sh}_{K}\gx$ over $E$. The group $\gafp$ acts on the inverse limit
\begin{align}
    \mathbf{Sh}_{K_p}\gx:=\varprojlim_{K^p} \mathbf{Sh}_{K^pK_p}\gx.
\end{align}
If it is clear from context, we will omit $\gx$ from the notation. By \cite[Proposition 4.1.2]{PappasRapoportShtukas} and \cite[Section 4.1.1]{DanielsVHKimZhangII}, there are morphisms
\begin{align}
    \mathbf{Sh}_{K}\gx^{\lozenge} \to \shtgmu \otimes_{\spd \mathcal{O}_{E}} \spd E
\end{align}
that are compatible with changing $K^p$. The goal of this section is to investigate the functoriality of this construction in the triple $\gxg$.

\subsubsection{} Consider the category $\shtrp$ whose objects are triples $(\g, \mathsf{X}, \mathcal{G})$, where $\gx$ is a Shimura datum satisfying SV5 and where $\mathcal{G}$ is a quasi-parahoric model of $G$. Morphisms in $\shtrp$ are morphisms $\gx \to \gxp$ that extend (necessarily uniquely) to $\mathcal{G} \to \mathcal{G}'$. For $\gxg \in \shtrp$, we will write $K_p=\mathcal{G}(\zp)$. If we fix an isomorphism $\mathbb{C} \to \qpbar$, then the reflex field $\mathsf{E}$ of each Shimura datum has a natural embedding $\mathsf{E} \to \mathbb{C} \to \qpbar$ with completion $E$. For $L \subset \qpbar$ a finite extension of $\qp$, we will write $\shtrp_L \subset \shtrp$ for the full subcategory of triples such that the reflex field is contained in $L$. There is a strict functor
\begin{align}
    \mathbf{Sh}:\shtrp_L &\to \perfpairsrat_L \\
    (\g, \mathsf{X}, \mathcal{G}) &\mapsto \mathbf{Sh}_{K_p}\gx_{L}^{\lozenge},
\end{align}
and similarly a strict functor sending
\begin{align}
    \mathbf{Sh}_{\infty}:(\g, \mathsf{X}, \mathcal{G}) &\mapsto \varprojlim_{U_p} \mathbf{Sh}_{U_p}\gx_{L}^{\lozenge}.
\end{align}
We also get a strict functor sending $(\g, \mathsf{X}, \mathcal{G}) \mapsto \underline{K_p}$ and thus a weak functor $\mathbb{B}$ sending $(\g, \mathsf{X}, \mathcal{G}) \mapsto \mathbb{B} \underline{K_p}$, see Section \ref{Sec:AppendixQuotientStackFixedPoints}. The natural map 
\begin{align}
    \varprojlim_{U_p} \mathbf{Sh}_{U_p}\gx^{\lozenge} \to  \mathbf{Sh}_{K_p}\gx^{\lozenge}
\end{align}
is a $\underline{K}_p$-torsor and corresponds to a weak natural transformation (see Lemma \ref{Lem:FunctorialityQuotientStack})
\begin{align}
    \mathbf{Sh} \to \mathbb{B}.
\end{align}
\subsubsection{} There is a natural functor $\shtrp_L \to \shtpr_L$ sending $\gxg \mapsto (\mathcal{G}, \mu)$, where $\mu$ is the $G(\qpbar)$-conjugacy class of cocharacters of $G$ corresponding to our fixed isomorphism $\mathbb{C} \to \qpbar$. Thus we can think of $\operatorname{Sht}:\shtpr_L \to \perfpairsrat_L$ as a weak functor on $\shtrp_L$. We let $(\mathbb{B} \underline{K_p})^{\mathrm{\mathrm{dR}}}$ denote the sub (pre-)stack of $(\mathbb{B} \underline{K_p})$ consisting of $K_p$-torsors that are de-Rham in the sense of \cite[Definition 2.6.1]{PappasRapoportShtukas}.
\begin{Lem} \label{Lem:DeRham}
The morphism $\mathbf{Sh}_{K_p}\gx_{L}^{\lozenge} \to \mathbb{B} \underline{K_p}$ factors through $(\mathbb{B} \underline{K_p})^{\mathrm{\mathrm{dR}}} \subset (\mathbb{B} \underline{K_p})$.
\end{Lem}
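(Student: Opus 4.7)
The plan is to pass from the $\underline{K_p}$-torsor $\varprojlim_{U_p} \mathbf{Sh}_{U_p}\gx_{L}^{\lozenge} \to \mathbf{Sh}_{K_p}\gx_{L}^{\lozenge}$ to the associated pro-\'etale $\underline{G(\qp)}$-torsor, obtained by extension of structure along $K_p \hookrightarrow G(\qp)$, and to verify that it is de Rham in the sense of \cite[Definition 2.6.1]{PappasRapoportShtukas}. By a Tannakian argument, choosing a faithful algebraic representation $\rho : G \hookrightarrow \operatorname{GL}(V)$ over $\qp$, this reduces to showing that the associated pro-\'etale $\qp$-local system $\mathbb{V}_\rho$ on $\mathbf{Sh}_{K_p}\gx_L^{\lozenge}$ is de Rham, together with functoriality in $\rho$ so that the tensor constructions cutting out $G$ inside $\operatorname{GL}(V)$ match the corresponding constructions on the de Rham side.

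Next, since $\mathbf{Sh}_{K_p}\gx_L$ is smooth, I would invoke the rigidity theorem of Liu--Zhu for $p$-adic local systems on smooth rigid analytic varieties: if $\mathbb{V}_\rho$ is de Rham at a single classical point of each geometric connected component, then it is de Rham globally. Thus it suffices to check the de Rham property at a well-chosen classical point of each connected component.

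By the theory of canonical models, each geometric connected component of $\mathbf{Sh}_{K_p}\gx$ contains a special (CM) point $x$, whose Hodge cocharacter factors through a maximal torus $\tor \subset \g$ of CM type. At such a point the Galois representation attached to $\rho \circ x$ factors through $\tor(\qp)$, and reciprocity for CM Shimura varieties identifies it with a locally algebraic character of the absolute Galois group of the residue field of $x$. Locally algebraic representations of $p$-adic Galois groups are de Rham, so the pointwise check succeeds for every $\rho$, and Liu--Zhu then yields the global de Rham property, which is exactly the assertion that the map factors through $(\mathbb{B}\underline{K_p})^{\mathrm{dR}}$.

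The main obstacle is the first paragraph: verifying that the extension of structure from $K_p$ to $G(\qp)$, evaluated at a CM point, is indeed identified with the classical CM Galois representation attached to $x$ via the construction of the torsor in \cite{PappasRapoportShtukas}. This is an unraveling of the definition of the pro-\'etale torsor as $\varprojlim_{U_p} \mathbf{Sh}_{U_p}\gx^{\lozenge}$, combined with the description of the Galois action on the tower of CM Shimura varieties; once this identification is made, the Tannakian--rigidity strategy outlined above closes the argument.
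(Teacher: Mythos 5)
Your proposal is correct and follows the same route as the paper: the paper's one-line proof defers to \cite[Proposition~4.1.2]{PappasRapoportShtukas}, which does exactly what you outline — reduce via the Tannakian formalism to $\qp$-local systems, apply the Liu--Zhu rigidity theorem to localize the de Rham check to a single classical point per connected component, and verify the de Rham property at special (CM) points using the CM reciprocity law. The identification you flag as the main obstacle (matching the pro-\'etale torsor construction at a CM point with the classical CM Galois representation) is indeed where the work lives, and is the content carried out in the cited reference.
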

\begin{proof}
This follows from the main results of \cite{LiuZhu}, as explained in the proof of \cite[Proposition 4.1.2]{PappasRapoportShtukas}.
\end{proof}

\begin{Prop} \label{Prop:FunctorialityShimuraVarietiesToShtukas}
There is a weak natural transformation $\mathbf{Sh} \to \operatorname{Sht}$ factoring on objects through $\operatorname{Sht}_{\mathcal{G},\mu,L}$, which recovers the construction of \cite[Section 2.6]{PappasRapoportShtukas} for each triple $\gxg$.
\end{Prop}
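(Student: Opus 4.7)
The plan is to factor the desired weak natural transformation as the composite $\mathbf{Sh} \to \mathbb{B}^{\mathrm{dR}} \to \operatorname{Sht}$, where $\mathbb{B}^{\mathrm{dR}}$ is the sub-weak-functor of $\mathbb{B}$ sending $\gxg \mapsto (\mathbb{B}\underline{K_p})^{\mathrm{dR}}$. The first arrow is essentially given: the weak natural transformation $\mathbf{Sh} \to \mathbb{B}$ has already been constructed from the torsor $\mathbf{Sh}_\infty \to \mathbf{Sh}$ via Lemma \ref{Lem:FunctorialityQuotientStack}, and by Lemma \ref{Lem:DeRham} it lands on objects in $\mathbb{B}^{\mathrm{dR}}$. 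Since being a de Rham torsor is a property (not extra data), and since pushout of a de Rham $\underline{K_p}$-torsor along $K_p \to K_p'$ induced by $f : \mathcal{G} \to \mathcal{G}'$ is again de Rham, the restriction carries the original coherence data unchanged.

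For the second arrow, for each triple $\gxg$ the construction of \cite[Section 2.6]{PappasRapoportShtukas} sends a de Rham $\underline{K_p}$-torsor to a $\mathcal{G}$-shtuka. Given a morphism $f : (\mathsf{G}, \mathsf{X}, \mathcal{G}) \to (\mathsf{G}', \mathsf{X}', \mathcal{G}')$ in $\shtrp_L$, we must produce a canonical $2$-cell between the two composites $(\mathbb{B}\underline{K_p})^{\mathrm{dR}} \to \operatorname{Sht}_{\mathcal{G}'}$, namely (i) first push the torsor along $K_p \to K_p'$ then apply PR for $\mathcal{G}'$, or (ii) first apply PR for $\mathcal{G}$ then push the resulting shtuka along $f$ using Lemma \ref{Lem:FunctorialityShtukas}. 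The PR construction is itself assembled out of a sequence of pushouts (producing $\mathcal{G}$-torsors on $S \bdtimes \zp \setminus S^{\sharp}$ and on a formal neighbourhood of $S^{\sharp}$, glued by a Frobenius intertwiner), and each such pushout is functorial in $\mathcal{G}$. The required $2$-cell is then built from iterated applications of the coherence data in Lemma \ref{Lem:FunctorialityClassifyingStack}, together with the observation that pushout along a map $\mathcal{G} \to \mathcal{G}'$ defined over $\zp$ commutes with the Frobenius operator $\frob_S$ and therefore respects both $\phi_{\mathcal{P}}$ and the isomorphism $\kappa$.

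The main obstacle is bookkeeping: one must verify the cocycle identity for these $2$-cells under a composition $(\mathsf{G}, \mathsf{X}, \mathcal{G}) \to (\mathsf{G}', \mathsf{X}', \mathcal{G}') \to (\mathsf{G}'', \mathsf{X}'', \mathcal{G}'')$, and this reduces, by construction, to the cocycle identity satisfied by the coherence data of Lemma \ref{Lem:FunctorialityClassifyingStack} at each stage of the PR construction. Finally, to see that the image lies in $\operatorname{Sht}_{\mathcal{G}, \mu, L}$ rather than in the unbounded $\operatorname{Sht}_{\mathcal{G}}$, one uses that the Hodge cocharacter of $\gx$ is carried to that of $(\mathsf{G}', \mathsf{X}')$ by $f$ (by the definition of morphisms in $\shtrp_L$), combined with the functoriality of $v$-sheaf local models under morphisms of pairs $(\mathcal{G}, \mu) \to (\mathcal{G}', \mu')$ recalled in \cite[Proposition 4.16]{AGLR}.
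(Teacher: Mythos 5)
The overall shape of your argument matches the paper's proof — factor through $\mathbb{B}^{\mathrm{dR}}$, re-use the already-constructed weak natural transformation $\mathbf{Sh} \to \mathbb{B}$, and establish that the passage from de Rham torsors to $\mathcal{G}$-shtukas is weakly functorial — but there is a genuine gap in the second half, which is precisely where the paper does the real work.

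You describe the Pappas--Rapoport construction of a $\mathcal{G}$-shtuka from a de Rham $\underline{K_p}$-torsor as ``assembled out of a sequence of pushouts \ldots glued by a Frobenius intertwiner'' and conclude that functoriality in $\mathcal{G} \to \mathcal{G}'$ follows from functoriality of pushouts plus the observation that pushout commutes with $\frob_S$. That observation is correct as far as it goes (it appears in the proof of Lemma \ref{Lem:FunctorialityShtukas}, which governs functoriality of $\operatorname{Sht}_{\mathcal{G}}$ itself), but it does not touch the crucial step of the construction: the modification of the vector bundle at the leg $S^{\sharp}$ by the $\mathbb{B}_{dR,S}^+$-lattice $D_{dR}^0(\mathbb{V}) \otimes \mathcal{O}\mathbb{B}_{dR,S}^+$ coming from the de Rham comparison. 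This modification is not a pushout of torsors, and the coherence data of Lemma \ref{Lem:FunctorialityClassifyingStack} does not apply to it. To make the de Rham $\to$ shtuka step compatible with pushout along $\mathcal{G} \to \mathcal{G}'$ one has to know that it is a \emph{tensor} equivalence; this is the content of the paper's Step 2, which rests on the tensor property of $D_{dR}^0$ established in \cite[Theorem 3.9]{LZRHCorresp} and on verifying that the lattice $\mathbb{V}_0 \otimes \mathbb{V}_0' \to (\mathbb{V}\otimes\mathbb{V}')_0$ is an isomorphism of $\mathbb{B}_{dR,S}^+$-lattices. Your proposal never mentions $D_{dR}$, the de Rham comparison, or tensor compatibility, so this step is unaccounted for.

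Relatedly, the paper does not attempt to verify functoriality ``directly at the level of torsors'': it first passes, on both ends, to exact tensor functors out of $\operatorname{Rep}_{\zp}\mathcal{G}$ (Steps 1 and 3), so that the entire question reduces to the single $\operatorname{GL}_n$-level tensor-compatibility claim. Your proposal would benefit from making that Tannakian reduction explicit; without it, the phrase ``each such pushout is functorial in $\mathcal{G}$'' is asserting exactly what needs to be proved. The boundedness remark at the end is fine but unnecessary at this point: that the image of each $\mathbf{Sh}_{K_p}\gx^{\lozenge}$ lies in $\shtgmu$ is a per-object statement already supplied by \cite[Proposition 4.1.2]{PappasRapoportShtukas} and does not need the functoriality of local models.
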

\begin{proof}
We are going to chain together a number of weak functors below. In the rest of this proof we let $S=\mathbf{Sh}_{K_p}\gx_{L}^{\lozenge}$. \smallskip 

There is a functor from $(\mathbb{B} \underline{K_p})(S)$ to the groupoid of exact tensor functors $\operatorname{Rep}_{\zp} \mathcal{G} \to \{\zp \text{ local systems on } S\}$, see \cite[Section 2.6.2]{PappasRapoportShtukas}. It is given by sending a $\underline{K_p}$-torsor $\mathcal{P} \to S$ to the tensor functor sending a representation $\rho:\mathcal{G} \to \operatorname{GL}(\Lambda)$, where $\Lambda$ is a finite free $\zp$-module, to the $\zp$-local system
    \begin{align}
        \mathcal{P} \times^{\rho} \underline{\Lambda}.
    \end{align}
    This is weakly functorial in $f:\mathcal{G} \to \mathcal{G}'$ by using the identifications
    \begin{align}
        \left(\mathcal{P} \times^{f} \mathcal{G'}\right) \times^{\rho'} \underline{\Lambda} \to \mathcal{P} \times^{\rho} \underline{\Lambda}  
    \end{align}
    of Section \ref{Sec:AppendixQuotientStacks}. Here $\rho':\mathcal{G}' \to \operatorname{GL}(\Lambda)$ is a representation whose composition with $f$ gives the representation $\rho$ of $\mathcal{G}$. Per definition of de-Rham local systems, this equivalence identifies $(\mathbb{B} \underline{K_p})^{\mathrm{\mathrm{dR}}}(S)$ with the groupoid of exact tensor functors $\operatorname{Rep}_{\zp} \mathcal{G} \to \{\zp \text{ de-Rham local systems on } S\}$. \smallskip 

Pappas and Rapoport construct in \cite[Proposition 2.6.3, Definition 2.6.4]{PappasRapoportShtukas} an equivalence of categories
\begin{align}
    \{\zp \text{ de-Rham local systems on } S\} \to \{\text{Shtukas on } S\}
\end{align}
compatible with tensor products. Composing with this equivalence gives us a functor from the groupoid of exact tensor functors $$\operatorname{Rep}_{\zp} \mathcal{G} \to \{\zp \text{ de-Rham local systems on } S\}$$ to the groupoid of exact tensor functors $\operatorname{Rep}_{\zp} \mathcal{G} \to \{\text{Shtukas on } S\}$, which is weakly functorial in $f:\mathcal{G} \to \mathcal{G}'$. \smallskip

Last, we claim that there is an equivalence of categories between the groupoid of exact tensor functors $\operatorname{Rep}_{\zp} \mathcal{G} \to \{\text{shtukas over } S\}$ to $\shtg(S)$, which is weakly functorial in $\mathcal{G} \to \mathcal{G}'$. The equivalence is explained in \cite[Section 5.1.1]{ImaiKatoYoucis}, and the weak functoriality can be proved in the same way as in the first paragraph of this proof.
\end{proof}

\subsection{The Pappas--Rapoport axioms} Before we can state the Pappas--Rapoport axioms we introduce some notation. Given a formal scheme $\mathcal{X} \to \spf \zpbr$ that is formally locally of finite type, and a closed point $x \in \mathcal{X}(\ovfp)$, we denote by $\widehat{\mathcal{X}}_{/x}$, the \emph{formal neighborhood} of $\mathcal{X}$ at $x$. There is a similar construction for certain v-sheaves over $\spd \zpbr$, see \cite[Section 3.3.1]{PappasRapoportShtukas}. We will only use this for the tautological point $x_0 \in \mintgb(\spd \ovfp)$. \smallskip 

Pappas and Rapoport conjecture, see \cite[Conjecture 4.2.2]{PappasRapoportShtukas} (and \cite[Conjecture 4.1.4]{DanielsVHKimZhangII} in the quasi-parahoric case), that there are flat normal integral models $\scrs_{K}\gx \to \spec \mathcal{O}_{E}$ of $\mathbf{Sh}_{K}\gx$ for sufficiently small $K^p$, together with forgetful maps $\scrs_{K'^pK_p}\gx \to \scrs_{K}$ over $\spec \mathcal{O}_{E}$ extending the ones on the generic fiber, such that:
\begin{enumerate}[label=\alph*)]
    \item The forgetful morphisms are finite \'etale and $\gafp$ acts on the inverse limit $$\scrs_{K_p}\gx:=\varprojlim_{K^p} \scrs_{K^pK_p}\gx.$$ Moreover, for every discrete valuation ring $R$ over $\mathcal{O}_{E}$ with characteristic $(0,p)$ the natural map
    \begin{align}
        \scrs_{K_p}\gx(R) \to \mathbf{Sh}_{K_p}\gx(R[1/p])
    \end{align}
    is a bijection. 
    
    \item For all sufficiently small $K^p$ the map $\mathbf{Sh}_{K}\gx^{\lozenge} \to \shtgmu \otimes_{\spd \mathcal{O}_{E}} \spd E$ extends to a map 
    \begin{align} \label{Eq:ShtukaFunctor}
        \scrs_{K}\gx^{\lozenge/} \to \shtgmu.
    \end{align}
    
    \item For all sufficiently small $K^p$ and every $x \in \scrs_{K}\gx(\ovfp)$ with induced $b_x:\spd \ovfp \to \shtgmu(\ovfp)$, there is an isomorphism 
    \begin{align}
        \Theta_x:\mintgbxnaught \to \left(\widehat{\scrs_{K,/x}}\right)^{\lozenge}, 
    \end{align}
    such that: The pullback under $\Theta_x$ of the $\mathcal{G}$-shtuka over $\scrs_{K}^{\lozenge/}$ coming from \eqref{Eq:ShtukaFunctor}, is isomorphic to the tautological $\mathcal{G}$-shtuka over $\mintgbxnaught$.
\end{enumerate}
By \cite[Remark 4.1.5]{DanielsVHKimZhangII}, the morphism in b) automatically factors through the open and closed substack $\shtgmuone \subset \shtgmu$.
\subsubsection{} \label{Sec:AxiomsDiscussion}
We will refer to integral models satisfying their conjecture as integral models satisfying the Pappas--Rapoport axioms or as \emph{integral canonical models}. Pappas and Rapoport prove, see \cite[Theorem 4.2.4]{PappasRapoportShtukas}, that such integral models are unique if they exist (see \cite[Theorem 4.1.8]{DanielsVHKimZhangII} for the quasi-parahoric case). These integral models are moreover functorial in the triple $\gxg$, see \cite[Corollary 4.1.10]{DanielsVHKimZhangII}. By \cite[Theorem I]{DanielsVHKimZhangII}, integral models satisfying their conjecture exist if $\gx$ is of Hodge type.

Let us denote by $\shtrp_{L,\operatorname{SV5},\mathrm{PR}}$ the category of triples $\gxg$ such that $\mathsf{E} \to  \mathbb{C} \to \qpbar$ factors through $L$, such that $\gx$ satisfies SV5 and such that \cite[Conjecture 4.1.4]{DanielsVHKimZhangII} holds for $\gxg$. Then as explained above, there is a strict functor $\mathscr{S}^{\lozenge/}:\shtrp_{L,\operatorname{SV5},\mathrm{PR}} \to \perfpairs_{\mathcal{O}_L}$ sending $\gxg$ to $\scrs_{K_p}\gx^{\lozenge/}_{\mathcal{O}_L}$. We can now state an important corollary to Proposition \ref{Prop:FunctorialityShimuraVarietiesToShtukas}. 
\begin{Cor} \label{Cor:FunctorialityShimuraVarietiesToShtukasII}
There is a weak natural transformation $\mathscr{S} \to \operatorname{Sht}$ of weak functors $\shtrp_{L,\operatorname{SV5},\mathrm{PR}} \to \perfpairs_{\mathcal{O}_L}$, with underlying $1$-morphisms the morphisms
\begin{align} 
        \scrs_{K_p}^{\lozenge/}\gx \to \shtgmu
    \end{align}
guaranteed to exist by axiom b).
\end{Cor}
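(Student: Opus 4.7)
My plan is to produce the data of the weak natural transformation $\mathscr{S}^{\lozenge/} \to \operatorname{Sht}$ by bootstrapping from Proposition \ref{Prop:FunctorialityShimuraVarietiesToShtukas} on the generic fiber, using axioms (b) and (c) together with the gluing description of $\scrs_{K_p}\gx^{\lozenge/}$ recalled in Section \ref{Sec:DiamondFunctors} to extend across the special fiber. The underlying $1$-morphisms at each object $\gxg$ are supplied directly by axiom (b), and on the generic fiber they agree with the $1$-morphisms constructed in Proposition \ref{Prop:FunctorialityShimuraVarietiesToShtukas}.

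For each morphism $f \colon \gxg \to (\mathsf{G}', \mathsf{X}', \mathcal{G}')$ in $\shtrp_{L,\operatorname{SV5},\mathrm{PR}}$ I need to produce a canonical invertible $2$-morphism filling the square
\begin{equation*}
\begin{tikzcd}
\scrs_{K_p}\gx^{\lozenge/} \ar[r] \ar[d] & \shtgmu \ar[d] \\
\scrs_{K'_p}(\mathsf{G}', \mathsf{X}')^{\lozenge/} \ar[r] & \operatorname{Sht}_{\mathcal{G}', \mu'},
\end{tikzcd}
\end{equation*}
together with coherence data. On the generic fiber $\mathbf{Sh}_{K_p}\gx^{\lozenge}$ such a $2$-morphism is provided by the weak natural transformation of Proposition \ref{Prop:FunctorialityShimuraVarietiesToShtukas}, so the task reduces to extending it across the special fiber. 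I would use the gluing description: $\scrs_{K_p}\gx^{\lozenge/}$ is obtained by gluing the $p$-adic formal completion v-sheaf $\scrs_{K_p}\gx^{\diamond}$ to $\mathbf{Sh}_{K_p}\gx^{\lozenge}$ along their common generic fiber, so a $2$-morphism of v-stack valued maps is uniquely determined by its restrictions to these two pieces with compatibility on the overlap.

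To produce the $2$-morphism on $\scrs_{K_p}\gx^{\diamond}$, I would work locally at each $x \in \scrs_{K}\gx(\ovfp)$ and invoke axiom (c), which gives an isomorphism $\Theta_x \colon \mintgbxnaught \xrightarrow{\sim} (\widehat{\scrs_{K,/x}})^{\lozenge}$ identifying the restriction of the shtuka map from axiom (b) with the tautological $\mathcal{G}$-shtuka on the integral local Shimura variety. The analogous statement for $(\mathsf{G}', \mathsf{X}', \mathcal{G}')$ and the fact that pushout of torsors along $\mathcal{G} \to \mathcal{G}'$ intertwines the two identifications (functoriality of $\mintgb$ in the triple, cf.\ the discussion preceding \cite[Corollary 4.0.11]{DanielsVHKimZhangII}) then yield the required $2$-morphism on each formal neighborhood, which assembles into a $2$-morphism on the entire $p$-adic formal completion.

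The main obstacle is verifying that the $2$-morphism on $\scrs_{K_p}\gx^{\diamond}$ constructed via axiom (c) agrees on the overlap $\mathbf{Sh}_{K_p}\gx^{\diamond}_{E}$ with the $2$-morphism on $\mathbf{Sh}_{K_p}\gx^{\lozenge}_{E}$ supplied by Proposition \ref{Prop:FunctorialityShimuraVarietiesToShtukas}. This reduces to the naturality of $\Theta_x$ in the triple $\gxg$, which is precisely the content of the functoriality of integral canonical models recorded in \cite[Corollary 4.0.11]{DanielsVHKimZhangII}. The remaining coherence conditions defining a weak natural transformation then follow formally: they are equalities of $2$-morphisms of maps out of $\scrs_{K_p}\gx^{\lozenge/}$, and both sides restrict to equal $2$-morphisms on the generic fiber (by the coherence in Proposition \ref{Prop:FunctorialityShimuraVarietiesToShtukas}) and on each formal neighborhood (by the uniqueness built into axiom (c)), hence they agree globally.
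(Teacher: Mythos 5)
Your proposal misses the key lemma that makes this corollary essentially formal, and as a result introduces a construction that has a genuine gap. The paper's proof rests on a single observation: by \cite[Corollary 2.7.10]{PappasRapoportShtukas}, restriction to the generic fiber gives a \emph{fully faithful} functor
\begin{align}
\operatorname{Hom}(\scrs_{K_p}\gx^{\lozenge/}, \shtgmu) \to \operatorname{Hom}(\mathbf{Sh}_{K_p}\gx^{\lozenge}, \shtgmu).
\end{align}
Once you know this, you are done: the $2$-morphisms you need are morphisms in the left-hand groupoid, and full faithfulness says they exist (lift) and are uniquely determined by their restriction to the generic fiber, where Proposition \ref{Prop:FunctorialityShimuraVarietiesToShtukas} supplies them. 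The coherence equations of Definition \ref{Def:WeakNaturalTransformation} are equalities of $2$-morphisms in the same groupoid, so they too may be verified on the generic fiber, where they hold by Proposition \ref{Prop:FunctorialityShimuraVarietiesToShtukas}. No appeal to axiom (c), and no gluing argument, is needed.

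Your alternative construction has a concrete flaw. You propose to build the $2$-morphism on $\scrs_{K_p}\gx^{\diamond}$ by working ``locally at each $x \in \scrs_K\gx(\ovfp)$'' via the isomorphisms $\Theta_x$ of axiom (c) and then ``assembling'' these into a $2$-morphism on the whole $p$-adic formal completion. But the formal neighborhoods $\bigl(\widehat{\scrs}_{K,/x}\bigr)^{\lozenge}$ at closed $\ovfp$-points do not form a cover of $\scrs_{K_p}\gx^{\diamond}$ in any topology for which descent of $2$-morphisms of v-stack maps is available, so there is no mechanism by which $2$-morphisms defined on these formal completions glue to a global one. This is not a minor technicality: it is precisely the difficulty that the full-faithfulness statement is designed to sidestep. (A secondary issue: the ``compatibility on the overlap'' you invoke between your putative construction on $\scrs_{K_p}\gx^{\diamond}$ and the generic-fiber construction would itself need full faithfulness or some uniqueness statement to be verified — but once you have that, the entire formal-neighborhood construction becomes superfluous.) I recommend replacing the body of your argument with the single citation to \cite[Corollary 2.7.10]{PappasRapoportShtukas} and the observation that existence, uniqueness, and coherence of the $2$-cells may all be checked after restricting to the generic fiber.
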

\begin{proof}
Restricting to the generic fiber gives a fully faithful functor
\begin{align}
    \operatorname{Hom}(\scrs_{K_p}\gx^{\lozenge/}, \shtgmu) \to \operatorname{Hom}(\mathbf{Sh}_{K_p}\gx^{\lozenge}, \shtgmu),
\end{align}
by \cite[Corollary 2.7.10]{PappasRapoportShtukas}. To turn the collection of $1$-morphisms specified in the lemma into a weak functor, we have to specify certain coherence isomorphisms satisfying certain properties, see Definition \ref{Def:WeakFunctor}. By the fully faithfulness, it suffices to do this on the generic fiber, where the result is Proposition \ref{Prop:FunctorialityShimuraVarietiesToShtukas}.
\end{proof}

\subsubsection{} Now we prove some uniqueness results for the maps in axiom c). 

\begin{Lem} \label{Lem:UniquenessShtukas}
Let $R$ be a complete Noetherian local $\zpbr$-algebra equipped with a morphism of $\zpbr$-algebras $x:R \to \ovfp$ and let $(\spf R)^{\lozenge} \to \shtgmu$ be a morphism. Let $b_x:\spd \ovfp \to \shtgmu$ be the $\mathcal{G}$-shtuka over $\spd \ovfp$ induced by $x$. Then there is a unique morphism
\begin{align}
    \Psi_{b_{x}}:(\spf R)^{\lozenge} \to \mintgbxnaught
\end{align}
such that $\spd \ovfp \to (\spf R)^{\lozenge} \xrightarrow{\beta} \to \mintgbxnaught \shtgmu$ is equal to $b_x$. 
\end{Lem}
\begin{proof}
The uniqueness is a direct consequence of \cite[Proposition 4.2.5]{PappasRapoportShtukas}, and the existence is \cite[Proposition 4.7.1]{PappasRapoportShtukas}. [Here we note that the proof of the latter also works when $\mathcal{G}$ is quasi-parahoric by using \cite[Proposition 4.2.1]{PappasRapoportRZ} to transport the statement about formal neighborhoods to the quasi-parahoric case.]
\end{proof}
\begin{Cor} \label{Cor:UniquenessShtukasII}
Let $\scrs_{K}=\scrs_K\gx$ be an integral model satisfying axioms a),b),c) above and let $x \in \scrs_{K}(\ovfp)$. Let $b_x: \spd \ovfp \to \shtgmu$ denote the induced $\mathcal{G}$-shtuka over $\spd \ovfp$. Then the map
\begin{align}
  \left(\widehat{\scrs_{K,/x}}\right)^{\lozenge} \to \mintgbpxnaught  
\end{align}
of Lemma \ref{Lem:UniquenessShtukas} is an isomorphism 
\end{Cor}
\begin{proof}
This is a straightforward consequence of axiom c) and the uniqueness proved in Lemma \ref{Lem:UniquenessShtukas}.
\end{proof}
\subsection{Local model diagrams} \label{Sec:LocalModelDiagrams} The purpose of this section is to discuss local model diagrams for the integral models of Shimura varieties constructed by Kisin--Zhou. In particular, we want to show that these local model diagrams are $\Gamma$-equivariant. 

\subsubsection{} Let $\gx$ be a Shimura datum of Hodge type with reflex field $\mathsf{E}$, let $p > 2$ be a prime such that $p$ is coprime to the order of $\pi_1(\gder)$ and let $G = \mathsf{G} \otimes \qp$ as usual. 
\begin{Hyp} \label{Hyp:InducedRamification}
    Either $G$ splits over a tamely ramified extension, or there is a Hodge embedding $\iota:\gx \to \gvx$ such that the identity component $\mathsf{G}^{\mathrm{Sp}}$ of $\gx \cap \mathsf{Sp}_V$ is isomorphic to
\begin{align}
  \prod_{i=1}^{s} \operatorname{Res}_{\mathsf{K}_i/\mathbb{Q}} \mathsf{H}_i^{\mathrm{Sp}},  
\end{align}
where $\mathsf{K}_1, \cdots, \mathsf{K}_s$ are totally real fields and where $\mathsf{H}_i^{\mathrm{Sp}}$ is a connected reductive group over $\mathsf{K}_i$ whose base change to each $p$-adic place of $\mathsf{K}_i$ is tamely ramified. 
\end{Hyp}
\subsubsection{} Let $\mathcal{G} := \mathcal{G}_x$ be the stabilizer Bruhat--Tits  group scheme (also called stabilizer quasi-parahoric, see \cite[Section 2.2]{PappasRapoportRZ}) associated to a point $x$ in the Bruhat--Tits building of $G$. Let $v$ be a place of $\mathsf{E}$ above $p$ and let $E$ be the $v$-adic completion of $\mathsf{E}$. 

Assume that Hypothesis \ref{Hyp:InducedRamification} holds and let $\iota$ be the Hodge embedding guaranteed to exist by that hypothesis. After possibly replacing $\iota$ by another Hodge embedding and $(V,\psi)$ by another symplectic space over $\mathbb{Q}$, we may assume that: The Hodge embedding $\iota$ is good for $(\mathsf{G}, \mathsf{X}, \mathcal{G})$ with respect to some lattice $\Lambda \subset V \otimes \qp$ in the sense of \cite[Section 5.1.2]{KisinZhou}, and $\gx$ satisfies Hypothesis \ref{Hyp:InducedRamification}. Indeed, this follows from \cite[Lemma 5.1.3]{KisinZhou}, noting that the new Hodge embedding $\iota'$ constructed from $\iota$ in the proof of \cite[Proposition 3.3.18]{KisinZhou} still satisfies the condition above in terms of $\mathsf{G}^{\mathrm{SP}}$. Then $\mathcal{G}$ embeds into the parahoric group scheme $\mathcal{G}_V$ of $G_{V}$ that is the stabilizer of the lattice $\Lambda$.

\subsubsection{} Let $\f$ be a Galois totally real field extension of $\mathbb{Q}$ unramified at $p$, let $\Gamma$ be its Galois group and let $F=\f \otimes \qp$. Let $\gx \to \hyo \to \hy$ and $\iota_{H_1}:\hyo \to \gwx$ be as in Section \ref{Sec:FixedPointsHodge}, where we recall that $W= V \otimes_{\mathbb{Q}} \f$. Let $\mathcal{H}_1$ denote the Bruhat--Tits stabilizer group scheme associated to the image of $x$ under the morphism of buildings $B(G, \mathbb{Q}_p) \to B(H_1, \qp)$, and let $\mathcal{H}=\operatorname{Res}_{\mathcal{O}_F/\zp} \mathcal{G}_{\mathcal{O}_F}$ be the Bruhat--Tits stabilizer scheme associated to the image of $x$ in $B(H,\qp)$. 

Recall from \cite[Section 2.4.8]{KisinZhou} that there is a natural map $\beta:\mathcal{H}_1 \to \mathcal{H}$.
\begin{Lem} \label{Lem:RSmoothClosedImmersion}
If Hypothesis \ref{Hyp:InducedRamification} holds, then the natural map $\mathcal{H}_1 \to \mathcal{H}$ is a closed immersion.
\end{Lem}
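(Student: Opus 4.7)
The plan is to verify the Kisin--Zhou criterion for closed immersions of Bruhat--Tits stabilizer group schemes, mirroring the argument used in \cite[Section 2.4]{KisinZhou} to produce the closed immersion $\mathcal{G} \to \mathcal{H}$. Both $\mathcal{H}_1$ and $\mathcal{H}$ are smooth affine $\zp$-group schemes, and on the generic fiber $H_1 \hookrightarrow H$ is a closed immersion by the fiber product construction defining $H_1$. By a standard fibral criterion for smooth affine group schemes over $\zp$, it therefore suffices to prove that $\mathcal{H}_1 \to \mathcal{H}$ is a closed immersion on the special fiber.

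For the special fiber, I would invoke the closed immersion results for Bruhat--Tits stabilizers recorded in \cite[Section 2.4]{KisinZhou}: a closed immersion of connected reductive groups over $\qp$ together with a compatible inclusion of buildings at chosen base points produces a closed immersion of stabilizer group schemes, provided an appropriate tameness hypothesis is satisfied. In our setting the compatibility of the building base points is automatic, since the points defining $\mathcal{H}_1$ and $\mathcal{H}$ are both obtained as the image of $x$ under the chain of functorial building maps $B(G, \qp) \to B(H_1, \qp) \to B(H, \qp)$ induced by $G \hookrightarrow H_1 \hookrightarrow H$.

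The main content, and the only real obstacle, is therefore to verify the tameness hypothesis for $H_1 \hookrightarrow H$ using Hypothesis \ref{Hyp:InducedRamification}. If $G$ splits over a tamely ramified extension $L/\qp$, then since $F/\qp$ is unramified the compositum $L \cdot F$ remains tame over $\qp$ and splits $H = \operatorname{Res}_{F/\qp} G_F$; the closed subgroup $H_1 \subset H$ then also splits over this tame extension. In the alternative case of the hypothesis, the explicit decomposition $\mathsf{G}^{\mathrm{Sp}} \simeq \prod_{i=1}^s \operatorname{Res}_{\mathsf{K}_i/\mathbb{Q}} \mathsf{H}_i^{\mathrm{Sp}}$ induces, upon applying $\operatorname{Res}_{\mathsf{F}/\mathbb{Q}}$ and intersecting with the analogous Sp-subgroup of the larger group, a parallel product structure on the corresponding subgroup of $\mathsf{H}_1$; the unramifiedness of $\mathsf{F}/\mathbb{Q}$ at $p$ ensures that the composita $\mathsf{K}_i \cdot \mathsf{F}$ remain tame at all $p$-adic places, so we are reduced to a factor-by-factor application of the tame case.

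With the tameness condition in hand, the Kisin--Zhou criterion yields a closed immersion on special fibers, which combined with the closed immersion on the generic fiber and the smoothness of both sides gives that $\mathcal{H}_1 \to \mathcal{H}$ is a closed immersion over $\zp$.
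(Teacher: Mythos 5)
Your overall plan is not quite what the paper does, and the second case of your tameness verification contains a genuine error. The paper invokes \cite[Proposition 2.4.9]{KisinZhou}, which reduces the closed-immersion statement for Bruhat--Tits stabilizer schemes to the \emph{$R$-smoothness} of the torus $T_1$ (the centralizer of a maximal $\breve{\mathbb{Q}}_p$-split torus in $H_1$); this is a strictly weaker condition than tameness and is precisely what Kisin--Zhou introduce to go beyond the tame case of Kisin--Pappas. Your proposal instead appeals to a ``tameness hypothesis'' and then tries to check that the relevant groups split over tame extensions, which amounts to reverting to the older, more restrictive criterion.

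In the first branch of Hypothesis~\ref{Hyp:InducedRamification} (when $G$ is tame), your argument and the paper's coincide in substance, since tameness implies $R$-smoothness by \cite[Proposition 2.4.6]{KisinZhou}. In the alternative branch, however, your argument breaks down: the hypothesis asserts only that each group $\mathsf{H}_i^{\mathrm{Sp}}$ is tamely ramified over the local fields $\mathsf{K}_{i,v}$; it does \emph{not} assert that the fields $\mathsf{K}_i$ themselves are tamely ramified over $\mathbb{Q}$ at $p$. Your claim that ``the composita $\mathsf{K}_i \cdot \mathsf{F}$ remain tame at all $p$-adic places'' is therefore false in general — adjoining the unramified $\mathsf{F}$ cannot repair wild ramification of $\mathsf{K}_i$. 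The whole point of $R$-smooth tori is to handle exactly this situation: $\operatorname{Res}_{L/K}$ of a tame group has an $R$-smooth centralizer torus even when $L/K$ is wildly ramified, which is the content of \cite[Proposition 5.2.6]{KisinZhou} that the paper applies after exhibiting the restriction-of-scalars structure $H_1^{\mathrm{Sp}} = \operatorname{Res}_{\mathsf{F}/\mathbb{Q}}\bigl(\prod_i \operatorname{Res}_{\mathsf{K}_i/\mathbb{Q}}\mathsf{H}_i^{\mathrm{Sp}}\bigr)_{\mathsf{F}}$. So the missing idea in your proof is the passage from tameness to $R$-smoothness; without it, the second case of the hypothesis is simply not covered by your argument.

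A secondary remark: the reduction you make at the start — closed immersion on generic fiber plus closed immersion on special fiber implies closed immersion for smooth affine group schemes over $\zp$ — is not a standard fibral criterion and is not what Kisin--Zhou do. The cited result is an intrinsic criterion in terms of $R$-smooth tori, so this step of your argument should also be revisited.
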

\begin{proof}
    By \cite[Proposition 2.4.9]{KisinZhou}, it suffices to show that the centraliser $T_1$ of a maximal $\qpbr$-split torus in $H_1$ is an $R$-smooth torus (in the sense of \cite[Section 5.1.2]{KisinZhou}). This is automatic when $G$ is tamely ramified, see \cite[Proposition 2.4.6]{KisinZhou}. Otherwise, we observe that
\begin{align}
    H_1^{\mathrm{Sp}}=\operatorname{Res}_{\f/\mathbb{Q}} \left(\operatorname{Res}_{\mathsf{K}_i/\mathbb{Q}} \mathsf{H}_i^{\mathrm{Sp}}\right)_{\f}
\end{align}
and this implies as in the proof of \cite[Proposition 5.2.6]{KisinZhou} that $T_1$ is $R$-smooth.
\end{proof}
\subsubsection{} We recall the following commutative diagram of closed embeddings of Shimura data from Section \ref{Sec:FixedPointsHodge}
\begin{equation}
    \begin{tikzcd}
        \gx \arrow{r} \arrow{d}{\iota} & (\ho, \mathsf{Y}_1) \arrow{d} \arrow{dr}{\iota_{H_1}} \\
        \gvx \arrow{r} & \gvfx\arrow{r} & \gwx.
    \end{tikzcd}
\end{equation}
Let $\mathcal{G}_{W}$ be the parahoric group scheme of $G_W=\g_{W} \otimes \qp$ that is the stabilizer of the lattice $\Lambda_W=\Lambda \otimes_{\zp} \mathcal{O}_F$ and let $\mathcal{GL}_{W}$ be the reductive integral model of $\operatorname{GL}_W$ corresponding to $\Lambda_W$. We will also consider the parahoric model $\mathcal{G}_{V,F}$ of $\gvf$ corresponding to $\Lambda \otimes \mathcal{O}_F=:\Lambda_{F}$ (thought of as an $\mathcal{O}_F$-module). We will write $M_{p,F}=\mathcal{G}_{V,F}(\zp)$ and $M_{p,W}=\mathcal{G}_{W}(\zp)$.
\begin{Lem} \label{Lem:GoodHodgeEmbedding}
    The morphism $\iota_{H_1}$ is a good Hodge embedding with respect to the lattice $\Lambda_W = \Lambda \otimes_{\zp} \mathcal{O}_F \subset W \otimes \qp$. 
\end{Lem}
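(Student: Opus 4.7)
The plan is to verify, one by one, the conditions that define a good Hodge embedding in the sense of \cite[Section 5.1.2]{KisinZhou} for the map $\iota_{H_1}:\hyo \to \gwx$ with respect to the lattice $\Lambda_W = \Lambda\otimes_{\zp}\mathcal{O}_F$. The three things to check are: (i) self-duality of $\Lambda_W$ under $\psi_W$, (ii) that the schematic closure of $H_1$ in $\mathcal{GL}_W$ agrees with $\mathcal{H}_1$ (equivalently, that $\mathcal{H}_1\hookrightarrow\mathcal{G}_W$ is a closed immersion realising $\mathcal{H}_1$ as a Bruhat--Tits stabilizer inside $\mathcal{G}_W$), and (iii) that $\hyo$ itself satisfies Hypothesis \ref{Hyp:InducedRamification} for this embedding.

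For (i), I would use that $\Lambda$ is self-dual under $\psi$ by the goodness of $\iota$, and that the trace pairing $\mathcal{O}_F \otimes_{\zp} \mathcal{O}_F \to \zp$ is a perfect pairing because $p$ is unramified in $\mathsf{F}$; tensoring the two self-dualities yields self-duality of $\Lambda_W$ with respect to $\psi_W = \operatorname{Tr}_{\mathsf{F}/\mathbb{Q}}\circ \psi$. For (ii), I would factor
\[
\mathcal{H}_1 \hookrightarrow \mathcal{H} \hookrightarrow \mathcal{G}_{V,F} \hookrightarrow \mathcal{G}_W.
\]
The first arrow is a closed immersion by Lemma \ref{Lem:RSmoothClosedImmersion}. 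The second is obtained by applying $\operatorname{Res}_{\mathcal{O}_F/\zp}$ to the closed immersion $\mathcal{G}_{\mathcal{O}_F}\hookrightarrow \mathcal{G}_{V,\mathcal{O}_F}$ coming from goodness of $\iota$ (Weil restriction along a finite \'etale cover preserves closed immersions). The third is a closed immersion because $\mathcal{G}_{V,F}$ is the preimage of $\mathbb{G}_m$ in $\operatorname{Res}_{\mathcal{O}_F/\zp}\mathcal{G}_{V,\mathcal{O}_F}$ and it stabilizes the $\zp$-lattice $\Lambda_W$. That the composite is identified with the Bruhat--Tits stabilizer attached to the image of $x$ in $B(H_1,\qp)$ is the content of Lemma \ref{Lem:RSmoothClosedImmersion}, as $\mathcal{H}$ is the Bruhat--Tits stabilizer of the image of $x$ in $B(H,\qp)$ by construction.

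For (iii), I would compute $\ho\cap \mathsf{Sp}_W$ using the fact that $\ho\hookrightarrow \h$ lies inside $\gvf\subset \gw$ and that the similitude character of $\gw$ restricts on $\ho$ to the $\mathbb{G}_m$-component of $c_{\g,\mathsf{F}}$. This identifies $\ho\cap \mathsf{Sp}_W$ with $\ker(\h\to \operatorname{Res}_{\mathsf{F}/\mathbb{Q}}\mathbb{G}_m)=\operatorname{Res}_{\mathsf{F}/\mathbb{Q}}(\g^{\mathrm{Sp}})_{\mathsf{F}}$, whose identity component is $\operatorname{Res}_{\mathsf{F}/\mathbb{Q}}(\g^{\mathrm{Sp},\circ})_{\mathsf{F}}$. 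If $\g$ is tame, then so is every Weil restriction of $\g_{\mathsf{F}}$ along the $p$-adic places of $\mathsf{F}$, since $\mathsf{F}$ is unramified at $p$; otherwise, using $\g^{\mathrm{Sp},\circ}=\prod_i\operatorname{Res}_{\mathsf{K}_i/\mathbb{Q}}\mathsf{H}_i^{\mathrm{Sp}}$ and the étale $\mathbb{Q}$-algebra decomposition of $\mathsf{K}_i\otimes_{\mathbb{Q}}\mathsf{F}$ into a product of totally real fields $\mathsf{L}_{ij}$, we obtain
\[
    \ho^{\mathrm{Sp},\circ} \simeq \prod_{i,j} \operatorname{Res}_{\mathsf{L}_{ij}/\mathbb{Q}}(\mathsf{H}_i^{\mathrm{Sp}})_{\mathsf{L}_{ij}},
\]
and tameness at each $p$-adic place of $\mathsf{L}_{ij}$ follows from tameness at each $p$-adic place of $\mathsf{K}_i$ plus unramifiedness of $\mathsf{F}$ at $p$.

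The main obstacle will be the bookkeeping in (iii): carefully tracking the étale algebra decomposition and checking that the tameness hypothesis is preserved through the Weil restriction, as well as verifying any auxiliary smallness conditions (such as $p$ being coprime to $|\pi_1(\ho^{\mathrm{der}})|$) that might appear in the precise formulation of ``good Hodge embedding'' used in \cite{KisinZhou}; the latter should follow automatically from the corresponding condition for $\g$ since passage to $\h$ and to its neutral fibre $\ho$ does not change the set of primes dividing $|\pi_1((-)^{\mathrm{der}})|$ beyond those already controlled by $\g$ and $\Gamma$.
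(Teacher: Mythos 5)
Your proposal misidentifies the three conditions that define a good Hodge embedding in the sense of \cite[Definition 3.3.15, Section 5.1.2]{KisinZhou}, and as a result the argument as written does not verify the actual definition. The paper's proof checks: (a) that $\iota_{H_1}(H_1)$ contains the scalars of $G_W$; (b) that $\iota_{H_1}$ extends to a closed immersion $\mathcal{H}_1\hookrightarrow\mathcal{GL}_W$; and (c) that the induced map of local models $\locmodhomu \to \mathbb{M}_{\mathcal{GL}_W,\mu}$ is a closed immersion. Only your condition (ii) roughly matches the paper's (b), and there your argument (factoring through $\mathcal{H}$ via Lemma \ref{Lem:RSmoothClosedImmersion}, and using that Weil restriction along a finite \'etale cover preserves closed immersions) is in line with what the paper does.

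Your conditions (i) and (iii) are not part of the definition. Self-duality of $\Lambda_W$ is an auxiliary reduction the paper makes elsewhere (in the proof of Proposition \ref{Prop:GammaEquivarianceLocalModelDiagramHodge}) but is not a clause of ``good Hodge embedding.'' Likewise, ``$\hyo$ satisfies Hypothesis \ref{Hyp:InducedRamification}'' is not a condition you need to check: the hypothesis is imposed on $\gx$, and Lemma \ref{Lem:RSmoothClosedImmersion} internally transfers the consequence (that the relevant torus is $R$-smooth, hence $\mathcal{H}_1\to\mathcal{H}$ is a closed immersion) to $H_1$; you are re-deriving part of the proof of that lemma rather than a separate condition. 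Meanwhile you never address the two conditions you are actually missing. The scalars condition is quick, but the local-models condition (c) is the genuinely nontrivial one: the paper handles it by writing, after base change to $\mathcal{O}_F$, both $\mathbb{M}_{\mathcal{GL}_W,\mu}$ and $\locmodhomu \simeq \locmodhmu$ as $[\mathsf{F}:\mathbb{Q}]$-fold products of the corresponding local models for $\mathcal{GL}_V$ and $\mathcal{G}$ (Lemma \ref{Lem:IdentificationLocalModelsI}, using that $p$ is unramified in $\mathsf{F}$), and then invoking that the closed-immersion property holds component-wise by the goodness of the original $\iota$. Without this step the lemma is not proved, so you should replace your (i) and (iii) by the scalar-containment check and by the local-model product decomposition argument.
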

\begin{proof}
    We need to check that the morphism $\iota_{H_1}$ satisfies the following three properties
    \begin{enumerate}[label=(\roman*)]
        \item $\iota_{H_1}(H_1)$ contains the scalars.
        \item $\iota_{H_1}$ extends to a closed immersion $\iota_{H_1}:\mathcal{H}_1 \to \mathcal{GL}_{W}$.
        \item The natural map of local models $\locmodhomu \to \mathbb{M}_{\mathcal{GL}_{W},\mu}$ induced by $\mathcal{H}_1 \to \mathcal{GL}_W$ is a closed immersion.\footnote{This condition is equivalent to condition (3) of \cite[Definition 3.3.15]{KisinZhou} because there is a unique map extending the natural map on the generic fiber. Moreover, the local models used by \cite{KisinZhou} agree with ours because theirs also satisfy the Scholze--Weinstein conjecture (which means that they have the correct associated v-sheaf), see \cite[Proposition 3.3.10]{KisinZhou}.}
    \end{enumerate}
    We know that $\iota_{H_1}(H_1)$ contains the scalars because $\iota(\g)$ contains the scalars of $G_V$ and the image of the natural map $G_{V, F} \to G_W$ contains the scalars. Property (ii) is a consequence of Lemma \ref{Lem:RSmoothClosedImmersion} together with the fact that $\mathcal{H}\to \mathcal{GL}_W$ is a closed immersion because it is the restriction of scalars along $\spec \mathcal{O}_F \to \spec \zp$ of (the base change to $\mathcal{O}_F$ of) the closed immersion $\mathcal{G} \to \mathcal{GL}_V$. Property (iii) follows because under our assumptions we have functorial isomorphism (see Lemma \ref{Lem:IdentificationLocalModelsI})
    \begin{align}
        \mathbb{M}_{\mathcal{GL}_{W},\mu} \otimes \mathcal{O}_F&\simeq \prod_{i=1}^d   \mathbb{M}_{\mathcal{GL}_{V},\mu} \otimes \mathcal{O}_F \\
        \locmodhomu \otimes \mathcal{O}_F \simeq \locmodhmu \otimes \mathcal{O}_F &\simeq \prod_{i=1}^d   \mathbb{M}_{\mathcal{G},\mu} \otimes \mathcal{O}_F
    \end{align}
    and property (iii) holds for $\iota:\mathcal{G} \to \mathcal{G}_V$ by assumption. 
\end{proof}

\subsubsection{} We let $K_p=\mathcal{H}(\zp)$ which gives $K_{p,1}=\mathcal{H}_1(\zp)$ and $K_{p}^{\Gamma}=\mathcal{G}(\zp)$. We recall the construction of $\scrs_{K_{p,1}}\hyo$ and $\scrs_{K_{p}^{\Gamma}}\gx$ from \cite[Section 5.1.4]{KisinZhou}, and note that they come with natural maps
\begin{align}
    \iota:\scrs_{K_{p}^{\Gamma}}\gx &\to \scrs_{M_p}\gvx \\
    \iota_{H_1}:\scrs_{K_{p,1}}\hyo &\to \scrs_{M_{p,W}}\gwx,
\end{align}
where $M_p=\mathcal{G}_V(\zp)$ and $M_{p,W}=\mathcal{G}_W(\zp)$. The targets of these morphisms have a moduli interpretation in terms of (weakly) polarized abelian schemes $(A,\lambda)$ up to prime-to-$p$ isogeny with prime-to-$p$ level structures. By \cite[Proposition 5.2.2]{KisinZhou}, the $\mathcal{O}_{E}$ scheme $\scrs_{K_{p}^{\Gamma}}\gx$ fits in a local model diagram
\[\xymatrix{ &\widetilde{\scrs}_{K_{p}^{\Gamma}}\gx\ar[dr]^\pi\ar[dl]^q&\\
\scrs_{K_{p}^{\Gamma}}\gx & &\locmodgmu,}\]
where $q$ is a $\mathcal{G}$-torsor and $\pi$ is pro-smooth of relative dimension $\dim G$. We have a similar local model diagram for $\scrs_{K_{p,1}}\hyo$ given by
\[\xymatrix{ &\widetilde{\scrs}_{K_{p,1}}\hyo\ar[dr]^{\pi_1}\ar[dl]^{q_1} & \\
\scrs_{K_{p,1}}\hyo & &\locmodhmu,}\] where $q_1$ is an $\mathcal{H}_1$-torsor and $\pi_1$ is pro-smooth of relative dimension $\dim H_1$.
\begin{Rem} \label{Rem:EntirelyClassical}
    The map $\pi$ has an entirely classical description over the generic fiber, see \cite[Proposition 4.2.26]{KisinPappas}. The torsor of trivializations of the first de-Rham cohomology of the universal abelian variety over $\scrs_{M_p}\gvx$ has a natural reduction to a $\mathcal{G}$-bundle $\mathcal{V}_{\mathrm{dR}}$. The morphism $\pi$ is given by assigning to a trivialization $\phi$ of the first de-Rham cohomology, the pre-image $\phi^{-1}(\mathcal{F}^1)$ of the nontrivial step of the Hodge filtration.
\end{Rem}

\subsubsection{} We require the following results on the local model diagram.

\begin{Prop} \label{Prop:GammaEquivarianceLocalModelDiagramHodge}
\begin{enumerate}

\item There exists a $\Gamma$-action on $\widetilde{\scrs}_{K_{p,1}}\hyo$ extending that on $\scrs_{K_{p,1}}\hyo$, which fits in a commutative diagram
\begin{equation}
    \begin{tikzcd}
        & \widetilde{\scrs}_{K_{p,1}}\hyo  \arrow[dl,"q_1"] \arrow{r}{\widetilde{\iota}_{H_1}} & \widetilde{\scrs}_{M_{p,F}}\gvfx \arrow{dr}{\pi_{F, V}} \arrow[bend left=20, dl, "q_{F,V}", densely dotted] \\
         \scrs_{K_{p,1}}\hyo \arrow{r}{\iota_{H_1}} & \scrs_{M_{p,F}}\gvfx & \locmodhmu \arrow[from = ul,crossing over ,"\pi_1"] \arrow{r}{\iota_{H_1, p}} & \locmodgvfmu,
    \end{tikzcd}
\end{equation}
and a $\Gamma$-action on $ \widetilde{\scrs}_{M_{p,F}}\gvfx$ such that $\pi_1, \pi_{F, V}, q_1, q_{F, V}, \iota_{H_1, p}, \iota_{H_1}$ are $\Gamma$-equivariant.  
\end{enumerate}
\end{Prop}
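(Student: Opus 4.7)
The plan is to construct the $\Gamma$-actions in two stages: first on the intermediate torsor $\widetilde{\scrs}_{M_{p,F}}\gvfx$, and then by restriction on $\widetilde{\scrs}_{K_{p,1}}\hyo$. The Galois group $\Gamma$ acts on $\mathsf{F}$ and hence on the Shimura data $\hy, \hyo, \gvfx$, on the group schemes $\mathcal{H}, \mathcal{H}_1, \mathcal{G}_{V,F}$ and on the lattice $\Lambda_W = \Lambda \otimes_{\zp} \mathcal{O}_F$, fixing $\gx, \gvx, \mathcal{G}, \mathcal{G}_V$ and $\Lambda$ respectively. Using that $\iota_{H_1}$ is a good Hodge embedding (Lemma \ref{Lem:GoodHodgeEmbedding}) and the analogous statement for $\gvfx \hookrightarrow \gwx$, together with functoriality of integral canonical models (\cite[Corollary 4.0.11]{DanielsVHKimZhangII}), the schemes $\scrs_{K_{p,1}}\hyo$ and $\scrs_{M_{p,F}}\gvfx$ acquire compatible $\Gamma$-actions making $\iota_{H_1}$ equivariant.

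For the intermediate torsor, I would use the description of $\widetilde{\scrs}_{M_{p,F}}\gvfx$ as the space of $\mathcal{G}_{V,F}$-trivializations of the first de Rham cohomology of the universal abelian scheme $\mathcal{A}$ over $\scrs_{M_{p,F}}\gvfx$, that is, isomorphisms $\Lambda_W \otimes \mathcal{O}_{\scrs} \xrightarrow{\sim} H^1_{\mathrm{dR}}(\mathcal{A})$ respecting the symplectic form and the $\mathcal{O}_F$-action. The $\Gamma$-action on $\scrs_{M_{p,F}}\gvfx$ lifts canonically to the universal abelian scheme by twisting its $\mathcal{O}_F$-structure, hence acts on $H^1_{\mathrm{dR}}(\mathcal{A})$; combined with the $\Gamma$-action on $\Lambda_W$, this produces a $\Gamma$-action on the torsor, compatible with the $\Gamma$-action on the structure group $\mathcal{G}_{V,F}$. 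The morphism $q_{F,V}$ is equivariant by construction, and $\pi_{F,V}$ is equivariant because the Hodge filtration is preserved by the $\Gamma$-lift of $\mathcal{A}$, using the description of $\pi_{F,V}$ from Remark \ref{Rem:EntirelyClassical} and the natural $\Gamma$-action on $\locmodgvfmu$ from Lemma \ref{Lem:IdentificationLocalModelsI}.

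Finally, $\widetilde{\scrs}_{K_{p,1}}\hyo$ is realized as the closed subfunctor of the fiber product $\widetilde{\scrs}_{M_{p,F}}\gvfx \times_{\scrs_{M_{p,F}}\gvfx} \scrs_{K_{p,1}}\hyo$ consisting of trivializations additionally preserving the Hodge tensors cutting out $\mathcal{H}_1 \subset \mathcal{G}_{V,F}$. Because $\mathcal{H}_1 \hookrightarrow \mathcal{G}_{V,F}$ is $\Gamma$-equivariant and the defining tensors come from $\mathbb{Q}$-subspaces on which $\Gamma$ acts, this subfunctor is $\Gamma$-stable, yielding the desired $\Gamma$-action on $\widetilde{\scrs}_{K_{p,1}}\hyo$ with $q_1$, $\pi_1$, and $\widetilde{\iota}_{H_1}$ all equivariant. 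The hard part will be to verify that the $\Gamma$-action on the universal abelian scheme is well-defined as a genuine action (rather than up to prime-to-$p$ isogeny), and that the resulting action on de Rham cohomology is canonical enough to give an honest action on the torsor compatible with the torsor structure; this ought to follow from the uniqueness properties in the Kisin--Zhou construction, but it is the delicate point where semilinearity versus linearity issues must be carefully sorted out.
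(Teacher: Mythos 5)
Your proposal is correct and matches the paper's argument: reduce to the Hilbert–Siegel case $\gx = \gvx$ using functoriality of integral models and the closed immersion of the $\mathcal{H}_1$-torsor into the $\Gamma$-equivariant fiber product, define the action on $\widetilde{\scrs}_{M_{p,F}}\gvfx$ via the moduli of trivializations of de Rham cohomology by twisting the $\mathcal{O}_F$-structure, and check equivariance of $\pi_{F,V}$ on the generic fiber via Remark~\ref{Rem:EntirelyClassical}. The ``hard part'' you flag at the end---whether the action on the torsor is genuinely well-defined and compatible with the torsor structure despite the $\mathcal{O}_F$-semilinearity---is dispatched more easily than you fear: writing the action out explicitly on moduli tuples as $(A,\lambda,i,\eta,\tau) \mapsto (A,\lambda, i \otimes_{\mathcal{O}_{\mathsf{F},(p)},\gamma^{-1}} \mathcal{O}_{\mathsf{F},(p)}, \eta, \tau \circ (\iota_\gamma \otimes \mathcal{O}_S))$ makes it manifest that the underlying abelian scheme is unchanged (so no isogeny-ambiguity arises) and that the semilinearity of $\iota_\gamma$ on $\Lambda_W$ exactly cancels the semilinearity of the twisted $\mathcal{O}_F$-action on cohomology, giving an honest action on the torsor.
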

\begin{proof}
    To prove the Proposition it suffices to consider the case $\gx = \gvx$, whence $\hyo = \gvfx$. Indeed, we already know that the $\Gamma$-action on $\mathbf{Sh}_{K_1'}\hyo$ extends to $\mathscr{S}_{K_1'}\hyo$ by \cite[Corollary 4.1.10]{DanielsVHKimZhangII}. Thus we are only trying to show that the action extends to the $\mathcal{H}_1$-torsor, and this admits a closed immersion into the pullback of $\widetilde{\scrs}_{M_{p,F}}\gvfx \to \scrs_{M_{p,F}}\gvfx$ along the $\Gamma$-equivariant natural map $\iota_{H_1}$. Moreover, the $\Gamma$-equivariance of the induced maps can be checked on the generic fiber and then over $\mathbb{C}$, where it is clear from uniformization. 
    
    We may also assume that $\Lambda \subset V$ is a self-dual $\mathbb{Z}_{p}$-lattice whence we may assume the same for $\Lambda_W \subset V_F$. To define the action of $\Gamma$ on the torsor $\widetilde{\scrs}_{M_{p,F}}\gvfx$ we may make use of the moduli problem associated to this torsor. Recall that $\scrs_{M_{p,F}}\gvfx$ classifies, over $S$ a $\mathbb{Z}_p$-scheme, tuples $(A, \lambda, i, \overline{\eta})$ where:
    \begin{enumerate}
        \item $h: A \to S$ is an abelian scheme of dimension $[\f:\mathbb{Q}] \cdot \text{dim}(V)/2$ considered up to prime-to-p isogeny. 
        \item $\lambda$ is a $\mathbb{Z}_{(p)}^{\times}$-multiple of a principal polarization.
        \item $i: \mathcal{O}_{\f,(p)} \to \operatorname{End}_S(A)$ is a map of rings such that the Rosati involution associated to $\lambda$ induces the identity on $\mathcal{O}_F$ and such that $i$ satisfies a determinant condition as in \cite[Section 5]{KottwitzPoints}.
        \item $\eta$ is a trivialization $\eta: V^p(A) \xrightarrow{\sim} V_F \otimes \mathbb{A}_f^p$ compatible with the $\mathcal{O}_{\f, (p)}$-action and with the polarization up to a scalar in $\mathbb{A}_{f}^{p, \times}$.
    \end{enumerate}
    The scheme $\widetilde{\scrs}_{M_{p,F}}\gvfx$ classifies quintuples $(A, \lambda, i, \eta, \tau)$ where $(A, \lambda, i, \eta)$ are as above and $\tau: \Lambda^{\vee}_{F} \xrightarrow{\sim} R^1h_* \Omega_{A/S}^*$ is compatible with the symplectic pairing and $\mathcal{O}_F$ actions on the source and target. The $\Gamma$-action on the moduli problem $\widetilde{\scrs_{M_{p,F}}}\gvfx$ sends a tuple $(A, \lambda, i, \eta, \tau)$ to the tuple $(A, \lambda, i \otimes_{\mathcal{O}_{\f,(p)}, \gamma^{-1}} \mathcal{O}_{\f,(p)}, \eta, \gamma(\tau))$, here $\tau: \Lambda^\vee_{F}\otimes \mathcal{O}_S \xrightarrow{\sim} R^1h_*\Omega^*_{A/S}$ and we define $\gamma(\tau)$ to be the composition $\tau \circ (\iota_{\gamma} \otimes \mathcal{O}_S)$ where $\iota_{\gamma}: \Lambda^{\vee}_{F} \otimes_{\mathcal{O}_{\f,(p)}, \gamma^{-1}} \mathcal{O}_{F} \xrightarrow{\sim} \Lambda^{\vee}_{F}$ is the natural isomorphism arising from the trivial descent datum on $\Lambda_{V, F} \xrightarrow{\sim} \Lambda \otimes \mathcal{O}_F$. By definition, this lifts the $\Gamma$ action on the moduli problem $\scrs_{M_{p,F}}\gvfx$. One is left to check that the map $\pi_{F, V}$ is $\Gamma$-equivariant, and that this moduli interpretation for the $\Gamma$ action agrees with the naive description of the $\Gamma$-action on the Hilbert--Siegel modular varieties we consider by uniformization. We check the former, the latter is left to the reader. By flatness and normality of $\widetilde{\scrs}_{M_{p,F}}\gvfx$ and its local model it suffices to check this over the generic fiber, but here we see that 
    \[
        \pi_{F, V, \mathbb{Q}_p}(\gamma(A, \lambda, i, \eta, \tau)) = \iota_{\gamma}^{-1}\tau^{-1}(\mathcal{F}^1(R^1h_{*}\Omega^*_{A/S}))
    \]
    which was exactly our definition of the $\Gamma$-action on the local model $\mathbb{M}_{\mathcal{G}_{V,F}, \mu, \mathbb{Q}_p}$, see Remark \ref{Rem:EntirelyClassical}. \end{proof}

 \subsubsection{} Finally, we state a Proposition which will be used in the proof of part (3) of Theorem \ref{Thm:FixedpointIntegralIntroduction}. The assumption that $\mathcal{H}$ is hyperspecial can probably be relaxed with some effort; this assumption is however present for different reasons in part (3) of Theorem \ref{Thm:FixedpointIntegralIntroduction}. 
\begin{Prop} \label{Prop:EquivariantTangentSpaces}
Assume that $\mathcal{H}$ is the parahoric group scheme associated to a hyperspecial vertex in the Bruhat-Tits building of $H$. Let $x: \spec \ovfp \to \scrs_{K_1}\hyo$ be a point which in the image of the inclusion $\scrs\gx \to \scrs \hyo$. Then there exists a $\Gamma$-fixed point $y: \spec \ovfp \to \locmodhmu$ and a $\Gamma$-equivariant isomorphism of tangent spaces \[\Theta: T_x(\scrs_{K_1}\hyo) \to T_y(\locmodhmu).\]
\end{Prop}
\begin{proof}
We actually prove slightly more. We will borrow freely the notation of Proposition \ref{Prop:GammaEquivarianceLocalModelDiagramHodge}. Let $x$ be as above and let $R_x$ denote the complete local ring of $\scrs\hyo$ at $x$. We let $S_x$ denote the versal deformation ring of the p-divisible group with $\mathcal{O}_F$-action living over the point $x$. By the argument of \cite[Proposition 2.3.5]{KisinModels}, the natural morphism $S_x \to R_x$ factors through a surjection onto the complete local ring of some point $y \in \locmodhomu$, we denote this ring by $S_y$. Let $x'$ denote $\iota_{H_1}(x)$, let $y' = \iota_{H_1, p}(y)$ and let $R'_{x'}, S'_{y'}$ the complete local rings of $\scrs_{M_{p, F}}\gvfx$, $\locmodgvfmu$ at these points. By construction of the map $S_x \to R_x$ and by the results of Proposition \ref{Prop:GammaEquivarianceLocalModelDiagramHodge}, one has a commutative diagram of morphisms of local rings
    \[
        \begin{tikzcd}
            S_y \arrow{r} &R_x \\
            S'_{y'}\arrow{u}\arrow{r} &R'_{x'} \arrow{u}
        \end{tikzcd}
    \]
    where the vertical arrows are surjections (for the right hand vertical arrow, this follows from on \cite[Theorem 1.1.1]{Xunormalization}), and all maps are $\Gamma$-equivariant. Since the maps on Zariski tangent spaces induced by the vertical arrows are injective, it thus suffices to deduce $\Gamma$-equivariance of the lower horizontal arrow, so we are reduced to the Hilbert--Siegel case.  
    
    Let $\bar{R'}_{x'}, \bar{S'}_{y'}$ denote the special fiber of the deformation rings $R'_{x'}, S'_{y'}$ respectively. The induced map $\bar{S'}_{y'}/\mathfrak{m}_{y'}^p \to \bar{R'}_{x'}/\mathfrak{m}_{x'}^p$ has an explicit moduli theoretic interpretation in terms of Grothendieck--Messing's crystalline deformation theory. We let $\mathcal{A}$ denote the universal polarized abelian scheme with $\mathcal{O}_F$-action over $\spec(\bar{R'}_{x'})$, and let $\mathcal{A}_0$ denote its fiber over the point $\spec(\fpbar) = \spec(\bar{R'}_{x'}/\mathfrak{m}_{x'})$ in what follows. The ring map $\bar{R}_{x'}/\mathfrak{m}_{x'}^p \to \ovfp = \bar{R'}_{x'}/\mathfrak{m}_{x'}$ admits a set of trivial nilpotent divided powers, whence one has a canonical identification of polarized $\mathcal{O}_F$-modules (via the Gauss--Manin connection) $\mathrm{H}^1_{\mathrm{dR}}(\mathcal{A}/(\bar{R'}_{x'}/\mathfrak{m}_{x'}^p)) \xrightarrow{\sim} \mathrm{H}^1_{\mathrm{dR}}(\mathcal{A}_0/\ovfp) \otimes_{\fpbar} \bar{R'}_{x'}/\mathfrak{m}_{x'}^p$, and one has a canonical identification $\mathrm{H}_{\mathrm{cris}}^1(\mathcal{A}_0/(\bar{R'}_{x'}/\mathfrak{m}_{x'}^p)) \xrightarrow{\sim} \mathrm{H}^1_{\mathrm{dR}}(\mathcal{A}/(\bar{R'}_{x'}/\mathfrak{m}_{x'}^p))$, and finally one has the identification $\mathrm{H}^1_{\mathrm{cris}}(\mathcal{A}_0/(\bar{R'}_{x'}/\mathfrak{m}_{x'}^p)) \otimes \ovfp = \mathrm{H}^1_{\mathrm{dR}}(\mathcal{A}/\ovfp)$. Because $x'$ is in the image of $\scrs\gx$, we may fix an $\mathcal{O}_F$-equivariant trivialization of $\mathrm{H}^1_{\mathrm{dR}}(\mathcal{A}_0/\fpbar) \xrightarrow{\sim} \Lambda^{\vee}_{F} \otimes_{\zp} \fpbar$ compatible with the standard symplectic pairing on $\Lambda^{\vee}_{F}$ and the polarization on $\mathrm{H}^1_{\mathrm{dR}}(\mathcal{A}_0/\fpbar)$, the $\Gamma$ action, and such that the image of the Hodge filtration under this trivialization defines the point $y'$ on $\locmodhomu$. Now by the result \cite[Theorem V.1.10]{Messing} the image of the Hodge filtration $F^0\mathrm{H}^1_{\mathrm{dR}}(\mathcal{A}/(\bar{R'}_{x'}/\mathfrak{m}_{x'}^p))$ inside of $\mathrm{H}^1_{\mathrm{cris}}(\mathcal{A}_0/(\bar{R'}_{x'}/\mathfrak{m}_{x'}^p)) \simeq (\Lambda^{\vee}_{F} \otimes_{\zp} \fpbar) \otimes_{\fpbar} (\bar{R'}_{x'}/\mathfrak{m}_{x'}^p)$ gives the isomorphism $\bar{S'}_{y'}/\mathfrak{m}_{y'}^p \to \bar{R'}_{x'}/\mathfrak{m}_{x'}^p$.

    Now the action of $\Gamma$ on the source takes the abelian scheme $\mathcal{A}$ to $\gamma^*\mathcal{A}$. On the other hand as $\mathcal{A}_0$ arises from an abelian variety $\mathcal{A}'_0$ such that $\mathcal{A}_0 = \mathcal{A}'_0 \otimes \mathcal{O}_F$ and thus there exists a canonical isomorphism $\mathrm{H}^1_{\mathrm{cris}}(\mathcal{A}_0/(\bar{R'}_{x'}/\mathfrak{m}_{x'}^p)) \xrightarrow{\sim} \mathrm{H}^1_{\mathrm{cris}}(\mathcal{A}_0'/(\bar{R'}_{x'}/\mathfrak{m}_{x'}^p)) \otimes \mathcal{O}_F$ compatible with the $\mathcal{O}_F$ action and polarization, as well as the $\Gamma$ action. There is a canonical isomorphism $\gamma^*\mathrm{H}^1_{\mathrm{dR}}(\mathcal{A}/(\bar{R'}_{x'}/\mathfrak{m}_{x'}^p)) \xrightarrow{\sim} \mathrm{H}^1_{\mathrm{dR}}(\gamma^*\mathcal{A}/(\bar{R'}_{x'}/\mathfrak{m}_{x'}^p))$ of polarized $\mathcal{O}_F$-modules. Finally the desired $\Gamma$-equivariance follows from scrutinizing the commutative diagram
    \[
        \begin{tikzcd}
            \gamma^*\mathrm{H}^1_{\mathrm{dR}}(\mathcal{A}/(\bar{R'}_{x'}/\mathfrak{m}_{x'}^p))\arrow{d} \arrow{r} &\mathrm{H}^1_{\mathrm{dR}}(\gamma^*\mathcal{A}/(\bar{R'}_{x'}/\mathfrak{m}_{x'}^p)) \arrow{d} &&\mathrm{H}^1_{\mathrm{dR}}(\mathcal{A}/(\bar{R'}_{x'}/\mathfrak{m}_{x'}^p)) \arrow{d} \\
            \gamma^*\mathrm{H}^1_{\mathrm{cris}}(\mathcal{A}_0/(\bar{R'}_{x'}/\mathfrak{m}_{x'}^p)) \arrow{d} \arrow{r}& \mathrm{H}^1_{\mathrm{cris}}(\gamma^*\mathcal{A}_0/(\bar{R'}_{x'}/\mathfrak{m}_{x'}^p)) \arrow{rr}{\mathrm{H}^1_{\mathrm{cris}}(\psi_{\gamma}^{-1})} && \mathrm{H}^1_{\mathrm{cris}}(\mathcal{A}_0/(\bar{R'}_{x'}/\mathfrak{m}_{x'}^p)) \arrow{d} \\
            \gamma^*(\Lambda^{\vee}_{F} \otimes_{\zp} \fpbar) \otimes_{\fpbar} (\bar{R'}_{x'}/\mathfrak{m}_{x'}^p) \arrow{rrr}&&& (\Lambda^{\vee}_{F} \otimes_{\zp} \fpbar \otimes_{\fpbar} (\bar{R'}_{x'}/\mathfrak{m}_{x'}^p).
        \end{tikzcd}
    \]
\end{proof}

\begin{Rem}
    We had originally hoped to prove the $\Gamma$-equivariance in Proposition \ref{Prop:GammaEquivarianceLocalModelDiagramHodge} using the $\Gamma$-equivariance of shtukas and the interpretation of the local model diagram in terms of shtukas, see \cite[Section 4.9.1]{PappasRapoportShtukas}. This does not seem to be possible however, since the scheme-theoretic local model diagram is not uniquely pinned down by the v-sheaf theoretic local model diagram, as discussed in \cite[Section 4.9.1]{PappasRapoportShtukas}. 
\end{Rem}    
\section{Fixed points of integral models of Shimura varieties} \label{Sec:MainTheoremI}
In this section we prove Theorem \ref{Thm:FixedpointIntegralIntroduction} from the introduction.

\subsection{Fixed points of integral models of Shimura varieties of Hodge type} \label{Sec:FixedPointsHodgeIntegral}
Let $\gx$ be a Shimura datum of Hodge type, let $\f$ be a totally real Galois extension of $\mathbb{Q}$ with Galois group $\Gamma$. Let $\gx \to \hyo \to \hy$ be as in Section \ref{Sec:FixedPointsHodge}. We let $\calgcirc$ be a parahoric model of $G$ that is the identity component of a Bruhat--Tits stabilizer group scheme $\mathcal{G}$ corresponding to some point $x$ in the Bruhat--Tits building of $G(\qp)$. We let $\calhcirc \subset \mathcal{H}$ be the corresponding objects for $H$, and we let $K_p=\mathcal{H}(\zp)$ and $K_p^{\circ}=\calhcirc(\zp)$. We let $\mathcal{H}_1$ be the Bruhat--Tits stabilizer group scheme for $H_1$ and we let $\mathcal{H}_1' \subset \mathcal{H}_1$ be the inverse image of $\calhcirc \subset \mathcal{H}$. We let $\mathcal{G}^{\mathrm{ad}}$ be the Bruhat--Tits stabilizer group scheme corresponding to the image of $x$ in the building of $G^{\mathrm{ad}}$, and similarly we define $\mathcal{H}^{\mathrm{ad}}$. We also need the parahoric group schemes $\calgadcirc$ and $\calhadcirc$, and we observe that
\begin{align}
    \calhad \simeq \operatorname{Res}_{\mathcal{O}_F/ \zp} \calgad_{\mathcal{O}_F}, \qquad  \calhadcirc \simeq \operatorname{Res}_{\mathcal{O}_F/ \zp} \calgadcirc_{\mathcal{O}_F}.
\end{align}
We set $K_{p,1}=\mathcal{H}_1(\zp)=K_p \cap H_1(\qp)$ and $K_{p,1}'=\mathcal{H}_1(\zp)=K_p' \cap H_1(\qp)$. We also have $K_p^{\Gamma}=\mathcal{G}(\zp)$ and $K_p^{\circ, \Gamma}=\mathcal{G}(\zp)$. 
\begin{Rem}
    The notation just introduced conflicts slightly with that of the introduction. The objects denoted by $\mathcal{G}, \mathcal{H}_1$ in the introduction correspond to $\calgcirc, \mathcal{H}_1'$ in this section. 
\end{Rem}

In what follows we let $K^p \subset \hafp$ denote a neat good compact open subgroup such that there is a prime number $\ell \not=p$ coprime to the order of $\Gamma$ such that $K^p=K^{p,\ell} K_{\ell}$ with $K_{\ell}$ a pro-$\ell$ group. For such $K^p$ we have $K_1, K_1', K^{\Gamma}, K^{\circ, \Gamma}$ defined in the obvious way using $K_1^p$ and $K^{p,\Gamma}$. Note that if $\f$ is tamely ramified over $\mathbb{Q}$, then such $K^p$ form a cofinal collection of compact open subgroups by Proposition \ref{Prop:ExistenceCofinalGoodCompactOpens}. Note that there are integral models of levels $K_1, K_1', K^{\Gamma}, K^{\circ, \Gamma}$ satisfying the Pappas--Rapoport axioms. These agree, by construction, with the ones of \cite{KisinZhou} that we used in Section \ref{Sec:LocalModelDiagrams}. The following theorem generalizes Theorem \ref{Thm:FixedpointIntegralIntroduction}.
\begin{Thm} \label{Thm:FixedPointsCanonicalIntegralModelsHodge}
Assume that $p$ is unramified in $\f$, that $p>2$ and that $\Sha^1(\mathbb{Q}, \mathsf{G}) \to \Sha^1(\f, \g)$. 
\begin{enumerate}
    \item The natural map
\begin{align} \label{Eq:NaturalMapMainTheorem}
    \scrs_{K^{\circ,\Gamma}}\gx \to \scrs_{K_{1}'}\hyo^{\Gamma}.
\end{align}
is a universal homeomorphism. 

    \item If $p$ is coprime to $|\Gamma| \cdot |\pi_1(\gder)|$ and $\gx$ satisfies Hypothesis \ref{Hyp:InducedRamification}, then the natural map of \eqref{Eq:NaturalMapMainTheorem} is an isomorphism.
    
    \item If $K_p$ is hyperspecial, then the natural map of \eqref{Eq:NaturalMapMainTheorem} is an isomorphism.
\end{enumerate}
\end{Thm}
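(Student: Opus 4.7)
The plan is to reduce all three parts to a regularity statement about $\scrs_{K_1'}\hyo^{\Gamma}$ over $\mathcal{O}_E$. First, on the generic fiber, Theorem~\ref{Thm:FixedPointsHodge} applied to the compact open $K = K^p K_p^{\circ}$ gives that the map $\mathbf{Sh}_{K^{\circ,\Gamma}}\gx \to \mathbf{Sh}_{K_1'}\hyo^{\Gamma}$ is an isomorphism of $E$-schemes; the hypotheses of that theorem are exactly our standing assumptions on $K^p$ and on $\Sha^1$. Since $\scrs_{K^{\circ,\Gamma}}\gx$ is flat and normal over $\mathcal{O}_E$ and the map of integral models is the $\Gamma$-equivariant functoriality map of Pappas--Rapoport integral models, for parts (2) and (3) it suffices to prove that $\scrs_{K_1'}\hyo^{\Gamma}$ is flat and normal over $\mathcal{O}_E$ and that the map from the source is finite; then a finite birational morphism to a normal scheme is an isomorphism. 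For part (1) the analogous reduction is to show that the weak normalization of $\scrs_{K_1'}\hyo^{\Gamma}$ is flat and normal over $\mathcal{O}_E$.

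For parts (2) and (3), I would take $\Gamma$-fixed points of the local model diagram of Proposition~\ref{Prop:GammaEquivarianceLocalModelDiagramHodge},
\[
\scrs_{K_1'}\hyo \xleftarrow{q_1} \widetilde{\scrs}_{K_1'}\hyo \xrightarrow{\pi_1} \locmodhomu.
\]
Because $p$ is coprime to $|\Gamma|$, the action of $\Gamma$ is linearly reductive on the relevant $p$-adic objects and fixed points of smooth morphisms stay smooth; so it is enough to analyze $(\locmodhomu)^{\Gamma}$. Passing to the adjoint side, this follows from Proposition~\ref{Prop:FixedPointsLocalModels} applied to $\calhad = \operatorname{Res}_{\mathcal{O}_F/\zp}\calgad_{\mathcal{O}_F}$, which gives $(\locmodhadmu)^{\Gamma} \simeq \locmodgadmu$; the identification crucially uses that $p$ is unramified in $\mathsf{F}$ so that Lemma~\ref{Lem:IdentificationLocalModelsI} applies. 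Combining with the identification $\mathcal{H}_1^{\Gamma} \simeq \calgcirc$ (and, in part (3), the fact that the hyperspecial case makes the local model smooth), the fixed-point diagram is a local model diagram for $\scrs_{K_1'}\hyo^{\Gamma}$ exhibiting it as flat and normal over $\mathcal{O}_E$. Comparing with the local model diagram for $\scrs_{K^{\circ,\Gamma}}\gx$ then yields the isomorphism.

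For part (1), with $p$ possibly dividing $|\Gamma|$, the smooth-averaging argument breaks down, and I would instead argue on the level of v-sheaves via the Pappas--Rapoport axiom (c) of Section~\ref{Sec:AxiomsDiscussion}. For $x \in \scrs_{K^{\circ,\Gamma}}\gx(\ovfp)$ with induced $\mathcal{H}_1$-shtuka $b_x$, axiom (c) (together with the descent from $\calho$ to $\calhadcirc$) identifies the formal neighborhood of $\scrs_{K_1'}\hyo$ at $x$ v-sheaf-theoretically with $\minthadcircbxnaught$. Proposition~\ref{Prop:InvariantsRZ}, applied using $\calhadcirc = \operatorname{Res}_{\mathcal{O}_F/\zp}\calgadcirc_{\mathcal{O}_F}$ (again requiring $p$ unramified in $\mathsf{F}$), identifies the $\Gamma$-fixed v-sheaf with $\mintgadcircbxnaught$, which by axiom (c) for $\scrs_{K^{\circ,\Gamma}}\gx$ is the formal neighborhood of $x$ in the source. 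Since the formal schemes representing these integral local Shimura varieties are absolutely weakly normal, this identification of v-sheaves upgrades to an identification of the weak normalization of $\scrs_{K_1'}\hyo^{\Gamma}$ with $\scrs_{K^{\circ,\Gamma}}\gx$ at every $\ovfp$-point, yielding the universal homeomorphism.

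The main obstacle is part (1): one must carefully juggle scheme-theoretic, v-sheaf-theoretic, and stacky fixed points, and translate a v-sheaf fixed-point identification into a scheme-theoretic statement about weakly normal schemes. A secondary technical point throughout is that $K_{1,p}'$ is only quasi-parahoric rather than parahoric, so one must invoke the forthcoming \cite{DanielsVHKimZhangII} to make sense of axioms (a)--(c) in this setting and to justify the functorialities used above; one also needs to verify that Proposition~\ref{Prop:GammaEquivarianceLocalModelDiagramHodge}, as stated, extends to the quasi-parahoric case.
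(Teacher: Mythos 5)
Your treatment of parts (1) and (2) is essentially the paper's argument: for part (2) take $\Gamma$-fixed points of the $\Gamma$-equivariant local model diagram and invoke the prime-to-$p$ hypothesis so that fixed points of a smooth morphism stay smooth (via Edixhoven), combined with Proposition~\ref{Prop:FixedPointsLocalModels} and Corollary~\ref{Cor:HomotopyFixedPointsLocalModelQuotients}; for part (1) work with v-sheaves at $\ovfp$-points via axiom (c), Proposition~\ref{Prop:InvariantsRZ}, and the fully faithfulness of $\lozenge$ on absolutely weakly normal formal schemes, and finish with the generic-fiber isomorphism of Theorem~\ref{Thm:FixedPointsHodge} (one must pass through the adjoint groups $\calhadcirc$ and $\calgadcirc$ to get shtuka maps, but you do note the need for the quasi-parahoric input from \cite{DanielsVHKimZhangII}).

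Part (3) however is where your plan breaks. You propose to absorb the hyperspecial case into the same linear-reductivity argument, adding only the remark that the local model is now smooth. But part (3) does \emph{not} assume $p$ coprime to $|\Gamma|$ — the standing hypotheses only require $p$ unramified in $\mathsf{F}$, $p>2$, and $\Sha^1$-injectivity — and there is nothing stopping $p$ from dividing $|\Gamma|$. Edixhoven's result that $\Gamma$-fixed points of a smooth scheme are smooth requires $|\Gamma|$ prime to the residue characteristic, so the ``fixed points of smooth morphisms stay smooth'' step is unavailable in part (3). Smoothness of $\locmodhmu$ alone does not rescue this: taking $\Gamma$-fixed points of a smooth scheme under a $p$-group can certainly produce something non-smooth or non-flat.

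The paper's actual argument for part (3) is therefore genuinely different. It first shows (using Xu's normalization theorem) that $\scrs_{K^{\circ,\Gamma}}\gx \to \scrs_{K_1'}\hyo$ is a closed immersion, hence so is $\scrs_{K^{\circ,\Gamma}}\gx \to \scrs_{K_1'}\hyo^{\Gamma}$. Then, instead of trying to prove normality of the target, it computes tangent spaces at $\ovfp$-points: Grothendieck--Messing theory is used to show the map from the formal neighborhood of the Siegel/Hilbert--Siegel integral model to the corresponding local model is $\Gamma$-equivariant on tangent spaces, and then Proposition~\ref{Prop:FixedPointsLocalModels} identifies $T_x(\widehat{\scrs}_{K_1'}\hyo^{\Gamma}_{/x})$ with the tangent space of $\locmodgmu$ at the corresponding point. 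Since the source $\scrs_{K^{\circ,\Gamma}}\gx$ is smooth (hyperspecial), and the closed immersion is a universal homeomorphism by part (1) and induces an isomorphism on tangent spaces, Lemma~\ref{Lem:CommutativeAlgebra} forces it to be an isomorphism. Your proposal is missing this entire line of reasoning, and without it part (3) does not go through whenever $p\mid|\Gamma|$.
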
 
\begin{proof}
\textbf{Part (1):} We first observe that the natural map $\mathcal{H}_1' \to \calhad$ factors through $\calhadcirc$ because $\mathcal{H}_1' \to \calhad$ factors through $\mathcal{H}$, and $\mathcal{H}_1'$ maps to $\mathcal{H}^{\circ}$. The corresponding map $\scrs_{K_1'}\hyo^{\lozenge/} \to \shthopmu$ is $\Gamma$-equivariant in the $2$-categorical sense by Corollary \ref{Cor:FunctorialityShimuraVarietiesToShtukasII}, and so is its composition with $\shthopmu \to \shthadcircmu$. 

By Lemma \ref{Lem:HomotopyFixedPointsShtukas}, using the fact that $p$ is unramified in $\f$, there is an isomorphism
\begin{align}
    \shtgadcircmu \to \left(\shthadcircmu\right)^{h \Gamma}.
\end{align}
This induces a map
\begin{align}
    \scrs_{K_1'}\hyo^{\lozenge/, \Gamma} \to \shtgadcircmu,
\end{align}
compatible with the maps $\scrs_{K^{\circ,\Gamma}}\gx^{\lozenge/} \to \shtgcircmu \to \shtgadcircmu$ and $\scrs_{K^{\circ,\Gamma}}\gx^{\lozenge/} \to \scrs_{K_1'}\hyo^{\lozenge/, \Gamma}$. For $x \in \scrs_{K_1'}\hyo^{\Gamma}(\ovfp)$ we let $b_x:\spd \ovfp \to \shtgadcircmu$ be the corresponding $\calgadcirc$-shtuka. Then by Lemma \ref{Lem:UniquenessShtukas}, there is a morphism
\begin{align}
    \Psi_{b_x}: \left(\widehat{\scrs_{K_1'}}\hyo^{\Gamma} \right)_{/x}^{\lozenge} \to \mintgadcircbxnaught,
\end{align}
and an isomorphism from the pullback under $\Psi_{b_x}$ of the tautological $\calgadcirc$-shtuka on $\mintgadcircbxnaught$ to the universal $\calgadcirc$-shtuka pulled back from $\scrs_{K_1'}\hyo^{\Gamma, \lozenge/}$. If we let $d_x$ denote the induced map $\spd \ovfp \to \shthadcircmu$, then by the uniqueness proved in Lemma \ref{Lem:UniquenessShtukas}, there is a commutative diagram
\begin{equation}
    \begin{tikzcd}
     \left(\widehat{\scrs_{K_1'}}\hyo^{\Gamma} \right)_{/x}^{\lozenge} \arrow{d} \arrow{r}{\Psi_{b_x}} & \mintgadcircbxnaught \arrow{d} \\
   \left(\widehat{\scrs_{K_1'}}\hyo \right)_{/x}^{\lozenge} \arrow{r}{\Psi_{d_x}} & \minthadcircbxnaught, 
    \end{tikzcd}
\end{equation}
where the vertical arrows are the natural closed immersions. We note that the map $\Psi_{d_x} $ is an isomorphism by Corollary \ref{Cor:UniquenessShtukasII} and the fact that (for a lift $b_x': \spd \ovfp \to \shthopmu$ of $b_x$)
\begin{align}
    \mathcal{M}^{\mathrm{int}}_{\mathcal{H}_1', b_x', \mu, /x_0} \to \minthadcircbxnaught
\end{align}
is an isomorphism, see \cite[Theorem 5.1.2]{PappasRapoportRZ}. It is moreover a direct consequence of the uniqueness proved in Lemma \ref{Lem:UniquenessShtukas} that the morphism $\Psi_{d_x}$ is $\Gamma$-equivariant. Since taking $\Gamma$-invariants commutes with taking diamond functors and formal completions, we see using Proposition \ref{Prop:InvariantsRZ} that $\Psi_{b_x}$ is the $\Gamma$-fixed points of $\Psi_{d_x}$. In particular, $\Psi_{b_x}$ is an isomorphism. \smallskip

By \cite[Theorem 2.5.4, Theorem 2.5.5]{PappasRapoportRZ} in combination with \cite[Corollary 1.4]{GleasonLourenco}, using our assumption that $p>2$, there is a flat normal formal scheme $\mathcal{N}$ such that 
\begin{align}
     \mathcal{N}^{\lozenge} \simeq \mintgadcircbxnaught. 
\end{align}
In particular, the unique $\ovfp$-point of $\mintgadcircbxnaught$ (and thus of $\left(\widehat{\scrs_{K_1'}}\hyo^{\Gamma} \right)_{/x}$ ) lifts to a $\spf L$ point for a finite extension $L$ of $\zpbr$. Since this is true for all $x$, this implies that $\scrs_{K_1'}\hyo^{\Gamma}$ is topologically flat over $\mathcal{O}_E$, i.e., that its generic fiber is dense. Let $\scrs_{K_1'}\hyo^{\Gamma,\mathrm{awn}} \to \scrs_{K_1'}\hyo^{\Gamma}$ be the absolute weak normalization, see \cite[Lemma 0EUS]{stacks-project}; this is a universal homeomorphism and so $\scrs_{K_1'}\hyo^{\Gamma,\mathrm{awn}}$ is again topologically flat over $\mathcal{O}_E$. Then by \cite[Lemma 2.13]{AGLR}, the natural map
\begin{align}
     \left(\widehat{\scrs_{K_1'}}\hyo^{\Gamma, \mathrm{awn}} \right)_{/x}^{\lozenge} \to  \left(\widehat{\scrs_{K_1'}}\hyo^{\Gamma} \right)_{/x}^{\lozenge}
\end{align}
is an isomorphism. By the fully faithfulness of the $\lozenge$ functor on absolute weakly normal formal schemes flat separated and topologically of finite type over $\zp$, see \cite[Theorem 2.16]{AGLR}, we find that 
\begin{align}
    \left(\widehat{\scrs_{K_1'}}\hyo^{\Gamma, \mathrm{awn}} \right)_{/x} \simeq \mathcal{N}.
\end{align}
This implies that the complete local rings of $\scrs_{K_1'}\hyo^{\Gamma, \mathrm{awn}}$ at $\ovfp$ points are normal, and thus that the local rings are normal by \cite[Lemma 0FIZ]{stacks-project}. This shows that $\scrs_{K_1'}\hyo^{\Gamma, \mathrm{awn}}$ is normal, because normality (of a quasicompact scheme) can be checked at closed points. By the universal property of the absolute weak normalization, the natural map $\scrs_{K^{\circ, \Gamma}}\gx \to \scrs_{K_1'}\hyo^{\Gamma}$ lifts to a finite map
\begin{align}
    \scrs_{K^{\circ,\Gamma}}\gx \to \scrs_{K_1'}\hyo^{\Gamma,\mathrm{awn}}.
\end{align}
This map is an isomorphism on the generic fiber by Theorem \ref{Thm:FixedPointsHodge} and thus an isomorphism since the target is normal and source and target are flat over $\zp$, see \cite[Lemma 0AB1]{stacks-project}. The theorem now follows from the fact that $\scrs_{K_1'}\hyo^{\Gamma,\mathrm{awn}} \to \scrs_{K_1}\hyo^{\Gamma}$ is a universal homeomorphism. \medskip

\textbf{Part (2):} We assume from now on that $p$ is coprime to the order of $\Gamma$ and to the order of $\pi_1(\gder)$, and that $\gxg$ satisfies Assumption \ref{Hyp:InducedRamification}. It then follows from Proposition \ref{Prop:GammaEquivarianceLocalModelDiagramHodge} that there is a $\Gamma$-equivariant and smooth map
\begin{align}
    \scrs_{K_{1}}\hyo \to \locmodhadmuStack.
\end{align}
It follows from \cite[Proposition 4.2]{Edixhoven}, using the fact that $\Gamma$ is of order prime-to-$p$, that the induced map
\begin{align}
    \scrs_{K_1}\hyo^{\Gamma} \to \locmodhadmuStackGamma.
\end{align}
is also smooth. Recall that Corollary \ref{Cor:HomotopyFixedPointsLocalModelQuotients} tells us that the natural map 
\begin{align}
    \locmodgadmuStack \to \locmodhadmuStackGamma
\end{align}
is an isomorphism. Thus $\scrs_{K_1}\hyo^{\Gamma}$ has a smooth local model diagram to $\locmodgadmu$ and is thus flat over $\zp$ and normal since $\locmodgadmu$ is, see \cite[Corollary 1.4]{GleasonLourenco}. The natural map $\scrs_{K_{1}'}\hyo \to \scrs_{K_{1}}\hyo$ is $\Gamma$-equivariant (by \cite[Corollary 4.1.10]{DanielsVHKimZhangII}) and finite \'etale (by the proof of \cite[Theorem 4.1.12]{DanielsVHKimZhangII}) and thus the induced map on fixed points 
\begin{align}
    \scrs_{K_{1}'}\hyo^{\Gamma} \to \scrs_{K_{1}}\hyo^{\Gamma}
\end{align}
is also finite \'etale. It follows that $\scrs_{K_{1}'}\hyo^{\Gamma}$ is flat over $\zp$ and normal since $ \scrs_{K_{1}}\hyo^{\Gamma}$ is. Now the natural map of the theorem is a finite morphism that induces an isomorphism on generic fibers by Theorem \ref{Thm:FixedPointsHodge}. Since the target is flat over $\zp$ and normal, it follows from \cite[Lemma 0AB1]{stacks-project} that the map is an isomorphism. \medskip

\textbf{Part (3):} We have a diagram
\[
    \scrs_{K^{\circ, \Gamma}}\gx \to \scrs_{K_1'}\hyo^{\Gamma} \to \scrs_{K_1'}\hyo. 
\]
We first observe that the composite is a closed immersion by \cite[Theorem 1.1.1]{Xunormalization} and the second arrow is a closed immersion because $\scrs_{K_1'}\hyo$ is separated; it follows that the first arrow is a closed immersion. From part (1), it moreover follows that the first arrow is a universal homeomorphism. To show that the first map is an isomorphism, it is thus enough to show it induces isomorphisms on complete local rings of $\ovfp$-points, see e.g. \cite[Lemma 0819]{stacks-project}. By Lemma \ref{Lem:CommutativeAlgebra} below, using the fact that $K_p^{\circ, \Gamma}$ is hyperspecial, it suffices to show that the induced map on tangent spaces at $\ovfp$-points is a bijection. Since the map is a closed immersion, it suffices to show that for $x \in \scrs_{K^{\circ, \Gamma}}\gx$ the tangent space of $\scrs_{K^{\circ, \Gamma}}\gx \to \scrs_{K_1'}\hyo^{\Gamma}$ at $x$ has the same dimension as the tangent space of $\scrs_{K_1'}\hyo^{\Gamma}$ at $x$. \smallskip 

We note that $\scrs_{K^{\circ, \Gamma}}\gx$ is smooth, and that the tangent space at $x$ has dimension equal to the dimension of the tangent space of any point in $\locmodgmu=\locmodgadmu$. By Proposition \ref{Prop:EquivariantTangentSpaces}, we may identify the tangent space of $\scrs_{K_1'}\hyo^{\Gamma}$ at $x$ with the $\Gamma$-fixed point of the tangent space of $\locmodhmu$ at a $\Gamma$-fixed point $y$ of $\locmodhmu=\locmodhadmu$. By Proposition \ref{Prop:FixedPointsLocalModels}, we have $y \in \locmodgadmu$, and moreover the $\Gamma$-fixed points of the tangent space gives the tangent space of $\locmodgadmu$ at $y$. This proves the claim about the dimensions of tangent spaces, and so we are done by the argument in the previous paragraph.
\end{proof}

\begin{Lem} \label{Lem:CommutativeAlgebra}
    Let $f:A \to B$ be a surjective local homomorphism of complete local Noetherian $\zpbr$-algebras with $B$ flat, formally smooth, and unramified over $\zpbr$. Suppose that $f$ induces a universal homeomorphism on spectra. If $f$ induces an isomorphism on residue fields and tangent spaces, then it is an isomorphism.
\end{Lem}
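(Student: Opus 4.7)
The plan is to exploit the formal smoothness of $B$ over $\zpbr$ to split a square-zero thickening and then kill the kernel $I := \ker(f)$ by Nakayama. First I would record two preliminary facts about $I$: (i) $I$ is nilpotent, since a closed immersion of Noetherian schemes is a universal homeomorphism exactly when its ideal of definition is contained in the nilradical, which for a finitely generated ideal in a Noetherian ring means nilpotent; and (ii) $I \subset \mathfrak{m}_A^2$, which follows from the surjectivity of $f$ together with the hypothesis that $f$ induces an isomorphism $\mathfrak{m}_A/\mathfrak{m}_A^2 \to \mathfrak{m}_B/\mathfrak{m}_B^2$.

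Next, I would pass to the quotient $A_1 := A/I^2$, which sits in a square-zero extension
\begin{equation}
    0 \to I/I^2 \to A_1 \to B \to 0
\end{equation}
of $\zpbr$-algebras. Since $I/I^2$ is nilpotent, formal smoothness of $B$ over $\zpbr$ produces a $\zpbr$-algebra section $s:B \to A_1$, so $A_1 \cong B \ltimes M$ with $M := I/I^2$ a $B$-module (the $A$-action on $I/I^2$ factors through $B$ since $I \cdot (I/I^2) = 0$). A direct computation in this split form shows that the maximal ideal of $A_1$ is $\mathfrak{m}_B \oplus M$ with square $\mathfrak{m}_B^2 \oplus \mathfrak{m}_B M$, so the tangent space decomposes as $\mathfrak{m}_B/\mathfrak{m}_B^2 \oplus M/\mathfrak{m}_B M$, and the projection to $B$ is the projection onto the first summand.

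Using that $I^2 \subset \mathfrak{m}_A^2$, one checks $\mathfrak{m}_A/\mathfrak{m}_A^2 \xrightarrow{\sim} \mathfrak{m}_{A_1}/\mathfrak{m}_{A_1}^2$ is an isomorphism, and then the hypothesis forces $\mathfrak{m}_{A_1}/\mathfrak{m}_{A_1}^2 \to \mathfrak{m}_B/\mathfrak{m}_B^2$ to be an isomorphism as well. Combining with the decomposition above, this gives $M/\mathfrak{m}_B M = 0$. Since $M = I/I^2$ is finitely generated as an $A$-module (by Noetherianity of $A$) and the action factors through $B$, it is finitely generated over the local ring $B$, so Nakayama yields $I = I^2$. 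Finally, since $I \subset \mathfrak{m}_A$, the equality $I = I^2 \subset \mathfrak{m}_A I$ together with Nakayama applied to the finitely generated $A$-module $I$ over the local ring $A$ gives $I = 0$.

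The only delicate point I anticipate is the application of formal smoothness: one must verify that the splitting really is available in the complete local Noetherian setup, i.e.\ that the topological/formal smoothness of $B$ over $\zpbr$ allows one to lift the identity $B \to A_1/\!(I/I^2)$ to $B \to A_1$. This is unproblematic once one recalls that a complete local Noetherian $\zpbr$-algebra which is formally smooth over $\zpbr$ is (non-canonically) a power series ring $\zpbr[[x_1,\dots,x_n]]$, in which case the lift can be written down by hand by sending each $x_i$ to an arbitrary preimage in $A_1$ and extending continuously; all other steps are short applications of Nakayama's lemma.
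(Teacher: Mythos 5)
Your proof is correct, but it takes a noticeably different route from the paper's. The paper constructs a ring-theoretic section $g\colon B\to A$ of $f$ directly (using that formal smoothness forces $B\cong\zpbr[[X_1,\dots,X_n]]$ and then lifting the variables $X_i$ to arbitrary preimages in $A$), and then applies Nakayama once to the surjective endomorphism $g\circ f|_{\mathfrak m_A}$ of the finitely generated $A$-module $\mathfrak m_A$. You instead reduce modulo $I^2$, split the resulting square-zero extension $0\to I/I^2\to A/I^2\to B\to 0$ using formal smoothness in its "square-zero thickenings split" form, decompose the tangent space of the trivial square-zero extension $B\ltimes I/I^2$, and then apply Nakayama twice (once over $B$ to get $I=I^2$, once over $A$ to get $I=0$). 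Both arguments hinge on the same ingredients — formal smoothness to produce a splitting, and Nakayama to kill the kernel — so this is a different packaging rather than a different idea; your version is a bit more deformation-theoretic and canonical, the paper's is shorter and more hands-on.

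One small observation worth noting: your preliminary fact (i), that $I=\ker f$ is nilpotent (which is where the universal-homeomorphism hypothesis enters), is never actually used in your argument. The square-zero thickening $A/I^2\to B$ is automatically nilpotent regardless, and the final step $I=I^2\subset\mathfrak m_A I\Rightarrow I=0$ is pure Nakayama. So your proof in fact establishes the lemma without assuming $f$ is a universal homeomorphism. (The paper's proof as written does lean on it at the very end, to pass from "$A$ reduced" to "$\ker f=0$", though a minor rewording would remove the dependence there too.) This is harmless — unused hypotheses cost nothing — but it is a slight sharpening.
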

\begin{proof}
    The proof is no doubt well known to experts, but we were unable to find a suitable reference. Because $B$ is formally smooth over $\zpbr$ we have that it is a complete local ring of the form $\zpbr[[X_1, \dots, X_n]]$. The map $f: A \to B$ admits a section $g: B \to A$, which we may construct by taking $g(X_i)$ to be any $Y \in A$ such that $f(Y) = X_i$. The morphism $g$ is finite, as both $A, B$ are complete and $g$ induces an isomorphism on residue fields and tangent spaces. Thus $g \circ f$ makes $A$ a finite $A$ algebra, and so by Nakayama's lemma we see that $g \circ f$ is a surjection, as $A$ is Noetherian; it is thus an isomorphism. 
\end{proof}

\begin{Rem}
One can prove a version of Part (1) of Theorem \ref{Thm:FixedPointsCanonicalIntegralModelsHodge} for integral models of Shimura varieties of abelian type $\gx$, using Theorem \ref{Thm:HomotopyFixedPointsShimura} as input on the generic fiber (thus we require the assumption that the $\mathbb{R}$-split rank of $Z_{\g}$ is zero). To do this, one needs integral models satisfying the Pappas--Rapoport axioms, see \cite{DanielsYoucis} for this.\footnote{In an earlier version of this paper, we proved a weaker version of these axioms for the same integral models of Shimura varieties in order to prove a version of Theorem \ref{Thm:FixedPointsCanonicalIntegralModelsHodge}. Our weaker version involved constructing shtukas for $\calgad$, which is easier than constructing $\mathcal{G}$-shtukas, see \cite[Remark 1.1]{DanielsYoucis}.} In fact, the proof of such a theorem is easier than the proof of Theorem \ref{Thm:FixedPointsCanonicalIntegralModelsHodge}, because one can directly work with $\hy$ instead of $\hyo$. 
\end{Rem}
\begin{Rem}
    If $p$ is completely split in $\f$, then the proof of part (1) of Theorem \ref{Thm:FixedPointsCanonicalIntegralModelsHodge} can be adapted to show that the natural map is actually an isomorphism. The key point is that if $p$ is completely split, then one can prove a version of Lemma \ref{Lem:IdentificationLocalModelsI} for the integral local Shimura varieties for $\calhadcirc$, which then implies that Proposition \ref{Prop:InvariantsRZ} holds on the level of formal schemes. We leave the details to the interested reader.
\end{Rem}

\section{Fixed points of Igusa stacks} \label{Sec:IgusaStacks} In this section we discuss the fixed points of the Igusa stacks of \cite{DvHKZIgusaStacks}. We use this to prove Theorem \ref{Thm:FixedPointsVsheavesIntroduction}, see Theorem \ref{Thm:FixedPointsIntegralHodgeRamified}.

\subsection{Igusa stacks} Fix an isomorphism $\mathbb{C} \to \qpbar$. For $\qp \subset L \subset\qpbar$ let us denote by $\shtrp_{L,\operatorname{Hdg}}$ the category whose objects are triples $\gxg$ (with $\mathcal{G}$ quasi-parahoric) such that $\mathsf{E} \to  \mathbb{C} \to \qpbar$ factors through $L$ and such that $\gxg$ is of Hodge type, and whose morphisms are morphisms $f:\gx \to \gxp$ such that $f$ extends (necessarily uniquely) to a morphism $\mathcal{G} \to \mathcal{G}'$. By \cite[Theorem I, Corollary 4.1.10]{DanielsVHKimZhangII}, there is a functor $\scrs^{\diamond}:\shtrp_{L,\operatorname{Hdg}} \to \perfpairs_{\mathcal{O}_L}$ sending $\gxg$ to $\scrs_{K_p}\gx^{\diamond}$, see Section \ref{Sec:AxiomsDiscussion}. Here $K_p=\mathcal{G}(\zp)$ and $\scrs_{K_p}\gx^{\diamond}$ is the diamond associated to the formal scheme $\widehat{\scrs}_{K_p}\gx$, which is the $p$-adic completion of the integral model $\scrs_{K_p}\gx$. 

The following result is a consequence of \cite[Theorem I, Theorem VII]{DvHKZIgusaStacks} as we will explain below. It was originally conjectured by Scholze. 
\begin{Thm} \label{Thm:IgusaStack}
    There is a functor $\igs:\shtrp_{L,\operatorname{Hdg}}\to \perfpairs_{\mathcal{O}_L}$ together with a natural transformation $\scrs^{\diamond} \to \igs$ and a weak natural transformations $\igs \to \bun$, and a strictly commutative diagram of weak natural transformations
\begin{equation}
    \begin{tikzcd}
    \scrs^{\diamond} \arrow{r} \arrow{d} & \operatorname{Sht} \arrow{d} \\
    \igs \arrow{r} & \bun,
    \end{tikzcd}
\end{equation}
such that the resulting $2$-commutative diagrams of stacks on $\perf$
\begin{equation}
    \begin{tikzcd}
    \scrs_{K_p}\gx^{\diamond}\arrow{r} \arrow{d} & \shtgmuone \arrow{d} \\
    \igs\gx \arrow{r} & \operatorname{Bun}_G,
    \end{tikzcd}
\end{equation}
are $2$-Cartesian for all $\gxg \in \shtrp_{L,\operatorname{Hdg}}$, and such that $ \igs\gx \to \operatorname{Bun}_G$ factors through the open substack $\bungmu \to \operatorname{Bun}_G$. 
\end{Thm}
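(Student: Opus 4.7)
The plan is to assemble the theorem from the pointwise constructions of \cite{DanielsVHKimZhang} together with the $2$-categorical functoriality machinery set up in Sections \ref{Sec:Prelim} and \ref{Sec:IntegralModels}. For a fixed triple $\gxg$ in $\shtrp_{L,\operatorname{Hdg}}$, the v-stack $\igs_{K_p}\gx$, together with maps $\scrs_{K_p}\gx^{\diamond} \to \igs_{K_p}\gx$ and $\igs_{K_p}\gx \to \bungmu$ fitting into a $2$-Cartesian square with $\shtgmu$, is the content of \cite[Theorem I, Theorem II]{DanielsVHKimZhang}. Thus all that remains is to upgrade this pointwise data to a functor on $\shtrp_{L,\operatorname{Hdg}}$ together with the two requested natural transformations.

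For a morphism $f:\gxg \to \gxgp$, the induced $1$-morphism $\igs_{K_p}\gx \to \igs_{K_p'}\gx'$ would be constructed using the universal property of the $2$-Cartesian square defining $\igs_{K_p'}\gx'$. The two sides of the input cone are the composite $\scrs_{K_p}\gx^{\diamond} \to \scrs_{K_p'}\gx'^{\diamond} \to \igs_{K_p'}\gx'$ and the composite $\igs_{K_p}\gx \to \bungmu \to \operatorname{Bun}_{G'}$, where the second map is provided by Lemma \ref{Lem:WeakFunctorBunG}. The required $2$-cell witnessing that these become equal after mapping to $\operatorname{Sht}_{\mathcal{G}',\mu'}$ comes from the weak natural transformation $\scrs^{\diamond} \to \operatorname{Sht}$ of Corollary \ref{Cor:FunctorialityShimuraVarietiesToShtukasII} pasted with Lemma \ref{Lem:FunctorialityShtukasToBunG}. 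To assemble these $1$-morphisms into a weak functor with the required natural transformations, one invokes the general principle that $2$-fiber products in the strict $(2,1)$-category of v-stacks assemble into a weak functor out of the category of commutative squares, which is outlined in Appendix \ref{Appendix:HFP}.

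The main obstacle will be verifying strict (rather than merely $2$-) commutativity of the final square of weak natural transformations, since the $2$-Cartesian property a priori only supplies coherence data up to canonical $2$-isomorphism. Here the key observation is that for each object $\gxg$ the two composites $\scrs_{K_p}\gx^{\diamond} \rightrightarrows \operatorname{Bun}_G$ agree on the nose by construction of $\igs_{K_p}\gx$ in \cite{DanielsVHKimZhang}, so the ambiguity is concentrated in the coherence $2$-cells produced by the fiber product construction; these can be normalized to be identities along this particular composite using the universal property. The factorization of $\igs_{K_p}\gx \to \operatorname{Bun}_G$ through $\bungmu$ is automatic from \cite[Theorem II]{DanielsVHKimZhang} together with the fact that a morphism $(G,\mu) \to (G',\mu')$ sends $B(G,\mu^{-1})$ into $B(G',\mu'^{-1})$, ensuring compatibility with the natural transformation $\igs \to \bun$.
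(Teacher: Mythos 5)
There is a genuine gap in your construction of the functor $\igs$ on morphisms. You propose to obtain the induced $1$-morphism $\igs_{K_p}\gx \to \igs_{K_p'}\gx'$ from ``the universal property of the $2$-Cartesian square defining $\igs_{K_p'}\gx'$,'' but in the $2$-Cartesian square
\begin{equation}
\begin{tikzcd}
\scrs_{K_p'}\gx'^{\diamond}\arrow{r} \arrow{d} & \operatorname{Sht}_{\mathcal{G}',\mu'} \arrow{d} \\
\igs_{K_p'}\gx' \arrow{r} & \operatorname{Bun}_{G'}
\end{tikzcd}
\end{equation}
the object exhibited as a $2$-fiber product is the top-left corner $\scrs_{K_p'}\gx'^{\diamond}$, not $\igs_{K_p'}\gx'$. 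The universal property therefore produces maps \emph{into} $\scrs_{K_p'}\gx'^{\diamond}$, which is of no help for constructing a map landing in $\igs_{K_p'}\gx'$. Consistent with this confusion, the two ``legs'' of the cone you describe do not even share a common source: one composite starts at $\scrs_{K_p}\gx^{\diamond}$ and the other at $\igs_{K_p}\gx$, so there is no coherent cone to which a universal property could apply. Your claim that a general result in Appendix~\ref{Appendix:HFP} assembles $2$-fiber products into a weak functor out of a category of squares is also not something that appendix contains; the closest statement there, Lemma~\ref{Lem:FiberProductsOfFunctors}, is about the fiber product of weak functors over a fixed indexing category, which is a different construction.

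The route the paper actually takes is simpler and avoids this issue entirely: the strict functor $\igs$ and the strict natural transformation $\scrs^{\diamond} \to \igs$ are not constructed here but are quoted directly from \cite[Theorem~I]{DanielsVHKimZhang}, the weak natural transformation $\igs \to \operatorname{Bun}$ is quoted from the end of the proof of \cite[Proposition~7.1.6]{DanielsVHKimZhang}, and strict commutativity of the resulting square holds by the construction of the Igusa stack in \emph{loc.~cit.} rather than needing a normalization argument. Your final observation about the factorization through $\bungmu$ is fine, but the heart of the theorem --- the functoriality of $\igs$ --- must be imported, not re-derived via a universal property that points in the wrong direction.
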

\begin{proof}
    The existence of $ \igs\gx$ is \cite[Theorem VII, Theorem 6.0.1, Remark 6.0.2]{DvHKZIgusaStacks}, the existence of the strict functor $\igs$ and the strict natural transformation $\scrs^{\diamond} \to \igs$ is \cite[Theorem I]{DvHKZIgusaStacks}. The weak natural transformation $\igs \to \operatorname{Bun}$ is constructed at the end of the proof of \cite[Proposition 7.1.6]{DvHKZIgusaStacks}, see the commutative cube in \cite[Equation (7.1.4)]{DvHKZIgusaStacks}. By construction of the Igusa stack, the diagram 
   \begin{equation}
    \begin{tikzcd}
    \scrs_{K_p}\gx^{\diamond}\arrow{r} \arrow{d} & \shtgmu \arrow{d} \\
    \igs\gx \arrow{r} & \operatorname{Bun}_G,
    \end{tikzcd}
\end{equation}
is strictly commutative for all $\gxg$, and thus we see that the result follows.
\end{proof}

\subsection{Fixed points of Igusa stacks} \label{Sec:IgusaStacksFix}
Let the notation be as in Section \ref{Sec:FixedPointsHodgeIntegral}. 
\begin{Thm} \label{Thm:FixedPointsIgusa}
    If $\Sha^1(\mathbb{Q}, \mathsf{G}) \to \Sha^1(\f, \g)$ is injective, then the natural map
    \begin{align}
         \igs \gx \to  \igs\hyo^{\Gamma} \times_{\bun_{H_1}^{h \Gamma}} \bungmu
    \end{align}
    is an isomorphism.
\end{Thm}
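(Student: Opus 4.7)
The plan is to combine the defining $2$-Cartesian diagrams of Theorem~\ref{Thm:IgusaStack} applied both to $\gxg^{\circ}$ and to $\hyo$ (with parahoric $\calhocirc$), exploiting the $\Gamma$-equivariance of the $\hyo$-diagram coming from functoriality in the triple.

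Theorem~\ref{Thm:IgusaStack} applied to $\hyo$ yields a $2$-Cartesian square
\begin{equation*}
    \begin{tikzcd}
        \scrs^{\diamond}_{K_{p,1}^{\circ}}\hyo \arrow{r} \arrow{d} & \shthocircmu \arrow{d} \\
        \igs_{K_{p,1}^{\circ}}\hyo \arrow{r} & \bunhomu
    \end{tikzcd}
\end{equation*}
that is $\Gamma$-equivariant by the functoriality of its four corners. By Lemma~\ref{Lem:FiberProducts}, taking $h\Gamma$-homotopy fixed points preserves $2$-Cartesianness, and since $\scrs^{\diamond}$ and $\igs$ are v-sheaves their $h\Gamma$- and $\Gamma$-fixed points coincide. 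Pulling back along $\bungmu \to \bun_{H_1}^{h\Gamma}$, I obtain a $2$-Cartesian square whose bottom-left corner is $\igs_{K_{p,1}^{\circ}}\hyo^{\Gamma}\times_{\bun_{H_1}^{h\Gamma}}\bungmu$, whose top-right is $(\shthocircmu)^{h\Gamma}\times_{\bun_{H_1}^{h\Gamma}}\bungmu$, and whose bottom-right is $\bungmu$.

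The theorem then reduces to two assertions that identify the top row of this square with that of the Cartesian square for $\gxg^{\circ}$ supplied by Theorem~\ref{Thm:IgusaStack}: first, that the natural map $\shtgcircmu \to (\shthocircmu)^{h\Gamma}\times_{\bun_{H_1}^{h\Gamma}}\bungmu$ is an isomorphism; and second, that the induced map $\scrs^{\diamond}_{K_p^{\circ,\Gamma}}\gx \to (\scrs^{\diamond}_{K_{p,1}^{\circ}}\hyo)^{\Gamma}\times_{\bun_{H_1}^{h\Gamma}}\bungmu$ is an isomorphism. Granting both, both $\igs_{K_p^{\circ,\Gamma}}\gx$ and $\igs_{K_{p,1}^{\circ}}\hyo^{\Gamma}\times_{\bun_{H_1}^{h\Gamma}}\bungmu$ complete $\scrs^{\diamond}_{K_p^{\circ,\Gamma}}\gx \to \shtgcircmu \to \bungmu$ to the same Cartesian square; since $\shtgcircmu \to \bungmu$ is a v-surjection, v-descent identifies the two bottom-left corners and yields the theorem.

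The main obstacle is verifying the two assertions, both of which I expect to reduce to Galois descent controlled by the $\Sha^1$ hypothesis. For the first, the homotopy fiber of $\shtgcircmu \to (\shthocircmu)^{h\Gamma}$ over a point of $\bun_{H_1}^{h\Gamma}$ that lifts to $\bungmu$ is classified by non-abelian Galois cohomology of $\calhocirc$ relative to $\calgcirc$; fibering over $\bungmu$ kills the local part, and the remaining global obstruction to upgrading a $\Gamma$-equivariant $\calhocirc$-shtuka with compatible $G$-bundle on the Fargues--Fontaine curve to a $\calgcirc$-shtuka is exactly $\ker(\Sha^1(\mathbb{Q},\mathsf{G})\to \Sha^1(\mathsf{F},G))$, cf.~Lemma~\ref{Lem:KernelAndSha}, which vanishes by hypothesis. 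For the second, one checks on $S$-valued points and applies an analogous descent, paralleling the adelic analysis of Lemma~\ref{Lem:QuotientExactAdelicSpace}. Unlike Lemma~\ref{Lem:HomotopyFixedPointsShtukasII}, one cannot appeal directly to Shapiro's lemma because $\mathcal{H}_1$ is not itself a restriction of scalars and $p$ need not be unramified in $\mathsf{F}$; it is precisely the fiber product with $\bungmu$ that compensates for this and allows the descent to go through under the $\Sha^1$ hypothesis.
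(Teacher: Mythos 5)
Your overall scheme --- combine the Cartesian squares of Theorem~\ref{Thm:IgusaStack} for $\gxg$ and $\hyo$, take $h\Gamma$ fixed points using Lemma~\ref{Lem:FiberProducts}, and conclude by v-descent from a cover of $\bungmu$ --- is the same shape as the paper's proof. Where you diverge, and where there is a genuine gap, is in the choice of v-cover and, consequently, in what you must prove after base change.

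You base change along $\shtgcircmu \to \bungmu$ and arrive at your assertions (a): $\shtgcircmu \xrightarrow{\sim} (\shthocircmu)^{h\Gamma}\times_{\bun_{H_1}^{h\Gamma}}\bungmu$, and (b): $\scrs^{\diamond}_{K_p^{\circ,\Gamma}}\gx \xrightarrow{\sim} (\scrs^{\diamond}_{K_{p,1}^{\circ}}\hyo)^{\Gamma}\times_{\bun_{H_1}^{h\Gamma}}\bungmu$. Neither of these is established in the paper, and your sketch does not establish them either. For (a), the obstruction to descending a $\Gamma$-equivariant $\calhocirc$-torsor on $S\,\dot\times\,\zp$ to a $\calgcirc$-torsor is a local problem on the open curve $S\,\dot\times\,\zp$, and the only mechanism the paper has for killing it (Shapiro's lemma as in Lemma~\ref{Lem:HomotopyFixedPointsShtukasII}) requires $\mathcal{O}_F/\zp$ \'etale, i.e.\ $p$ unramified. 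The $\Sha^1$ hypothesis is a statement about the global groups $\mathsf{G}\subset\mathsf{H}$ and has no purchase on this local descent; fibering with $\bungmu$ supplies a $G$-reduction of the $H_1$-bundle on the Fargues--Fontaine curve, but the Fargues--Fontaine curve is a quotient of the punctured $\mathcal{Y}(S)$, not of $S\,\dot\times\,\zp$, so this does not produce the needed integral $\calgcirc$-structure on $S\,\dot\times\,\zp$. Thus the ``fibering over $\bungmu$ kills the local part'' claim is not substantiated. For (b), note that once (a) is granted, your (b) is equivalent to $\scrs_{K_p^{\circ,\Gamma}}\gx^{\lozenge/}\xrightarrow{\sim}\scrs_{K_{p,1}^{\circ}}\hyo^{\Gamma,\lozenge/}\times_{\shthocircmu^{h\Gamma}}\shtgcircmu$, which is precisely Theorem~\ref{Thm:FixedPointsIntegralHodgeRamified}; but that theorem is deduced in the paper \emph{from} Theorem~\ref{Thm:FixedPointsIgusa}, so taking it as an input here would be circular.

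The missing idea is to base change along the \emph{generic fiber} v-cover $\operatorname{Sht}_{\calgcirc,\mu,E}\to\bungmu$ rather than the integral $\shtgcircmu\to\bungmu$. Over $\spd E$ two things become available that are unavailable integrally: first, the generic fiber shtuka stack admits the presentation $\operatorname{Sht}_{\calhocirc,\mu,E}\simeq[\operatorname{Gr}_{H,\mu^{-1}}/\underline{K_{p,1}^{\circ}}]$, so its $h\Gamma$ fixed points can be computed via Proposition~\ref{Prop:KeyPropositionAppendix} and the $\operatorname{Gr}_{G,\mu^{-1}}=\operatorname{Gr}_{H,\mu^{-1}}^{\Gamma}$ identification; second, by Lemma~\ref{Lem:PotentiallyCrystalline} the base change of $\scrs^{\diamond}$ to $\spd E$ is the potentially crystalline locus, which at infinite level is controlled by the adelic uniformization and hence by the $\Sha^1$ hypothesis via Lemma~\ref{Lem:FixedPointsInfiniteLevelHodge}. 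Both ingredients are packaged in Lemma~\ref{Lem:FixedPointsGenericRamifiedHodge}, which is exactly the statement the paper reduces to after base change, and which avoids any integral shtuka descent and hence any unramifiedness hypothesis.
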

To prove the theorem, we need to introduce potentially crystalline loci. 
\subsubsection{The potentially crystalline locus in Hodge type Shimura varieties} 
Recall that there is an open immersion $\mathbf{Sh}_K\gx^{\circ, \mathrm{an}} \subset \mathbf{Sh}_K\gx^{\mathrm{an}}$ of rigid spaces over $E$ called the \emph{potentially crystalline locus}, constructed in \cite{ImaiMieda}, see \cite[Theorem 5.17]{ImaiMieda}. The formation of $\mathbf{Sh}_K\gx^{\circ, \lozenge}$ is compatible with changing $K$, see \cite[Corollary 5.29]{ImaiMieda}, and we will also consider
\begin{align}
    \mathbf{Sh}_{K_p}\gx^{\circ, \lozenge}&=\varprojlim_{K^p \subset \gafp} \mathbf{Sh}_{K^pK_p}\gx^{\circ, \lozenge} \\
    \mathbf{Sh}_{}\gx^{\circ, \lozenge}&=\varprojlim_{K \subset \gafp} \mathbf{Sh}_{K}\gx^{\circ, \lozenge}.
\end{align}
\begin{Lem} \label{Lem:FunctorialityPotentiallyCrystalline}
If $\gx \to \gxp$ is a closed immersion of Shimura data of Hodge type, then for $K_p' \subset G'(\qp)$ containing $K_{p} \subset G(\qp)$ there are equalities of open subdiamonds
\begin{align}
    \mathbf{Sh}_{K_p}\gx^{\circ, \lozenge} &= \mathbf{Sh}_{K_p'}\gxp^{\circ, \lozenge} \times_{\mathbf{Sh}_{K_p'}\gxp^{\lozenge}}\mathbf{Sh}_{K_p}\gx^{\lozenge} \\
    \mathbf{Sh}_{}\gx^{\circ, \lozenge} &= \mathbf{Sh}_{}\gxp^{\circ, \lozenge} \times_{\mathbf{Sh}_{}\gxp^{\lozenge}}\mathbf{Sh}_{}\gx^{\lozenge}.
\end{align}
\end{Lem}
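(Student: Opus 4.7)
The plan is to reduce both statements to a direct unwinding of the definition of the potentially crystalline locus from \cite{ImaiMieda}. Recall that for a Shimura datum of Hodge type, $\mathbf{Sh}_{K}\gx^{\circ, \mathrm{an}}$ is characterized by choosing a Hodge embedding $\gx \hookrightarrow \gvx$, and then taking the preimage under the induced map $\mathbf{Sh}_{K}\gx^{\mathrm{an}} \to \mathbf{Sh}_{M}\gvx^{\mathrm{an}}$ of the locus in the Siegel Shimura variety where the universal abelian variety has potentially good reduction (with a mild extra de Rham / admissibility condition coming from the $G$-structure on cohomology). The key input from \cite{ImaiMieda} that I will invoke is that this locus is independent of the choice of Hodge embedding.

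Given a closed immersion of Shimura data $\gx \to \gxp$, I would first choose a symplectic representation $(V,\psi)$ together with a factorization $\gx \hookrightarrow \gxp \hookrightarrow \gvx$; such a factorization exists because any symplectic representation of $\gp$ restricts to a symplectic representation of $\g$. Write $M \subset \gv(\qp)$ for a compact open containing the images of $K_p$ and $K_p'$. Then by independence of Hodge embedding, one has simultaneously
\begin{align}
    \mathbf{Sh}_{K_p}\gx^{\circ, \lozenge} &= \mathbf{Sh}_{M}\gvx^{\circ, \lozenge} \times_{\mathbf{Sh}_{M}\gvx^{\lozenge}} \mathbf{Sh}_{K_p}\gx^{\lozenge}, \\
    \mathbf{Sh}_{K_p'}\gxp^{\circ, \lozenge} &= \mathbf{Sh}_{M}\gvx^{\circ, \lozenge} \times_{\mathbf{Sh}_{M}\gvx^{\lozenge}} \mathbf{Sh}_{K_p'}\gxp^{\lozenge}.
\end{align}
Transitivity of fiber products over the common base $\mathbf{Sh}_{M}\gvx^{\lozenge}$ then yields the first equality of the lemma.

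For the second equality, I would simply pass to the inverse limit over $K^p, K'^p$. Since the formation of $\mathbf{Sh}_{K}\gx^{\circ, \lozenge}$ is compatible with pullback in $K^p$ by \cite[Corollary 5.29]{ImaiMieda}, and inverse limits of diamonds commute with fiber products, the first equality at finite prime-to-$p$ level passes to the limit without issue.

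The main delicate point will be verifying carefully that the \emph{same} Hodge embedding $\gxp \hookrightarrow \gvx$ can be used to compute both $\mathbf{Sh}_{K_p}\gx^{\circ, \lozenge}$ and $\mathbf{Sh}_{K_p'}\gxp^{\circ, \lozenge}$; this amounts to checking that the restriction of a Hodge embedding for $\gxp$ along a closed immersion of Shimura data $\gx \to \gxp$ is again a Hodge embedding for $\gx$, and that the Imai--Mieda characterization is compatible with this restriction. Both are essentially tautological from the definitions, so no substantive new input is needed beyond \cite{ImaiMieda}.
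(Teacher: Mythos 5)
Your proof is correct in outline, but it takes a more roundabout route than the paper's, and it has a worrying parenthetical. The paper's entire proof is to cite \cite[Lemma 2.2]{ImaiMieda}, which is the intrinsic functoriality statement for the potentially crystalline locus under a morphism of Shimura data (phrased via the period map); applied to $\gx \to \gxp$ this gives the lemma immediately, with no detour through the Siegel variety and no need for a Hodge embedding at all. Your argument instead invokes the Siegel-preimage description
\begin{align}
\mathbf{Sh}_{K_p}\gx^{\circ, \lozenge} = \mathbf{Sh}_{M}\gvx^{\circ, \lozenge} \times_{\mathbf{Sh}_{M}\gvx^{\lozenge}} \mathbf{Sh}_{K_p}\gx^{\lozenge}
\end{align}
for $\gx$ and $\gxp$ simultaneously (via the factored Hodge embedding), then composes fiber products. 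That transitivity argument is fine, but note that the Siegel-preimage formula is itself literally an instance of the lemma being proved (applied to the closed immersion of Shimura data $\gx \hookrightarrow \gvx$), so you are bootstrapping from the one special case established in \cite{ImaiMieda}. This is logically sound as long as you cite that special case as a theorem there rather than as a definition, but it is arguably a heavier input than the general functoriality lemma that the paper uses directly; and it quietly restricts the lemma to the Hodge type case, which is fine in context but not stated.

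The genuine worry is the parenthetical ``(with a mild extra de Rham / admissibility condition coming from the $G$-structure on cohomology).'' If the potentially crystalline locus for $\gx$ were the preimage of the Siegel locus intersected with an extra condition, then the displayed fiber-product description you write down would be \emph{false} (the left side would be a proper open subset of the right), and with it the whole argument. The entire content of the Imai--Mieda comparison for Hodge type Shimura varieties is precisely that there is \emph{no} such extra condition: the preimage is exactly the potentially crystalline locus. You should delete that parenthetical, or at least make clear that whatever condition you have in mind is automatically satisfied on the preimage.
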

\begin{proof}
This is a direct consequence of \cite[Lemma 2.2]{ImaiMieda}, cf. \cite[Lemma 5.1.6, Lemma 5.1.7]{DvHKZIgusaStacks}.
\end{proof}
The following lemma is \cite[Lemma 5.1.6]{DvHKZIgusaStacks}.
\begin{Lem} \label{Lem:PotentiallyCrystalline}
There is an equality
\begin{align}
    \mathbf{Sh}_K\gx^{\circ, \lozenge} = \scrs_{K_p}\gx^{\diamond} \times_{\spd \mathcal{O}_{E}} \spd E
\end{align}
of open subdiamonds of $\mathbf{Sh}_K\gx^{\lozenge}$.
\end{Lem}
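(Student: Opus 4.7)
The plan is to reduce to the Siegel case via the Hodge embedding and then invoke the criterion of N\'eron--Ogg--Shafarevich for good reduction of abelian varieties. Fix a Hodge embedding $\iota:\gx \to \gvx$ compatible with the chosen parahoric levels, so that after possibly shrinking the level away from $p$, $\iota$ induces a closed immersion $\scrs_{K_p}\gx \hookrightarrow \scrs_{M_p}\gvx$. By Lemma \ref{Lem:FunctorialityPotentiallyCrystalline}, the formation of the potentially crystalline locus is compatible with pullback along closed immersions of Shimura data. Since the Hodge-type integral model is constructed as a normalization inside the Siegel integral model, I would verify that $\scrs_{K_p}\gx^{\diamond}$ also behaves well under this pullback (i.e., its formation is compatible with the analogous fiber product). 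Consequently, both sides of the claimed equality would reduce to the analogous statement for $\gvx$.

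For the Siegel case I would argue pointwise. The integral model $\scrs_{M_p}\gvx$ is the classical Siegel moduli scheme parametrizing (weakly) polarized abelian varieties with prime-to-$p$ level structure. A point $x \in \mathbf{Sh}_{M_p}\gvx^{\lozenge}(\spa(C,C^+))$ corresponds to a polarized abelian variety $A$ over $\spec C$, and lies in $\scrs_{M_p}\gvx^{\diamond} \times_{\spd \mathcal{O}_E} \spd E$ precisely when $A$ has good reduction over $\spec C^+$. By Fontaine's crystalline criterion for abelian varieties, good reduction is equivalent to the $p$-adic Tate module $T_p A$ being a crystalline Galois representation. This is exactly the Hodge-type incarnation of the condition defining $\mathbf{Sh}_{M_p}\gvx^{\circ,\lozenge}$ in \cite{ImaiMieda}, expressed through the standard local system on the universal abelian variety over the Siegel modular variety.

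The main obstacle will be to promote this pointwise equality to an identification of open subdiamonds, rather than a mere bijection on geometric points. For this I would need that both subfunctors are represented by open subdiamonds and that their formation is compatible with passage to sufficiently fine v-covers. For $\scrs_{M_p}\gvx^{\diamond}$ this should follow from the theory relating formal schemes to their associated v-sheaves combined with the Serre--Tate theory controlling deformations of abelian varieties. For the potentially crystalline locus it is essentially built into the Imai--Mieda construction, which produces this locus as an open subdiamond through a pro-\'etale descent on the universal Galois representation. Matching the two descriptions under Fontaine's criterion would then yield the desired equality of open subdiamonds of $\mathbf{Sh}_{K_p}\gx^{\lozenge}$.
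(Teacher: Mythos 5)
The paper does not give a proof of this lemma: it is imported verbatim as \cite[Lemma 5.1.6]{DanielsVHKimZhang}, so there is no in-paper argument to compare against. Your sketch is a reasonable reconstruction of how such a statement is typically established, via a Hodge embedding, the $p$-adic good-reduction criterion, and a comparison of open subdiamonds.

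Two imprecisions are worth fixing. First, the map $\scrs_{K_p}\gx \to \scrs_{M_p}\gvx$ is in general only finite, not a closed immersion; it is obtained by normalizing the schematic image, and the closed-immersion refinement holds only under extra hypotheses (cf.\ \cite[Theorem 1.1.1]{Xunormalization}, which the paper invokes elsewhere). Finiteness is all your argument actually needs: a $\operatorname{Spa}(C,C^+)$-point of the generic fiber extends to the normalization of the closure if and only if the corresponding point extends to $\scrs_{M_p}\gvx^{\diamond}$, because $C^+$ is a valuation ring (hence integrally closed in $C$) and any such specialization automatically lands in the schematic closure. Second, ``N\'eron--Ogg--Shafarevich'' names the $\ell\neq p$ criterion; the tool you actually need, and then correctly invoke, is the crystalline good-reduction criterion for abelian varieties (Fontaine/Coleman--Iovita/Breuil).

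The last step is easier than you present it. Both sides of the claimed equality are already known to be \emph{open} subdiamonds of $\mathbf{Sh}_K\gx^{\lozenge}$: the left side by construction in \cite{ImaiMieda}, and the right side because $\scrs_{K_p}\gx^{\diamond}\times_{\spd\mathcal{O}_E}\spd E$ is the tube over the special fiber, which is open in $\scrs_{K_p}\gx^{\lozenge/}\times_{\spd\mathcal{O}_E}\spd E=\mathbf{Sh}_K\gx^{\lozenge}$ by the gluing description of $(-)^{\lozenge/}$ recalled in Section \ref{Sec:DiamondFunctors}. Two open subdiamonds of a common diamond coincide as soon as they have the same underlying topological space, so once the pointwise statement is in hand via the crystalline criterion you are done; no separate ``compatibility with v-covers'' argument is required.
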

Let $K_{p,1}^{\circ} \subset K_p \cap H_1(\qp)$ be the unique parahoric subgroup, corresponding to the identity component $\mathcal{H}_1^{\circ}$ of $\mathcal{H}_1$. 
\begin{Lem} \label{Lem:FixedPointsGenericRamifiedHodge}
If $\Sha^1(\mathbb{Q}, \mathsf{G}) \to \Sha^1(\f, \g)$ is injective, then the following diagram is $2$-Cartesian 
\begin{equation}
    \begin{tikzcd}
    \mathbf{Sh}_{K_p^{\circ, \Gamma}}\gx^{\circ, \lozenge} \arrow{d} \arrow{r} & \mathbf{Sh}_{K_{p,1}^{\circ}}\hyo^{\circ, \Gamma, \lozenge} \arrow{d} \\
    \operatorname{Sht}_{\mathcal{G},\mu, E} \arrow{r} & \left(\operatorname{Sht}_{\calhocirc,\mu, E}\right)^{h \Gamma}.
    \end{tikzcd}
\end{equation}
\end{Lem}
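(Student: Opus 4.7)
The plan is to verify the $2$-Cartesian property by combining the $2$-Cartesian descriptions of potentially crystalline loci from Theorem \ref{Thm:IgusaStack} with the $2$-categorical calculus of homotopy fixed points (Lemma \ref{Lem:FiberProducts}). First I would apply Theorem \ref{Thm:IgusaStack} to the pairs $(\gx, \calgcirc)$ and $(\hyo, \calhocirc)$, restrict the resulting $2$-Cartesian squares to the generic fiber over $\spd E$, and use Lemma \ref{Lem:PotentiallyCrystalline} to identify the top-left corner with the potentially crystalline locus. This produces $2$-Cartesian squares
\begin{equation*}
    \begin{tikzcd}
    \mathbf{Sh}_{K_p^{\circ, \Gamma}}\gx^{\circ, \lozenge} \arrow{r} \arrow{d} & \operatorname{Sht}_{\mathcal{G},\mu,E} \arrow{d} \\
    \igs_{K_p^{\circ, \Gamma}}\gx_E \arrow{r} & \bungmu
    \end{tikzcd}
    \quad\text{and}\quad
    \begin{tikzcd}
    \mathbf{Sh}_{K_{p,1}^{\circ}}\hyo^{\circ, \lozenge} \arrow{r} \arrow{d} & \operatorname{Sht}_{\calhocirc,\mu,E} \arrow{d} \\
    \igs_{K_{p,1}^{\circ}}\hyo_E \arrow{r} & \bunhomu.
    \end{tikzcd}
\end{equation*}

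Next, by the functoriality in Theorem \ref{Thm:IgusaStack}, the second square is $\Gamma$-equivariant in the $2$-categorical sense, and Lemma \ref{Lem:FiberProducts} implies that its $\Gamma$-homotopy fixed points form a $2$-Cartesian square with corners $\mathbf{Sh}_{K_{p,1}^{\circ}}\hyo^{\circ, \Gamma, \lozenge}$, $\operatorname{Sht}_{\calhocirc,\mu,E}^{h\Gamma}$, $\igs_{K_{p,1}^{\circ}}\hyo_E^{h\Gamma}$, and $\bunhomu^{h\Gamma}$. The natural $2$-morphisms induced by $(\gx, \calgcirc)\to(\hyo, \calhocirc)$ then assemble these squares into a $2$-commutative cube relating the $\gx$-square to the $h\Gamma$-version of the $\hyo$-square.

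By the standard pasting lemma for $2$-Cartesian squares, the desired $2$-Cartesian property of the square in the statement becomes equivalent to the claim that the natural map
\begin{equation*}
    \igs_{K_p^{\circ, \Gamma}}\gx_E \to \igs_{K_{p,1}^{\circ}}\hyo_E^{h\Gamma} \times_{\bunhomu^{h\Gamma}} \bungmu
\end{equation*}
is an isomorphism over $\bungmu$. The main obstacle is to establish this Igusa-stack identity directly, since it is essentially the generic-fiber restriction of Theorem \ref{Thm:IntroIgusa} and thus cannot be used circularly. I would tackle it by testing against perfectoid spaces $S$: a map into the right-hand side amounts to a $G$-bundle on $X_S$ together with a $\Gamma$-fixed $H_1$-Igusa datum whose underlying $H_1$-bundle is induced from the given $G$-bundle, and the task is to produce from this a unique $G$-Igusa datum. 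The hard part will be essential surjectivity: injectivity follows from classical results of Deligne applied at infinite prime-to-$p$ level, while surjectivity requires the generic-fiber fixed point theorem Lemma \ref{Lem:FixedPointsInfiniteLevelHodge}, whose proof uses precisely the $\Sha^1$-injectivity hypothesis to match $\mathbb{C}$-points along the $\Gamma$-equivariant map.
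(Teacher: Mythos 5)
Your reduction via the pasting lemma for $2$-Cartesian cubes is formally valid: if the two Igusa squares (for $\gx$ and the homotopy fixed points of the one for $\hyo$) are both $2$-Cartesian, then the statement of the Lemma is equivalent to the Igusa-stack isomorphism
\begin{equation*}
    \igs_{K_p^{\circ, \Gamma}}\gx_E \to \igs_{K_{p,1}^{\circ}}\hyo_E^{h\Gamma} \times_{\bunhomu^{h\Gamma}} \bungmu.
\end{equation*}
But this reduction runs exactly backwards relative to the paper's logical structure, and this is a genuine gap. In the paper, Theorem \ref{Thm:FixedPointsIgusa} (of which your Igusa-stack identity is a reformulation) is \emph{deduced} from Lemma \ref{Lem:FixedPointsGenericRamifiedHodge}: the proof of the Theorem performs v-descent along the cover $\operatorname{Sht}_{\calgcirc,\mu,E} \to \bungmu$, which after using the Igusa--shtuka Cartesian square of Theorem \ref{Thm:IgusaStack} reduces the isomorphism precisely to the square in Lemma \ref{Lem:FixedPointsGenericRamifiedHodge}. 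You acknowledge this risk but your proposed escape — ``testing against perfectoid spaces $S$'' and treating $\igs$ as classifying some $\Gamma$-fixed ``Igusa data'' — does not actually provide an independent handle. The Hodge-type Igusa stack of \cite{DanielsVHKimZhang} is an abstract v-stack constructed via the Cartesian square itself, not by a moduli description, so evaluating its $S$-points in practice means v-descending along $\shtgmu \to \bun_G$, which brings one back to the very statement you are trying to prove.

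The paper's proof of the Lemma sidesteps Igusa stacks entirely. The actual engine is the quotient-stack presentation $\operatorname{Sht}_{\calhocirc,\mu,E} \simeq [\operatorname{Gr}_{H,\mu^{-1}}/\underline{K_{p,1}^\circ}]$ (from \cite{ZhangThesis}), together with the fact that the $\underline{K_{p,1}^\circ}$-torsor over $\mathbf{Sh}_{K_{p,1}^\circ}\hyo^{\circ,\lozenge}$ pulled back from this quotient stack is the infinite-level potentially crystalline locus $\mathbf{Sh}_{}\hyo^{\circ,\lozenge}$. One then writes down the $\Gamma$-equivariant $2$-Cartesian torsor square, applies Lemma \ref{Lem:FiberProducts} to its homotopy fixed points, inserts Lemma \ref{Lem:FixedPointsInfiniteLevelHodge} at infinite level (where the $\Sha^1$-hypothesis enters), and concludes by a basechange along $\mathbb{B}\underline{K_p^{\circ,\Gamma}} \to \mathbb{B}\underline{K_{p,1}^\circ}$ using Proposition \ref{Prop:KeyPropositionAppendix}. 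You correctly identified Lemma \ref{Lem:FixedPointsInfiniteLevelHodge} and Lemma \ref{Lem:PotentiallyCrystalline} as the ultimate inputs, but the missing ingredient is the quotient-stack presentation of $\operatorname{Sht}$, which is precisely what lets one pass from the infinite-level isomorphism to the finite-level $2$-Cartesian square without ever mentioning $\igs$.
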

\begin{proof}
It follows from Lemma \ref{Lem:FixedPointsInfiniteLevelHodge} that the natural map (where the limit runs over $\Gamma$-stable compact open subgroups of $\hafp$)
\begin{align}
    \varprojlim_{K \subset \hafp} \mathbf{Sh}_{K^{\Gamma}}\gx^{\lozenge} \to \varprojlim_{K \subset \hafp} \mathbf{Sh}_{K_1}\hyo^{\lozenge, \Gamma}
\end{align}
is an isomorphism. We then invoke Lemma \ref{Lem:PotentiallyCrystalline} to get an isomorphism
\begin{align}
    \mathbf{Sh}_{}\gx^{\circ, \lozenge} \to \mathbf{Sh}_{}\hyo^{\circ, \lozenge}.
\end{align}
The map $\mathbf{Sh}_{}\hyo^{\circ, \lozenge} \to \mathbf{Sh}_{K_{p,1}^{\circ}}\hyo^{\circ, \lozenge}$ is a torsor for the sheaf of groups $\underline{K_{p,1}^{\circ}}$ associated to the topological group $K_{p,1}^{\circ}$. Now we observe that there is a natural (in particular $\Gamma$-equivariant) isomorphism
\begin{align}
    \operatorname{Sht}_{\mathcal{H}_1^{\circ},\mu, E} \simeq \left[\operatorname{Gr}_{H, \mu^{-1}}/\underline{K_{p,1}^{\circ}} \right],
\end{align}
see \cite[Proposition 11.17]{ZhangThesis}. Moreover, by construction, the $\underline{K_{p,1}^{\circ}}$-torsor over $\mathbf{Sh}_{K_{p,1}^{\circ}}\hyo^{\circ, \lozenge}$ coming from the map
\begin{align}
    \mathbf{Sh}_{K_{p,1}}\hyo^{\circ, \lozenge} \to \operatorname{Sht}_{\calhocirc,\mu, E},
\end{align}
is given by $\mathbf{Sh}_{}\hyo^{\circ, \lozenge} \to \mathbf{Sh}_{K_{p,1}^{\circ}}\hyo^{\circ, \lozenge}$. Thus we have a $\Gamma$-equivariant $2$-Cartesian diagram
\begin{equation}
    \begin{tikzcd}
        \mathbf{Sh}_{}\hyo^{\circ, \lozenge} \arrow{d} \arrow{r} & \operatorname{Gr}_{H, \mu^{-1}} \arrow{d} \\
        \mathbf{Sh}_{}\hyo^{\circ, \lozenge}/\underline{K_{p,1}^{\circ}} \arrow{r} & \left[\operatorname{Gr}_{H, \mu^{-1}}/\underline{K_{p,1}^{\circ}} \right],
    \end{tikzcd}
\end{equation}
whose homotopy fixed points are again $2$-Cartesian by Lemma \ref{Lem:FiberProducts}. The homotopy fixed points diagram looks like (see the proof of Proposition \ref{Prop:FixedPointsLocalModels})
\begin{equation}
    \begin{tikzcd}
        \mathbf{Sh}_{}\gx^{\circ, \lozenge} \arrow{d} \arrow{r} & \operatorname{Gr}_{G, \mu^{-1}} \arrow{d} \\
        \left(\mathbf{Sh}_{}\hyo^{\circ, \lozenge}/\underline{K_{p,1}^{\circ}}\right)^{\Gamma} \arrow{r} & \left[\operatorname{Gr}_{H, \mu^{-1}}/\underline{K_{p,1}^{\circ}} \right]^{h \Gamma}.
    \end{tikzcd}
\end{equation}
If we base change the bottom row via $\mathbb{B} \underline{K_{p}^{\circ, \Gamma}} \to \mathbb{B} \underline{K_{p,1}^{\circ}}$, then by Proposition \ref{Prop:KeyPropositionAppendix} we get the Cartesian diagram
\begin{equation}
    \begin{tikzcd}
        \mathbf{Sh}_{}\gx^{\circ, \lozenge} \arrow{d} \arrow{r} & \operatorname{Gr}_{G, \mu^{-1}} \arrow{d} \\
        \mathbf{Sh}_{}\gx^{\circ, \lozenge}/\underline{K_{p}^{\circ, \Gamma}} \arrow{r} & \left[\operatorname{Gr}_{G, \mu^{-1}}/\underline{K_{p}^{\circ, \Gamma}}\right].
    \end{tikzcd}
\end{equation}
By Proposition \ref{Prop:KeyPropositionAppendix}, this proves that the following diagram is Cartesian
\begin{equation}
    \begin{tikzcd}
    \mathbf{Sh}_{K_p^{\circ, \Gamma}}\gx^{\circ, \lozenge} \arrow{d} \arrow{r} & \mathbf{Sh}_{K_{p,1}^{\circ}}\hyo^{\circ, \Gamma, \lozenge} \arrow{d} \\
    \left[\operatorname{Gr}_{G, \mu^{-1}}/\underline{K_{p}^{\circ, \Gamma}}\right] \arrow{r} & \left[\operatorname{Gr}_{H, \mu^{-1}}/\underline{K_{p,1}^{\circ}} \right]^{h \Gamma},
    \end{tikzcd}
\end{equation}
proving the lemma.
\end{proof}

\begin{proof}[Proof of Theorem \ref{Thm:FixedPointsIgusa}]
It follows from Theorem \ref{Thm:IgusaStack} that there is a $2$-commutative square
\begin{equation}
    \begin{tikzcd}
    \igs \gx \arrow{r} \arrow{d} & \bungmu \arrow{d} \\
     \igs\hyo^{h \Gamma} \arrow{r} & \bun_{H_1}^{h \Gamma}.
    \end{tikzcd}
\end{equation}
This induces a map 
\begin{align}
      \igs\gx \to  \igs\hyo^{h \Gamma} \times_{\bun_{H_1}^{h \Gamma}} \bungmu
\end{align}
which we will show is an isomorphism. By v-descent we can do this after basechanging via the v-cover
\begin{align}
    \operatorname{Sht}_{\calgcirc,\mu,E} \to \bungmu
\end{align}
from \cite[Corollary 6.4.2]{DvHKZIgusaStacks}. Using Theorem \ref{Thm:IgusaStack}, Lemma \ref{Lem:PotentiallyCrystalline} and Lemma \ref{Lem:FiberProducts}, we can identify the basechanged map with the natural map
\begin{align}
    \mathbf{Sh}_{K_p^{\circ, \Gamma}}\gx^{\circ, \lozenge} \to \mathbf{Sh}_{K_{p,1}^{\circ}}\hyo^{\circ, \lozenge, \Gamma} \times_{\operatorname{Sht}_{\calhocirc,\mu,E}^{h \Gamma}} \operatorname{Sht}_{\calgcirc,\mu,E}. 
\end{align}
This natural map is an isomorphism by Lemma \ref{Lem:FixedPointsGenericRamifiedHodge}.
\end{proof}

\subsection{Fixed points of integral models of Shimura varieties of Hodge type II} \label{sub:ProofIgusa} We now prove Theorem \ref{Thm:FixedPointsVsheavesIntroduction}, which we will restate for the convenience of the reader.  Let $\gx$ be a Shimura datum of Hodge type with reflex field $\mathsf{E}$. Fix a prime $p$ and a prime $v$ of $\mathsf{E}$ above $p$, let $E,G,\mathcal{O}_E, \f,\Gamma,F,\mathcal{O}_F$ be as above. Let $\mathcal{G}$ be a parahoric model of $G$ over $\zp$, let $\mathcal{H}=\operatorname{Res}_{\mathcal{O}_F/\zp} \mathcal{G}_{\mathcal{O}_F}$ and let $K_p=\mathcal{H}(\zp)$. Let $\mathcal{H}_1$ be the group smoothening of the Zariski closure of $\mathcal{H}(\zpbr) \cap H_1(\qpbr)$ in $\mathcal{H}$; this is a quasi-parahoric model of $H_1$ with $\mathcal{H}_1(\zp)=K_{p,1}=K_{p} \cap H_1(\qp)$. 
\begin{Thm} \label{Thm:FixedPointsIntegralHodgeRamified}
If $\Sha^1(\mathbb{Q}, \mathsf{G}) \to \Sha^1(\f, \g)$ is injective, then the following diagram of v-stacks is $2$-Cartesian
\begin{equation}
    \begin{tikzcd}
    \scrs_{K_p^{\circ, \Gamma}}\gx^{\lozenge/} \arrow{r} \arrow{d} & \scrs_{K_{p,1}}\hyo^{ \Gamma, \lozenge/} \arrow{d} \\
    \shtgcircmu \arrow{r} & \shthomu^{h \Gamma}.
    \end{tikzcd}
\end{equation}    
\end{Thm}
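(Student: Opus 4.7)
The v-sheaf $\scrs_{K_p^{\circ,\Gamma}}\gx^{\lozenge/}$ is obtained, per the discussion of Section \ref{Sec:DiamondFunctors}, by gluing $\scrs_{K_p^{\circ,\Gamma}}\gx^{\diamond}$ with $\mathbf{Sh}_{K_p^{\circ,\Gamma}}\gx^{\lozenge}$ along the open immersion from the potentially crystalline locus $\mathbf{Sh}_{K_p^{\circ,\Gamma}}\gx^{\circ,\lozenge}$ (cf.\ Lemma \ref{Lem:PotentiallyCrystalline}). Since being $2$-Cartesian is v-local, it suffices to verify the claim on each of these two open pieces separately.

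\textbf{Step 1 (formal completion side).} On $\scrs^{\diamond}$, I would apply Theorem \ref{Thm:IgusaStack} to obtain the $2$-Cartesian presentations
\[
\scrs_{K_p^{\circ,\Gamma}}\gx^{\diamond} \simeq \igs_{K_p^{\circ,\Gamma}}\gx \times_{\bungmu} \shtgcircmu, \quad \scrs_{K_{p,1}^{\circ}}\hyo^{\diamond} \simeq \igs_{K_{p,1}^{\circ}}\hyo \times_{\bunhomu} \shthocircmu.
\]
Applying $(-)^{h\Gamma}$ to the second presentation and invoking Lemma \ref{Lem:FiberProducts} so that homotopy fixed points commute with the $2$-fiber product (noting that $\igs$ and $\scrs^{\diamond}$ are $1$-stacks, whence $h\Gamma$- and $\Gamma$-fixed points coincide on those factors) yields
\[
\scrs_{K_{p,1}^{\circ}}\hyo^{\Gamma,\diamond} \simeq \igs_{K_{p,1}^{\circ}}\hyo^{\Gamma} \times_{\bunhomu^{h\Gamma}} \shthocircmu^{h\Gamma}.
\]
Taking the fiber product with $\shtgcircmu$ over $\shthocircmu^{h\Gamma}$, cancelling the repeated factor, inserting the intermediate base $\bungmu$ (through the BL map composed with the functorial morphism $\bun_G \to \bun_{H_1}^{h\Gamma}$), and applying Theorem \ref{Thm:FixedPointsIgusa} to identify $\igs_{K_{p,1}^{\circ}}\hyo^{\Gamma}\times_{\bunhomu^{h\Gamma}}\bungmu$ with $\igs_{K_p^{\circ,\Gamma}}\gx$ collapses the right-hand side to $\igs_{K_p^{\circ,\Gamma}}\gx \times_{\bungmu} \shtgcircmu \simeq \scrs_{K_p^{\circ,\Gamma}}\gx^{\diamond}$, as required.

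\textbf{Step 2 (generic fiber).} Over $\spd E$, the proof mirrors that of Lemma \ref{Lem:FixedPointsGenericRamifiedHodge}, but applied to the full generic fiber $\mathbf{Sh}^{\lozenge}$ in place of its potentially crystalline open subsheaf. The only role of the restriction to $\mathbf{Sh}^{\circ,\lozenge}$ in that proof is to match the output of Lemma \ref{Lem:FixedPointsInfiniteLevelHodge} against $\scrs^{\diamond}|_E$ via Lemma \ref{Lem:PotentiallyCrystalline}; dropping the restriction, Lemma \ref{Lem:FixedPointsInfiniteLevelHodge} directly supplies the isomorphism $\mathbf{Sh}\gx^{\lozenge} \xrightarrow{\sim} \mathbf{Sh}\hyo^{\Gamma,\lozenge}$ on all of the generic fiber at infinite level. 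The remaining inputs --- namely the $\underline{K_{p,1}^{\circ}}$-torsor structure of $\mathbf{Sh}\hyo^{\lozenge}\to \mathbf{Sh}_{K_{p,1}^{\circ}}\hyo^{\lozenge}$, the identification $\shthocircmu|_E \simeq [\operatorname{Gr}_{H_1,\mu^{-1}}/\underline{K_{p,1}^{\circ}}]$ of \cite[Proposition 11.17]{ZhangThesis}, and Lemma \ref{Lem:FiberProducts} --- assemble exactly as in the cited proof to give the required $2$-Cartesian square on $\mathbf{Sh}^{\lozenge}$.

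\textbf{Main obstacle.} The subtle point is verifying rigorously that the argument of Lemma \ref{Lem:FixedPointsGenericRamifiedHodge} genuinely extends to the full generic fiber. Concretely, this reduces to checking that the shtuka map $\mathbf{Sh}_{K_p}\gx^{\lozenge} \to \shtgmu|_E$ is realized, at infinite level, as the $\underline{K_p}$-quotient of a trivializing period map $\mathbf{Sh}\gx^{\lozenge} \to \operatorname{Gr}_{G,\mu^{-1}}$ on all of the generic fiber, rather than only on the potentially crystalline locus. This follows from the construction of the shtuka map via the $B_{dR}^+$-lattices attached to the associated \'etale local systems (cf.\ the proof of Proposition \ref{Prop:FunctorialityShimuraVarietiesToShtukas}), which at infinite level factors naturally through the Beilinson--Drinfeld Grassmannian. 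Granting this, Steps 1 and 2 combine by v-locality to yield the desired $2$-Cartesian square.
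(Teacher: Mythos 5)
Your proposal follows essentially the same route as the paper: handle the formal-completion piece via Theorem \ref{Thm:FixedPointsIgusa} together with the Cartesian presentation of $\scrs^{\diamond}$ from Theorem \ref{Thm:IgusaStack}, handle the generic-fiber piece via the argument of Lemma \ref{Lem:FixedPointsGenericRamifiedHodge}, and glue using the cover of $\lozenge/$ by $\diamond$ and the generic fiber. Your Step~1 is a longer but equivalent unwinding of the paper's one-line base change of Theorem \ref{Thm:FixedPointsIgusa} along $\shtgcircmu \to \bungmu$; the observation that $\igs$ and $\scrs^{\diamond}$ are v-\emph{sheaves} (so $h\Gamma$ and $\Gamma$ fixed points coincide on them) is correct and is implicitly used in the paper as well.

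The place where you go beyond the paper's written proof is the generic-fiber step, and you are right to flag it. The paper cites Lemma \ref{Lem:FixedPointsGenericRamifiedHodge}, whose statement concerns only the potentially crystalline open sub-diamond $\mathbf{Sh}^{\circ,\lozenge}$ --- which is precisely the overlap $\scrs^{\diamond}_{\qp}$ in the gluing description of $\lozenge/$, not the full generic fiber $\mathbf{Sh}^{\lozenge}$. As you note, the full-generic-fiber version is what is actually required, and it does follow by the same proof: Lemma \ref{Lem:FixedPointsInfiniteLevelHodge} is already a statement on the full generic fiber at infinite level, the $\underline{K_{p,1}^{\circ}}$-torsor structure and the identification of \cite[Prop.\ 11.17]{ZhangThesis} with the quotient of $\operatorname{Gr}_{H,\mu^{-1}}$ are likewise unconditional over $\spd E$, and the shtuka map of \cite[Prop.\ 4.1.2]{PappasRapoportShtukas} (used via Proposition \ref{Prop:FunctorialityShimuraVarietiesToShtukas}) is realized at infinite level as the $\underline{K_p}$-quotient of the $B_{\mathrm{dR}}^{+}$-period map on all of $\mathbf{Sh}^{\lozenge}$, not only on $\mathbf{Sh}^{\circ,\lozenge}$. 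Your identification of this as the ``main obstacle,'' and your resolution via the de Rham / $B_{\mathrm{dR}}^{+}$-lattice construction, is correct and makes explicit what the paper's appeal to ``the definition of $\lozenge/$'' leaves implicit. So: same approach, with one technical point spelled out more carefully than in the source.
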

\begin{proof} If we basechange the isomorphism
\begin{align}
     \igs\gx \to  \igs\hyo^{h \Gamma} \times_{\bun_{H_1}^{h \Gamma}} \bungmu
\end{align}
of Theorem \ref{Thm:FixedPointsIgusa} via $\shtgcircmu \to \bungmu$, then by Theorem \ref{Thm:IgusaStack} we get the natural map 
\begin{align}
     \scrs_{K_p^{\circ,\Gamma}}\gx^{\diamond} \to \scrs_{K_{p,1}^{\circ}}\hyo^{ \Gamma, \diamond} \times_{\shthomu^{h \Gamma}} \shtgcircmu,
\end{align}
which is therefore an isomorphism of v-sheaves. If we combine this isomorphism with Lemma \ref{Lem:FixedPointsGenericRamifiedHodge} (or rather its generalization to quasi-parahoric subgroups, using $\shthomuone$ and \cite[Corollary 3.3.9]{DanielsVHKimZhangII}), then we see that the natural map
\begin{align}
     \scrs_{K_p^{\circ,\Gamma}}\gx^{\lozenge/} \to \scrs_{K_{p,1}^{\circ}}\hyo^{ \Gamma, \lozenge/} \times_{\shthomu^{h \Gamma}} \shtgcircmu,
\end{align}
is an isomorphism (this follows from the definition of $\lozenge/$, see Section \ref{Sec:DiamondFunctors}). This concludes the proof. 
\end{proof}

\appendix \section{Some \texorpdfstring{$(2,1)$}{2,1}-category theory}  \label{Appendix:HFP}
\newcommand{\Ob}{\operatorname{Ob}}
\newcommand{\Mor}{\operatorname{Mor}}
\subsection{Strict \texorpdfstring{$(2,1)$}{(2,1)}-categories and weak functors}
Recall from \cite[Section 02X8]{stacks-project} the definition of a strict $(2, 1)$-category. We will use $\Mor^1(x, y)$ to refer to the category of $1$-morphisms between $x$ and $y$ in such a category, and sometimes abusively to refer to the class of objects of this category using the same notation. We will sometimes refer to isomorphisms in the category $\Mor^1(x, y)$ as natural transformations.

\begin{Def} \label{Def:WeakFunctor}
    Let $\mathcal{C}$ be a 1-category and $\mathcal{D}$ a strict $(2, 1)$-category. We define a \emph{weak functor} $\mathcal{F}: \mathcal{C} \to \mathcal{D}$ to be a pair $(\mathcal{F}, \eta_{\mathcal{F}})$ consisting of:
    \begin{enumerate}
        \item An assignment
    \[
        \mathcal{F}: \Ob(\mathcal{C}) \to \Ob(\mathcal{D})
    \]
     and a map $\mathcal{F}: \Mor(x, y) \to \Ob(\Mor^1(\mathcal{F}(x), \mathcal{F}(y)))$.
    \item For every $x \in \Ob(\mathcal{C})$ a 2-morphism $\eta_{\mathcal{F}, x}: \text{Id}_{\mathcal{F}(x)} \to \mathcal{F}(\text{Id}_{x})$.
    \item For every composable pair $f: x \to y$, $g: y \to z$ in $\mathcal{C}$ a 2-morphism $\eta_{\mathcal{F}, f, g}: \mathcal{F}(g \circ f) \to \mathcal{F}(g) \circ \mathcal{F}(f)$. 
    \end{enumerate}
    Such that:
    \begin{enumerate}
        \item For any morphism $f: x \to y$ in $\mathcal{C}$ we have that 
        \[
        \eta_{\mathcal{F}, f, \text{Id}_y} = \eta_{\mathcal{F}, y} \circ \text{Id}_{\mathcal{F}(f)}
        \]
        and 
        \[\eta_{\mathcal{F}, \text{Id}_x, f} = \text{Id}_{\mathcal{F}(f)} \circ \eta_{\mathcal{F}, x}.
        \]
        \item For any composable triple $f: x \to y, g: y \to z, h: z \to t$ we have 
        \[
            (\text{Id}_{\mathcal{F}(h)} \circ \eta_{\mathcal{F}, f, g}) \circ \eta_{\mathcal{F}, g \circ f, h} = (\eta_{\mathcal{F}, g, h} \circ \text{Id}_{\mathcal{F}(f)}) \circ \eta_{\mathcal{F}, f, h \circ g}.
        \]
    \end{enumerate}
\end{Def}

\begin{Def} \label{Def:WeakNaturalTransformation}
    A weak natural transformation $\epsilon: \mathcal{F} \to \mathcal{G}$ of weak functors $\mathcal{F}, \mathcal{G}: \mathcal{C} \to \mathcal{D}$ is:
    \begin{enumerate}
        \item A collection for $x \in \Ob(\mathcal{C})$ of $\epsilon_x: \mathcal{F}(x) \to \mathcal{G}(x)$ of 1-morphisms in $\mathcal{D}$.
        \item For each morphism $f: x \to y$ in $\mathcal{C}$, a natural transformation $\epsilon(f):\epsilon_y \circ \mathcal{F}(f) \to \mathcal{G}(f) \circ \epsilon_x$ in the category $\Mor^1(\mathcal{F}(x), \mathcal{G}(y))$, we will denote this in diagram form by
        \[
        \begin{tikzcd}
            & \mathcal{F}(x) \arrow{r}{\mathcal{F}(f)}\arrow{d}{\epsilon_x} & \mathcal{F}(y) \arrow{d}{\epsilon_y} \arrow[Rightarrow, swap]{dl}{\epsilon(f)}\\
            & \mathcal{G}(x) \arrow{r}{\mathcal{G}(f)} & \mathcal{G}(y),
        \end{tikzcd}
        \]
    \end{enumerate}
    satisfying the following conditions:
    \begin{enumerate}
        \item We have an equality of natural transformations for every pair of morphisms $f: x \to y, g: y \to z$:
        \[
            \epsilon(g \circ f) \circ \eta_{\mathcal{F}, f, g} = \eta_{\mathcal{G}, f, g} \circ \mathcal{G}(g)_* (\epsilon(f)) \circ \mathcal{F}(f)^*(\epsilon(g)).
        \]
        \item For every object $x \in \Ob(\mathcal{C})$ we ask for commutativity of the diagram
        \[
            \begin{tikzcd}
                &&&\epsilon_x&&\\
                & \epsilon_x \circ \text{Id}_{\mathcal{F}(x)} \arrow{urr}{=}\arrow{dr}{\eta_{\mathcal{F}, x}}&&&& \text{Id}_{\mathcal{G}(x)} \circ \epsilon_x \arrow[swap]{ull}{=}\arrow[swap]{dl}{\eta_{\mathcal{G}, x}}\\
                &&\epsilon_x \circ \mathcal{F}(\text{Id}_x)  \arrow{rr}{\epsilon(\text{Id}_x)}&& \mathcal{G}(\text{Id}_x) \circ \epsilon_x&
            \end{tikzcd}
        \]
        inside of the category $\Mor(\mathcal{F}(x), \mathcal{G}(x))$. We note that because we work with strict $(2, 1)$-categories the commutativity of this diagram is well-posed.
    \end{enumerate}
\end{Def}

Let $\Gamma$ be an abstract group. We define the category $B\Gamma$ to be the classifying category of the abstract group $\Gamma$ (that is, the category with one object $\ast$ with automorphism group $\Gamma$). Suppose $\mathcal{D}$ is a strict $(2, 1)$-category. 

\begin{Def} \label{Def:GammaObject}
    A $\Gamma$-object in $\mathcal{D}$ is a weak functor $\mathcal{F}: B\Gamma \to \mathcal{D}$. A $\Gamma$-equivariant morphism is a weak natural transformation of such functors.
\end{Def}
Let $\mathcal{D}$ be a strict $(2,1)$-category and let $x$ be an object of $\mathcal{D}$. Then a weak functor $\mathcal{F}: B\Gamma \to \mathcal{D}$ sending $\ast$ to $x$ is called a \emph{weak $\Gamma$-action on $x$}.
\begin{eg} \label{eg:TrivialAction}
Let $\mathcal{D}$ be a strict $(2,1)$-category and let $x$ be an object of $\mathcal{D}$. Then the trivial $\Gamma$-action on $x$ is the following weak functor $\mathcal{F}: B\Gamma \to \mathcal{D}$: On objects it sends $\ast$ to $x$ and on morphisms it sends all morphisms to the identity $x \to x$. Moreover the $2$-morphisms $\eta_{\mathcal{F},x}$ and $\eta_{\mathcal{F},f,g}$ are all taken to be the identity $2$-morphism.
\end{eg}
\begin{eg} \label{eg:StrongAction}
Let $\mathcal{D}$ be a strict $(2,1)$-category and let $x$ be an object of $\mathcal{D}$ such that $\Mor^1(x,x)$ is a discrete category.\footnote{A discrete category is a category where the only morphisms are the identity morphisms.} Then $\Ob(\Mor^1(x,x))$ is a group under composition of morphisms and a weak $\Gamma$-action on $x$ is the same as a group homomorphism $\Gamma \to \Ob(\Mor^1(x,x))$.
\end{eg}

\subsubsection{} Recall \cite[Tag 003R]{stacks-project} that fiber products in the $(2, 1)$-category of categories have the following description. Let $\mathcal{A}, \mathcal{B}, \mathcal{C}$ be $(2, 1)$-categories and let $\mathcal{F}: \mathcal{A} \to \mathcal{C}, \mathcal{G}: \mathcal{B} \to \mathcal{C}$ be functors. Then there is the following canonical presentation of the fiber product:

\begin{Def} \label{Def:2FiberProduct}
    The fiber product $\mathcal{A} \times_{\mathcal{C}} \mathcal{B}$ identifies with the strict $(2,1)$-category whose objects consist of triples $(a, b, f)$ where $a \in \Ob(\mathcal{A}), b \in \Ob(\mathcal{B})$ and $f: \mathcal{F}(a) \to \mathcal{G}(b)$ is an isomorphism in $\mathcal{C}$. The morphisms $\phi: (a, b, f) \to (c, d, g)$ consist of pairs $(X, Y)$ where $X: a \to c, Y: b \to d$ are such that the diagram
    \[
        \begin{tikzcd}
            & \mathcal{F}(a)\arrow{r}{f}\arrow{d}{X} & \mathcal{G}(b) \arrow{d}{Y}\\
            & \mathcal{F}(c) \arrow{r}{g}& \mathcal{G}(d)
        \end{tikzcd}
    \]
    commutes. 
\end{Def}

\begin{Def} \label{Def:HomotopyFixedPoints}
    Let $X$ be a $\Gamma$-object in the strict $(2, 1)$-category of categories, defined a weak functor $\mathcal{F}$. Then we define the $\Gamma$-\emph{homotopy fixed points} $X^{h\Gamma}$ of $X$ to be the following category: The objects of $X^{h\Gamma}$ are tuples $(x, \{\tau_{\gamma}\}_{\gamma \in \Gamma})$, where $x \in \Ob(X)$ is an object and $\tau_{\gamma}: x \to \mathcal{F}(\gamma)(x)$ for each $\gamma \in \Gamma$ is an isomorphism such that for all $\gamma, \gamma'$ the diagram
    \[
       \begin{tikzcd} 
            &x \arrow{r}{\tau_{\gamma'}} \arrow[swap]{d}{\tau_{\gamma'\gamma}} &\mathcal{F}(\gamma')(x) \arrow{d}{\mathcal{F}(\gamma')(\tau_{\gamma})} \\
            &\mathcal{F}(\gamma' \gamma)(x)\arrow{r}{\eta_{\mathcal{F}, \gamma, \gamma'}} & \mathcal{F}(\gamma')(\mathcal{F}(\gamma)(x)) 
        \end{tikzcd}
    \]
    is commutative. A morphism $f:(x, \tau) \to (x', \tau')$ of such objects is a morphism $f:x \to x'$ such that the following diagram commutes for all $\gamma \in \Gamma$
    \begin{equation}
    \begin{tikzcd}
        x \arrow{r}{f} \arrow{d}{\tau_{\gamma}} & x' \arrow{d}{\tau'_{\gamma}} \\
        \mathcal{F}(\gamma)(x) \arrow{r}{\mathcal{F}(f)} & \mathcal{F}(\gamma)(x').
    \end{tikzcd}
    \end{equation}
\end{Def}
\begin{Lem} \label{Lem:FunctorialityHomotopyFixedPoints}
A $\Gamma$-equivariant morphism $\alpha:X \to Y$ of $\Gamma$-objects in the strict $(2, 1)$-category of categories defines a natural functor 
\begin{align}
    \alpha^{h \Gamma}:X^{h \Gamma} \to Y^{h \Gamma}.
\end{align}
Furthermore, this functor is an equivalence if $\alpha$ is an equivalence.
\end{Lem}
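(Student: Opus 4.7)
The plan is to construct $\alpha^{h\Gamma}$ explicitly on objects and morphisms using the data of the weak natural transformation $\alpha$, verify the cocycle condition in Definition \ref{Def:HomotopyFixedPoints}, and then address the equivalence claim. Let $\mathcal{F}, \mathcal{G} : B\Gamma \to \mathrm{Cat}$ denote the weak functors defining $X$ and $Y$, and let $\alpha(\gamma) : \alpha \circ \mathcal{F}(\gamma) \to \mathcal{G}(\gamma) \circ \alpha$ denote the $2$-isomorphism coming from the weak natural transformation structure. On objects $(x, \{\tau_\gamma\}_{\gamma\in\Gamma}) \in X^{h\Gamma}$, I will set $\alpha^{h\Gamma}(x, \{\tau_\gamma\}) := (\alpha(x), \{\sigma_\gamma\})$, where
\[
\sigma_\gamma \; := \; \alpha(\gamma)_x \circ \alpha(\tau_\gamma) \; : \; \alpha(x) \longrightarrow \alpha(\mathcal{F}(\gamma)(x)) \longrightarrow \mathcal{G}(\gamma)(\alpha(x)).
\]
On morphisms $f : (x, \tau) \to (x', \tau')$ I set $\alpha^{h\Gamma}(f) := \alpha(f)$; compatibility with the $\sigma_\gamma$ follows from applying $\alpha$ to the defining square for $f$ and then pasting with the naturality square for $\alpha(\gamma)$ against $f$.

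The main verification is that $\{\sigma_\gamma\}$ satisfies the cocycle axiom $\mathcal{G}(\gamma')(\sigma_\gamma)\circ \sigma_{\gamma'} = \eta_{\mathcal{G},\gamma,\gamma'} \circ \sigma_{\gamma'\gamma}$. Expanding both sides using the definition of $\sigma$ and inserting the naturality square of $\alpha(\gamma')$ applied to $\tau_\gamma : x \to \mathcal{F}(\gamma)(x)$, the identity reduces to comparing $\alpha(\gamma'\gamma)_x \circ \alpha(\tau_{\gamma'\gamma})$ with a pasting of $\alpha(\gamma')$, $\mathcal{F}(\gamma')(\alpha(\gamma))$, the cocycle identity for $\tau$ in $X^{h\Gamma}$, and the coherence data $\eta_{\mathcal{F}, \gamma, \gamma'}$, $\eta_{\mathcal{G}, \gamma, \gamma'}$. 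This is exactly the content of compatibility axiom (1) of Definition \ref{Def:WeakNaturalTransformation} evaluated at $x$, together with the cocycle identity for $\{\tau_\gamma\}$. Functoriality of $\alpha^{h\Gamma}$ (preservation of identities and composition) is then immediate from functoriality of $\alpha$ on the underlying category.

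For the second assertion, the cleanest interpretation is that $\alpha$ is an equivalence in the strict $(2,1)$-category of $\Gamma$-objects, i.e.\ admits a $\Gamma$-equivariant quasi-inverse $\beta : Y \to X$ together with invertible $2$-morphisms $\beta\alpha \simeq \mathrm{id}_X$ and $\alpha\beta \simeq \mathrm{id}_Y$ in the category of weak natural transformations. The construction above is strictly $2$-functorial in $\alpha$ (a weak natural transformation of weak natural transformations induces a natural isomorphism of the induced functors on homotopy fixed points), so applying the construction to $\beta$ and to these unit/counit data yields a quasi-inverse $\beta^{h\Gamma}$ together with natural isomorphisms $\beta^{h\Gamma} \alpha^{h\Gamma} \simeq \mathrm{id}_{X^{h\Gamma}}$ and $\alpha^{h\Gamma}\beta^{h\Gamma} \simeq \mathrm{id}_{Y^{h\Gamma}}$. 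If instead one only assumes that the underlying functor of $\alpha$ is an equivalence of categories, one first promotes any underlying quasi-inverse to a $\Gamma$-equivariant quasi-inverse by transporting the structure maps $\alpha(\gamma)$ across the adjoint equivalence; this is a standard coherence argument for pseudo-natural transformations between pseudo-functors.

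The main obstacle will be the cocycle verification in the second paragraph: it is a purely combinatorial pasting diagram chase, but one has to carefully track the directions of the various $2$-isomorphisms (in particular the conventions $\eta_{\mathcal{F},\gamma,\gamma'} : \mathcal{F}(\gamma'\gamma) \to \mathcal{F}(\gamma')\circ \mathcal{F}(\gamma)$ and $\alpha(\gamma) : \alpha\circ \mathcal{F}(\gamma) \to \mathcal{G}(\gamma)\circ \alpha$) to land precisely on the compatibility axiom (1) of Definition \ref{Def:WeakNaturalTransformation}. The rest of the argument -- functoriality, and deducing the equivalence statement from the $2$-functoriality of the construction -- is formal.
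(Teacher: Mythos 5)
Your construction of $\alpha^{h\Gamma}$ on objects and morphisms is the same as the paper's: send $(x,\{\tau_\gamma\})$ to $(\alpha(x), \{\epsilon(\gamma)_x \circ \alpha(\tau_\gamma)\})$ and $f \mapsto \alpha(f)$, with the cocycle condition coming from axiom (1) of Definition \ref{Def:WeakNaturalTransformation} plus the cocycle identity for $\{\tau_\gamma\}$; the paper states this more tersely but the argument is identical. Where you diverge is the equivalence claim. The paper proves it directly and elementarily: it shows fully faithfulness of $\alpha$ implies fully faithfulness of $\alpha^{h\Gamma}$ (immediate, since the forgetful map $X^{h\Gamma}\to X$ is faithful and morphisms in the homotopy fixed points are just morphisms in $X$ satisfying a condition), and then proves essential surjectivity directly: given $(y,\{\kappa_\gamma\})\in Y^{h\Gamma}$, pick $x$ with $\xi\colon\alpha(x)\xrightarrow{\sim}y$, transport each $\kappa_\gamma$ through $\mathcal{G}(\gamma)(\xi)\circ\epsilon(\gamma)_x$ and use fully faithfulness to lift to a unique $\tau_\gamma\colon x\to\mathcal{F}(\gamma)(x)$, then use the uniqueness of these lifts to verify the cocycle condition for $\{\tau_\gamma\}$. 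Your route instead builds a quasi-inverse $\beta^{h\Gamma}$ by appealing to $2$-functoriality of $(-)^{h\Gamma}$ (it should send modifications of weak natural transformations to natural transformations of induced functors) and to the coherence fact that a quasi-inverse of the underlying functor can be promoted to a $\Gamma$-equivariant one. Both routes work, but yours requires two pieces of machinery the paper does not develop (modifications, $2$-functoriality of $(-)^{h\Gamma}$, and the promotion of a quasi-inverse to an equivariant one), each of which is plausibly another lemma on its own. The paper's direct argument is shorter and self-contained, which is presumably why it is chosen here; your version is more conceptual and would pay off if one needed $(-)^{h\Gamma}$ to be a genuine $2$-functor elsewhere. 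If you keep your approach, you should at least spell out the transport-of-structure step for the equivariant quasi-inverse rather than deferring it as ``standard,'' since it is the crux of the second claim.
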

\begin{proof}
Let $\mathcal{CAT}$ be the strict $(2,1)$-category of categories and suppose that $X$ and $Y$ are given by weak functors $\mathcal{F}, \mathcal{G}:B\Gamma \to \mathcal{CAT}$. Then the morphism $\alpha$ is precisely a weak natural transformation $\epsilon:\mathcal{F} \to \mathcal{G}$. In particular, for each $\gamma \in \Gamma$ we have a natural transformation
\begin{equation}
    \begin{tikzcd}
            & X \arrow{r}{\mathcal{F}(\gamma)}\arrow[d, swap, "\alpha"] & X \arrow{d}{\alpha} \arrow[Rightarrow, swap]{dl}{\epsilon(f)}\\
            & Y \arrow[r,swap,"\mathcal{G}(\gamma)"] & Y.
        \end{tikzcd}
\end{equation}
Given a tuple $(x, \{\tau_{\gamma}\}_{\gamma \in \Gamma}) \in X^{h \Gamma}$ we would like to define its image under $\alpha^{h \Gamma}$ to be the tuple $(\alpha(x), \{\epsilon_{\gamma} \circ \alpha(\tau_{\gamma})\}_{\gamma \in \Gamma})$. To check that this tuple defines an object of $Y^{h \Gamma}$, we need to check that for $\gamma, \gamma' \in \Gamma$ the diagram
\begin{equation}
    \begin{tikzcd}
        \alpha(x) \arrow{r}{\epsilon_{\gamma'} \circ \alpha(\tau_{\gamma'})} \arrow{d}{\epsilon_{\gamma'\gamma} \circ \alpha(\tau_{\gamma'\gamma})} & \mathcal{G}(\gamma')(\alpha(x)) \arrow{d}{\mathcal{G}(\gamma')(\epsilon_{\gamma} \circ \alpha(\tau_{\gamma}))} \\
        \mathcal{G}(\gamma'\gamma)(\alpha(x)) \arrow{r}{\eta_{\mathcal{G}, \gamma, \gamma'}} & \mathcal{G}(\gamma')(\mathcal{G}(\gamma)(\alpha(x)) 
    \end{tikzcd}
\end{equation}
commutes. But this is a direct consequence of the fact that $\epsilon$ is a weak natural transformation. Thus we can define $\alpha^{h \Gamma}$ on the level of objects by sending 
$(x, \{\tau_{\gamma}\}_{\gamma \in \Gamma}) \in X^{h \Gamma}$ to the tuple $(\alpha(x), \{\epsilon_{\gamma} \circ \alpha(\tau_{\gamma})\}_{\gamma \in \Gamma}) \in Y^{h \Gamma}$. 

Given a morphism $f:(x, \{\tau_{\gamma}\}_{\gamma \in \Gamma}) \to (x', \{\tau'_{\gamma}\}_{\gamma \in \Gamma})$ in $X^{h \Gamma}$, one can check that
\begin{align}
    \alpha(f):\alpha(x) \to \alpha(x')
\end{align}
is a morphism in $Y^{h \Gamma}$. The association $f \mapsto \alpha(f)$ is compatible with compositions, since $\alpha$ is a functor from $X$ to $Y$. Thus we have constructed the desired functor $\alpha^{h\Gamma}$.

If $\alpha$ is fully faithful, then it is immediate that $\alpha^{h \Gamma}$ is also fully faithful. If $\alpha$ is essentially surjective and fully faithful, then $\alpha^{h \Gamma}$ is also essentially surjective. Indeed, given $(y, \{\kappa_{\gamma}\}_{\gamma \in \Gamma}) \in Y^{h \Gamma}$ we may choose an object $x \in X$ and an isomorphism $\xi:\alpha(x) \to y$. For each $\gamma \in \Gamma$ there is an induced isomorphism
\begin{align}
    \alpha(\mathcal{F}(\gamma)) \to \mathcal{G}(\gamma)(y)
\end{align}
given by $\mathcal{G}(\gamma)(\xi) \circ \epsilon_{\gamma}$. Using the fully faithfulness, the isomorphism $\kappa_{\gamma}:y \to \mathcal{G}(\gamma)(y)$ induces a unique isomorphism $\tau_{\gamma}:x \to \alpha(x)$. Using this uniqueness, it is not hard to check that the tuple $(x, \{\tau_{\gamma}\}_{\gamma \in \Gamma})$ defines an object in $X^{h \Gamma}$. Its image under $\alpha^{h \Gamma}$ is moreover isomorphic to $(y, \{\kappa_{\gamma}\}_{\gamma \in \Gamma})$ by construction.
\end{proof}
\begin{Lem} \label{Lem:FiberProductsOfFunctors}
    Let $\mathcal{F}, \mathcal{G}, \mathcal{H}: \mathcal{C} \to \mathcal{CAT}$ be weak functors, where $\mathcal{C}$ is a 1-category and $\mathcal{CAT}$ is the strict $(2,1)$-category of categories. Let $\delta: \mathcal{F} \to \mathcal{H}, \epsilon: \mathcal{G} \to \mathcal{H}$ be weak natural transformations. Then there exists a canonical weak functor $\mathcal{F} \times_{\mathcal{H}} \mathcal{G}: \mathcal{C} \to \mathcal{CAT}$ such that $(\mathcal{F} \times_{\mathcal{H}} \mathcal{G})(x) = \mathcal{F}(x) \times_{\mathcal{H}(x)} \mathcal{G}(x)$ for all $x \in \Ob(\mathcal{C})$. 
\end{Lem}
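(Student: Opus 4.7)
The plan is to construct $\mathcal{F}\times_{\mathcal{H}}\mathcal{G}$ explicitly, using the pointwise fiber product of categories from Definition \ref{Def:2FiberProduct} together with the weak natural transformation data of $\delta$ and $\epsilon$ to produce the functorial structure on $\mathcal{C}$. On objects I would set $(\mathcal{F}\times_{\mathcal{H}}\mathcal{G})(x):=\mathcal{F}(x)\times_{\mathcal{H}(x)}\mathcal{G}(x)$, whose objects are triples $(a,b,f)$ with $a\in\mathcal{F}(x)$, $b\in\mathcal{G}(x)$ and $f:\delta_x(a)\xrightarrow{\sim}\epsilon_x(b)$. Given a morphism $g:x\to y$ in $\mathcal{C}$, I would define the induced functor by sending $(a,b,f)$ to
\[
  \bigl(\mathcal{F}(g)(a),\ \mathcal{G}(g)(b),\ \epsilon(g)_b^{-1}\circ \mathcal{H}(g)(f)\circ \delta(g)_a\bigr),
\]
where $\delta(g)_a:\delta_y(\mathcal{F}(g)(a))\xrightarrow{\sim}\mathcal{H}(g)(\delta_x(a))$ and $\epsilon(g)_b:\epsilon_y(\mathcal{G}(g)(b))\xrightarrow{\sim}\mathcal{H}(g)(\epsilon_x(b))$ are the 2-isomorphisms supplied by the weak natural transformation data (using crucially that we are in a $(2,1)$-category so these are invertible). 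On a morphism $(X,Y):(a,b,f)\to(c,d,f')$ in the fiber product, I would send it to $(\mathcal{F}(g)(X),\mathcal{G}(g)(Y))$; compatibility with the new gluing isomorphism follows from the naturality of $\delta(g)$ and $\epsilon(g)$ in their arguments.

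Next I would define the coherence $2$-isomorphisms $\eta_{\mathcal{F}\times_{\mathcal{H}}\mathcal{G},x}$ and $\eta_{\mathcal{F}\times_{\mathcal{H}}\mathcal{G},g,h}$ componentwise. On the $\mathcal{F}$-component and the $\mathcal{G}$-component they are forced to be $\eta_{\mathcal{F},\bullet}$ and $\eta_{\mathcal{G},\bullet}$ respectively, which is visibly a morphism in the strict $(2,1)$-category of categories once one verifies it is compatible with the triple's gluing isomorphism. That compatibility is exactly condition (1) in Definition \ref{Def:WeakNaturalTransformation} applied to $\delta$ and to $\epsilon$ (together with the coherence axioms of $\mathcal{F},\mathcal{G},\mathcal{H}$): indeed, $\eta_{\mathcal{H},g,h}$ in the middle lets one rewrite $\mathcal{H}(hg)(f)$ as $\mathcal{H}(h)(\mathcal{H}(g)(f))$, while the cocycle identity for $\delta(hg)$ versus $\delta(h)\ast\delta(g)$ (and likewise for $\epsilon$) guarantees the two gluing isomorphisms agree.

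Finally I would verify the two axioms of Definition \ref{Def:WeakFunctor} for the data just constructed. This reduces, componentwise, to the axioms already known for $\mathcal{F}$, $\mathcal{G}$, $\mathcal{H}$ together with axiom (1) of Definition \ref{Def:WeakNaturalTransformation} for $\delta$ and $\epsilon$; the unit axioms follow from axiom (2) of the same definition. The main obstacle, such as it is, is purely bookkeeping: all the relevant 2-cells are invertible and uniquely determined, so the content of the proof is a careful diagram chase in the category $\mathcal{H}(y)$ checking that the cocycle of gluing isomorphisms assembled from $\delta(hg)$, $\epsilon(hg)$ and $\eta_{\mathcal{H},g,h}$ matches the one obtained by stacking $\delta(g),\delta(h),\epsilon(g),\epsilon(h)$. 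No obstruction arises because the defining relations of a weak natural transformation were designed for exactly this compatibility; the construction is canonical in the sense that any two choices produce uniquely 2-isomorphic weak functors.
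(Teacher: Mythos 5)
Your construction is exactly the one the paper uses: the same pointwise fiber product on objects, the same transport isomorphism $\epsilon(g)^{-1}\circ\mathcal{H}(g)(f)\circ\delta(g)$ on triples, and the same componentwise coherence cells $(\eta_{\mathcal{F}},\eta_{\mathcal{G}})$ verified by axiom (1) of the weak natural transformation condition for $\delta$ and $\epsilon$. You leave the two diagram chases (Steps 3--4 in the paper) as asserted rather than displayed, but you correctly identify exactly which identities make them close, so the proof is essentially the paper's.
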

\begin{proof}
\textbf{Step 1: Objects.} We define $\mathcal{P}:=(\mathcal{F} \times_{\mathcal{H}} \mathcal{G})$ on objects by sending $x \mapsto \mathcal{F}(x) \times_{\mathcal{H}(x)} \mathcal{G}(x)$. For $x,y \in \mathcal{C}$ we define
\begin{align}
    \mathcal{P}:\operatorname{Mor}_{\mathcal{C}}(x,y) \to \operatorname{Ob}\left(\operatorname{Mor}(\mathcal{F}(x) \times_{\mathcal{H}(x)} \mathcal{G}(x), \mathcal{F}(y) \times_{\mathcal{H}(y)} \mathcal{G}(y)\right)
\end{align}
to be the assignment taking $g:x \to y$ to the functor
\begin{align}
    \mathcal{F}(g) \times_{\mathcal{H}(g)} \mathcal{G}(g): \mathcal{F}(x) \times_{\mathcal{H}(x)} \mathcal{G}(x) \to \mathcal{F}(y) \times_{\mathcal{H}(y)} \mathcal{G}(y)
\end{align}
taking $(a,b,f) \in \mathcal{F}(x) \times_{\mathcal{H}(x)} \mathcal{G}(x)$ to the triple $(\mathcal{F}(g)(a), \mathcal{G}(g)(b), \mathcal{P}(g)(f))$, where $\mathcal{P}(g)(f))$ is defined as
\begin{equation}
    \begin{tikzcd}
        \delta_y(\mathcal{F}(g)(a)) \arrow[drr, "\mathcal{P}(g)(f))"] \arrow{d}{\delta(g)} \\
        \mathcal{H}(g)(\delta_x(a)) \arrow{r}{\mathcal{H}(g)(f)} & \mathcal{H}(g)(\delta_x(b)) \arrow[r, swap, "\epsilon(g)^{-1}"] & \epsilon_y(\mathcal{G}(g)(b)).
    \end{tikzcd}
\end{equation}
\textbf{Step 2: Morphisms.} A morphism $p:(a,b,f) \to (a',b',f')$ is sent to the morphism
\begin{align}
    (\mathcal{F}(g)(p), \mathcal{G}(g)(p))
\end{align}
which one checks is a morphism in $\mathcal{F}(y) \times_{\mathcal{H}(y)} \mathcal{G}(y)$. To check that $\mathcal{F}(g) \times_{\mathcal{H}(g)} \mathcal{G}(g)$ is a functor, one uses the fact that $\delta$ and $\epsilon$ are weak natural transformations (and thus have compatibility with composition). \smallskip

\textbf{Step 3: Identity natural transformations.} Next, for each $x \in \Ob(\mathcal{C})$ we define a natural transformation $\eta_x: \text{Id}_{\mathcal{P}(x)} \to (\mathcal{P})(\text{Id}_x)$ given by the morphism
\begin{align}
    \eta_{x,(a,b,f)}: (a,b,f) \to (\mathcal{F}(\operatorname{Id}_x)(a), \mathcal{G}(\operatorname{Id}_x)(b), \mathcal{P}(\operatorname{Id}_x)(f))
\end{align}
described by $(\eta_{\mathcal{F},x,a}, \eta_{\mathcal{G},x,a})$. Naturality follows from the naturality of $\eta_{\mathcal{F}}$ and $\eta_{\mathcal{G}}$, and we must simply check that the morphisms $(\eta_{\mathcal{F}, x,a}, \eta_{\mathcal{G}, x,b})$ are morphisms in the category $\mathcal{F}(x) \times_{\mathcal{H}(x)} \mathcal{G}(x)$. We recall that this comes down to showing that the top square in the following diagram is commutative for every $(a, b, f) \in \Ob(\mathcal{F}(x) \times_{\mathcal{H}(x)} \mathcal{G}(x))$
    \begin{equation} \label{Eq:IdentityNaturalTransformationFiberProduct}
        \begin{tikzcd}
            &\delta_x(a) \arrow{r}{f}\arrow{d}{\delta_x(\eta_{\mathcal{F}, x,a})} & \epsilon_x(b) \arrow{d}{\epsilon_x(\eta_{\mathcal{G}, x,b})}\\
            &\delta_x(\mathcal{F}(\text{Id}_x)(a)) \arrow{d}{\delta(\text{Id}_x)} \arrow{r}{\mathcal{P}(\operatorname{Id}_x)(f)} & \epsilon_x(\mathcal{G}(\text{Id}_x)(b)) \arrow{d}{\epsilon(\text{Id}_x)} \\
            &\mathcal{H}(\text{Id}_x)(\delta_x(a)) \arrow{r}{\mathcal{H}(\text{Id}_x)(f)} &\mathcal{H}(\text{Id}_x)(\epsilon_x(b)).
        \end{tikzcd}
    \end{equation}
But we know that the bottom square is commutative by the definition of $\mathcal{P}(\operatorname{Id}_x)(f)$. Moreover, because $\delta$ is a weak natural transformation from $\mathcal{F}$ to $\mathcal{H}$, we know that $\delta(\text{Id}_x) \circ \delta_x(\eta_{\mathcal{F}, x,a}) = \eta_{\mathcal{H}, x} \circ \delta_x$ as morphisms between $\delta_x(a)$ and $\mathcal{H}(\text{Id}_x)(\delta_x(a))$ in the category $\mathcal{H}(x)$. In addition $\epsilon(\text{Id}_x) \circ \epsilon_x(\eta_{\mathcal{G}, x,b}) = \eta_{\mathcal{H}, x,b} \circ \epsilon_x$ as morphisms from $\epsilon_x(b)$ to $\mathcal{H}(\text{Id}_x)(\epsilon_x(b))$. The commutativity of the outer square of \eqref{Eq:IdentityNaturalTransformationFiberProduct} follows from these last two observations. Since we also have commutativity of the bottom diagram, the commutativity of the top diagram follows since the vertical arrows are all isomorphisms. \smallskip

\textbf{Step 4: Natural transformations for compositions of morphisms.} Finally, for a pair of morphisms $\gamma':x \to y$ and $\gamma: y \to z \in \mathcal{C}$ we define a natural transformation $\eta_{\mathcal{P}, \gamma', \gamma}:\mathcal{P}(\gamma' \circ \gamma) \to \mathcal{P}(\gamma') \circ \mathcal{P}(\gamma)$ of functors $\mathcal{P}(x) \to \mathcal{P}(z)$ which is defined on objects $(a,b,f) \in \mathcal{P}(x)$ by 
    \[
        (\eta_{\mathcal{F}, \gamma, \gamma'}(a) , \eta_{\mathcal{G}, \gamma, \gamma'}(b)).
    \]
Once again, the naturality is straightforward if we can check that this is an isomorphism in $\mathcal{P}(z)$. For this, we consider the following diagram of isomorphisms in $\mathcal{H}(z)$
\begin{equation}
    \begin{tikzcd}
        \mathcal{H}(\gamma' \circ \gamma)(\delta_x(a)) \arrow{rr}{\mathcal{H}(\gamma' \circ \gamma)(f)}  & & \mathcal{H}(\gamma' \circ \gamma)(\epsilon_x(a)) \arrow{d}{\epsilon(\gamma' \gamma)^{-1})}\\
        \delta_z(\mathcal{F}(\gamma' \circ \gamma)(a)) \arrow{u}{\delta(\gamma' \gamma)} \arrow[d,"\delta_z(\eta_{\mathcal{F}, \gamma, \gamma'}(a))", swap]  \arrow{rr}{\mathcal{P}(\gamma' \circ \gamma)(f)}  & & \epsilon_z(\mathcal{G}(\gamma' \circ \gamma)(b)) \arrow[d, "\epsilon_z(\eta_{\mathcal{G}, \gamma, \gamma'}(b))"]  \\
        \delta_z(\mathcal{F}(\gamma')(\mathcal{F}(\gamma)(a))) \arrow{d}{\delta(\gamma')} \arrow{rr}{\mathcal{P}(\gamma')(\mathcal{P}(\gamma)(f))} & & \epsilon_z(\mathcal{G}(\gamma')(\mathcal{G}(\gamma)(b)))   \\
        \mathcal{H}(\gamma')(\delta_y(\mathcal{F}(\gamma)(a))) \arrow{rr}{\mathcal{H}(\gamma')(\mathcal{P}(\gamma)(f))} \arrow{d}{\mathcal{H}(\gamma')(\delta(\gamma))} & & \mathcal{H}(\gamma')(\epsilon_y(\mathcal{G}(\gamma)(a))) \arrow{u}{\epsilon(\gamma')^{-1}} \\
        \mathcal{H}(\gamma')(\mathcal{H}(\gamma)(\delta_x(a))) \arrow{rr}{\mathcal{H}(\gamma')(\mathcal{H}(\gamma)(f))} & &\mathcal{H}(\gamma')(\mathcal{H}(\gamma)(\epsilon_x(b))) \arrow{u}{(\mathcal{H}(\gamma')(\epsilon(\gamma))^{-1}}.
    \end{tikzcd}
\end{equation}
We are asked to show that the second square from the top commutes, and we see that this reduces to checking commutativity of the outer square in the diagram. But this again follows from the fact that $\delta$ and $\epsilon$ are weak natural transformations.
\smallskip 

\textbf{Step 5: End of the proof.} We have now specified the data required to define a weak functor $\mathcal{C} \to \mathcal{CAT}$. To check that this is a weak functor, we need to check certain identities of natural transformations hold, see Definition \ref{Def:WeakFunctor}. Since all natural transformations $\eta_{\mathcal{P}}$ are defined as pairs $(\eta_{\mathcal{F}}, \eta_{\mathcal{G}})$, the identities for $\eta_{\mathcal{P}}$ follow from those for $\eta_{\mathcal{F}}$ and $\eta_{\mathcal{G}}$.
\end{proof}
We have the following key lemma. Let $\mathcal{F}, \mathcal{G}, \mathcal{H}:B \Gamma \to \mathcal{CAT}$ be weak functors and let $\delta: \mathcal{F} \to \mathcal{H}, \epsilon: \mathcal{G} \to \mathcal{H}$ be weak natural transformations. Recall the fiber product weak functor $\mathcal{F} \times_{\mathcal{H}} \mathcal{G}$ from Lemma \ref{Lem:FiberProductsOfFunctors}. 
\begin{Lem}\label{Lem:FiberProducts}
There is an equivalence of categories
    \[
        \beta:\mathcal{F}^{h\Gamma}(\ast) \times_{\mathcal{H}^{h\Gamma}(\ast)} \mathcal{G}^{h\Gamma}(\ast) \to (\mathcal{F} \times_{\mathcal{H}} \mathcal{G})(\ast)^{h \Gamma}.
    \]
\end{Lem}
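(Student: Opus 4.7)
The plan is to exhibit $\beta$ by matching the data on both sides piece-by-piece, and then construct an explicit inverse. An object of the left-hand side consists of a triple $\bigl((a,\{\tau^{\mathcal{F}}_\gamma\}),\,(b,\{\tau^{\mathcal{G}}_\gamma\}),\,f\bigr)$, where $(a,\{\tau^{\mathcal{F}}_\gamma\}) \in \mathcal{F}^{h\Gamma}(\ast)$, $(b,\{\tau^{\mathcal{G}}_\gamma\}) \in \mathcal{G}^{h\Gamma}(\ast)$, and $f : \delta_\ast(a) \to \epsilon_\ast(b)$ is an isomorphism in $\mathcal{H}(\ast)$ that is a morphism in $\mathcal{H}^{h\Gamma}(\ast)$ (equivalently, $f$ intertwines the transported cocycles $\delta^{h\Gamma}(\{\tau^{\mathcal{F}}_\gamma\})$ and $\epsilon^{h\Gamma}(\{\tau^{\mathcal{G}}_\gamma\})$). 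I would send this datum to the pair $\bigl((a,b,f),\{\sigma_\gamma\}_{\gamma\in\Gamma}\bigr)$, where $\sigma_\gamma := (\tau^{\mathcal{F}}_\gamma, \tau^{\mathcal{G}}_\gamma)$. The verification that each $\sigma_\gamma$ is a morphism in $\mathcal{F}(\ast)\times_{\mathcal{H}(\ast)}\mathcal{G}(\ast)$ from $(a,b,f)$ to $(\mathcal{F}\times_{\mathcal{H}}\mathcal{G})(\gamma)(a,b,f)$ amounts precisely to the commutativity of the square witnessing that $f$ is a morphism in $\mathcal{H}^{h\Gamma}(\ast)$, combined with the construction of $(\mathcal{F}\times_{\mathcal{H}}\mathcal{G})(\gamma)$ from Lemma \ref{Lem:FiberProductsOfFunctors}.

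Next I would check the cocycle condition for $\{\sigma_\gamma\}$. Using the construction in Lemma \ref{Lem:FiberProductsOfFunctors}, the natural transformation $\eta_{\mathcal{F}\times_{\mathcal{H}}\mathcal{G},\,\gamma,\gamma'}$ is given component-wise by $(\eta_{\mathcal{F},\gamma,\gamma'},\eta_{\mathcal{G},\gamma,\gamma'})$, so the cocycle diagram for $\sigma$ in Definition \ref{Def:HomotopyFixedPoints} decomposes into the cocycle diagrams for $\tau^{\mathcal{F}}$ and $\tau^{\mathcal{G}}$, both of which hold by hypothesis. On morphisms, a morphism in the source is a pair of morphisms $(u,v)$ with $u: a \to a'$ in $\mathcal{F}^{h\Gamma}(\ast)$, $v: b \to b'$ in $\mathcal{G}^{h\Gamma}(\ast)$, compatible with $f,f'$; I would send this to the induced morphism $(u,v): (a,b,f) \to (a',b',f')$, which is automatically a morphism in $(\mathcal{F}\times_{\mathcal{H}}\mathcal{G})(\ast)^{h\Gamma}$ because the required squares for each $\gamma$ reduce to the already-known compatibility squares for $u$ and $v$ separately.

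For the inverse, given $\bigl((a,b,f),\{\sigma_\gamma\}\bigr)$ in $(\mathcal{F}\times_{\mathcal{H}}\mathcal{G})(\ast)^{h\Gamma}$, write $\sigma_\gamma = (\tau^{\mathcal{F}}_\gamma, \tau^{\mathcal{G}}_\gamma)$; then the two projected cocycle conditions give $(a,\{\tau^{\mathcal{F}}_\gamma\}) \in \mathcal{F}^{h\Gamma}(\ast)$ and $(b,\{\tau^{\mathcal{G}}_\gamma\}) \in \mathcal{G}^{h\Gamma}(\ast)$, while the condition that $\sigma_\gamma$ is a morphism in the fiber product says exactly that $f$ is a morphism in $\mathcal{H}^{h\Gamma}(\ast)$ between the transported objects. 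This produces the triple $\bigl((a,\{\tau^{\mathcal{F}}_\gamma\}),(b,\{\tau^{\mathcal{G}}_\gamma\}),f\bigr)$, and extending to morphisms in the evident way gives a functor that is strictly (not just up to isomorphism) inverse to $\beta$.

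The only non-routine point is bookkeeping: one must keep straight the various coherences $\eta_{\mathcal{F},\gamma,\gamma'}$, $\eta_{\mathcal{G},\gamma,\gamma'}$, $\eta_{\mathcal{H},\gamma,\gamma'}$ and the components $\delta(\gamma)$, $\epsilon(\gamma)$ of the weak natural transformations when comparing the two sides. However, because the construction of $\mathcal{F}\times_{\mathcal{H}}\mathcal{G}$ in Lemma \ref{Lem:FiberProductsOfFunctors} was designed so that these coherences assemble componentwise from those of $\mathcal{F}$ and $\mathcal{G}$, all the required diagrams commute essentially tautologically once decomposed into their $\mathcal{F}$- and $\mathcal{G}$-components, with the $\mathcal{H}$-components entering only through the condition that $f$ be a morphism of homotopy-fixed-point objects. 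Thus the main obstacle is purely organizational rather than conceptual, and the argument terminates quickly once the correspondence of data is written down.
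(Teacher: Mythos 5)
Your construction of $\beta$ and the componentwise decomposition of the cocycle data match the paper's proof exactly, including the key observation that the coherence data of $\mathcal{F}\times_{\mathcal{H}}\mathcal{G}$ from Lemma \ref{Lem:FiberProductsOfFunctors} split into $\mathcal{F}$- and $\mathcal{G}$-components while the $\mathcal{H}$-data enter only through the compatibility of $f$. The only cosmetic difference is that you build a strict inverse directly, whereas the paper checks full faithfulness and then essential surjectivity — but its essential-surjectivity step (Step~4) already exhibits a strict preimage, so the two arguments are the same in substance.
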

\begin{proof} Let us write $\mathcal{P}=\mathcal{F} \times_{\mathcal{H}} \mathcal{G}$ for simplicity. 

\textbf{Step 1: Defining the functor on objects.} The functor $\beta$ has the following construction on objects: Let $(a,\{\tau_{ \gamma}\}_{\gamma \in \Gamma})$ be an object in $\mathcal{F}^{h\Gamma}(\ast)$, let $(b,\{\sigma_{ \gamma}\}_{\gamma \in \Gamma})$ be an object in $\mathcal{G}^{h\Gamma}(\ast)$ and let $f: \delta^{h \Gamma}(a,\{\tau_{\gamma}\}_{\gamma \in \Gamma}) \to \epsilon^{h \Gamma}(b,\{\sigma_{\gamma}\}_{\gamma \in \Gamma})$ be an isomorphism in $\mathcal{H}^{h \Gamma}(\ast)$. Observe that $f$ is per definition an automorphism $\delta(a) \to \epsilon(b)$ in $\mathcal{H}$ satisfying certain extra properties.
    
    We define $\beta((a,\{\tau_{\gamma}\}_{\gamma \in \Gamma}),(b,\{\sigma_{\gamma}\}_{\gamma \in \Gamma}),f)$ to be the triple $(a,b,f, \{\rho_{\gamma}\}_{\gamma \in \Gamma})$, where $\rho_{\gamma}$ is the pair $(\tau_{\gamma}, \rho_{\gamma})$. To check that our triple defines an element of $\mathcal{P}(\ast)^{h\Gamma}$, we need to check that for all $\gamma, \gamma'$ the diagram
    \begin{equation} \label{Eq:CommutativityGammaAction}
        \begin{tikzcd}
            (a,b,f) \arrow{r}{\rho_{\gamma}} \arrow{d}{\tau_{\gamma'\gamma}} & \mathcal{P}(\gamma')(a,b,f) \arrow{d}{\mathcal{P}(\gamma)(\rho_{\gamma})} \\
            \mathcal{P}(\gamma' \gamma)(a,b,f) \arrow{r}{\eta_{\mathcal{P}, \gamma, \gamma'}} & \mathcal{P}(\gamma')(\mathcal{P}(\gamma)(a,b,f))
        \end{tikzcd}
    \end{equation}
    commutes. But morphisms in $\mathcal{P}(\ast)^{h \Gamma}$ are determined by the corresponding morphisms in $\mathcal{P}(\ast)$, which are in turn determined by a pair consisting of a morphism in $\mathcal{F}(\ast)$ and a morphism in $\mathcal{G}(\ast)$. The commutativity of the diagram \eqref{Eq:CommutativityGammaAction} then follows from the definition of $\eta_{\mathcal{P}, \gamma, \gamma'}$ and $\mathcal{P}$, in combination with the fact that $(a,\{\tau_{ \gamma}\}_{\gamma \in \Gamma})$ is an object in $\mathcal{F}^{h\Gamma}(\ast)$ and the fact that $(b,\{\sigma_{ \gamma}\}_{\gamma \in \Gamma})$ is an object of $\mathcal{G}^{h\Gamma}(\ast)$. \smallskip
    
\textbf{Step 2: Defining the functor on morphisms.} A morphism $$g:((a,\{\tau_{\gamma}\}_{\gamma \in \Gamma}),(b,\{\sigma_{\gamma}\}_{\gamma \in \Gamma}),f) \to ((a',\{\tau'_{\gamma}\}_{\gamma \in \Gamma}),(b',\{\sigma'_{\gamma}\}_{\gamma \in \Gamma}),f')$$ in $\mathcal{F}^{h\Gamma}(\ast) \times_{\mathcal{H}^{h\Gamma}(\ast)} \mathcal{G}^{h\Gamma}(\ast)$ corresponds to a pair of morphisms $(g_{1}, g_{2})$. Concretely, we have $g_1:a \to a'$ and $g_2:b \to b'$ such that the following diagrams commute for all $\gamma \in \Gamma$:
\begin{equation} \label{Eq:ThreeDiagrams}
    \begin{tikzcd}
        a \arrow{r}{g_1} \arrow{d}{\tau_{\gamma}} & a' \arrow{d}{\tau'_{\gamma}} \\
        \mathcal{F}(\gamma)(a) \arrow{r}{\mathcal{F}(g_1)} & \mathcal{F}(\gamma)(a')
    \end{tikzcd}
       \begin{tikzcd}
        b \arrow{r}{g_2} \arrow{d}{\sigma_{\gamma}} & b' \arrow{d}{\sigma'_{\gamma}} \\
        \mathcal{G}(\gamma)(b) \arrow{r}{\mathcal{F}(g_2)} & \mathcal{G}(\gamma)(b')
    \end{tikzcd}
    \begin{tikzcd}
        \delta(a) \arrow{r}{f} \arrow{d}{\delta(g_1)} & \epsilon(b) \arrow{d}{\epsilon(g_2)} \\
        \delta(a') \arrow{r}{f'} & \epsilon(b').
    \end{tikzcd}
\end{equation}
We will define $\beta(g)$ to be the morphism
\begin{align}
    (a,b,f, \{\rho_{\gamma}\}_{\gamma \in \Gamma}) \to (a',b',f', \{\rho'_{\gamma}\}_{\gamma \in \Gamma})
\end{align}
corresponding to $(g_1,g_2)$. This is a morphism in $\mathcal{P}(\ast)$ between $(a,b,f)$ and $(a',b',f)$ by the commutativity of the third diagram in \eqref{Eq:ThreeDiagrams}. To show that this is a morphism in $\mathcal{P}(\ast)^{h\Gamma}$, we have to show that for all $\gamma \in \Gamma$ the diagram
\begin{equation} \label{Eq:OneDiagram}
    \begin{tikzcd}
        (a,b,f) \arrow{r}{g} \arrow{d}{\rho_{\gamma}} & (a',b',f') \arrow{d}{\rho'_{\gamma}} \\
        P(\gamma)(a,b,f) \arrow{r}{\mathcal{P}(\gamma)(g)} & P(\gamma)(a',b',f')
    \end{tikzcd}
\end{equation}
commutes. But morphisms in $\mathcal{P}(\ast)$ are determined by pairs of morphisms in $\mathcal{F}(\ast)$ and $\mathcal{H}(\ast)$. The commutativity now follows from the commutativity of the first two diagrams of \eqref{Eq:ThreeDiagrams} and the definition of $P(\gamma)$ and $\rho_{\gamma}$. \smallskip
    
\textbf{Step 3: Showing $\beta$ is fully faithful.} Suppose we are given objects $$((a,\{\tau_{\gamma}\}_{\gamma \in \Gamma}),(b,\{\sigma_{\gamma}\}_{\gamma \in \Gamma}),f), ((a',\{\tau'_{\gamma}\}_{\gamma \in \Gamma}),(b',\{\sigma'_{\gamma}\}_{\gamma \in \Gamma}),f') \in \mathcal{F}^{h\Gamma}(\ast) \times_{\mathcal{H}^{h\Gamma}(\ast)} \mathcal{G}^{h\Gamma}(\ast).$$ We have seen in Step 2 that a morphism between these objects consists of a pair of morphisms $g_1:a \to a'$ and $g_2:b \to b'$ such that the diagrams in \eqref{Eq:ThreeDiagrams} commute. There are also conditions for the pair $(g_1,g_2)$ to define a morphism between
\begin{align}
    \beta((a,\{\tau_{\gamma}\}_{\gamma \in \Gamma}),(b,\{\sigma_{\gamma}\}_{\gamma \in \Gamma}),f)), \beta((a',\{\tau'_{\gamma}\}_{\gamma \in \Gamma}),(b',\{\sigma'_{\gamma}\}_{\gamma \in \Gamma}),f')),
\end{align}
namely the commutativity of the third diagram in \eqref{Eq:ThreeDiagrams} and the commutativity of \eqref{Eq:OneDiagram}. But it is immediate that the latter condition is equivalent to the commutativity of the first two diagrams in \eqref{Eq:ThreeDiagrams}, proving fully faithfulness. \smallskip

\textbf{Step 4: Essential surjectivity of $\beta$.} Let $(a,b,f,\{\rho_{\gamma}\}_{\gamma \in \Gamma})$ be an object of $\mathcal{P}(\ast)^{h \Gamma}$. Then for each $\gamma \in \Gamma$ the map $\rho_{\gamma}$ consists of a pair of an isomorphism $\tau_{\gamma}:a \to \mathcal{F}(\gamma)(a)$ and an isomorphism $\sigma_{\gamma}:b \to \mathcal{G}(\gamma)(b)$. It is straightforward to show that $(a,b,f,\{\rho_{\gamma}\}_{\gamma \in \Gamma})$ is equal to (!!) the image under $\beta$ of the triple $(a,\{\tau_{\gamma}\}_{\gamma \in \Gamma}),(b,\{\sigma_{\gamma}\}_{\gamma \in \Gamma}),f)$. 
\end{proof}

\subsection{Quotient stacks and fixed points} \label{Sec:AppendixQuotientStacks} In this section we will study the homotopy fixed points of quotients stacks and in particular the interaction between the formation of quotient stacks and homotopy fixed points. 

Let $S$ be a small category with all fiber products, equipped with a Grothendieck pretopology. We will consider the strict $(2,1)$-category $\operatorname{Cat}_{/S}$ of categories fibered in groupoids over $S$, see \cite[Definition 02XS]{stacks-project}. The $2$-fiber product of categories fibered in groupoids as described in \cite[Lemma 0040]{stacks-project} is the same as the one described in Definition \ref{Def:2FiberProduct}, thus Lemma \ref{Lem:FiberProducts} holds for categories fibered in groupoids. \smallskip

Let $G$ be a sheaf of groups on $S$ and consider the category fibered in groupoids $\mathbb{B} G \to S$: Its objects are pairs $(\mathcal{P},T)$ where $T$ is an object of $S$ and where $\mathcal{P} \to T$ is a left $G$-torsor. A morphism $(\mathcal{P},T) \to (\mathcal{P}',T')$ is a pair of morphisms $a:T \to T'$ and $b:\mathcal{P} \to \mathcal{P}'$ such that $b$ is $G$-equivariant and such that the following diagram commutes and is Cartesian
\begin{equation}
    \begin{tikzcd}
    \mathcal{P} \arrow{r}{b} \arrow{d} & \mathcal{P}' \arrow{d} \\
    T \arrow{r}{a} & T'.
    \end{tikzcd}
\end{equation}
The composition of morphisms is given by concatenating Cartesian diagrams.

Recall the notation of a stack in groupoids over $S$, see \cite[Definition 02ZI]{stacks-project}. We observe that the proof of \cite[Lemma 04UK]{stacks-project} can be repeated to show that $\mathbb{B} G \to S$ is a stack in groupoids over $S$.

Given a morphism $f:G \to G'$ of sheaves of groups, there is a functor $\mathbb{B}f:\mathbb{B} G \to \mathbb{B} G'$ sending
\begin{align}
    (T,\mathcal{P}) \mapsto (T, \mathcal{P} \times^{G} G')
\end{align}
and on morphisms sending $b:\mathcal{P} \to \mathcal{P}'$ to the morphism $b':\mathcal{P} \times^{G} G' \to \mathcal{P} \times^{G} G'$ defined by $b'(p,g')=(b(p),g')$. Here we consider $\mathcal{P} \times^{G} G'$ as the quotient (sheaf) of $P \times G'$ by the right action of $G$ given by $(p,h) \cdot g = (g^{-1} \cdot p, h f(g))$. We let $G'$ act on $P \times G'$ by $g' \cdot (p,h) = (p, g' h)$, which clearly descends to the quotient $\mathcal{P} \times^{G} G'$ turning that quotient into a left $G'$-torsor. 
\begin{Lem} \label{Lem:FunctorialityClassifyingStack}
Let $\operatorname{Grp}_{S}$ be the $1$-category of sheaves of groups in $S$. There is a weak functor 
\begin{align}
    \mathbb{B}:\operatorname{Grp}_{S} \to \operatorname{Cat}_{/S}
\end{align}
which on objects sends $G$ to $\mathbb{B}G$ and on morphisms sends $f:G \to G'$ to $\mathbb{B}f:\mathbb{B} G \to \mathbb{B} G'$.
\end{Lem}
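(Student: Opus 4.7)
The plan is to verify the data and axioms of Definition \ref{Def:WeakFunctor} for the assignment already specified on objects and morphisms. What remains to construct are the coherence 2-morphisms $\eta_{\mathbb{B}, G}\colon \mathrm{Id}_{\mathbb{B} G} \to \mathbb{B}(\mathrm{Id}_G)$ and, for composable $f\colon G\to G'$, $g\colon G'\to G''$, the natural transformation $\eta_{\mathbb{B}, f, g}\colon \mathbb{B}(g\circ f) \to \mathbb{B}(g)\circ\mathbb{B}(f)$; these measure the failure of the contracted-product construction to be strictly associative and unital.

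For the unit, given a left $G$-torsor $\mathcal{P}\to T$, the assignment $p\mapsto [p,1_G]$ gives a canonical $G$-equivariant isomorphism $\mathcal{P} \xrightarrow{\sim} \mathcal{P}\times^G G$ over $T$, natural in $(\mathcal{P},T)$. I will take this as $\eta_{\mathbb{B},G}$. For composition, given a $G$-torsor $\mathcal{P}$, the assignment $[p,g'']\mapsto [[p,1_{G'}],g'']$ defines a canonical $G''$-equivariant isomorphism
\[
    \mathcal{P}\times^G G'' \xrightarrow{\sim} (\mathcal{P}\times^G G')\times^{G'} G''
\]
with inverse sending $[[p,g'],g'']$ to $[p,g''\cdot g(g')]$. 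Naturality in $\mathcal{P}$ with respect to $G$-equivariant morphisms is immediate from the formulas, so this gives a well-defined natural transformation $\eta_{\mathbb{B},f,g}$.

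The two axioms of Definition \ref{Def:WeakFunctor} will then reduce to standard identities for contracted products. The unit axiom amounts to the statement that for a morphism $f\colon G\to G'$, both of the canonical isomorphisms $\mathcal{P}\times^G G' \to (\mathcal{P}\times^G G)\times^G G'$ and $\mathcal{P}\times^G G'\to \mathcal{P}\times^G (G'\times^{G'}G')$ induced by the two units agree, which is verified by chasing a representative $[p,g']$ through each composite. The associativity axiom for a triple $f\colon G\to G', g\colon G'\to G'', h\colon G''\to G'''$ amounts to commutativity of a pentagon of canonical isomorphisms between the five reassociations of $((\mathcal{P}\times^G G')\times^{G'}G'')\times^{G''}G'''$, which again I will check on a representative $[p,g''']$.

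I expect the main obstacle to be purely notational: writing out the pentagon diagram with enough precision to make each triangular face obvious. There is no substantive issue, since all of the maps involved are the universal ones provided by the defining colimit property of contracted products, and the coherence then follows from the standard coherence theorem for symmetric monoidal structures applied to the biaction bimodule calculus. Once the coherence is verified on affine (schematic) torsors it extends automatically to general torsors via the sheafification inherent in the definition of $\mathbb{B} G$.
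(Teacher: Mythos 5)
Your choice of coherence data is the same as the paper's: the unit $\eta_{\mathbb{B},G}$ sends $p\mapsto (p,1)$, and $\eta_{\mathbb{B},f,g}$ sends $(p,g'')\mapsto ((p,1),g'')$, and the paper likewise leaves the verification of the two axioms of Definition \ref{Def:WeakFunctor} to the reader; your plan to check these by chasing a representative local section is exactly what the paper has in mind and does close the argument. (Incidentally, the inverse formula $[[p,g'],g'']\mapsto [p,\,g''\cdot g(g')]$ is correct with the paper's left-torsor conventions.)

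A few statements in the write-up are imprecise or wrong, though none of them creates a real gap since the representative-chase already finishes the proof. First, in the unit axiom the second comparison should land in $(\mathcal{P}\times^G G')\times^{G'}G'$, \emph{not} in $\mathcal{P}\times^G(G'\times^{G'}G')$; those two objects are only identified via the very associativity constraint you are constructing, so conflating them is circular as phrased. Second, the composition axiom of a weak functor is a commuting \emph{square} of 2-morphisms (both composites go from $\mathcal{F}(hgf)$ to $\mathcal{F}(h)\circ\mathcal{F}(g)\circ\mathcal{F}(f)$ in a strict 2-category), not a pentagon; the pentagon belongs to the coherence of a monoidal or bicategorical \emph{structure}, which is a related but different statement. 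Third, and most substantively, the appeal to ``the standard coherence theorem for symmetric monoidal structures applied to the biaction bimodule calculus'' is off target: the contracted product is not symmetric, and what you would need is the coherence of the bicategory of sheaves of groups, bimodule sheaves, and bimodule maps, which is not something you have set up and would take more work to justify than the direct computation. Finally, the claim that one first verifies coherence on ``affine (schematic) torsors'' and then extends by sheafification misreads the setup: $\mathbb{B} G$ is a stack on an arbitrary site $S$, the torsors are just sheaves on $S/T$, and the coherence morphisms are defined by formulas on local sections, so there is nothing affine in sight and no extension step is needed. I would cut that last paragraph entirely and keep only the element-chase.
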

\begin{proof}
Part (1) of Definition \ref{Def:WeakFunctor} has been specified by the lemma. For part (2) we have to specify a $2$-morphism
\begin{align}
    \eta_{\mathbb{B},G}:\operatorname{Id}_{\mathbb{B}G} \to \mathbb{B}(\operatorname{Id}_G).
\end{align}
We take the one which is given on objects by the map
\begin{align}
    \mathcal{P} &\to \mathcal{P}\times^{G} G \\
    p &\mapsto  (p,1).
\end{align}
Moreover, given $f_1:G_1 \to G_2$ and $f_2:G_2 \to G_3$ we have to specify a $2$-morphism
\begin{align}
\eta_{\mathbb{B},f_1,f_2}:\mathbb{B}(f_2 \circ f_1) \to \mathbb{B}(f_2) \circ \mathbb{B}(f_1).
\end{align}
We take the one given by the isomorphism
\begin{align}
    \mathcal{P} \times^{G_1} G_3 &\to \left(\mathcal{P}\times^{G_1} G_2 \right) \times^{G_2} G_3 \\
    (p,g_3) &\mapsto ((p,1),g_3).
\end{align}
It is straightforward, if somewhat tedious, to check that these coherence data satisfy Definition \ref{Def:WeakFunctor}
\end{proof}
Now let $X$ be a sheaf on $S$ equipped with a left action of a sheaf of groups $G$. Then we define a category fibered in groupoids $\left[ X / G \right] \to S$ whose objects are triples $(T,\mathcal{P},\omega)$, where $T$ is an object of $S$, where $\mathcal{P} \to T$ is a $G$-torsor and where $\omega:\mathcal{P} \to X$ is a $G$-equivariant map of sheaves. A morphism $(T,\mathcal{P},\omega) \to (T',\mathcal{P}',\omega')$ is a pair $(a,b)$, where $a:T \to T'$ is a morphism in $S$ and $b:\mathcal{P} \to \mathcal{P}'$ is a $G$-equivariant morphism such that the following two diagrams commute 
\begin{equation}
    \begin{tikzcd}
    \mathcal{P} \arrow{r}{b} \arrow{d} & \mathcal{P}' \arrow{d} \\
    T \arrow{r}{a} & T',
    \end{tikzcd} \qquad 
    \begin{tikzcd}
    \mathcal{P} \arrow{r}{b} \arrow{d}{\omega} & \mathcal{P}' \arrow{d}{\omega'} \\
    X \arrow{r}{\operatorname{Id}_X} & X,
    \end{tikzcd}
\end{equation}
and such that the first diagram is Cartesian. We observe that $\left[X / G \right]$ is a stack in groupoids over $S$; indeed, the proof of \cite[Lemma 0370]{stacks-project} goes through verbatim.

Note that there is a natural forgetful morphism $\left[ X / G \right] \to \mathbb{B} G$ of categories fibered in groupoids over $S$ which sends $(T,\mathcal{P},\omega) \mapsto (T,\mathcal{P})$. \medskip 

\subsubsection{} Let $\operatorname{GrpShv}_S$ be the category whose objects are triples $(G,X,A)$, where $G$ is a sheaf of groups on $S$, where $X$ is a sheaf on $S$, and where $A:G \times X \to X$ is a left action of $G$ on $X$. A morphism $(G,X,A) \to (G',X',A')$ consists of a homomorphism of groups $h:G \to G'$ and a morphism of sheaves $f:X \to X'$, such that $f$ is $G$-equivariant via $h$. There is an obvious forgetful functor $\operatorname{GrpShv}_S \to \operatorname{Grp}_S$ sending $(G,X,A)$ to $G$. We consider $\mathbb{B}$ as a weak functor on $\operatorname{GrpShv}_S$ via this forgetful functor.
\begin{Lem} \label{Lem:FunctorialityQuotientStack}
There is a weak functor $\mathscr{Q}:\operatorname{GrpShv}_S \to \operatorname{Cat}_{/S}$, which on objects sends $(G,X,A)$ to $\left[ X / G \right]$. There is moreover a weak natural transformation $\mathscr{Q} \to \mathbb{B}$, which on objects induces the natural forgetful functor
\begin{align}
    \left[ X / G \right] \to \mathbb{B} G.
\end{align}
\end{Lem}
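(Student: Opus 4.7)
The plan is to mimic the construction of Lemma~\ref{Lem:FunctorialityClassifyingStack} and then verify that the same coherence data for $\mathbb{B}$ also works for $\mathscr{Q}$ once we supply the $G$-equivariant maps $\omega$. The weak natural transformation $\mathscr{Q}\to\mathbb{B}$ will then essentially be ``forget $\omega$,'' with trivial coherence $2$-morphisms inherited from those of $\mathscr{Q}$.

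First I would define $\mathscr{Q}$ on objects by $(G,X,A)\mapsto [X/G]$. For a morphism $(h,f):(G,X,A)\to(G',X',A')$ I would define the functor $\mathscr{Q}(h,f):[X/G]\to[X'/G']$ by sending $(T,\mathcal{P},\omega)$ to $(T,\mathcal{P}\times^{G}G',\omega')$, where $\omega'(p,g'):=g'\cdot f(\omega(p))$. One checks that $\omega'$ descends to the quotient $\mathcal{P}\times^{G}G'$ because
\[
g'h(g)\cdot f(\omega(g^{-1}\cdot p)) \;=\; g'h(g)h(g^{-1})\cdot f(\omega(p)) \;=\; g'\cdot f(\omega(p)),
\]
and that it is $G'$-equivariant. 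On morphisms, $\mathscr{Q}(h,f)$ acts as in the proof of Lemma~\ref{Lem:FunctorialityClassifyingStack}, and compatibility with the $\omega$-data is automatic from the definition of $\omega'$.

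For the coherence data I would take exactly the $2$-isomorphisms used in the proof of Lemma~\ref{Lem:FunctorialityClassifyingStack}: namely $\eta_{\mathscr{Q},(G,X,A)}:\operatorname{Id}_{[X/G]}\to\mathscr{Q}(\operatorname{Id}_{(G,X,A)})$ given on objects by $p\mapsto(p,1)\in\mathcal{P}\times^{G}G$, and $\eta_{\mathscr{Q},(h_1,f_1),(h_2,f_2)}$ given by the canonical $(p,g_3)\mapsto((p,1),g_3)$. One has to check that each of these underlying torsor-level isomorphisms is compatible with the $\omega$-structure, i.e., that the relevant $G$-equivariant maps to $X$ or $X'$ agree before and after applying the isomorphism; this follows directly from the defining formula $\omega'(p,g')=g'\cdot f(\omega(p))$ and from $f$ being $G$-equivariant via $h$. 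The axioms of Definition~\ref{Def:WeakFunctor} then reduce to the corresponding axioms for $\mathbb{B}$, which were verified in Lemma~\ref{Lem:FunctorialityClassifyingStack}.

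For the weak natural transformation $\epsilon:\mathscr{Q}\to\mathbb{B}$, I would take $\epsilon_{(G,X,A)}:[X/G]\to\mathbb{B}G$ to be the forgetful functor $(T,\mathcal{P},\omega)\mapsto(T,\mathcal{P})$, and for each morphism $(h,f)$ in $\operatorname{GrpShv}_S$ the required natural transformation $\epsilon(h,f):\epsilon_{(G',X',A')}\circ\mathscr{Q}(h,f)\to\mathbb{B}(h)\circ\epsilon_{(G,X,A)}$ is the identity, since both functors send $(T,\mathcal{P},\omega)$ to $(T,\mathcal{P}\times^{G}G')$ on the nose. The two axioms of Definition~\ref{Def:WeakNaturalTransformation} then reduce to the fact that the coherence $2$-morphisms $\eta_{\mathscr{Q}}$ and $\eta_{\mathbb{B}}$ are compatible under forgetting $\omega$, which is immediate since we took them to be given by the same underlying torsor-level formulas.

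The only mildly delicate point is verifying the two hexagonal axioms for $\eta_{\mathscr{Q}}$, but as in Lemma~\ref{Lem:FunctorialityClassifyingStack} this is a purely diagrammatic check involving the canonical maps $\mathcal{P}\times^{G_1}G_3\to(\mathcal{P}\times^{G_1}G_2)\times^{G_2}G_3$ and their associativity; the presence of $\omega$ adds no content since the $2$-morphisms act trivially on the $X'$-component. I expect the main (but still routine) labor to be the book-keeping of these commutative diagrams, rather than any substantive obstruction.
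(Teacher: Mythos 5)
Your proposal is correct and follows essentially the same route as the paper: you define $\mathscr{Q}(h,f)$ via $\omega'(p,g') = g'\cdot f(\omega(p)) = A'(g', f(\omega(p)))$, reuse the coherence $2$-morphisms from Lemma~\ref{Lem:FunctorialityClassifyingStack}, verify they respect the $\omega$-data, and take the identity $2$-cells for the natural transformation $\mathscr{Q}\to\mathbb{B}$ since the forgetful squares commute strictly. The only addition over the paper's proof is your explicit well-definedness check for $\omega'$ on the quotient $\mathcal{P}\times^{G}G'$, which is a correct (and welcome) bit of extra care.
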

\begin{proof}
Given a morphism $(h,f):(G,X,A) \to (G',X',A')$ we define a morphism
\begin{align}
    \mathscr{Q}(h,f):\left[ X / G \right] \to \left[ X' / G' \right]
\end{align}
by sending $(T,\mathcal{P}, \omega) \mapsto (T, \mathcal{P} \times^G G', \omega_h)$, where 
\begin{align}
    \omega_h:\mathcal{P} \times^G G' &\to X' \\
    (p,g') &\mapsto A'(g',f(\omega(p)).
\end{align}
For the identity natural transformation
\begin{align}
    \eta_{\mathscr{Q},(G,X,A)}:\operatorname{Id}_{\mathscr{Q}(G,X,A)} \to \mathscr{Q}(\operatorname{Id}_G, \operatorname{Id}_X)
\end{align}
we take the isomorphism $(T,\mathcal{P}, \omega) \mapsto (T, \mathcal{P}\times^G G, \omega_{\operatorname{Id}_G})$ given by
\begin{align}
    \mathcal{P} &\to \mathcal{P}\times^G G \\
    p &\mapsto (p,1),
\end{align}
which is compatible with $\omega$ and $\omega_{\operatorname{Id}_G}$ since
\begin{align}
    (\omega_{\operatorname{Id}_G})(p,1)=A(1,\operatorname{Id}_X(\omega(p))=\omega(p).
\end{align}
Given a pair of composable morphisms $(h,f):(G_1,X_1,A_1) \to (G_2,X_2,A_2)$ and $(h',f'):(G_2,X_{2},A_2) \to (G_3,X_3,A_3)$, we need to specify a natural transformation
\begin{align}
    \eta_{\mathscr{Q},(h,f),(h',f')}:\mathscr{Q}(h' \circ h, f' \circ f) \to \mathscr{Q}(h',f') \circ \mathscr{Q}(h,f). 
\end{align}
For this we take the isomorphism in $\left[ X_3 / G_3 \right]$ given by 
\begin{align}
    (S,\mathcal{P} \times^{G_1} G_3,\omega_{h' \circ h}) \to (S, \left(\mathcal{P} \times^{G_2} G_2 \right) \times^{G_2} G_3), \left(\omega_{h}\right)_{h'})
\end{align}
given by the isomorphism
\begin{align}
    \mathcal{P} \times^{G_1} G_3 &\to \left(\mathcal{P} \times^{G_1} G_2 \right) \times^{G_2} G_3 \\
    (p,g_3) &\mapsto ((p,1),g_3)
\end{align}
from the proof of Lemma \ref{Lem:FunctorialityClassifyingStack}. We have to check that this is an isomorphism in $\left[ X_3 / G_3 \right]$ which comes down to the equality
\begin{align}
    \omega_{h'h}(((p,1),g_3)) = \omega_{h' \circ h}(p,g_3).
\end{align}
To check this we compute that
\begin{align}
   \left(\omega_{h}\right)_{h'}(((p,1),g_3))&=A'(g_3, f'(\omega_h(p,1)))\\
    &=A_3(g_3, f'(A_2(1,f(p)))) \\
    &=A_3(g_3, f'(f(p))) \\
    &=\omega_{h' \circ h}(p,g_3).
\end{align}
As in the proof of Lemma \ref{Lem:FunctorialityClassifyingStack}, we omit the verification that these coherence data satisfy Definition \ref{Def:WeakFunctor}. \medskip 

The weak natural transformation $\mathscr{Q} \to \mathbb{B}$ is easy to specify because it follows by construction that for $(h,f):(G,X,A) \to (G',X',A')$ the diagram
\begin{equation}
    \begin{tikzcd}
    \left[X /G\right] \arrow{r}{\mathscr{Q}(h,f)} \arrow{d} & \left[X' /G' \right] \arrow{d} \\
    \mathbb{B} G \arrow{r}{\mathbb{B}(h)} & \mathbb{B} G'
    \end{tikzcd}
\end{equation}
is strictly commutative. Thus we can specify the identity natural transformation in (2) of Definition \ref{Def:WeakNaturalTransformation}, which clearly satisfies the properties outlined in that definition.
\end{proof}
We will now spell out a particular case of Lemma \ref{Lem:FunctorialityQuotientStack}: Let $\Gamma$ be
an abstract group, let $(G,X,A) \in \operatorname{GrpShv}_S$ and let $\Gamma \to \operatorname{Isom}_{\operatorname{GrpShv}_S}((G,X,A),(G,X,A))$ be a group homomorphism. This gives us a functor
\begin{align}
    B \Gamma \to \operatorname{GrpShv}_S,
\end{align}
which we can compose with the weak functor of Lemma \ref{Lem:FunctorialityQuotientStack} to get a $\Gamma$-object in categories fibered in groupoids over $S$, see Example \ref{eg:StrongAction}. Concretely, the group homomorphism $$\Gamma \to \operatorname{Isom}_{\operatorname{GrpShv}_S}((G,X,A),(G,X,A))$$ comes down to the following data: We have a left action of $\Gamma$ on $G$ over $S$; let us write $\alpha_\gamma:G \to G$ for the induced isomorphism of groups for each $\gamma \in \Gamma$. We have a left action of $\Gamma$ on $X$, which we denote by $\beta_\gamma:X \to X$ for each $\gamma \in \Gamma$. These morphisms are subject to the following commutative diagram for all $\gamma \in \Gamma$
\begin{equation}
    \begin{tikzcd}
    G \times X \arrow{r}{A} \arrow{d}{\alpha_{\gamma} \times \beta_{\gamma}}  & X \arrow{d}{\beta_{\gamma}} \\
    G \times X \arrow{r}{A} & X.
    \end{tikzcd}
\end{equation}
The induced action of $\Gamma$ on $\left[ X / G \right]$ on objects takes a tuple $(T,\mathcal{P}, \omega)$ and sends it to the tuple $(T, \mathcal{P} \times^{G, \gamma} G, \omega_{\gamma})$. Concretely, we have that
\begin{align}
    \omega_{\gamma}(p,g) &= A(g,\beta_{\gamma}(\omega(p)))
\end{align}
which means that $\omega_{\gamma}(p,1)=\beta_{\gamma}(\omega(p))$.

\subsubsection{} By Lemma \ref{Lem:FunctorialityQuotientStack} there is a $\Gamma$-equivariant morphism
\begin{align}
    \left[X /G\right] \to \mathbb{B} G
\end{align}
and thus by Lemma \ref{Lem:FunctorialityHomotopyFixedPoints} there is a functor
\begin{align}
    \left[X /G\right]^{h \Gamma} \to (\mathbb{B} G)^{h \Gamma}.
\end{align}
Here $(\mathbb{B} G)^{h \Gamma}$ is the $\Gamma$-homotopy fixed points of $(\mathbb{B} G)$ in the sense of definition \ref{Def:HomotopyFixedPoints}, which is a category fibered in groupoids over $S$.
\begin{Lem} \label{Lem:Descent}
    The category fibered in groupoids $(\mathbb{B} G)^{h \Gamma}$ is a stack in groupoids over $S$.
\end{Lem}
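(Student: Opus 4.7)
The plan is to verify the two descent conditions for a stack in groupoids, reducing each of them to the corresponding condition for $\mathbb{B} G$, which is already a stack in groupoids over $S$. Throughout, I will use the explicit description of objects and morphisms of $(\mathbb{B} G)^{h\Gamma}$ given by Definition \ref{Def:HomotopyFixedPoints}: an object over $T \in S$ is a pair $(\mathcal{P}, \{\tau_\gamma\}_{\gamma \in \Gamma})$ consisting of a $G$-torsor $\mathcal{P}$ on $T$ together with isomorphisms $\tau_\gamma \colon \mathcal{P} \to \mathbb{B}(\alpha_\gamma)(\mathcal{P})$ in $\mathbb{B} G$ satisfying a cocycle identity involving the coherence data $\eta_{\mathbb{B}, \alpha_\gamma, \alpha_{\gamma'}}$, and a morphism is a morphism of $G$-torsors compatible with the $\tau_\gamma$.

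First I would treat descent of morphisms. Given a covering $\{T_i \to T\}$ in $S$, a morphism from $(\mathcal{P}, \{\tau_\gamma\})$ to $(\mathcal{P}', \{\tau_\gamma'\})$ in $(\mathbb{B} G)^{h\Gamma}$ is a morphism $f \colon \mathcal{P} \to \mathcal{P}'$ in $\mathbb{B} G(T)$ satisfying a family of equalities of morphisms in $\mathbb{B} G(T)$, one for each $\gamma \in \Gamma$. Because $\mathbb{B} G$ is a stack, the presheaf $\underline{\mathrm{Isom}}_{\mathbb{B} G}(\mathcal{P}, \mathcal{P}')$ is a sheaf, so morphisms $f$ glue from local data, and the equations expressing compatibility with the $\tau_\gamma$ can be checked after pulling back to the $T_i$. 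This gives descent for morphisms.

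Next I would verify descent for objects. Suppose we are given a covering $\{T_i \to T\}$ and objects $(\mathcal{P}_i, \{\tau_{i,\gamma}\})$ over $T_i$ with descent data, that is, isomorphisms $\phi_{ij} \colon \mathrm{pr}_2^{\ast}(\mathcal{P}_j, \{\tau_{j,\gamma}\}) \to \mathrm{pr}_1^{\ast}(\mathcal{P}_i, \{\tau_{i,\gamma}\})$ over $T_i \times_T T_j$ satisfying the cocycle condition on triple intersections. Since $\mathbb{B} G$ is a stack, the underlying $G$-torsors $\mathcal{P}_i$ glue via the underlying $\phi_{ij}$ to a $G$-torsor $\mathcal{P}$ on $T$, unique up to canonical isomorphism, together with identifications $\mathcal{P}|_{T_i} \simeq \mathcal{P}_i$. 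For each $\gamma \in \Gamma$, the isomorphisms $\tau_{i,\gamma} \colon \mathcal{P}_i \to \mathbb{B}(\alpha_\gamma)(\mathcal{P}_i)$ are compatible on overlaps in view of the fact that $\phi_{ij}$ is a morphism in $(\mathbb{B} G)^{h\Gamma}$, and because the weak functor $\mathbb{B}(\alpha_\gamma)$ commutes with pullback along $T_i \to T$ (being a morphism of stacks on $S$). Therefore, again by the stack property of $\mathbb{B} G$ applied to the torsors $\mathcal{P}$ and $\mathbb{B}(\alpha_\gamma)(\mathcal{P})$, the $\tau_{i,\gamma}$ glue to an isomorphism $\tau_\gamma \colon \mathcal{P} \to \mathbb{B}(\alpha_\gamma)(\mathcal{P})$. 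The cocycle identity required for $(\mathcal{P}, \{\tau_\gamma\})$ to define an object of $(\mathbb{B} G)^{h\Gamma}(T)$ is an equality of morphisms in $\mathbb{B} G(T)$, and since it holds after restriction to each $T_i$, it holds on $T$ by the first paragraph. This produces a global object restricting to the given data.

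The main point is really just bookkeeping: both the ``new'' data (the $\tau_\gamma$) and the ``new'' constraints (cocycle and compatibility equations) are local in $T$, so they descend as soon as the underlying data descend in $\mathbb{B} G$. There is no genuine obstacle beyond checking that the coherence isomorphisms $\eta_{\mathbb{B}, \alpha_\gamma, \alpha_{\gamma'}}$ are themselves natural (hence compatible with pullback and with gluing), which is immediate from Lemma \ref{Lem:FunctorialityClassifyingStack}.
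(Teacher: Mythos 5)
Your argument is correct and follows the same route as the paper's: descent for morphisms reduces to the sheaf property of $\operatorname{Isom}$ in $\mathbb{B} G$ together with the locality of the commutativity constraints involving the $\tau_\gamma$, and descent for objects first glues the underlying torsors, then glues the $\tau_\gamma$ using descent for morphisms in $\mathbb{B} G$, then checks the cocycle identity locally. The paper compresses this into two sentences (noting faithfulness of the forgetful functor and citing weak functoriality of the $\Gamma$-action); your write-up is the expanded version of the same reasoning.
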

\begin{proof}
Descent for morphisms in $(\mathbb{B} G)^{h \Gamma}$ follows directly from descent of morphisms in $\mathbb{B} G$ since the forgetful map $(\mathbb{B} G)^{h \Gamma} \to \mathbb{B} G$ is faithful by construction. Descent for objects in $(\mathbb{B} G)^{h \Gamma}$ follows from descent for objects in $\mathbb{B} G$ in combination with descent for morphisms in $\mathbb{B} G$, and the weak functoriality of the $\Gamma$ action.
\end{proof}

\subsubsection{} \label{Sec:AppendixQuotientStackFixedPoints}  Now let $G^{\Gamma}$ be the fixed point sheaf of $G$ and let $X^{\Gamma}$ be the fixed point sheaf of $X$. Then there is a natural morphism
\begin{align}
    \mathbb{B} G^{\Gamma} \to (\mathbb{B} G)^{h \Gamma}
\end{align}
of categories fibered in groupoids over $S$ which sends $(T,\mathcal{P})$ to $(T, \mathcal{P} \times^{G^{\Gamma}} G)$ equipped with the isomorphisms
\begin{align}
    \tau_{\gamma,0}:\left(\mathcal{P} \times^{G^{\Gamma}} G \right) &\to \left(\mathcal{P} \times^{G^{\Gamma}} G \right) \times^{G, \gamma} G \\
    (p,g) &\mapsto ((p, 1), \alpha_{\gamma}(g)),
\end{align}
and which sends a morphism
\begin{equation}
    \begin{tikzcd}
    \mathcal{P} \arrow{r}{b} \arrow{d} & \mathcal{P}' \arrow{d} \\
    T \arrow{r}{a} & T'.
    \end{tikzcd}
\end{equation}
to the induced diagram
\begin{equation}
    \begin{tikzcd}
    \mathcal{P} \times^{G^{\Gamma}} G \arrow{r}{b_G} \arrow{d} & \mathcal{P}' \times^{G^{\Gamma}} G \arrow{d} \\
    T \arrow{r}{a} & T',
    \end{tikzcd}
\end{equation}
where $b_G(p,g)=(b(p),g)$, which is well defined by the $G^{\Gamma}$-equivariance of $b$. Similarly there is a natural morphism
\begin{align}
    \left[X^{\Gamma} / G^{\Gamma} \right] \to \left[X / G\right]^{h \Gamma}
\end{align}
of categories fibered in groupoids over $S$ which sends $(T,\mathcal{P},\omega)$ to $(T, \mathcal{P} \times^{G^{\Gamma}} G, \{\tau_{\gamma,0}\}_{\gamma \in \Gamma}, \omega_G)$, where
\begin{align}
    \omega_G: \mathcal{P} \times^{G^{\Gamma}} G \to X
\end{align}
is defined by $\omega_G(p,g)=A(g,\omega(p))$. By construction, we see that the following diagram of categories fibered in groupoids over $S$ is strictly commutative 
\begin{equation} \label{Eq:KeyDiagram}
    \begin{tikzcd}
    \left[X^{\Gamma} / G^{\Gamma} \right] \arrow{r} \arrow{d} & \left[X /G\right]^{h \Gamma} \arrow{d} \\
    (\mathbb{B} G^{\Gamma}) \arrow{r} & (\mathbb{B} G)^{h \Gamma}.
    \end{tikzcd}
\end{equation}
\begin{Prop} \label{Prop:KeyPropositionAppendix}
The diagram in equation \eqref{Eq:KeyDiagram} is $2$-Cartesian.
\end{Prop}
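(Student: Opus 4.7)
The plan is to verify that the natural functor
\[
\Phi:\left[X^{\Gamma}/G^{\Gamma}\right] \longrightarrow (\mathbb{B} G^{\Gamma}) \times_{(\mathbb{B} G)^{h \Gamma}} \left[X/G\right]^{h \Gamma}
\]
induced by the $2$-commutativity of \eqref{Eq:KeyDiagram} is an equivalence of categories fibered in groupoids over $S$. Since both sides are stacks (using Lemma \ref{Lem:Descent} for the target), it suffices to check this fibrewise over a test object $T \in S$ and then apply descent. First I would unpack, via Definition \ref{Def:2FiberProduct} and Definition \ref{Def:HomotopyFixedPoints}, what an object of the right-hand side amounts to: a quintuple consisting of a $G^{\Gamma}$-torsor $\mathcal{Q}$ on $T$, a $G$-torsor $\mathcal{P}$ on $T$ with a $G$-equivariant map $\omega:\mathcal{P} \to X$ and a cocycle of isomorphisms $\tau_{\gamma}:\mathcal{P} \to \mathcal{P} \times^{G,\gamma} G$ compatible with $\omega$, together with a $\Gamma$-equivariant isomorphism $\phi: (\mathcal{Q} \times^{G^{\Gamma}} G, \{\tau_{\gamma,0}\}) \simeq (\mathcal{P}, \{\tau_\gamma\})$ in $(\mathbb{B} G)^{h\Gamma}$.

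For essential surjectivity I would use $\phi$ to identify $\mathcal{P}$ with $\mathcal{Q} \times^{G^{\Gamma}} G$ together with its canonical descent data $\tau_{\gamma,0}$. Then the map $\omega$ pulls back to a $G$-equivariant map $\mathcal{Q} \times^{G^{\Gamma}} G \to X$, and the compatibility of $\omega$ with the $\tau_{\gamma,0}$ (which by construction lift $p \mapsto (p,1)$ through the $\gamma$-twist of the action on $G$) is precisely what ensures that the restriction of this map along the canonical section $\mathcal{Q} \hookrightarrow \mathcal{Q} \times^{G^{\Gamma}} G$, $q \mapsto (q,1)$, factors through $X^{\Gamma} \subset X$. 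This produces the required $G^{\Gamma}$-equivariant morphism $\omega_0:\mathcal{Q} \to X^{\Gamma}$ and shows that $\Phi$ hits the original quintuple up to canonical isomorphism.

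For fully faithfulness, I would observe that a morphism in the fiber product is (by Definition \ref{Def:2FiberProduct}) a pair of morphisms in $\mathbb{B} G^{\Gamma}$ and $[X/G]^{h\Gamma}$ whose images in $(\mathbb{B} G)^{h\Gamma}$ agree. Under the identification above, this data reduces exactly to a $G^{\Gamma}$-equivariant isomorphism $\mathcal{Q} \to \mathcal{Q}'$ of $G^{\Gamma}$-torsors, compatible with the maps $\omega_0, \omega_0'$ to $X^{\Gamma}$; but this is precisely a morphism in $[X^{\Gamma}/G^{\Gamma}]$, as desired. The comparison here is formal but requires carefully expanding the definitions of $\omega_G$, $\tau_{\gamma,0}$ and the coherence data $\eta_{\mathbb{B},(-),(-)}$ and $\eta_{\mathscr{Q},(-),(-)}$ provided by Lemmas \ref{Lem:FunctorialityClassifyingStack} and \ref{Lem:FunctorialityQuotientStack}.

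The main obstacle I anticipate is purely bookkeeping: keeping straight the various associators (the isomorphisms $\mathcal{Q} \times^{G^\Gamma} G \to (\mathcal{Q} \times^{G^\Gamma} G) \times^{G,\gamma} G$ and analogous ones for $[X/G]$) so that the cocycle conditions in Definition \ref{Def:HomotopyFixedPoints} translate exactly into the defining conditions for being an object over $X^{\Gamma}$. In particular, I would make a short preliminary lemma observing that, for a $G^{\Gamma}$-torsor $\mathcal{Q}$, a $G$-equivariant map $f:\mathcal{Q} \times^{G^{\Gamma}} G \to X$ factors through a unique $G^{\Gamma}$-equivariant $f_0:\mathcal{Q} \to X^{\Gamma}$ if and only if $f$ is compatible with the canonical descent datum $\tau_{\gamma,0}$; granting this, the rest of the proof is a direct verification.
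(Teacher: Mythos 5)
Your proof is correct and takes essentially the same route as the paper's: reduce to showing the induced functor to the $2$-fiber product is an equivalence, use fully faithfulness of $\mathbb{B}G^{\Gamma}\to(\mathbb{B}G)^{h\Gamma}$ to handle morphisms, and for essential surjectivity observe that compatibility of $\omega$ with the canonical descent data $\tau_{\gamma,0}$ forces the restriction of $\omega$ along $\mathcal{Q}\hookrightarrow\mathcal{Q}\times^{G^{\Gamma}}G$, $q\mapsto(q,1)$, to factor through $X^{\Gamma}$. Your ``preliminary lemma'' is exactly the computational heart of the paper's essential-surjectivity step.
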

We start by proving a lemma.
\begin{Lem} \label{Lem:FullyFaithfulII}
The natural map $\mathbb{B}G^{\Gamma} \to \left(\mathbb{B}G\right)^{h\Gamma}$ is fully faithful.
\end{Lem}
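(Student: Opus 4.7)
The plan is to reduce the statement to a local computation on the base. First I would unwind what a morphism in $(\mathbb{B}G)^{h\Gamma}$ looks like: given objects $(T,\mathcal{P},\{\tau_{\gamma}\})$ and $(T,\mathcal{P}',\{\tau'_{\gamma}\})$, a morphism over $\operatorname{id}_T$ is an isomorphism $b:\mathcal{P}\to\mathcal{P}'$ of $G$-torsors such that for every $\gamma\in\Gamma$ the square
\begin{equation}
\begin{tikzcd}
\mathcal{P} \arrow{r}{b} \arrow{d}{\tau_{\gamma}} & \mathcal{P}' \arrow{d}{\tau'_{\gamma}} \\
\mathcal{F}(\gamma)(\mathcal{P}) \arrow{r}{\mathcal{F}(\gamma)(b)} & \mathcal{F}(\gamma)(\mathcal{P}')
\end{tikzcd}
\end{equation}
commutes, where $\mathcal{F}(\gamma)=\mathbb{B}\alpha_{\gamma}$. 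For objects in the essential image of $\mathbb{B}G^{\Gamma}$, namely $\mathcal{P}=\mathcal{Q}\times^{G^{\Gamma}}G$ and $\mathcal{P}'=\mathcal{Q}'\times^{G^{\Gamma}}G$ with the specified descent data $\tau_{\gamma,0}$ and $\tau'_{\gamma,0}$, the statement to prove is that the natural map
\[
\underline{\operatorname{Hom}}_{G^{\Gamma}}(\mathcal{Q},\mathcal{Q}')\;\to\;\underline{\operatorname{Hom}}_{(\mathbb{B}G)^{h\Gamma}}\bigl((\mathcal{P},\tau_0),(\mathcal{P}',\tau'_0)\bigr)
\]
of Hom-sheaves on $S/T$ is an isomorphism.

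Since both $\mathbb{B}G^{\Gamma}$ and $(\mathbb{B}G)^{h\Gamma}$ are stacks in groupoids over $S$ (the latter by Lemma \ref{Lem:Descent}), the equality of Hom-sheaves can be checked after refining to a cover of $T$. I would therefore pass to a cover over which both $\mathcal{Q}$ and $\mathcal{Q}'$ are trivial $G^{\Gamma}$-torsors, in which case the fully-faithfulness claim reduces to the assertion that the natural group homomorphism
\[
G^{\Gamma}(T) \;\longrightarrow\; \operatorname{Aut}_{(\mathbb{B}G)^{h\Gamma}}\bigl(G|_T,\{\tau_{\gamma,0}\}_{\gamma\in\Gamma}\bigr)
\]
is an isomorphism. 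The left-hand side parametrises automorphisms of the trivial $G^{\Gamma}$-torsor via the right regular action. An automorphism of the trivial $G$-torsor $G|_T$ is right multiplication by a unique element $\beta\in G(T)$, so the remaining task is to show that $\beta$ defines a morphism in $(\mathbb{B}G)^{h\Gamma}$ if and only if $\beta\in G^{\Gamma}(T)$.

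The key computation is this compatibility with the descent data. Using the explicit formulas for $\tau_{\gamma,0}$ from the construction preceding the lemma, together with the description of $\mathcal{F}(\gamma)=\mathbb{B}\alpha_{\gamma}$ on morphisms provided by Lemma \ref{Lem:FunctorialityClassifyingStack}, one unwinds the diagram above in terms of $\beta$ and $\alpha_{\gamma}$; the commutativity becomes equivalent to the single relation $\alpha_{\gamma}(\beta)=\beta$ in $G(T)$. Taking this condition over all $\gamma\in\Gamma$ gives exactly $\beta\in G^{\Gamma}(T)$, proving both surjectivity and injectivity of the map on automorphism groups and finishing the proof.

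The main obstacle is bookkeeping rather than conceptual: one must carefully match the left/right conventions for the $G$-action on pushout torsors, the convention for how $\gamma$ acts through $\alpha_{\gamma}$, and the direction of $\tau_{\gamma,0}$, in order to see that the compatibility diagram really collapses to $\alpha_{\gamma}(\beta)=\beta$ and not to some variant involving conjugation or inversion. Once the conventions are fixed the computation is a short direct manipulation.
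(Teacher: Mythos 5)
Your proof is correct, and it takes a genuinely different (and somewhat cleaner) route than the paper's. The paper argues directly with arbitrary $G^\Gamma$-torsors $\mathcal{P}, \mathcal{P}'$: it writes a $G$-equivariant isomorphism $f:\mathcal{P}\times^{G^\Gamma}G\to\mathcal{P}'\times^{G^\Gamma}G$ in coordinates as $f(p,g)=(f_1(p,g),f_2(p,g))$ and uses the compatibility square with $\tau_{\gamma,0},\tau'_{\gamma,0}$ to conclude $f_2(p,g)=g$, i.e.\ that $f$ carries the subsheaf $\mathcal{P}\hookrightarrow\mathcal{P}\times^{G^\Gamma}G$ into $\mathcal{P}'$ and is therefore induced by a $G^\Gamma$-equivariant map. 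You instead invoke the stack property (Lemma~\ref{Lem:Descent}, plus the analogous fact for $\mathbb{B}G^\Gamma$) to check the isomorphism on $\operatorname{Hom}$-sheaves after a cover trivializing $\mathcal{Q},\mathcal{Q}'$, which reduces the statement to showing that an automorphism $g\mapsto g\beta$ of the trivial $G$-torsor respects the descent data precisely when $\alpha_\gamma(\beta)=\beta$ for all $\gamma$. What your route buys is a cleaner final computation: over the trivial $G^\Gamma$-torsor the maps $\tau_{\gamma,0}$ become canonical identifications, and the compatibility really does collapse to $\alpha_\gamma(\beta)=\beta$; the paper's direct argument requires juggling representatives modulo the $G^\Gamma$-equivalence in the pushout, which is exactly where the paper's writeup becomes notationally delicate (for instance, its displayed identity already assumes $f(p,1)$ has the normalized form $(f_1,1)$, which is close to the assertion being proved, and the printed formula for $\tau_{\gamma,0}$ appears not to be $G$-equivariant as stated). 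The cost of your approach is one extra reduction step (local triviality of torsors plus the descent lemma). Your closing caution about tracking conventions is well-founded; worked through carefully the condition is indeed $\alpha_\gamma(\beta)=\beta$, with no conjugation or inversion appearing.
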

\begin{proof}
Since this is a morphism of categories fibered in groupoids over $S$, it suffices to prove that the map on fibers is fully faithful. Let $T \in S$, let $\mathcal{P}, \mathcal{P}'$ be $G^{\Gamma}$-torsors over $T$ and let $f:(T, \mathcal{P} \times^{G^{\Gamma}} G, \{\tau_{\gamma,0}\}_{\gamma \in \Gamma}) \to (T, \mathcal{P}' \times^{G^{\Gamma}} G, \{\tau'_{\gamma,0}\}_{\gamma \in \Gamma})$ be an isomorphism in $\left(\mathbb{B}G\right)^{h\Gamma}(T)$.

We need to show that $f$ is induced from a unique isomorphism $\mathcal{P} \to \mathcal{P}'$. If we can show that $f_2(p,g)=g$, then $f$ is induced from a unique isomorphism $\mathcal{P} \to \mathcal{P}'$; namely, the unique $G^{\Gamma}$-equivariant morphism sending $p$ to $f_1(p,1)$. It follows from the definitions of morphisms in $\left(\mathbb{B}G\right)^{h\Gamma}$ that for $\gamma \in \Gamma$ the following diagram must commute
\begin{equation}
    \begin{tikzcd}
        \left(\mathcal{P} \times^{G^{\Gamma}} G \right) \arrow{r}{\tau_{\gamma,0}} \arrow{d}{f} & \left(\mathcal{P} \times^{G^{\Gamma}} G \right) \times^{G, \gamma} G \arrow{d}{(f,1)} \\
        \left(\mathcal{P}' \times^{G^{\Gamma}} G \right) \arrow{r}{\tau'_{\gamma,0}} & \left(\mathcal{P}' \times^{G^{\Gamma}} G \right) \times^{G, \gamma} G.
    \end{tikzcd}
\end{equation}
Using the definition $\tau_{\gamma,0}(p,g)=(p,1,\gamma(g))$ this gives us the equality
\begin{align}
    ((f_1(p,g),1),\gamma(f_2(p,g)) = ((f_1(p,g),1),\gamma(g)).
\end{align}
Thus we see that $f_2(p,g)=g$ and thus $f$ is induced from the unique $G^{\Gamma}$-equivariant map $\mathcal{P} \to \mathcal{P}'$ sending $p \mapsto f_2(p,1)$. 
\end{proof}
\begin{proof}[Proof of Proposition \ref{Prop:KeyPropositionAppendix}]
Both the top horizontal map and the bottom horizontal map are fully faithfull, this follows from the definition and Lemma \ref{Lem:FullyFaithfulII}. The proposition then comes down to the following claim: Given an object $(T,\mathcal{P},\{\tau_{\gamma}\}_{\gamma \in \Gamma},\omega)$ of $\left[ X / G \right]^{h \Gamma}$ such that there is a $G^{\Gamma}$-torsor $\mathcal{Q} \to T$ and a $G^{\Gamma}$-equivariant morphism $f:\mathcal{Q} \to \mathcal{P}$ inducing an isomorphism
\begin{align}
    \mathcal{P} \times^{G^{\Gamma}} G \to \mathcal{Q}
\end{align}
under which $\{\tau_{\gamma}\}_{\gamma \in \Gamma}$ corresponds to $\{\tau_{\gamma,0}\}_{\gamma \in \Gamma}$, the composition of $f$ with $\omega$ factors through $X^{\Gamma}$. Let us write $\omega'$ for the map $\mathcal{Q} \times^{G^{\Gamma}} G \to X$ induced by $f$. In this situation the diagrams 
\begin{equation}
    \begin{tikzcd}
    \mathcal{Q} \times^{G^{\Gamma}} G \arrow{d}{\omega'}  \arrow{r}{\tau_{\gamma},0} & \left(\mathcal{Q} \times^{G^{\Gamma}} 
G \right) \times^{G,\gamma} G \arrow{d}{\omega'_{\gamma}} \\
    X \arrow{r}{\operatorname{Id}_X} & X
    \end{tikzcd}
\end{equation}
commute for all $\gamma \in \Gamma$, which means that for $q \in \mathcal{Q}$ we have
\begin{align}
    \omega'(q,1)&=\omega'_{\gamma}((q,1),1)=\beta_{\gamma}(\omega'(q,1))
\end{align}
for all $\gamma \in \Gamma$. We deduce that $\omega'$ maps $\mathcal{Q} \subset \mathcal{Q} \times^{G^{\Gamma}} G$ to $X^{\Gamma}$, which concludes the proof.
\end{proof}
\subsubsection{} We now study under what conditions the map $\mathbb{B}G^{\Gamma} \to (\mathbb{B} G)^{h \Gamma}$ is an equivalence. For this purpose, we introduce cocycles and cohomology groups. Consider the sheaf $\underline{Z}^1(\Gamma,G)$ whose $T$-points are given by the set of cocycles $Z^1(\Gamma,G(T))$ with values in $G(T)$. In other words, these are the functions $\sigma:\Gamma \to G(T)$ satisfying
\begin{align}
    \sigma(\gamma \cdot \gamma')=\sigma(\gamma) \cdot \alpha_{\gamma}(\sigma(\gamma')).
\end{align}
There is an action of $G$ on $\underline{Z}^1(\Gamma,G)$, which takes an element $g \in G(T)$ and a cocycle $\sigma \in Z^1(\Gamma, G(T))$ and sends it to the cocycle
\begin{align}
    \sigma'(\gamma) = g \cdot \sigma(\gamma) \cdot \alpha_{\gamma}(g^{-1}).
\end{align}
The quotient set for this action is per definition the first (nonabelian) cohomology group of $\Gamma$ with coefficients in $G(T)$, in formulas, 
\begin{align}
    Z^1(\Gamma, G(T))/G(T) = \rH^1(\Gamma, G(T)).
\end{align}
Given a cocycle $\sigma:\Gamma \to G(T)$ and $\gamma \in \Gamma$ we get an isomorphism of trivial $G_T$-torsors
\begin{align}
    \epsilon_{\gamma,\sigma}:G_T &\to G_{T} \times^{G, \gamma} G \\ 
    g &\mapsto (\alpha_{\gamma^{-1}}(g),\sigma(\gamma)).
\end{align}
To check that $(T,G_T,\{\epsilon_{\gamma,\sigma}\}_{\gamma \in \Gamma}))$ defines an object of $(\mathbb{B}G)^{h \Gamma}$ we need to check that the following diagram commutes
\begin{equation}
    \begin{tikzcd}
    G_T \arrow{d}{\epsilon_{\gamma_2 \gamma_1, \sigma}} \arrow{r}{\epsilon_{\gamma_2}} & G_T \times^{G,\gamma_2} G \arrow{d}{\mathbb{B}(\gamma_2)(\epsilon_{\gamma_1, \sigma})} \\
    G_T \times^{G,\gamma_2 \gamma_1} G \arrow[r,"\eta_{\mathbb{B}, \gamma_1, \gamma_2}"]&   \left(G_T \times^{G,\gamma_1} G \right) \times^{G,\gamma_2} G.
    \end{tikzcd}
\end{equation}
This comes down to the equality
\begin{align}
    (\alpha_{\gamma_2^{-1} \gamma_1}(g), \sigma(\gamma_1 \gamma_2)),1) = ((\alpha_{\gamma_2^{-1}} \alpha_{\gamma_1^{-1}}(g), \sigma(\gamma_1)), \sigma(\gamma_1))
\end{align}
which follows from the cocycle condition
\begin{align}
    \sigma(\gamma \cdot \gamma')=\sigma(\gamma) \cdot \alpha_{\gamma}(\sigma(\gamma')).
\end{align}
If we now consider $\underline{Z}^1(\Gamma,G)$ as a fibered category over $S$, the construction above defines a natural map $\underline{Z}^1(\Gamma,G) \to (\mathbb{B} G)^{h \Gamma}$ which takes $(T,\sigma \in Z^1(\Gamma, G(T))$ to $(T,G_T,\{\epsilon_{\gamma,\sigma}\}_{\gamma \in \Gamma})$. 
\begin{Lem} \label{Lem:CocyclesTorsors}
This map induces an isomorphism
\begin{align}
    \left[\underline{Z}^1(\Gamma,G)/G \right] \to (\mathbb{B} G)^{h \Gamma}.
\end{align}
\end{Lem}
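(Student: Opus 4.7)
The plan is to prove the lemma in three stages, using throughout that both sides are stacks on $S$---the quotient stack by construction and $(\mathbb{B}G)^{h\Gamma}$ by Lemma~\ref{Lem:Descent}---so that fully faithfulness and essential surjectivity can be checked locally on $S$.

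First I would construct the map out of the quotient stack from the given map out of $\underline{Z}^1(\Gamma,G)$, by showing that the $G$-action on $\underline{Z}^1(\Gamma,G)$ is absorbed after passing to $(\mathbb{B}G)^{h\Gamma}$. Concretely, given $\sigma\in Z^1(\Gamma,G(T))$ and $g\in G(T)$ with $\sigma':=g\cdot\sigma$, right-multiplication by $g$ is a $G$-equivariant isomorphism $\phi_g:G_T\to G_T$ of trivial $G$-torsors, and a direct check using $\sigma'(\gamma)=g\sigma(\gamma)\alpha_\gamma(g^{-1})$ shows that $\phi_g$ intertwines $\epsilon_{\gamma,\sigma}$ and $\epsilon_{\gamma,\sigma'}$. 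The assignment $g\mapsto\phi_g$ is strictly functorial in $g$, supplying the required coherence data for the quotient stack.

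Next, for fully faithfulness it suffices by stackiness to compare morphism sets over objects with trivial underlying torsor over some $T\in S$. A morphism $(T,G_T,\sigma)\to(T,G_T,\sigma')$ in $[\underline{Z}^1(\Gamma,G)/G](T)$ amounts to an element $g\in G(T)$ with $\sigma'=g\cdot\sigma$, while a morphism $(G_T,\{\epsilon_{\gamma,\sigma}\})\to(G_T,\{\epsilon_{\gamma,\sigma'}\})$ in $(\mathbb{B}G)^{h\Gamma}(T)$ is, by the trivial-torsor analogue of the argument in Lemma~\ref{Lem:FullyFaithfulII}, again given by right-multiplication by an element $g\in G(T)$; the constraint that it commute with the $\epsilon$'s becomes precisely $\sigma'=g\cdot\sigma$. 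Hence the two morphism sets match up.

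For essential surjectivity, given $(\mathcal{P},\{\tau_\gamma\})\in(\mathbb{B}G)^{h\Gamma}(T)$ I would pass to a cover $T'\to T$ trivializing $\mathcal{P}$; under a choice of trivialization each $\tau_\gamma$ becomes an isomorphism $G_{T'}\to G_{T'}\times^{G,\gamma}G$ of trivial $G$-torsors, which is determined by a unique element $\sigma(\gamma)\in G(T')$ via the formula for $\epsilon_{\gamma,\sigma}$. The main obstacle is verifying that the 2-cocycle coherence condition on $\{\tau_\gamma\}$ coming from Definition~\ref{Def:HomotopyFixedPoints} and the weak-functor structure $2$-morphisms $\eta_{\mathbb{B},\gamma,\gamma'}$ from Lemma~\ref{Lem:FunctorialityClassifyingStack} translates, under this trivialization, to the classical cocycle identity $\sigma(\gamma\gamma')=\sigma(\gamma)\alpha_\gamma(\sigma(\gamma'))$. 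This is essentially the computation carried out just before the statement of the lemma read backwards; once established, it produces $\sigma\in\underline{Z}^1(\Gamma,G)(T')$ mapping to the restriction of the given object, and combined with fully faithfulness and stackiness this finishes the proof.
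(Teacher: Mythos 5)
Your proposal is correct and follows essentially the same strategy as the paper: reduce to the local statement that over a base $T$ on which the underlying $G$-torsor is trivialized, $\Gamma$-equivariant structures correspond bijectively to cocycles in $\underline{Z}^1(\Gamma,G)(T)$ with $G$-conjugation matching the automorphism action, then invoke local triviality of $G$-torsors and the stack property of $(\mathbb{B}G)^{h\Gamma}$ (Lemma \ref{Lem:Descent}). The paper packages this as the statement that $\underline{Z}^1(\Gamma,G)$ represents the functor of $\Gamma$-equivariant structures on the trivial torsor and describes the inverse construction explicitly via $\sigma(\gamma)=\tau_{\gamma,0}^{-1}\circ\tau_\gamma$, while you organize the same content into fully faithfulness and essential surjectivity checks, but the underlying computations are identical.
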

\begin{proof}
This comes down to showing that the sheaf $\underline{Z}^1(\Gamma,G)$ represents the functor on $S$ sending $T$ to the set of $\Gamma$-equivariant structures on the trivial $G$-torsor over $T$. We have seen above how to go from a cocycle $\sigma \in \underline{Z}^1(\Gamma,G)(T)$ to a $\Gamma$-equivariant structure on the trivial $G$-torsor over $T$. Conversely, let $(\mathcal{P}^0 \times^{G^{\Gamma}} G, \{\tau_{\gamma}\}_{\gamma \in \Gamma})$, be a $\Gamma$-equivariant structure on the trivial $G$-torsor $\mathcal{P}^0 \times^{G^{\Gamma}} G$ where $\mathcal{P}^0$ is the trivial $G^{\Gamma}$-torsor. Then we get a cocycle by considering the collection of elements $\sigma(\gamma) \in G(T)$ defined to be the element of $G(T)$ corresponding to the following automorphism of $\mathcal{P}^0 \times^{G^{\Gamma}} G$ over $T$
\begin{equation}
    \begin{tikzcd}
    \mathcal{P}^0 \times^{G^{\Gamma}} G \arrow{r}{\tau_{\gamma}} & \left(\mathcal{P}^0 \times^{G^{\Gamma}} G \right) \times^{G,\gamma} G \arrow{r}{\tau_{\gamma,0}^{-1}} & \mathcal{P}^0 \times^{G^{\Gamma}} G.
    \end{tikzcd}
\end{equation}
Here $\tau_{\gamma,0}$ is the isomorphism 
    \begin{align}
    \tau_{\gamma,0}:\left(\mathcal{P}^0 \times^{G^{\Gamma}} G \right) &\to \left(\mathcal{P}^0 \times^{G^{\Gamma}} G \right) \times^{G, \gamma} G \\
    (p,g) &\mapsto ((p, 1), \gamma(g))
\end{align}
from the beginning of Section \ref{Sec:AppendixQuotientStackFixedPoints}. It is a straightforward yet laborious check that this defines an inverse to the construction above, and the lemma is proved.
\end{proof}
We let $\underline{H}^1(\Gamma, G)$ be the sheafification of the presheaf $T \mapsto \rH^1(\Gamma, G(T))$ on $S$. It follows from Lemma \ref{Lem:CocyclesTorsors} that there is a natural map $(\mathbb{B} G)^{h \Gamma} \to \underline{H}^1(\Gamma, G)$. This fits in a $2$-commutative diagram
\begin{equation} \label{Eq:TwoCartesianCohomologyIII}
    \begin{tikzcd}
            \mathbb{B} G^{\Gamma} \arrow{r} \arrow{d} & (\mathbb{B} G)^{h \Gamma} \arrow{d} \\
            S \arrow{r} & \underline{H}^1(\Gamma, G),
    \end{tikzcd}
\end{equation}
where the top horizontal map is the natural map, and the bottom horizontal map is the natural inclusion of the cohomology class of the trivial cocycle. We have the following fundamental result. 
\begin{Prop} \label{Prop:HomotopyFixedPointsClassifyingStack}
    The diagram in \eqref{Eq:TwoCartesianCohomologyIII} is $2$-Cartesian.
\end{Prop}
\begin{proof}
The top horizontal arrow is fully faithful by Lemma \ref{Lem:FullyFaithfulII}. It suffices to show that its essential image consists of those elements of $(\mathbb{B} G)^{\Gamma}$ whose image in $\underline{H}^1(\Gamma,G)$ is trivial. This can be checked locally in the Grothendieck topology on $S$ since both source and target are stacks in groupoids over $S$, see Lemma \ref{Lem:Descent}. 

Given an object of $(\mathbb{B} G)^{h \Gamma}$ over an object $T$ of $S$ whose image in $\underline{H}^1(\Gamma,G)$ is trivial, we replace $T$ by a cover to assume that the underlying $G$-torsor is trivial. Then the $\Gamma$-equivariant structure corresponds to a cocycle $\sigma:\Gamma \to G(T)$, and by assumption we may pass to a further cover of $T$ to assume that this cocycle is trivial. We now observe that the cocycle being trivial tells us precisely that the $\Gamma$-equivariant structure on the trivial $G$-torsor over $T$ is isomorphic to the \emph{trivial} $\Gamma$-equivariant structure on the trivial $G$-torsor. In other words, that the corresponding object of $(\mathbb{B} G)^{h \Gamma}(T)$ is in the essential image of $(\mathbb{B} G^{h \Gamma})(T)$.
\end{proof}
Putting together Proposition \ref{Prop:HomotopyFixedPointsClassifyingStack} and Proposition \ref{Prop:KeyPropositionAppendix} we obtain the following corollaries. 
\begin{Cor} \label{Cor:DoubleCartesian}
    There is a natural $2$-Cartesian diagram
    \begin{equation}
        \begin{tikzcd}
            \left[X^{\Gamma}/G^{\Gamma} \right] \arrow{d} \arrow{r} & \left[ X / G \right]^{\Gamma} \arrow{d} \\
            S \arrow{r} & \underline{H}^1(\Gamma,G).
        \end{tikzcd}
    \end{equation}
\end{Cor}
\begin{Cor} \label{Cor:NoCohomologyIsomorphism}
If $\underline{H}^1(\Gamma, G)$ is trivial, then the natural map
\begin{align}
    \left[X^{\Gamma} / G^{\Gamma} \right] \to \left[X /G\right]^{h \Gamma}
\end{align}
is an equivalence.
\end{Cor}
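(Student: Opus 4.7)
The plan is to deduce the corollary immediately from the two propositions that precede it. Specifically, I will combine Proposition \ref{Prop:KeyPropositionAppendix}, which asserts that the square \eqref{Eq:KeyDiagram} is $2$-Cartesian, with Proposition \ref{Prop:HomotopyFixedPointsClassifyingStack}, which identifies the bottom horizontal arrow $\mathbb{B}G^{\Gamma} \to (\mathbb{B}G)^{h\Gamma}$ of that square as an equivalence under the vanishing of $\underline{H}^1(\Gamma, G)$.

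The key step is then to invoke the standard fact that in the strict $(2,1)$-category of categories fibered in groupoids over $S$, the base change of an equivalence along any morphism in a $2$-Cartesian square is again an equivalence. Applied to our situation, the top horizontal arrow
\[
\left[X^{\Gamma}/G^{\Gamma}\right] \to \left[X/G\right]^{h\Gamma}
\]
is the base change of the bottom arrow $\mathbb{B}G^{\Gamma} \to (\mathbb{B}G)^{h\Gamma}$ along the forgetful map $\left[X/G\right]^{h\Gamma} \to (\mathbb{B}G)^{h\Gamma}$, and hence is itself an equivalence.

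There is no real obstacle here; the work has already been done in Propositions \ref{Prop:HomotopyFixedPointsClassifyingStack} and \ref{Prop:KeyPropositionAppendix}. The corollary is essentially a formal consequence, and the proof will be short — perhaps only two or three sentences, exactly as outlined above.
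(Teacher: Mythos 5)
Your proposal is correct and is exactly the intended (though unwritten) argument: the paper states the Corollary immediately after Propositions \ref{Prop:HomotopyFixedPointsClassifyingStack} and \ref{Prop:KeyPropositionAppendix} without a separate proof, precisely because it follows formally from the facts that the square \eqref{Eq:KeyDiagram} is $2$-Cartesian and that $\mathbb{B}G^{\Gamma} \to (\mathbb{B}G)^{h\Gamma}$ is an equivalence, so the top arrow is the base change of an equivalence and hence an equivalence.
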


\DeclareRobustCommand{\VAN}[3]{#3}
\bibliographystyle{amsalpha}
\bibliography{references}
\end{document}